\newtheorem{proposition}{Proposition}[section]
\newtheorem{definition}[proposition]{Definition}
\newtheorem{lemma}[proposition]{Lemma}
\newtheorem{theorem}[proposition]{Theorem}
\newtheorem{remark}[proposition]{Remark}
\newtheorem{example}[proposition]{Example}
\newtheorem{corollary}[proposition]{Corollary}
\title{{\bf On the \DJ ur\dj evi\'c approach to quantum principal bundles}}
\author{{\large Antonio Del Donno}\footnote{antonio.deldonno2@unibo.it}~}
\author{{\large Emanuele Latini}\footnote{emanuele.latini@unibo.it}~}
\author{{\large Thomas Weber}\footnote{thomasmartin.weber@unibo.it}$^{~,\mathparagraph}$}
\affil{\centerline{\sl
{ Alma Mater Studiorum - Università di Bologna}}
\centerline{\sl  Via Zamboni 33, 40126 Bologna, Italy}

\bigskip

\centerline{$^\mathparagraph${\sl Istituto Nazionale di Fisica Nucleare}}

\centerline{\sl Sezione di Bologna,  I-40126 Bologna, Italy}
}
\date{\today}
\begin{document}
		
\maketitle
		
\begin{abstract}
We revisit and extend the \DJ ur\dj evi\'c theory of complete calculi on quantum principal bundles. In this setting one naturally obtains a graded Hopf--Galois extension of the higher order calculus and an intrinsic decomposition of degree 1-forms into horizontal and vertical forms. This proposal is appealing, since it is consistently equipped with a canonical braiding and exactness of the Atiyah sequence is guaranteed. Moreover, we provide  examples of complete calculi, including the noncommutative 2-torus, the quantum Hopf fibration and differential calculi on crossed product algebras.
\end{abstract}
		
\tableofcontents
		
\section{Introduction}

Principal bundles and principal connections are very interesting and deep mathematical objects. They encode many relevant features of the underlying geometry: for example, they are used in Cartan geometry to describe locally homogeneous spaces, as well as certain structures, e.g. Riemannian structures, emerging as reduction of the first order frame bundle. In mathematical physics, principal bundles and their associated bundles are used to describe gauge bosons and their interaction with matter fields. For all these reasons it is then an interesting and fascinating task to construct their quantum deformations. In doing so, an algebraic approach is needed;  the structure group of the principal bundle is replaced by a structure Hopf algebra $H,$ while the total space is replaced by an $H$-comodule algebra $A$. Following \cite{Schn90}, principality of the bundle is captured by the notion of a faithfully flat Hopf--Galois extension $B:=A^{coH}\subset A$, while local triviality is encompassed by cleft extensions. 

In order to introduce connections in this algebraic context extra ingredients are required. While in classical differential geometry there is a functorial construction of the differential calculus on a differentiable manifold, in noncommutative differential geometry \cite{Connes,Wor89} there does not exist a unique differential calculus on an algebra; in spite of that, it is well known that every first order differential calculus can be obtained as the quotient of a universal one. On Hopf algebras and $H$-comodule algebras one can require natural (co)invariance conditions leading thus to $H$-covariant calculi. In this perspective emerges the necessity of constructing a consistent differential geometry of a quantum principal bundle that requires also certain natural compatibility conditions among the calculi on $A$ and $H$. In a nutshell, one requires the coaction $\Delta_A\colon A\to A\otimes H$ to extend to differential forms as a morphism of differential graded algebras. This leads then to the notion of complete calculus (first introduced by \DJ ur\dj evi\'c in \cite{DurI,DurII}), which is the main objective of the present work; this manuscript in particular aims to give a solid contribution to the differential geometry of quantum principal bundles on an affine variety. We remark that the case of bundles on projective varieties is intrinsically more difficult since it requires patching affine opens; a solution to this problem can be found in \cite{AFL,AFLW,FLP24} where the authors proposed an approach to noncommutative differential geometry which employs the sheaf-theoretic language. Note that there is also a recent interest from the noncommutative geometry community in the \DJ ur\dj evi\'c approach, since it permits a suitable notion of gauge transformation that naturally generalises the classical case \cite{ALP1,ALP2,Cac,Cac2}.  

In this paper we rebuild part of the quantum principal bundle framework \cite{DurI,DurII,DurDS,DurGauge,DurHG} of \DJ ur\dj evi\'c, which he developed in the 90's and which is partially unpublished to this day (see also the book \cite{Son15} for more recent developments). Doing so, our contribution is threefold. First of all, we translate the original papers into a mathematical language which seems to be more broadly accepted, (roughly) following the conventions of \cite{BegMaj,Haj96,SchDC}. This is crucial to make the approach more accessible. Thereby, we also establish the \DJ ur\dj evi\'c braiding on the total space algebra $A$, which makes the latter braided-commutative. This braiding is obtained by pulling back the algebra structure of $A\otimes H$ to $A\otimes_BA$ via the canonical Hopf--Galois map and it extends to higher order differential forms. Secondly, we relate the results to the established literature on quantum principal bundles, most importantly to \cite{BrzMaj}. In this way we clarify that the \DJ ur\dj evi\'c approach is not in opposition to the one of \cite{BrzMaj}, but in fact extends the latter from differential $1$-forms to higher order forms. Lastly, we provide new examples of complete calculi on quantum principal bundles, including calculi on the noncommutative $2$-torus, crossed product algebras, as well as on the fundamental example of $\mathrm{SL}_q(2)$. The corresponding \DJ ur\dj evi\'c braidings and base, vertical and horizontal forms are discussed in detail. The unification with the Brzezi\'nski--Majid approach together with the supply of relevant examples is, in our opinion, great evidence for the relevance of \DJ ur\dj evi\'c's approach in noncommutative differential geometry. In addition, we make an effort to present and prove the results of \DJ ur\dj evi\'c, particularly the ones contained in the work \cite{DurII}, in new concise and conceptual ways, thereby also clarifying and resolving some of the material. Note that, in contrast to \cite{DurI,DurII} we do not assume the existence of a compact quantum space and a compact quantum group acting on it, but rather work in the algebraic setting, which concerns principal comodule algebras and Hopf algebras. Furthermore, for the sake of brevity we do not consider $*$-involutions, even though this could lead to an interesting continuation of this paper. More details about the content of this paper can be found in the master Thesis \cite{AntonioThesis}.

In chapter \ref{C2} we review the main definitions and properties of differential calculi on Hopf algebras and $H$-comodule algebras. 
In chapter \ref{C3} we discuss the notion of Hopf--Galois extension and quantum principal bundle. We introduce the  complete calculus on the total space algebra, as well as the corresponding base, vertical and horizontal forms and show that the Atiyah sequence is always exact in this framework. We then compare \DJ ur\dj evi\'c's theory to the approach of \cite{BrzMaj}.
Given a complete calculus, one can then develop the formalism of connections and connection 1-forms, which is the main topic of chapter \ref{sec:connections}.
Chapter \ref{C5} treats the previously mentioned braiding, which emerges from the \DJ ur\dj evi\'c approach. We prove that the higher order complete calculus yields a graded Hopf--Galois extension. This shows that complete calculi preserve and in fact prolong the Hopf--Galois property of the underlying algebras.
In chapter \ref{sec:ex} we provide explicit realizations of complete differential calculi for many relevant examples, like the noncommutative algebraic 2-torus, the quantum Hopf fibration and cleft Hopf--Galois extensions.

\subsection*{Conventions and notation}

We fix a field $\Bbbk$. Throughout this paper all spaces are $\Bbbk$-vector spaces and all maps are $\Bbbk$-linear if not stated otherwise. All algebras are assumed to be associative and unital and all coalgebras are assumed to be coassociative and counital if not stated otherwise. The tensor product of $\Bbbk$-vector spaces is denoted by $\otimes$, while the balanced tensor product over an algebra $B$ is denoted by $\otimes_B$. Given a coalgebra $C$ with comultiplication $\Delta\colon C\to C\otimes C$ and counit $\varepsilon\colon C\to\Bbbk$ we employ Sweedler's notation for the coproduct $\Delta(c)=:c_1\otimes c_2$ of an element $c\in C$. In particular, coassociativity and counitality of $C$ read for $c\in C$
$$
(c_1)_1\otimes(c_1)_2\otimes c_2=c_1\otimes(c_2)_1\otimes(c_2)_2=:c_1\otimes c_2\otimes c_3
$$
and $\varepsilon(c_1)c_2=c=c_1\varepsilon(c_2)$, respectively.
For a right $C$-comodule $M$ of a coalgebra $C$ we denote the right $C$-coaction by $\Delta_M\colon M\to M\otimes C$ and employ the Sweedler-like notation $\Delta_M(m)=:m_0\otimes m_1$,
$$
(m_0)_0\otimes(m_0)_1\otimes m_1=m_0\otimes(m_1)_1\otimes(m_1)_2=:m_0\otimes m_1\otimes m_2
$$
and similar for repeated coactions of $m\in M$. Similarly, the left $C$-coaction of a coalgebra $C$ on a left $C$-comodule $M$ is denoted by ${}_M\Delta\colon M\to C\otimes M$ with short notation ${}_M\Delta(m)=:m_{-1}\otimes m_0$, etc. and in case of a $C$-bicomodule $M$ we write $(m_0)_{-1}\otimes(m_0)_0\otimes m_1=m_{-1}\otimes(m_0)_0\otimes(m_0)_1=:m_{-1}\otimes m_0\otimes m_1$ for all $m\in M$.
Given a Hopf algebra $H$ we always assume that its antipode $S\colon H\to H$ is invertible and denote the inverse by $S^{-1}\colon H\to H$. As standard references for Hopf algebras and comodules we refer to the textbooks \cite{Kas95,Mon94}.

In this paper all graded vector spaces are $\mathbb{N}_0$-graded $\Bbbk$-vector spaces $V=\bigoplus\nolimits_{k\in\mathbb{N}_0}V^k$. Given two graded vector spaces $V=\bigoplus\nolimits_{k\in\mathbb{N}_0}V^k$ and $W=\bigoplus\nolimits_{k\in\mathbb{N}_0}W^k$ their tensor product is the graded vector space $V\otimes W$ with degree $k$-component $\bigoplus\nolimits_{r+s=k}V^r\otimes V^s$ and a map $\Phi\colon V\to W$ is of degree $\ell\in\mathbb{N}_0$ if $\Phi(V^k)\subseteq W^{k+\ell}$ for all $k\in\mathbb{N}_0$. Note that a map of degree $\ell$ is completely determined by its components $\Phi^k:=\Phi|_{V^k}\colon V^k\to W^{k+\ell}$ for all $k\in\mathbb{N}_0$ and we sometimes write $\Phi\colon V^\bullet\to W^{\bullet+\ell}$ to indicate the degree shift.
In the following, a graded algebra is always understood as a graded vector space $\Omega^\bullet=\bigoplus\nolimits_{k\in\mathbb{N}_0}\Omega^k$ with associative multiplication $\wedge\colon\Omega^\bullet\otimes\Omega^\bullet\to\Omega^\bullet$ which is a map of degree $0$, i.e. $\Omega^k\wedge\Omega^\ell\subseteq\Omega^{k+\ell}$. The unit of $\Omega^\bullet$ is necessarily an element of $\Omega^0$. 
A differential graded algebra is a graded algebra $\Omega^\bullet$ with a degree $1$ map $\mathrm{d}\colon\Omega^\bullet\to\Omega^{\bullet+1}$, the differential, such that $\mathrm{d}\circ\mathrm{d}=0$ and such that the Leibniz rule
$$
\mathrm{d}(\omega\wedge\eta)=\mathrm{d}\omega\wedge\eta+(-1)^{|\omega|}\omega\wedge\mathrm{d}\eta
$$
is satisfied for homogeneous elements $\omega\in\Omega^{|\omega|}$, where $|\omega|$ denotes the degree of $\omega$ and for all $\eta\in\Omega^\bullet$. A morphism of differential graded algebras is a degree $0$-map which intertwines the multiplications, units and differentials in the obvious ways.
More details on differential graded algebras can be found in \cite{SchDC}.

\subsection*{Abbreviations}

\begin{center}
\begin{tabular}{l|l|l|l}
Abbreviation & Description & Notation & First appearance\\
\hline
 & & & \\
DC & Differential calculus & $\Omega^\bullet$ or $\Omega^\bullet(A)$ & Definition \ref{def:DC}\\
DCi & Differential calculi & & \\
& & & \\
DGA & Differential graded algebra & $(\Omega^\bullet,\wedge,\mathrm{d})$ & Previous paragraph\\
DGAs & Differential graded algebras & & \\
& & & \\
FODC & First order differential calculus & $(\Gamma,\mathrm{d})$ & Definition \ref{def:FODC}\\
FODCi & First order differential calculi & & \\
 & & & \\
QPB & Quantum principal bundle & $B=A^{\mathrm{co}H}\subseteq A$ & Definition \ref{def:QPB}\\
QPBs & Quantum principal bundles & &
\end{tabular}
\end{center}

\section{Noncommutative differential geometry}\label{C2}

In this preliminary section we recall the notion of noncommutative differential calculus, with basics about first order and higher order calculi in Section \ref{sec:DC} and the treatment of covariant calculi in Section \ref{sec:covDC}. The main results are the Woronowicz classification of covariant first order calculi on Hopf algebras and the expression of the corresponding maximal prolongations via relations of the Cartan-Maurer form.
Our main references for this standard material are \cite{BegMaj,DurII,SchDC,Wor89}.

\subsection{Differential calculi}\label{sec:DC}

One of the main concepts in noncommutative differential geometry is the notion of noncommutative differential calculus. We give the definition of first order differential calculus, their classification in terms of the universal calculus and recall the universal construction of higher order calculi via the maximal prolongation. This works in great generality for arbitrary associative unital algebras $A$.

\begin{definition}\label{def:FODC}
A \textbf{first order differential calculus} (abbreviated by FODC) on an algebra $A$ is a tuple $(\Gamma,\mathrm{d})$ such that
\begin{enumerate}
\item[i.)] $\Gamma$ is an $A$-bimodule,

\item[ii.)] $\mathrm{d}\colon A\to\Gamma$ is a map which satisfies the \textbf{Leibniz rule} 
\begin{equation}
	\mathrm{d}(aa')=\mathrm{d}(a)a'+a\mathrm{d}(a')
\end{equation}
for all $a,a'\in A$,

\item[iii.)] $\Gamma$ is generated by $A$, i.e. $\Gamma=A\mathrm{d}A:=\mathrm{span}_\Bbbk\{a\mathrm{d}(a')~|~a,a'\in A\}$.
\end{enumerate}
\end{definition}
The last condition is sometimes referred to as the \textbf{surjectivity condition}. In the following we sometimes write $\mathrm{d}a$ instead of $\mathrm{d}(a)$ in order to ease the notation. 
A morphism between a FODC $(\Gamma,\mathrm{d})$ on $A$ and another FODC $(\Gamma',\mathrm{d}')$ on another associative unital algebra $A'$ is a tuple $(\phi,\Phi)$, where $\phi\colon A\to A'$ is an algebra morphism and $\Phi\colon\Gamma\to\Gamma'$ is an $A$-bimodule morphism, where we view $\Gamma'$ as an $A$-bimodule via $\phi$. One further demands compatibility with the differentials: $\mathrm{d}'\circ\phi=\Phi\circ\mathrm{d}$. In case $A'=A$ and $\phi=\mathrm{id}_A$ we simply refer to $\Phi\colon\Gamma\to\Gamma'$ as the morphism of first order differential calculi (FODCi).

We call $(\Gamma',\mathrm{d}')$ a \textbf{quotient FODC} of $(\Gamma,\mathrm{d})$ if there is a surjective morphism $(\Gamma,\mathrm{d})\to(\Gamma',\mathrm{d}')$ of FODCi, i.e. such that the corresponding $A$-bimodule morphism $\Phi\colon\Gamma\to\Gamma'$ is surjective.

Every algebra $A$ admits the \textbf{trivial FODC} $\Gamma=\{0\}$ and the \textbf{universal FODC} $(\Gamma_u,\mathrm{d}_u)$. The latter is defined by $\Gamma_u:=\ker m_A$, where $m_A\colon A\otimes A\to A$ denotes the multiplication of $A$ and its kernel is endowed with the obvious $A$-bimodule structure, and $\mathrm{d}_u\colon A\to\Gamma_u$ is defined by $\mathrm{d}_u(a):=1\otimes a-a\otimes 1$ for all $a\in A$.
\begin{proposition}[{\cite[Proposition 1.1]{Wor89}}]
Let $A$ be an algebra. Every FODC $(\Gamma,\mathrm{d})$ on $A$ is a quotient of the universal FODC $(\Gamma_u,\mathrm{d}_u)$ on $A$. The corresponding surjective morphism of FODCi is given by
\begin{equation}
	\Gamma_u\twoheadrightarrow\Gamma,\qquad a^i\otimes b^i\mapsto a^i\mathrm{d}b^i,
\end{equation}
where a finite sum over repeated indices is understood.
\end{proposition}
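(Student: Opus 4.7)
The plan is to define the candidate morphism $\pi\colon\Gamma_u\to\Gamma$ as the restriction to $\Gamma_u=\ker m_A\subseteq A\otimes A$ of the $\Bbbk$-linear map $A\otimes A\to\Gamma$ sending $a\otimes b\mapsto a\mathrm{d}b$, and then to verify the four conditions that package $(\mathrm{id}_A,\pi)$ into a surjective morphism of FODCi: $\Bbbk$-linearity, $A$-bilinearity, compatibility $\pi\circ\mathrm{d}_u=\mathrm{d}$, and surjectivity of $\pi$.

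Linearity and left $A$-linearity are immediate from the definition, since $\pi(c(a\otimes b))=ca\mathrm{d}b=c\pi(a\otimes b)$. The delicate point is right $A$-linearity: applying the Leibniz rule gives
$$
\pi((a\otimes b)c)=a\mathrm{d}(bc)=a\mathrm{d}b\cdot c+ab\mathrm{d}c,
$$
and the second summand is precisely the obstruction. This is where the restriction to $\ker m_A$ becomes essential: for a generic element $\sum_i a^i\otimes b^i\in\ker m_A$ one has $\sum_i a^ib^i=0$, so the unwanted contribution $\bigl(\sum_i a^ib^i\bigr)\mathrm{d}c$ vanishes and right $A$-linearity holds on $\Gamma_u$. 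I view this as the main obstacle and, simultaneously, the conceptual reason why $\Gamma_u$ is defined as $\ker m_A$ rather than all of $A\otimes A$.

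The two remaining conditions reduce to short direct checks. For the compatibility of the differentials I compute
$$
\pi(\mathrm{d}_u a)=\pi(1\otimes a-a\otimes 1)=\mathrm{d}a-a\mathrm{d}1=\mathrm{d}a,
$$
where $\mathrm{d}1=0$ follows from applying the Leibniz rule to $1=1\cdot 1$. For surjectivity I invoke the surjectivity axiom iii.) on $\Gamma$: every generator $a\mathrm{d}b$ of $\Gamma=A\mathrm{d}A$ is realised as $\pi(a\otimes b-ab\otimes 1)$ with $a\otimes b-ab\otimes 1\in\ker m_A$, so the image of $\pi$ contains all generators of $\Gamma$. Combining these four steps yields the required surjective morphism of FODCi from $(\Gamma_u,\mathrm{d}_u)$ to $(\Gamma,\mathrm{d})$.
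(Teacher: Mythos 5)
Your proof is correct and is exactly the standard argument: the paper itself gives no proof of this proposition (it cites \cite[Proposition 1.1]{Wor89}), and your four checks—left $A$-linearity, right $A$-linearity via $\sum_i a^ib^i=0$ killing the Leibniz obstruction $\bigl(\sum_i a^ib^i\bigr)\mathrm{d}c$, the identity $\pi\circ\mathrm{d}_u=\mathrm{d}$ using $\mathrm{d}1=0$, and surjectivity via $a\otimes b-ab\otimes 1\in\ker m_A$—are precisely the ingredients of Woronowicz's original proof. Your identification of right $A$-linearity as the conceptual reason $\Gamma_u$ is defined as $\ker m_A$ rather than all of $A\otimes A$ is also on point.
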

Other examples of FODCi which are relevant for us will be given in Section \ref{sec:ex}.

\begin{definition}\label{def:DC}
A \textbf{differential calculus} (DC) on an algebra $A$ is a differential graded algebra $(\Omega^\bullet,\mathrm{d})$ which is generated in degree zero by $\Omega^0=A$. The latter condition means that for every $k>1$ we have
\begin{equation}
	\Omega^k=\mathrm{span}_\Bbbk\{a^0\mathrm{d}a^1\wedge\ldots\wedge\mathrm{d}a^k~|~a^0,a^1,\ldots,a^k\in A\}.
\end{equation}
We sometimes write $\Omega^\bullet(A)$ if we want to stress that $\Omega^\bullet$ is a DC on the algebra $A$. If we want to emphasize the underlying DGA structure of $\Omega^\bullet$ we write $(\Omega^\bullet,\wedge,\mathrm{d})$.
\end{definition}
A morphism of DCi is a morphism of the underlying differential graded algebras (DGAs). Quotients of DCi correspond to surjective morphisms of DCi.
\begin{remark}
Note that a morphism of DCi $\Phi\colon(\Omega^\bullet,\wedge,\mathrm{d})\to(\Omega^{'\bullet},\wedge',\mathrm{d}')$ is completely determined by its zeroth order and the DGA structures. In fact, for all $a^0,a^1,\ldots,a^k\in A$ we obtain
\begin{equation}
	\Phi(a^0\mathrm{d}(a^1)\wedge\ldots\wedge\mathrm{d}(a^k))
	=\Phi(a^0)\mathrm{d}'(\Phi(a^1))\wedge'\ldots\wedge'\mathrm{d}'(\Phi(a^k))
\end{equation}
and since $\Omega^\bullet$ is generated in degree $0$ this determines $\Phi$ completely. In particular, if $\Phi$ exists and its zeroth order is surjective, then $\Phi$ is surjective.
\end{remark}

\begin{example}
Given two DCi $\Omega^\bullet(A)$ and $\Omega^\bullet(A')$ on associative unital algebras $A$ and $A'$, we define a DC $\Omega^\bullet(A\otimes A')$ on the tensor product algebra $A\otimes A'$ as the tensor product of the corresponding DGAs. Namely, $\Omega^n(A\otimes A'):=\bigoplus_{k+\ell=n}\Omega^k(A)\otimes\Omega^\ell(A')$ with multiplication
\begin{equation}
	(\omega\otimes\omega')\wedge(\eta\otimes\eta'):=(-1)^{|\omega'|\cdot|\eta|}(\omega\wedge\eta)\otimes(\omega'\wedge\eta'),
\end{equation}
where $\omega,\eta\in\Omega^\bullet(A)$ and $\omega',\eta'\in\Omega^\bullet(A')$. The differential is determined by
\begin{equation}
	\mathrm{d}_\otimes(\omega\otimes\omega'):=\mathrm{d}(\omega)\otimes\omega'+(-1)^{|\omega|}\omega\otimes\mathrm{d}(\omega')
\end{equation}
for all $\omega\in\Omega^\bullet(A)$ and $\omega'\in\Omega^\bullet(A')$. All axioms of a differential calculus, including the surjectivity property, are easily verified. We call $\Omega^\bullet(A\otimes A')$ the \textbf{tensor product calculus} on $A\otimes A'$.
\end{example}

Clearly, given a DC $\Omega^\bullet$ on $A$ we obtain a FODC $(\Gamma,\mathrm{d})$ on $A$ by setting $\Gamma:=\Omega^1$ and $\mathrm{d}\colon A\to\Gamma$ the restriction of $\mathrm{d}\colon\Omega^\bullet\to\Omega^{\bullet+1}$. We call $(\Gamma,\mathrm{d})$ the \textbf{truncation} of $\Omega^\bullet$ and $\Omega^\bullet$ an \textbf{extension} of $(\Gamma,\mathrm{d})$. Every FODC $(\Gamma,\mathrm{d})$ on $A$ admits the \textbf{trivial extension} $\Omega^k=\{0\}$ for $k>1$ and the \textbf{maximal prolongation} $\Gamma^\wedge$. The latter is the extension of $(\Gamma,\mathrm{d})$ defined in the following way: consider the tensor algebra 
\begin{equation}
	\Gamma^{\otimes_A}:=\bigoplus\nolimits_{k\in\mathbb{N}_0}\Gamma^{\otimes_A k}=A\oplus\Gamma\oplus(\Gamma\otimes_A\Gamma)\oplus\ldots
\end{equation}
and its graded ideal $I^\wedge$ generated by elements $\mathrm{d}a^i\otimes_A\mathrm{d}b^i\in\Gamma^{\otimes_A2}$ (sum over repeated indices understood) with $a^i,b^i\in A$ such that $a^i\mathrm{d}b^i=0$. We denote by $\Gamma^\wedge:=\Gamma^{\otimes_A}/I^\wedge$ the graded quotient algebra. The differential $\mathrm{d}\colon A\to\Gamma$ extends to $1$-forms on the quotient
\begin{equation}
	\mathrm{d}\colon\Gamma^{\wedge 1}\to\Gamma^{\wedge 2},\qquad
	a^i\mathrm{d}b^i\mapsto[\mathrm{d}a^i\otimes_A\mathrm{d}b^i]
\end{equation}
precisely by the definition of $I^\wedge$.
\begin{proposition}[{\cite[Appendix B]{DurI}}]\label{prop:maxprol}
Let $(\Gamma,\mathrm{d})$ be a FODC on an algebra $A$. Then, the maximal prolongation $\Gamma^\wedge$ is a DC on $A$ and an extension of $(\Gamma,\mathrm{d})$. Moreover, every extension $\Omega^\bullet$ of $(\Gamma,\mathrm{d})$ is a quotient of $\Gamma^\wedge$.
\end{proposition}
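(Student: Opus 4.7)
The strategy is to (i) endow $\Gamma^\wedge$ with a DGA structure extending $(\Gamma,\mathrm{d})$, and (ii) establish its universal property among such extensions.

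For (i), observe first that $I^\wedge$ is by definition a graded ideal of the tensor algebra $T:=T_A(\Gamma)=\bigoplus_{k\geq 0}\Gamma^{\otimes_A k}$ generated in degree $2$, so the quotient $\Gamma^\wedge=T/I^\wedge$ inherits a graded algebra structure with $\Gamma^{\wedge 0}=A$ and $\Gamma^{\wedge 1}=\Gamma$ untouched, and it is generated in degree zero by $A$ since $\Gamma=A\mathrm{d}A$ generates $T$ as an $A$-bimodule. The central task is to produce a degree-$1$ differential satisfying
\begin{equation}
\mathrm{d}(a^0\mathrm{d}a^1\wedge\cdots\wedge\mathrm{d}a^k):=\mathrm{d}a^0\wedge\mathrm{d}a^1\wedge\cdots\wedge\mathrm{d}a^k,
\end{equation}
which is the only formula compatible with the graded Leibniz rule and the forced identity $\mathrm{d}(\mathrm{d}a)=0$ (obtained by applying Leibniz to $a\mathrm{d}b=1\cdot\mathrm{d}(ab)-1\cdot a\mathrm{d}b$-type expressions). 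The essential verification is that this prescription descends to the quotient, equivalently that $I^\wedge$ is stable under the induced map on $T$. By the graded Leibniz rule this reduces to the base case $k=1$, which is precisely the defining relation of $I^\wedge$: if $\sum_i a^i\mathrm{d}b^i=0$ in $\Gamma$ then $\sum_i\mathrm{d}a^i\wedge\mathrm{d}b^i\in I^\wedge$ by construction, while higher-degree generators of the form $\omega\wedge(\mathrm{d}a^i\otimes_A\mathrm{d}b^i)\wedge\eta$ differentiate, via Leibniz, to elements still containing $\mathrm{d}a^i\otimes_A\mathrm{d}b^i$ as a factor plus a middle term that vanishes because $\mathrm{d}(\mathrm{d}a^i\otimes_A\mathrm{d}b^i)=\mathrm{d}(\mathrm{d}a^i)\otimes_A\mathrm{d}b^i-\mathrm{d}a^i\otimes_A\mathrm{d}(\mathrm{d}b^i)=0$. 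With the differential well-defined, $\mathrm{d}^2=0$ and the Leibniz rule hold on the monomial generators by inspection and propagate to all of $\Gamma^\wedge$ by linearity and the derivation property.

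For (ii), let $\Omega^\bullet$ be any DC extending $(\Gamma,\mathrm{d})$. The natural candidate $\Phi\colon\Gamma^\wedge\to\Omega^\bullet$ is defined on monomial generators by $\Phi(a^0\mathrm{d}a^1\wedge\cdots\wedge\mathrm{d}a^k):=a^0\mathrm{d}a^1\wedge\cdots\wedge\mathrm{d}a^k$ computed inside $\Omega^\bullet$. The crucial check is that $\Phi$ annihilates the generators of $I^\wedge$: if $\sum_i a^i\mathrm{d}b^i=0$ in $\Gamma=\Omega^1$, then applying the differential of $\Omega^\bullet$ and its Leibniz rule gives $0=\mathrm{d}\bigl(\sum_i a^i\mathrm{d}b^i\bigr)=\sum_i\mathrm{d}a^i\wedge\mathrm{d}b^i$ in $\Omega^2$. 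Hence $\Phi$ factors through $\Gamma^\wedge$, is a morphism of DGAs by construction, and is surjective because $\Omega^\bullet$ is generated in degree zero by $A$.

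The main obstacle is the well-definedness of the differential in (i): although the reduction to the degree-$1$ base case is conceptually transparent, it requires careful bookkeeping to handle the interaction of the $A$-bimodule relations in $\Gamma^{\otimes_A k}$ with the graded ideal structure of $I^\wedge$. A slicker alternative, essentially reverse-engineered from the argument in (ii), is to identify $\Gamma^\wedge$ directly as the quotient of the universal DC $\Omega_u^\bullet(A)$ (the maximal prolongation of $\Gamma_u$) by the graded differential ideal generated by the kernel of the canonical FODC surjection $\Gamma_u\twoheadrightarrow\Gamma$; this inherits the differential from $\Omega_u^\bullet(A)$ automatically and makes both (i) and (ii) immediate consequences of the universal property of $\Omega_u^\bullet(A)$.
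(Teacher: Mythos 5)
The paper itself gives no proof of this proposition (it is quoted from \cite[Appendix B]{DurI}); the only guidance in the text is the remark that the degree-$1$ extension $\mathrm{d}\colon\Gamma^{\wedge 1}\to\Gamma^{\wedge 2}$, $a^i\mathrm{d}b^i\mapsto[\mathrm{d}a^i\otimes_A\mathrm{d}b^i]$, is well defined ``precisely by the definition of $I^\wedge$''. Your part (ii) is correct and complete: the map $\Phi$ exists by the universal property of the tensor algebra $T_A(\Gamma)$, kills the generators of $I^\wedge$ because $0=\mathrm{d}\bigl(\sum_i a^i\mathrm{d}b^i\bigr)=\sum_i\mathrm{d}a^i\wedge\mathrm{d}b^i$ in $\Omega^2$, and is a surjective DGA morphism. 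However, part (i) has a genuine gap. You propose to define $\mathrm{d}$ on $T:=T_A(\Gamma)$ by the monomial formula and then check that ``$I^\wedge$ is stable under the induced map on $T$'' --- but there is no induced map on $T$: the assignment $a^0\mathrm{d}a^1\wedge\cdots\wedge\mathrm{d}a^k\mapsto\mathrm{d}a^0\wedge\cdots\wedge\mathrm{d}a^k$ is \emph{not} well defined on the tensor algebra (if it were, no quotient would be needed in degree $2$ either; the failure of exactly this well-definedness is the reason $I^\wedge$ exists). The differential can only be constructed with values in the quotient: first $\mathrm{d}^1\colon\Gamma\to\Gamma^{\wedge 2}$, well defined by the definition of $I^\wedge$; then degreewise maps $\Gamma^{\otimes_A k}\to\Gamma^{\wedge(k+1)}$ via the graded Leibniz formula, and here compatibility with the balancing relations $\gamma a\otimes_A\gamma'=\gamma\otimes_A a\gamma'$ and with the relations inside each tensor factor $\Gamma$ is a genuine computation using the Leibniz rule of $(\Gamma,\mathrm{d})$, for instance the identity $\mathrm{d}^1(\gamma a)=\mathrm{d}^1(\gamma)a-\gamma\wedge\mathrm{d}a$ for $\gamma\in\Gamma$, $a\in A$. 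Your ``reduction to the base case $k=1$ by the graded Leibniz rule'' is circular at this point, since the graded Leibniz rule presupposes the differential whose existence is being proven; only after the degreewise maps are known to exist does your ideal-stability argument (outer terms retain a generator as a factor, middle terms vanish since $\mathrm{d}^1(\mathrm{d}a)=0$) complete the descent. You flag this as ``careful bookkeeping'', but it is the actual mathematical content of the proposition, not a routine verification. (Minor: the parenthetical $a\mathrm{d}b=1\cdot\mathrm{d}(ab)-1\cdot a\mathrm{d}b$ is garbled; the identity forcing the formula is $a\mathrm{d}b=\mathrm{d}(ab)-\mathrm{d}(a)b$ together with $\mathrm{d}\circ\mathrm{d}=0$.)

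Your closing alternative is a sound idea and closer to how \cite{BegMaj} and \cite{SchDC} proceed: a differential graded ideal automatically yields a differential on the quotient, and the universal property descends from that of $\Omega_u^\bullet(A)$. But as stated it does not yet prove the proposition for the paper's object $\Gamma^\wedge=T_A(\Gamma)/I^\wedge$: you must still identify $\Omega_u^\bullet(A)/\langle N,\mathrm{d}_uN\rangle$ with $T_A(\Gamma)/I^\wedge$, where $N=\ker(\Gamma_u\twoheadrightarrow\Gamma)$. This uses the (independently established) fact $\Omega_u^\bullet(A)\cong T_A(\Gamma_u)$, the isomorphism $T_A(\Gamma_u)/\langle N\rangle\cong T_A(\Gamma)$, and the verification that the image of $\mathrm{d}_uN$ in $\Gamma\otimes_A\Gamma$ generates exactly $I^\wedge$ (every generator $\sum_i\mathrm{d}a^i\otimes_A\mathrm{d}b^i$ with $\sum_ia^i\mathrm{d}b^i=0$ in $\Gamma$ is the image of $\mathrm{d}_u\bigl(\sum_ia^i\otimes b^i-\sum_ia^ib^i\otimes 1\bigr)$ with the argument lying in $N$). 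None of these steps appears in your sketch, so neither of your two routes is complete as written, although both can be completed.
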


\subsection{Covariant differential calculi}\label{sec:covDC}

In this section we implement the notion of "symmetry" of an algebra $A$ in the context of noncommutative differential geometry. The role of the "symmetry group" is played by a Hopf algebra $H$ which coacts on $A$ in a way that respects the algebra structure of $A$. Modules of $A$, particularly differential calculi on $A$, have to be compatible with this coaction and maps between comodules, like the differential, are assumed to commute with the coactions. We introduce the corresponding mathematical concepts in the following and discuss calculi in this covariant context.

We fix a Hopf algebra $H$ and consider a \textbf{right $H$-comodule algebra} $A$. The latter is an algebra and a right $H$-comodule such that the corresponding right $H$-coaction $\Delta_A\colon A\to A\otimes H$ is an algebra homomorphism.
We call an $A$-bimodule $M$ a \textbf{right $H$-covariant $A$-bimodule} if there is a right $H$-coaction $\Delta_M\colon M\to M\otimes H$ such that
\begin{equation}
	\Delta_M(a\cdot m\cdot a')=\Delta_A(a)\Delta_M(m)\Delta_A(a')
\end{equation}
for all $a,a'\in A$ and $m\in M$. On the right hand side we use the obvious tensor product multiplication, which reads 
$
(a\cdot m\cdot a')_0\otimes(a\cdot m\cdot a')_1
=a_0\cdot m_0\cdot a'_0\otimes a_1m_1a'_1
$
in Sweeder's notation. The vector subspace of right $H$-coinvariant elements is denoted by 
\begin{equation}
	M^{\mathrm{co}H}=\{m\in M~|~\Delta_M(m)=m\otimes 1\}\subseteq M.
\end{equation}
Similarly, in case $A$ is a left $H$-comodule algebra (resp. $H$-bicomodule algebra), left $H$-covariant $A$-bimodules (resp. $H$-bicovariant $A$-bimodules) are defined. The vector subspace of left $H$-coinvariant elements of a left $H$-comodule $M$ with left $H$-coaction ${}_M\Delta\colon M\to H\otimes M$ is denoted by 
\begin{equation}
	{}^{\mathrm{co}H}M=\{m\in M~|~{}_M\Delta(m)=1\otimes m\}\subseteq M.
\end{equation} 
In the following we often omit $\cdot$ and simply write $am$ and $ma$ for the left and right action of $a$ on $m$.
\begin{definition}\label{def:CovCal}
Let $A$ be a right $H$-comodule algebra.
\begin{enumerate}
\item[i.)] A FODC $(\Gamma,\mathrm{d})$ on $A$ is called right $H$-covariant if $\Gamma$ is a right $H$-covariant $A$-bimodule and $\mathrm{d}\colon A\to\Gamma$ is right $H$-colinear.

\item[ii.)] A DC $\Omega^\bullet$ on $A$ is called right $H$-covariant if $\Omega^\bullet$ is a graded right $H$-comodule algebra and $\mathrm{d}\colon\Omega^\bullet\to\Omega^{\bullet+1}$ is right $H$-colinear. Explicitly, $\Omega^\bullet$ is right $H$-covariant if for every $k>0$ there is a right $H$-coaction $\Delta_{\Omega^k}\colon\Omega^k\to\Omega^k\otimes H$ such that
\begin{equation}
	\Delta_{\Omega^{k+\ell}}(\omega\wedge\eta)=\Delta_{\Omega^k}(\omega)\wedge_\otimes\Delta_{\Omega^\ell}(\eta)\qquad\text{and}\qquad
	\Delta_{\Omega^{k+1}}(\mathrm{d}(\omega))=(\mathrm{d}\otimes\mathrm{id})(\Delta_{\Omega^k}(\omega))
\end{equation}
for all $\omega\in\Omega^k$ and $\eta\in\Omega^\ell$, where $(\omega\otimes h)\wedge_\otimes(\omega'\otimes h'):=\omega\wedge\omega'\otimes hh'$ for all $\omega,\omega'\in\Omega^\bullet$ and $h,h'\in H$.
\end{enumerate}
Similarly one defines left $H$-covariant FODCi of a left $H$-comodule algebra and $H$-bicovariant FODCi for an $H$-bicomodule algebra and the same for DCi.
\end{definition}
\begin{remark}
Let $A$ be a right $H$-comodule algebra and $(\Gamma,\mathrm{d})$ a FODC on $A$. We give two characterizations of right $H$-covariance of $(\Gamma,\mathrm{d})$, which are equivalent to the one in Definition \ref{def:CovCal} $i.)$. The first one is the definition originally given by Woronowicz in \cite{Wor89}, while the second one is due to Schauenburg, see \cite{SchDC}.
\begin{enumerate}
\item[i.)] $(\Gamma,\mathrm{d})$ is right $H$-covariant if and only if the map 
\begin{equation}\label{eq:DelGam}
	\Delta_\Gamma\colon\Gamma\to\Gamma\otimes H,\qquad\sum_ia^i\mathrm{d}b^i\mapsto\sum_ia^i_0\mathrm{d}(b^i_0)\otimes a^i_1b^i_1,
\end{equation}
with $a^i,b^i\in A$,
is well-defined. In particular, the right $H$-coaction $\Delta_\Gamma\colon\Gamma\to\Gamma\otimes H$ on $\Gamma$ for a right $H$-covariant FODC $(\Gamma,\mathrm{d})$ is necessarily of the form \eqref{eq:DelGam}.

\item[ii.)] $(\Gamma,\mathrm{d})$ is right $H$-covariant if and only if $\Delta_A\colon A\to A\otimes H$ extends to a morphism of FODCi 
\begin{equation}
	(\Delta_A,\Delta_\Gamma)\colon(\Gamma,\mathrm{d})\to(\Gamma\otimes H,\mathrm{d}\otimes\mathrm{id}_H),
\end{equation}
where $(\Gamma\otimes H,\mathrm{d}\otimes\mathrm{id}_H)$ is understood as a FODC on $A\otimes H$ and the $A$-bimodule actions on $A\otimes H$ are given by $\Delta_A$. The compatibility of $\Delta_\Gamma$ with the differentials can be expressed via commutativity of the diagram
\begin{equation}
\begin{tikzcd}
\Gamma \arrow{r}{\Delta_\Gamma}
& \Gamma\otimes H \\
A \arrow{u}{\mathrm{d}} \arrow{r}{\Delta_A}
& A\otimes H \arrow{u}[swap]{\mathrm{d}\otimes\mathrm{id}_H}
\end{tikzcd},
\end{equation}
which, in the language of \cite{SchDC}, says that $\Delta_A$ is (1-time) \textbf{differentiable} with its differential being $\Delta_\Gamma$.
\end{enumerate}
The generalizations of these characterizations to higher order calculi are straightforward:
a DC $\Omega^\bullet$ on a right $H$-comodule algebra $A$ is right $H$-covariant if and only if for all $k>0$ the map
\begin{equation}\label{eq:DelGamK}
\Delta_{\Omega^k}\colon\Omega^k\to\Omega^k\otimes H,\qquad
a^0\mathrm{d}a^1\wedge\ldots\wedge\mathrm{d}a^k\mapsto
a^0_0\mathrm{d}(a^1_0)\wedge\ldots\wedge\mathrm{d}(a^k_0)\otimes a^0_1a^1_1\ldots a^k_1,
\end{equation}
for all $a^0\mathrm{d}a^1\wedge\ldots\wedge\mathrm{d}a^k\in\Omega^k$, is well-defined, which is the case if and only if $\Delta_A\colon A\to A\otimes H$ extends to a morphism of DCi
\begin{equation}
	\Delta_{\Omega^\bullet}\colon\Omega^\bullet\to\Omega^\bullet\otimes H,
\end{equation}
where $\Omega^\bullet\otimes H$ is understood as a DC over $A\otimes H$. In particular, the restriction of the right $H$-coaction $\Delta_{\Omega^\bullet}$ to $\Omega^k$ is necessarily of the form \eqref{eq:DelGamK} and $\Delta_A$ is $k$-times differentiable (for all $k>0$) in the sense of \cite{SchDC}.
\end{remark}
Clearly, there are left $H$-covariant and $H$-bicovariant versions of the above characterizations.

An important special case is that of the $H$-bicomodule algebra $H$ endowed with the comultiplication. In this case a right (resp. left) $H$-covariant $H$-bimodule is simply called a right or left covariant $H$-bimodule. Right $H$-covariant calculi on $H$ are simply called right covariant calculi and similarly for the left covariant and bicovariant cases.
\begin{lemma}[Fundamental Theorem of Hopf modules, {\cite{Sweedler69}}]\label{FundThm}
~
\begin{enumerate}
\item[i.)] For every right covariant $H$-bimodule $M$ there is an isomorphism of right covariant $H$-bimodules
\begin{equation}\label{eq:FT1}
M\xrightarrow{\cong}M^{\mathrm{co}H}\otimes H,\qquad
m\mapsto m_0\cdot S(m_1)\otimes m_2,
\end{equation}
where $M^{\mathrm{co}H}$ is an $H$-bimodule via $h\cdot(\overline{m}\otimes h')\cdot h'':=h_1\cdot\overline{m}\cdot S(h_2)\otimes h_3h'h''$ for all $\overline{m}\in M^{\mathrm{co}H}$ and $h,h',h''\in H$.
If $M$ is even a bicovariant $H$-bimodule, then \eqref{eq:FT1} is an isomorphism of bicovariant $H$-bimodules.
	
\item[ii.)] For every left covariant $H$-bimodule $M$ there is an isomorphism of left covariant $H$-bimodules
\begin{equation}\label{eq:FT2}
M\xrightarrow{\cong}H\otimes{}^{\mathrm{co}H}M,\qquad
m\mapsto m_{-2}\otimes S(m_{-1})\cdot m_0,
\end{equation}
where ${}^{\mathrm{co}H}M$ is an $H$-bimodule via $h\cdot(h'\otimes\overline{m})\cdot h'':=hh'h''_1\otimes S(h''_2)\cdot\overline{m}\cdot h''_3$ for all $\overline{m}\in{}^{\mathrm{co}H}M$ and $h,h',h''\in H$.
If $M$ is even a bicovariant $H$-bimodule, then \eqref{eq:FT2} is an isomorphism of bicovariant $H$-bimodules.
\end{enumerate}
\end{lemma}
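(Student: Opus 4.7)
The plan is to produce an explicit two-sided inverse for each of the two candidate isomorphisms and then verify the required bimodule/bicomodule structural compatibilities. I focus on part $i.)$; part $ii.)$ is entirely analogous by reversing the coaction. Write
$$
\phi\colon M\to M^{\mathrm{co}H}\otimes H,\qquad m\mapsto m_0\cdot S(m_1)\otimes m_2,
$$
with candidate inverse
$$
\psi\colon M^{\mathrm{co}H}\otimes H\to M,\qquad \overline{m}\otimes h\mapsto \overline{m}\cdot h.
$$
My first step is to verify that $\phi$ lands in $M^{\mathrm{co}H}\otimes H$. Applying $\Delta_M$ to the first tensorand and using the bimodule covariance $\Delta_M(n\cdot h)=\Delta_M(n)\Delta_H(h)$, the anti-algebra property $\Delta_H(S(h))=S(h_2)\otimes S(h_1)$, coassociativity and the antipode identity $h_1 S(h_2)=\varepsilon(h)\,1$ collapse $\Delta_M(m_0\cdot S(m_1))$ to $m_0\cdot S(m_1)\otimes 1$.

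The inverse check is short: on the one hand
$$
\psi(\phi(m))=m_0\cdot S(m_1)\cdot m_2=m_0\cdot\varepsilon(m_1)\cdot 1=m
$$
and on the other hand, since coinvariance of $\overline{m}$ gives $\Delta_M(\overline{m}\cdot h)=\overline{m}\otimes h_1\otimes h_2$,
$$
\phi(\psi(\overline{m}\otimes h))=\overline{m}\cdot h_1\cdot S(h_2)\otimes h_3=\overline{m}\otimes h.
$$
The bimodule intertwining splits into a trivial right-action check $\phi(m\cdot h)=\phi(m)\cdot h$ and the left-action identity
$$
\phi(h\cdot m)=h_1 m_0\cdot S(m_1)\cdot S(h_2)\otimes h_3 m_2,
$$
which exactly matches the prescribed $H$-bimodule structure $h\cdot(\overline{m}\otimes h')=h_1\cdot\overline{m}\cdot S(h_2)\otimes h_3 h'$ on $M^{\mathrm{co}H}\otimes H$. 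Right $H$-colinearity of $\phi$ is then immediate from coassociativity, both sides yielding $m_0\cdot S(m_1)\otimes m_2\otimes m_3$.

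For the bicovariant refinement, the left coaction ${}_M\Delta$ restricts to a coaction on $M^{\mathrm{co}H}$, because on a bicomodule the left and right coactions commute, so ${}_M\Delta(m)\in H\otimes M^{\mathrm{co}H}$ whenever $m\in M^{\mathrm{co}H}$; with this restricted coaction, $\phi$ is automatically left $H$-colinear by the same coassociativity argument. Part $ii.)$ is the mirror image: the map $m\mapsto m_{-2}\otimes S(m_{-1})\cdot m_0$ has inverse $h\otimes\overline{m}\mapsto h\cdot\overline{m}$, and the same four verifications go through after swapping the roles of the two coactions; note that $S^{-1}$ is not needed at any point, only the antipode axioms in their standard form. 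The main bookkeeping obstacle is the left-module compatibility in part $i.)$, where the adjoint-like left $H$-action $h\cdot(\overline{m}\otimes h')=h_1\overline{m}S(h_2)\otimes h_3 h'$ on $M^{\mathrm{co}H}\otimes H$ forces one to reorganize iterated Sweedler indices carefully; apart from this, every step is a routine application of coassociativity and the antipode identity.
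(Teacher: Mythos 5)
Your proposal is correct: the explicit pair of mutually inverse maps $m\mapsto m_0\cdot S(m_1)\otimes m_2$ and $\overline{m}\otimes h\mapsto\overline{m}\cdot h$, together with the covariance, colinearity and bicovariant-restriction checks, is exactly the standard structure-theorem argument, and all your Sweedler-index computations (including the observation that only the antipode axioms, not $S^{-1}$, are needed) go through. The paper itself gives no proof of this lemma but cites \cite{Sweedler69}, and your argument coincides with that classical proof, so there is nothing to compare beyond noting that your ``trivial'' right-action check $\phi(m\cdot h)=\phi(m)\cdot h$ does in fact use anti-multiplicativity of $S$ and the antipode identity, though only as a one-line computation.
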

Given a left covariant FODC $(\Gamma,\mathrm{d})$ on $H$ we denote the space of $1$-forms which are invariant under the left $H$-coaction by
\begin{equation}
	\Lambda^1:={}^{\mathrm{co}H}\Gamma=\{\omega\in\Gamma~|~{}_\Gamma\Delta(\omega)=1\otimes\omega\}.
\end{equation}
We further define the \textbf{quantum Cartan--Maurer form}
\begin{equation}
	\varpi\colon H^+\to\Lambda^1,\qquad
	\varpi(h):=S(h_1)\mathrm{d}(h_2),
\end{equation}
where $H^+:=\ker\varepsilon$. As one easily verifies, $\varpi$ is right $H$-linear ($\Lambda^1$ is endowed with the adjoint right $H$-action) and surjective. Thus, $\Lambda^1\cong H^+/\ker\varpi$ as right $H$-modules. On the other hand, every right $H$-ideal $I$ determines a left covariant FODC $(\Gamma,\mathrm{d})$ on $H$ via $\Gamma:=H\otimes H^+/I$ and $\mathrm{d}\colon H\to\Gamma$ defined by $\mathrm{d}(h):=(\mathrm{id}\otimes\pi)(\Delta(h)-h\otimes 1)$, where $\pi\colon H^+\to H^+/I$ denotes the projection. This completely determines covariant FODCi on Hopf algebras and leads to the following classification theorem of Woronowicz, see also \cite[Theorem 2.26]{BegMaj}.

\begin{theorem}[{\cite{Wor89}}]\label{ThmWor}
There is a $1$$:$$1$-correspondence
\begin{equation}
\begin{Bmatrix}
\text{left covariant}\\
\text{FODC}\\
\text{on $H$}
\end{Bmatrix}\xleftrightarrow{1:1}
\begin{Bmatrix}
\text{right}\\
\text{$H$-ideals}\\
\text{in $H^+$}
\end{Bmatrix}
\end{equation}
Moreover, a FODC on $H$ is bicovariant if and only if the corresponding right ideal $I$ is closed under the adjoint right $H$-coaction, i.e. if $\mathrm{Ad}(I)\subseteq I\otimes H$, where $\mathrm{Ad}\colon H\to H\otimes H$, $\mathrm{Ad}(h):=h_2\otimes S(h_1)h_3$.
\end{theorem}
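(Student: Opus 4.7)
The plan is to realize the correspondence by two explicit mutually inverse constructions; throughout, the Fundamental Theorem of Hopf modules (Lemma \ref{FundThm}) reduces a left covariant $H$-bimodule to its space of coinvariants, and this is the conceptual pivot of the argument.

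For the forward direction, given a left covariant FODC $(\Gamma,\mathrm{d})$ I assign $I:=\ker\varpi\subseteq H^+$, where $\varpi$ is the quantum Cartan--Maurer form landing in $\Lambda^1:={}^{\mathrm{co}H}\Gamma$. A direct Leibniz-rule computation yields $\varpi(ah)=S(h_1)\varpi(a)h_2+\varepsilon(a)S(h_1)\mathrm{d}(h_2)$, and the second summand vanishes when $a\in H^+$; hence $\varpi$ is right $H$-linear between the right $H$-module $H^+$ (with right multiplication) and $\Lambda^1$ equipped with the twisted action $\omega\circ h:=S(h_1)\omega h_2$. Surjectivity follows from the surjectivity axiom $\Gamma=H\mathrm{d}H$ together with the coinvariant projection $m\mapsto S(m_{-1})m_0$ from \eqref{eq:FT2}: this projection sends $a\mathrm{d}b$ to $\varepsilon(a)\varpi(b)$ by the antipode axiom, while every coinvariant form is fixed by it. Therefore $\Lambda^1\cong H^+/I$ and $I$ is a right $H$-ideal.

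For the backward direction, given a right $H$-ideal $I\subseteq H^+$ I define $\Gamma_I:=H\otimes(H^+/I)$ with bimodule structure $h\cdot(h'\otimes\pi(v))\cdot h''=hh'h''_1\otimes\pi(vh''_2)$ (well-defined precisely because $I$ is a right $H$-ideal) and differential as stated in the preamble. A short Sweedler bookkeeping verifies the Leibniz rule, and since $\varpi_{\Gamma_I}(v)=1\otimes\pi(v)$, any element factors as $h'\otimes\pi(v)=h'\varpi_{\Gamma_I}(v)\in H\mathrm{d}H$, giving the surjectivity axiom. Left covariance is automatic from the coaction on the first tensorand. That the two assignments are mutually inverse then follows because \eqref{eq:FT2} supplies an isomorphism $\Gamma\xrightarrow{\cong}H\otimes\Lambda^1\cong\Gamma_I$ of left covariant FODC that intertwines the differentials, while conversely $\ker(\varpi_{\Gamma_I})=I$ by direct inspection.

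For the bicovariance refinement, a right $H$-coaction on $\Gamma$ compatible with the bimodule structure and the differential corresponds under \eqref{eq:FT2} to a right $H$-coaction on $\Lambda^1$. A calculation using \eqref{eq:DelGam} gives $\Delta_\Gamma(\varpi(h))=\varpi(h_2)\otimes S(h_1)h_3$, so the induced coaction on $\Lambda^1\cong H^+/I$ is precisely the adjoint coaction $\mathrm{Ad}$ descended to the quotient; this descent is well-defined exactly when $\mathrm{Ad}(I)\subseteq I\otimes H$, and conversely any $\mathrm{Ad}$-stable ideal assembles a bicovariant $\Gamma_I$ by tensoring coactions. The principal technical burden is the Leibniz-rule bookkeeping for $\Gamma_I$ and making the Fundamental Theorem isomorphism explicit enough to transport both the differential and the right coaction; no conceptual obstacle arises once this is carried out.
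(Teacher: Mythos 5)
Your proposal is correct and follows essentially the same route the paper indicates: the paper cites Woronowicz and sketches precisely this construction in the preceding paragraph, namely $I:=\ker\varpi$ with $\varpi$ right $H$-linear (for the adjoint action) and surjective onto $\Lambda^1$, the reconstruction $\Gamma_I:=H\otimes H^+/I$ with $\mathrm{d}(h)=(\mathrm{id}\otimes\pi)(\Delta(h)-h\otimes 1)$, and the Fundamental Theorem of Hopf modules to identify $\Gamma\cong H\otimes\Lambda^1$. Your key formulas ($\varpi(ah)=S(h_1)\varpi(a)h_2+\varepsilon(a)S(h_1)\mathrm{d}(h_2)$, the coinvariant projection sending $a\mathrm{d}b\mapsto\varepsilon(a)\varpi(b)$, and $\Delta_\Gamma(\varpi(h))=\varpi(h_2)\otimes S(h_1)h_3$ for the bicovariance criterion) all check out, so the argument is sound.
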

By the previous theorem we can uniquely identify any left covariant FODC $(\Gamma,\mathrm{d})$ on $H$ with a quotient $H\otimes\Lambda^1$, where $\Lambda^1=H^+/I$ for some right $H$-ideal $I\subseteq H^+$. In terms of the Cartan--Maurer form, the corresponding isomorphism reads
\begin{equation}
\begin{split}
\Gamma&\xrightarrow{\cong}H\otimes\Lambda^1\\
h\mathrm{d}(h')&\mapsto hh'_1\otimes\varpi(\pi_\varepsilon(h'_2)),
\end{split}
\end{equation}
where $\pi_\varepsilon\colon H\to H^+$, $\pi_\varepsilon(h):=h-\varepsilon(h)1$ is the canonical projection.
A natural question is if the maximal prolongation of $\Gamma$ can also be expressed in terms of the Cartan--Maurer form.
\begin{proposition}[{\cite[Proposition 2.31]{BegMaj}}]\label{prop:MaxGammaH}
Let $(\Gamma,\mathrm{d})$ be a left covariant FODC on $H$ with corresponding right $H$-ideal $I\subseteq H^+$. Consider the maximal prolongation $\Omega^\bullet(H)$ of $(\Gamma,\mathrm{d})$ and denote the graded subspace of left coinvariant forms by $\Lambda^\bullet:=\{\omega\in\Omega^\bullet(H)~|~{}_{\Omega^\bullet(H)}\Delta(\omega)=1\otimes\omega\}\subseteq\Omega^\bullet(H)$. Then $\Lambda^\bullet$ is isomorphic to the free graded algebra generated by $\Lambda^1$ modulo the relations
\begin{equation}
	\wedge\circ(\varpi\circ\pi_\varepsilon\otimes\varpi\circ\pi_\varepsilon)\Delta(I)=0,
\end{equation}
where above $\wedge$ denotes the product in the free graded algebra. Moreover, the Cartan--Maurer equation
\begin{equation}\label{eq:CartanMaurer}
\mathrm{d}(\varpi(\pi_\varepsilon(h)))+\varpi(\pi_\varepsilon(h_1))\wedge\varpi(\pi_\varepsilon(h_2))=0
\end{equation}
holds for all $h\in H$.
\end{proposition}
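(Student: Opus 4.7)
The statement has two claims: the Cartan--Maurer equation and the identification $\Lambda^\bullet\cong T(\Lambda^1)/J$, where $J$ denotes the graded ideal of the free graded algebra $T(\Lambda^1)$ generated by the stated relations. I would handle Cartan--Maurer first since it feeds into the identification. Apply $\mathrm{d}$ to $\varpi(\pi_\varepsilon(h))=S(h_1)\mathrm{d}(h_2)$ using the Leibniz rule and $\mathrm{d}^2=0$ to obtain $\mathrm{d}(\varpi(\pi_\varepsilon(h)))=\mathrm{d}(S(h_1))\wedge\mathrm{d}(h_2)$. Differentiating the antipode identity $S(h_1)h_2=\varepsilon(h)1$ yields $\mathrm{d}(S(h_1))h_2=-S(h_1)\mathrm{d}(h_2)$, and multiplying on the right by $S(h_3)$ while using $h_2S(h_3)=\varepsilon(h_2)1$ isolates $\mathrm{d}(S(h))=-S(h_1)\mathrm{d}(h_2)S(h_3)$. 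Substituting back and using associativity of $\wedge$ together with the $H$-balance over $\otimes_H$ gives $\mathrm{d}(\varpi(\pi_\varepsilon(h)))=-S(h_1)\mathrm{d}(h_2)\wedge S(h_3)\mathrm{d}(h_4)=-\varpi(\pi_\varepsilon(h_1))\wedge\varpi(\pi_\varepsilon(h_2))$.

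For the surjection $T(\Lambda^1)/J\twoheadrightarrow\Lambda^\bullet$, I would first note that $\Lambda^\bullet$ is a graded subalgebra of $\Omega^\bullet(H)$ (wedges of coinvariants are coinvariant because the left coaction is an algebra morphism) generated by $\Lambda^1$. Indeed, iterated application of Lemma \ref{FundThm}$ii.)$ gives $\Omega^k(H)\cong H\otimes\Lambda^k$ as left $H$-comodules, while the generation of $\Omega^\bullet(H)$ by $H$ and $\mathrm{d}H$ combined with $\mathrm{d}h=h_1\varpi(\pi_\varepsilon(h_2))\in H\cdot\Lambda^1$ and the right-action formula $\lambda\cdot h=h_1\cdot(\lambda\triangleleft h_2)$, with $\lambda\triangleleft h:=S(h_1)\lambda h_2\in\Lambda^1$ (left coinvariance being a direct check via the bicovariance rule for bimodules), lets one push all $H$-factors to the left in any product of differentials; this forces $\Omega^k(H)=H\cdot(\Lambda^1)^{\wedge k}$ and hence, after comparison with $H\otimes\Lambda^k$, yields $\Lambda^k=(\Lambda^1)^{\wedge k}$. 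The algebra map $\phi\colon T(\Lambda^1)\twoheadrightarrow\Lambda^\bullet$ extending $\mathrm{id}_{\Lambda^1}$ is therefore surjective; moreover for $h\in I$ one has $\varpi(\pi_\varepsilon(h))=0$ in $\Lambda^1=H^+/I$, so Cartan--Maurer forces $\varpi(\pi_\varepsilon(h_1))\wedge\varpi(\pi_\varepsilon(h_2))=-\mathrm{d}(\varpi(\pi_\varepsilon(h)))=0$ in $\Lambda^2$, giving $J\subseteq\ker\phi$ and a factored surjection $\bar\phi\colon T(\Lambda^1)/J\twoheadrightarrow\Lambda^\bullet$.

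For the inverse, I would equip $\tilde\Omega^\bullet:=H\otimes T(\Lambda^1)/J$ with a DGA structure extending $(\Gamma,\mathrm{d})$ and apply Proposition \ref{prop:maxprol}. The multiplication is $(g\otimes\omega)(h\otimes\eta):=gh_1\otimes(\omega\triangleleft h_2)\eta$ with the right action extended diagonally to $T(\Lambda^1)$ via iterated $\Delta$; this action preserves $J$ because $I$ is a right $H$-ideal in $H^+$, so that $(\varpi(\pi_\varepsilon(x_1))\wedge\varpi(\pi_\varepsilon(x_2)))\triangleleft h$ unwinds, after exploiting $\varepsilon(x)=0=\varpi(\pi_\varepsilon(x))$ for $x\in I$, to the generator $\varpi(\pi_\varepsilon((xh)_1))\wedge\varpi(\pi_\varepsilon((xh)_2))$ corresponding to $xh\in I$. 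The differential is defined by $\mathrm{d}(h)=h_1\otimes\varpi(\pi_\varepsilon(h_2))$ on degree $0$ and $\mathrm{d}(1\otimes\varpi(\pi_\varepsilon(h)))=-1\otimes\varpi(\pi_\varepsilon(h_1))\wedge\varpi(\pi_\varepsilon(h_2))$ on $\Lambda^1$, extended by Leibniz; well-definedness on $\Lambda^1=H^+/I$ holds because changing the representative $h$ by $x\in I$ alters the right-hand side precisely by $(\wedge\circ(\varpi\circ\pi_\varepsilon\otimes\varpi\circ\pi_\varepsilon))\Delta(x)\in J$, by bilinearity of $\Delta$. Proposition \ref{prop:maxprol} then produces a DGA surjection $\Omega^\bullet(H)=\Gamma^\wedge\twoheadrightarrow\tilde\Omega^\bullet$; restricting to left coinvariants yields $\Lambda^\bullet\twoheadrightarrow T(\Lambda^1)/J$, which composed with $\bar\phi$ gives algebra endomorphisms that are the identity on the generating subspace $\Lambda^1$, hence the identity, establishing that $\bar\phi$ is an isomorphism. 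The main obstacle is the DGA verification for $\tilde\Omega^\bullet$---associativity of the twisted multiplication, the graded Leibniz rule, and $\mathrm{d}^2=0$ in higher degree---which reduces to identities in iterated coproducts and the defining relations of $J$, but the bookkeeping with the diagonal right action is delicate.
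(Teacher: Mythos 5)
The paper offers no proof of this proposition at all---it is imported verbatim from \cite[Proposition 2.31]{BegMaj}---so there is no internal argument to compare against; judged on its own, your reconstruction is correct in approach and in every step you actually carry out. The Cartan--Maurer part is complete: differentiating $S(h_1)h_2=\varepsilon(h)1$, isolating $\mathrm{d}(S(h))=-S(h_1)\mathrm{d}(h_2)S(h_3)$, and using $H$-balancedness of $\wedge$ in the maximal prolongation is exactly the standard argument. The two-sided strategy for the algebra identification is also sound: generation of $\Lambda^\bullet$ by $\Lambda^1$ (fundamental theorem plus pushing coefficients to the left via $\lambda h=h_1(\lambda\triangleleft h_2)$) gives the surjection $T(\Lambda^1)/J\twoheadrightarrow\Lambda^\bullet$ once Cartan--Maurer kills the relations, and the competing model $\tilde\Omega^\bullet=H\otimes T(\Lambda^1)/J$ fed into Proposition \ref{prop:maxprol} closes the loop by the identity-on-generators argument. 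Moreover, the two genuinely non-routine inputs for $\tilde\Omega^\bullet$ are precisely the ones you identify, and your sketches of both are correct: stability of $J$ under the diagonal adjoint action (the cross terms vanish because $\varepsilon(x)=0$, $\varpi\circ\pi_\varepsilon$ kills $I$, and $xh\in I$ since $I$ is a right ideal), and well-definedness of the differential on $\Lambda^1=H^+/I$ (a change of representative moves the right-hand side into $J$).

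Two caveats. First, what you leave schematic---associativity, graded Leibniz, and $\mathrm{d}^2=0$ for $\tilde\Omega^\bullet$---is not negligible bookkeeping: it is the same page-long kind of computation the paper performs in detail for the vertical forms $\mathrm{ver}^\bullet=A\otimes\Lambda^\bullet$, resting on the identity $\varpi(\pi_\varepsilon(hh'))=\varpi(\pi_\varepsilon(h))\triangleleft h'+\varepsilon(h)\varpi(\pi_\varepsilon(h'))$. Note you could not simply quote that proposition, since its proof in the paper invokes the present statement (generation of $\Lambda^\bullet$ by $\Lambda^1$ and the Cartan--Maurer equation); your argument correctly avoids this circularity by working with the abstract algebra $T(\Lambda^1)/J$, but the verification must then be written out. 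Second, a small omission: restricting the quotient map $p\colon\Omega^\bullet(H)\twoheadrightarrow\tilde\Omega^\bullet$ to coinvariants presupposes that $p$ is left $H$-colinear. This follows in one line---$(\mathrm{id}_H\otimes p)\circ{}_{\Omega^\bullet(H)}\Delta$ and $(\Delta\otimes\mathrm{id})\circ p$ are DGA morphisms agreeing in degree zero, hence equal because morphisms of DCi are determined by their zeroth order---but it needs to be said, since it is what places the image of $\Lambda^\bullet$ inside $1\otimes T(\Lambda^1)/J$.
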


Examples of covariant calculi will be discussed in Section \ref{sec:ex}.

\section{Quantum principal bundles}\label{C3}

In this section we present the main concept of this paper, the quantum principal bundle in the sense of \DJ ur\dj evi\'c. Such an object is given by a faithfully flat Hopf--Galois extension $B=A^{\mathrm{co}H}\subseteq A$, where $H$ is a Hopf algebra, $A$ a comodule algebra and $B$ the subalgebra of coinvariant elements. For the convenience of the reader we summarize some of the main features of Hopf--Galois extensions in Section \ref{sec:HG}. In particular, we discuss the special cases of cleft and trivial extension, together with their crossed product and smash product structure, which will be relevant for Section \ref{ex:CP}.
We then continue to give a differential structure to $B=A^{\mathrm{co}H}\subseteq A$, starting with a bicovariant (even complete) calculus on the structure Hopf algebra $H$ in Section \ref{sec:StrHA}, a calculus on the total space algebra $A$ in Section \ref{sec:totalforms} and a calculus on the base algebra $B$ in Section \ref{sec:baseforms}. Vertical forms and horizontal forms are introduced in Sections \ref{sec:VF} and \ref{sec:totalforms}, respectively and their first orders are featured in the noncommutative Atiyah sequence in Section \ref{sec:totalforms}. Finally, in Section \ref{sec:compare} we compare this approach of \DJ ur\dj evi\'c to other related concepts in the literature, particularly to the quantum principal bundle theory of Brzezi\'nski--Majid \cite{BrzMaj}. 
As pointed out in the in the introduction, our main reference is \cite{DurII}. However, we present the theory in the framework of Hopf algebras and comodule algebras rather than compact quantum groups and compact quantum spaces. We provide elementary algebraic proofs and clarify some of the material.

\subsection{Hopf--Galois extensions}\label{sec:HG}

Consider a Hopf algebra $H$ and a right $H$-comodule algebra $A$. The subalgebra of elements which are invariant under the coaction is denoted by
\begin{equation}
	B:=A^{\mathrm{co}H}:=\{a\in A~|~\Delta_A(a)=a\otimes 1\}\subseteq A.
\end{equation}
\begin{definition}[\cite{KrTa}]
For a right $H$-comodule algebra $A$ we call $B:=A^{\mathrm{co}H}\subseteq A$ a \textbf{Hopf--Galois extension} if the canonical map
\begin{equation}
	\chi\colon A\otimes_BA\to A\otimes H,\qquad
	a\otimes a'\mapsto a\Delta_A(a')=aa'_0\otimes a'_1
\end{equation}
is a bijection.
In this case, we define the \textbf{translation map} $\tau\colon H\to A\otimes_BA$ by $\tau(h):=\chi^{-1}(1\otimes h)$ for all $h\in H$.
We frequently employ the short notation
\begin{equation}
	\tau(h)=:h^{\langle 1\rangle}\otimes_Bh^{\langle 2\rangle}\in A\otimes_BA
\end{equation}
for $h\in H$.
\end{definition}
It is well-known (see e.g. \cite{Brz96}) that the following equations hold for all $h,g\in H$, $a\in A$.
\begin{align}
	h^{\langle 1\rangle}(h^{\langle 2\rangle})_0\otimes(h^{\langle 2\rangle})_1&=1_A\otimes h\label{tau6}\\
	a_0(a_1)^{\langle 1\rangle}\otimes_B(a_1)^{\langle 2\rangle}&=1_A\otimes_B a\label{tau5}\\
	\tau(hg)&=h^{\langle 1\rangle}g^{\langle 1\rangle}\otimes_Bg^{\langle 2\rangle}h^{\langle 2\rangle}\label{tau2}\\
	h^{\langle 1\rangle}h^{\langle 2\rangle}&=\varepsilon(h)1\label{tau1}\\
	h^{\langle 1\rangle}\otimes_B(h^{\langle 2\rangle})_0\otimes(h^{\langle 2\rangle})_1&=(h_1)^{\langle 1\rangle}\otimes_B(h_1)^{\langle 2\rangle}\otimes h_2\label{tau3}\\
	(h^{\langle 1\rangle})_0\otimes_Bh^{\langle 2\rangle}\otimes(h^{\langle 1\rangle})_1&=(h_2)^{\langle 1\rangle}\otimes_B(h_2)^{\langle 2\rangle}\otimes S(h_1)\label{tau4}
\end{align}
Moreover, the "centrality property"
\begin{equation}\label{tau7}
	b\tau(h)=\tau(h)b
\end{equation}
holds for all $h\in H$ and $b\in B$.

\medskip

Recall that $A$ is \textbf{faithfully flat} as a right $B=A^{\mathrm{co}H}$-module if the functor $A\otimes_B\cdot\colon{}_B\mathcal{M}\to\mathrm{Vec}_\Bbbk$ from the category of left $B$-modules to the category of vector spaces preservers and reflects exact sequences. By definition, a Hopf--Galois extension is faithfully flat if $A$ is faithfully flat as a right $B=A^{\mathrm{co}H}$-module. Faithful flatness of a Hopf--Galois extension ensures a preferable "geometric" behavior of the extension: it ensures the existence of a strong connection (as we will discuss in Section \ref{sec:connections}) and is equivalent to the categorical equivalence of $B$-modules with $H$-covariant $A$-modules (seminal paper \cite{Schn90} of Schneider). For this reason faithful flatness is included in the following definition of quantum principal bundle.
\begin{definition}\label{def:QPB}
	A right $H$-comodule algebra $A$ is called a \textbf{quantum principal bundle} (QPB) if $B=A^{\mathrm{co}H}\subseteq A$ is a faithfully flat Hopf--Galois extension. In this case we call $A$ the \textbf{total space algebra}, $B$ the \textbf{base space algebra} and $H$ the \textbf{structure Hopf algebra} of the QPB.
\end{definition}

\begin{remark}
	Let us make some comments on Definition \ref{def:QPB}.
	\begin{enumerate}
		\item[i.)] In \cite[Definition 3.1]{DurII} \DJ ur\dj evi\'c defines quantum principal bundles in the context of compact quantum groups. There, only surjectivity of the Hopf--Galois map is required and the faithful flatness assumption is not explicitly stated. Later, in \cite{DurHG}, \DJ ur\dj evi\'c clarifies that the Hopf--Galois map is also injective without additional assumptions. We thus adopt this revised notion of QPB in Definition \ref{def:QPB}, with the difference that we do not assume a compact quantum group but faithful flatness of the extension instead.
		
		\item[ii.)] Since we assume an invertible antipode and are working over a field $\Bbbk$ the Definition \ref{def:QPB} is equivalent to that of a \textbf{principal comodule algebra} in \cite{DGH}.
		
		\item[iii.)] To ease the setup we avoid including a $*$-structure in the definition of QPB, while this is among the assumptions in \cite{DurII}. 
		
		\item[iv.)] Note that the results of Section \ref{C3}, \ref{C4.1} and \ref{C5} do not require faithful flatness, i.e. they are valid for arbitrary Hopf--Galois extensions. Faithful flatness is only invoked when we consider strong connections in Section \ref{C4.2}. As we commented before, faithfully flat Hopf--Galois extension are more "geometric" than arbitrary extension, which is the reason why this assumption is included in the definition. Furthermore, all our examples are in fact faithfully flat. Thus, for simplicity, we assume faithful flatness from the beginning in the definition of QPB. 
	\end{enumerate}
\end{remark}
In the following we discuss the two well-studied classes of faithfully flat Hopf--Galois extensions (or quantum principal bundles), given by cleft extensions and trivial extensions.
\begin{definition}[\cite{DoiTak}]
Let $A$ be a right $H$-comodule algebra. Then we call $B:=A^{\mathrm{co}H}\subseteq A$ a
\begin{enumerate}
	\item[i.)] \textbf{cleft extension} if there exists a convolution invertible right $H$-comodule morphism $j\colon H\to A$, the so-called \textbf{cleaving map}, where $H$ is understood as a right $H$-comodule via the comultiplication.
		
	\item[ii.)] \textbf{trivial extensions} if there exists a convolution invertible right $H$-comodule algebra morphism $j\colon H\to A$.
\end{enumerate}
\end{definition}
Recall that the convolution inverse of a map $j\colon H\to A$ is a map $j^{-1}\colon H\to A$ satisfying $j(h_1)j^{-1}(h_2)=\varepsilon(h)1=j^{-1}(h_1)j(h_2)$ for all $h\in H$. One assumes without loss of generality that $j$ and $j^{-1}$ are unital. As one easily verifies, for every cleft extension the inverse of the canonical map is given by $\chi^{-1}(a\otimes h)=aj^{-1}(h_1)\otimes j(h_2)$ for all $a\otimes h\in A\otimes H$. Thus, every cleft extension is a Hopf--Galois extension. 
It is shown in \cite{BrJaMa} that for a cleft extension $B=A^{\mathrm{co}H}\subseteq A$ it follows that $A$ is faithfully flat as a right $B$-module. Thus, the corresponding Hopf--Galois extension is automatically faithfully flat. Note that we assume the antipode of $H$ to be invertible, which is crucial for this statement.
Clearly every trivial extension is in particular a cleft extension.

We recall that cleft extensions and trivial extensions correspond to crossed product algebras and smash product algebras, respectively. For this we follow the presentation of \cite{Mon94}. Before stating the characterization theorem we recall the involved algebraic structures. 
For this, let $B$ be an arbitrary algebra. A \textbf{measure} is a map $\cdot\colon H\otimes B\to B$ such that
\begin{equation}
	h\cdot(bb')=(h_1\cdot b)(h_2\cdot b')\qquad\text{and}\qquad
	h\cdot 1=\varepsilon(h)1
\end{equation}
for all $h\in H$ and $b,b'\in B$. In order to avoid confusion we want to point out that $\cdot$ sometimes denotes an action, while the measure is \textit{not} an action, in general. From the context it will be clear if an action or a measure is indicated. Given a measure $\cdot$ and any convolution invertible map $\sigma\colon H\otimes H\to B$ we define the map
\begin{equation}
\begin{split}
	m_{\#_\sigma}\colon(B\otimes H)\otimes(B\otimes H)&\to B\otimes H\\
	(b\otimes h)\otimes(b'\otimes h')&\mapsto b(h_1\cdot b')\sigma(h_2\otimes h'_1)\otimes h_3h'_2
\end{split}
\end{equation}
and denote $B\#_\sigma H:=(B\otimes H,m_{\#_\sigma})$. The elements of $B\#_\sigma H$ are sometimes written $b\#_\sigma h$ instead of $b\otimes h$.
\begin{lemma}[{\cite[Lemma 7.1.2]{Mon94}}]\label{lem:crossedalg}
The space $B\#_{\sigma}H$ is an associative algebra with unit $1_{B}\otimes1_{H}$ if and only if the following two conditions hold: 
\begin{enumerate}
	\item[i.)] $B$ is a $\sigma$-twisted left $H$-module, i.e., $1_{H}\cdot b=b$ for all $b\in B$ and 
\begin{equation}
		h\cdot(h'\cdot b)=\sigma(h_{1}\otimes h'_{1})(h_{2}h'_{2}\cdot b)\sigma^{-1}(h_{3}\otimes h'_{3})
\end{equation}
for all $h,h'\in H$ and $b\in B$.
		
	\item[ii.)] $\sigma$ is a 2-cocycle with values in $B$, i.e. $\sigma(h\otimes1_{H})=\varepsilon(h)1_{B}=\sigma(1_{H}\otimes h)$ and 
\begin{equation}
		(h_{1}\cdot\sigma(h'_{1}\otimes h''_{1}))\sigma(h_{2}\otimes h'_{2}h''_{2})=\sigma(h_{1}\otimes h'_{1})\sigma(h_{2}h'_{2}\otimes h'')
\end{equation}
for all $h,h',h''\in H$.
\end{enumerate}
\end{lemma}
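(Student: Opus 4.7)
The plan is to prove both implications by direct computation with the defining multiplication
$$
(b\#_\sigma h)(b'\#_\sigma h') = b(h_1\cdot b')\,\sigma(h_2\otimes h'_1)\#_\sigma h_3h'_2.
$$
Throughout, I will use that the convolution inverse $\sigma^{-1}$ of $\sigma\colon H\otimes H\to B$ satisfies $\sigma(k_1)\sigma^{-1}(k_2)=\varepsilon(k)1_B=\sigma^{-1}(k_1)\sigma(k_2)$ where $k\in H\otimes H$ carries the tensor coalgebra structure, i.e.\ $\sigma(h_1\otimes h'_1)\sigma^{-1}(h_2\otimes h'_2)=\varepsilon(h)\varepsilon(h')1_B$ and similarly with the order swapped.

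For the ``if'' direction, I would first check the unit axiom: evaluating $(1_B\#_\sigma 1_H)(b\#_\sigma h)$ and $(b\#_\sigma h)(1_B\#_\sigma 1_H)$ and using $1_H\cdot b=b$, $h\cdot 1_B=\varepsilon(h)1_B$ together with the normalization $\sigma(h\otimes 1_H)=\varepsilon(h)1_B=\sigma(1_H\otimes h)$ from (ii), both reduce to $b\#_\sigma h$. Next, I would expand both $((b\#_\sigma h)(b'\#_\sigma h'))(b''\#_\sigma h'')$ and $(b\#_\sigma h)((b'\#_\sigma h')(b''\#_\sigma h''))$. The two expressions differ in the order in which the measure is applied to $b''$ and in the grouping of $\sigma$-factors; they can be reconciled by rewriting $(h\cdot(h'\cdot b''))\sigma(\cdots)$ via the twisted module identity from (i) and then applying the 2-cocycle identity from (ii) to collapse the inner pair of $\sigma$'s into a single $\sigma$ evaluated on a product argument. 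Book-keeping of Sweedler indices is the most error-prone step, but the algebraic moves are forced.

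For the ``only if'' direction, I would reverse the above by specialization. Assuming $B\#_\sigma H$ is an associative unital algebra with unit $1_B\otimes 1_H$, the unit axiom evaluated on arbitrary $b\#_\sigma h$ immediately forces $1_H\cdot b=b$ and $\sigma(1_H\otimes h)=\varepsilon(h)1_B=\sigma(h\otimes 1_H)$. Then I would write out the associativity equation for arbitrary $b,b',b'',h,h',h''$ and extract the two conditions by judicious substitutions. Setting $b=b'=b''=1_B$ and using $h\cdot 1_B=\varepsilon(h)1_B$ reduces the identity, after applying $\mathrm{id}\otimes\varepsilon$ to the $H$-factor, to
$$
\sigma(h_1\otimes h'_1)\,\sigma(h_2h'_2\otimes h''_1)\otimes h_3h'_3h''_2 \;=\; (h_1\cdot\sigma(h'_1\otimes h''_1))\,\sigma(h_2\otimes h'_2h''_2)\otimes h_3h'_3h''_3,
$$
which, after applying $\mathrm{id}\otimes\varepsilon$ once more, is precisely the 2-cocycle identity of (ii). Setting instead $b=b''=1_B$, $h''=1_H$, and keeping $b'$ arbitrary yields, after similar counit applications,
$$
\sigma(h_1\otimes h'_1)(h_2h'_2\cdot b') = (h_1\cdot(h'_1\cdot b'))\,\sigma(h_2\otimes h'_2).
$$
Right-convolving with $\sigma^{-1}(h_3\otimes h'_3)$ and using the convolution identity above isolates $h\cdot(h'\cdot b')$ on the right, yielding the twisted module condition of (i).

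The main obstacle is purely combinatorial: the triple product expansions involve five Sweedler splittings of each of $h,h',h''$, and one must track them carefully to see that associativity is equivalent to the conjunction of (i) and (ii) rather than to some stronger relation. The conceptual steps (using $\sigma*\sigma^{-1}=\varepsilon$ on $H\otimes H$ to solve for iterated measures, and specializing arguments to extract each axiom) are standard, but keeping the indices straight while remaining convinced that no independent identities are being imposed is the delicate point.
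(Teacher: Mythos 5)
The paper does not prove this lemma at all (it is quoted verbatim from Montgomery, Lemma 7.1.2), so the only question is whether your direct computation is sound; your overall strategy is the standard one. Your ``if'' direction is correct: the unit axioms follow from $1_H\cdot b=b$, $h\cdot 1_B=\varepsilon(h)1_B$ and the normalization of $\sigma$, and associativity follows by rewriting the iterated measure via (i) and collapsing the resulting $\sigma$-factors via (ii) and the convolution inverse, exactly as you say. In the ``only if'' direction, your derivation of the normalization, of $1_H\cdot b=b$, and of the cocycle identity (via $b=b'=b''=1_B$ followed by $\mathrm{id}\otimes\varepsilon$) is also correct.

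However, the specialization you use to extract the twisted-module condition (i) fails as stated. You set $b=b''=1_B$, $h''=1_H$ and keep $b'$ arbitrary; but then the third factor of the triple product is $1_B\#_\sigma 1_H$, the unit, so once unitality is known the associativity constraint $\bigl((x)(y)\bigr)(1_B\#_\sigma 1_H)=(x)\bigl((y)(1_B\#_\sigma 1_H)\bigr)$ is vacuous (both sides equal $(x)(y)$) and yields no identity whatsoever. Indeed, in your target equation the arbitrary element is acted on iteratively by both $h$ and $h'$, and in the product $(b\#_\sigma h)\bigl((b'\#_\sigma h')(b''\#_\sigma h'')\bigr)$ only the element in the \emph{third} slot is acted on twice. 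The correct specialization is $b=b'=1_B$, $h''=1_H$, with $b''$ arbitrary: using the already-established normalization and $k\cdot 1_B=\varepsilon(k)1_B$ one gets
\begin{equation*}
\bigl((1\#_\sigma h)(1\#_\sigma h')\bigr)(b''\#_\sigma 1)
=\sigma(h_1\otimes h'_1)(h_2h'_2\cdot b'')\#_\sigma h_3h'_3,
\qquad
(1\#_\sigma h)\bigl((1\#_\sigma h')(b''\#_\sigma 1)\bigr)
=(h_1\cdot(h'_1\cdot b''))\sigma(h_2\otimes h'_2)\#_\sigma h_3h'_3,
\end{equation*}
and applying $\mathrm{id}\otimes\varepsilon$ and right-convolving with $\sigma^{-1}(h_3\otimes h'_3)$ gives (i) exactly as you describe. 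With this one repair (and modulo the harmless double mention of applying $\mathrm{id}\otimes\varepsilon$ in the cocycle step), your proof is complete.
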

Given a $\sigma$-twisted left $H$-module $B$ and $\sigma$, a 2-cocycle with values in $B$, we call $B\#_\sigma H$ the corresponding \textbf{crossed product algebra}. If $\sigma$ is the trivial $2$-cocycle $h\otimes h'\mapsto\epsilon(hh')1$ we call $B\#_\sigma H$ the \textbf{smash product algebra} and write $B\#H$ instead. The following proposition is due to \cite{DoiTak}, see also \cite[Theorem 7.2.2]{Mon94}.

\begin{proposition}
There is a $1$$:$$1$-correspondence of crossed product algebras and cleft extensions. Explicitly,
\begin{enumerate}
\item[i.)] every crossed product algebra $B\#_{\sigma}H$ is a cleft extension $B\subseteq B\#_{\sigma}H$ with corresponding cleaving map given by $j\colon H\to B\#_{\sigma}H,\ h\mapsto1_{B}\otimes h$.
		
\item[ii.)] for every cleft extension $B\subseteq A$ with cleaving map $j:H\to A$ we define a measure on $B$
\begin{equation}
	\cdot\colon H\otimes B\to B,\qquad h\otimes b\mapsto h\cdot b:=j(h_{1})bj^{-1}(h_{2}) 
\end{equation}
and a 2-cocycle with values in $B$
\begin{equation}
	\sigma\colon H\otimes H\to B,\qquad h\otimes h'\mapsto\sigma(h\otimes h'):=j(h_{1})j(h'_{1})j^{-1}(h_{2}h'_{2}). 
\end{equation}
Then $A\cong B\#_{\sigma}H$ are isomorphic as right $H$-comodule algebras via $\theta\colon a\mapsto a_{0}j^{-1}(a_{1})\otimes a_{2}$, with inverse $\theta^{-1}\colon b\otimes h\mapsto bj(h)$.
\end{enumerate}
A cleft extension is a trivial extension if and only if the corresponding crossed product algebra is a smash product algebra.
\end{proposition}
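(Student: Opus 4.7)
The plan is to establish (i) and (ii) separately, observe that the two constructions are mutually inverse, and then reduce the trivial/smash statement to a direct check. Throughout, the key algebraic inputs are the convolution identities $j(h_1)j^{-1}(h_2) = \varepsilon(h)1_A = j^{-1}(h_1)j(h_2)$, the fact that for a colinear map $j$ the convolution inverse satisfies $\Delta_A(j^{-1}(h)) = j^{-1}(h_2)\otimes S(h_1)$, and the multiplication formula in $B\#_\sigma H$.

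For (i), I would endow $B\#_\sigma H$ with the right $H$-coaction $\mathrm{id}_B\otimes\Delta$. A direct check using the formula for $m_{\#_\sigma}$ and coassociativity shows this coaction is an algebra map, making $B\#_\sigma H$ a right $H$-comodule algebra. By a Hopf-module/direct argument the coinvariant subalgebra is $B\otimes 1 \cong B$. The map $j(h):=1\otimes h$ is manifestly right $H$-colinear, and an explicit convolution inverse can be written down using $\sigma^{-1}$ (for instance $j^{-1}(h):=\sigma^{-1}(S(h_3)\otimes h_4)\otimes S(h_2)\varepsilon(h_1)$, the precise form coming from solving $j*j^{-1}=\varepsilon\cdot 1$ using the cocycle and twisted module axioms of Lemma \ref{lem:crossedalg}); this establishes cleftness.

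For (ii), I first verify that the proposed measure and cocycle land in $B$. For $h\cdot b = j(h_1)bj^{-1}(h_2)$, applying $\Delta_A$ and using both the coaction on $a\in A$ and the formula $\Delta_A(j^{-1}(h))=j^{-1}(h_2)\otimes S(h_1)$ gives $h\cdot b\otimes 1$ after collapsing $h_{?}S(h_{?})=\varepsilon$; similarly for $\sigma$. The measure axioms $h\cdot(bb')=(h_1\cdot b)(h_2\cdot b')$ and $h\cdot 1=\varepsilon(h)1$ then follow by inserting $j^{-1}(h_1)j(h_2)=\varepsilon(h)1$ between $b$ and $b'$, and the 2-cocycle identity likewise reduces to associativity in $A$ combined with the same convolution identities. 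Lemma \ref{lem:crossedalg} then equips $B\#_\sigma H$ with an associative algebra structure. Next, the map $\theta(a)=a_0j^{-1}(a_1)\otimes a_2$ lands in $B\otimes H$ by the same coinvariance computation used for the measure, and $\theta^{-1}(b\otimes h)=bj(h)$ is verified to be its two-sided inverse using $j(a_1)j^{-1}(a_2)=\varepsilon(a_1)1$ and $j(h_1)j^{-1}(h_2)=\varepsilon(h)1$. Right $H$-colinearity of $\theta$ is immediate from coassociativity.

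The central obstacle is showing that $\theta$ is an algebra morphism. Starting from $\theta(aa') = (aa')_0 j^{-1}((aa')_1)\otimes (aa')_2 = a_0a'_0 j^{-1}(a_1a'_1)\otimes a_2a'_2$, the plan is to insert the identities $j^{-1}(a_1)j(a_2)=\varepsilon(a_1)1$ and $j^{-1}(a'_1)j(a'_2)=\varepsilon(a'_1)1$ in strategic positions so as to rewrite the expression in the form of $m_{\#_\sigma}(\theta(a)\otimes\theta(a'))$, which after expansion contains the terms $j(h_1)b'j^{-1}(h_2)$ (the measure) and $j(h_1)j(h'_1)j^{-1}(h_2h'_2)$ (the cocycle). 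This bookkeeping — juggling four Sweedler indices on both $a$ and $a'$ and repeatedly applying $j*j^{-1}=\varepsilon\cdot 1$ — is the only delicate step, but is purely computational. Mutual inverseness of the two assignments is then immediate: starting from $B\#_\sigma H$ the construction of (ii) recovers the original $\cdot$ and $\sigma$ because $j(h)=1\otimes h$ and $j^{-1}$ is the one fixed above, while starting from a cleft extension $B\subseteq A$, the isomorphism $\theta$ of (ii) intertwines the original cleaving with $h\mapsto 1\otimes h$. Finally, the trivial/smash equivalence follows directly: if $j$ is an algebra map then $\sigma(h\otimes h')=j(h_1)j(h'_1)j^{-1}(h_2h'_2)=j(h_1h'_1)j^{-1}(h_2h'_2)=\varepsilon(hh')1$, i.e. $\sigma$ is the trivial cocycle and $B\#_\sigma H = B\# H$; conversely, when $\sigma$ is trivial the cleaving $j(h)=1\otimes h$ in a smash product is visibly an algebra map.
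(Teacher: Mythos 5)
The paper offers no proof of this proposition at all: it is quoted from Doi--Takeuchi \cite{DoiTak} (see also \cite[Theorem 7.2.2]{Mon94}), so your attempt can only be measured against the standard argument in those references --- which is exactly what you reconstruct, and it is correct in all essentials. The key inputs you isolate are the right ones: colinearity of $j$ forces $\Delta_A(j^{-1}(h))=j^{-1}(h_2)\otimes S(h_1)$ (by uniqueness of convolution inverses applied to $\Delta_A\circ j$), from which the coinvariance of $h\cdot b$, of $\sigma(h\otimes h')$ and of the first leg of $\theta$ all follow by collapsing $h_iS(h_{i+1})$; the measure axioms and the multiplicativity of $\theta$ follow by inserting $j^{-1}(h_1)j(h_2)=\varepsilon(h)1$ in the places you indicate; and $\theta$, $\theta^{-1}$ are mutually inverse and colinear by the convolution identities and coassociativity. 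Your formula for the convolution inverse in part (i) simplifies (the $\varepsilon(h_1)$ factor is redundant) to $j^{-1}(h)=\sigma^{-1}(S(h_2)\otimes h_3)\otimes S(h_1)$, which is precisely the expression the paper itself uses later in Section \ref{ex:CP}, and the smash/trivial equivalence is argued correctly in both directions.

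One point of incompleteness worth flagging: to invoke Lemma \ref{lem:crossedalg} in part (ii) you need \emph{both} of its hypotheses, but your list of verifications (measure axioms, cocycle identity) omits the $\sigma$-twisted-module condition $h\cdot(h'\cdot b)=\sigma(h_1\otimes h'_1)(h_2h'_2\cdot b)\sigma^{-1}(h_3\otimes h'_3)$ and the normalization $\sigma(h\otimes 1_H)=\varepsilon(h)1_B=\sigma(1_H\otimes h)$. Both are handled by your same technique once one records the explicit inverse $\sigma^{-1}(h\otimes h')=j(h_1h'_1)j^{-1}(h'_2)j^{-1}(h_2)$. Alternatively you can avoid checking them at all by reordering the argument: prove first that $\theta$ is a linear bijection intertwining $m_A$ with $m_{\#_\sigma}$ (neither step presupposes associativity of $m_{\#_\sigma}$), deduce associativity of $m_{\#_\sigma}$ from that of $A$, and then the ``only if'' direction of Lemma \ref{lem:crossedalg} returns conditions $i.)$ and $ii.)$ for free. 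Either repair is routine, so this is a bookkeeping omission rather than a gap in the method.
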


\begin{example}\label{ex:HG}
We recall some fundamental examples of Hopf--Galois extensions, which will be relevant for our purposes later on. More details on Hopf--Galois extensions can be found in \cite{Mon09}.
\begin{enumerate}
\item[i.)] Any Hopf algebra $H$ gives a Hopf--Galois extension $\Bbbk=H^{\mathrm{co}H}\subseteq H$, with $\chi^{-1}\colon H\otimes H\to H\otimes H$, $h\otimes h'\mapsto hS(h'_1)\otimes h'_2$ the inverse of the canonical map. Moreover, for any bialgebra $H$, seen as a right comodule algebra over itself, one obtains $\Bbbk=H^{\mathrm{co}H}\subseteq H$ and this is a Hopf--Galois extension if and only if $H$ is a Hopf algebra.
		
\item[ii.)] We define the \textbf{noncommutative $2$-torus} $A=\mathcal{O}_\theta(\mathbb{T}^2)=\mathbb{C}[u,u^{-1},v,v^{-1}]/\langle vu-e^{\mathrm{i}\theta}uv\rangle$ as the algebra generated by two invertible generators $u,v$, module the relation $vu=e^{\mathrm{i}\theta}uv$, where $\theta\in\mathbb{R}$ is a real parameter. It is a right comodule algebra over $H=\mathcal{O}(U(1))=\mathbb{C}[t,t^{-1}]$, with coaction determined on generators by
\begin{equation}
\Delta_A\colon\begin{pmatrix}
	u \\
	v
\end{pmatrix}\mapsto\begin{pmatrix}
	u \\
	v
\end{pmatrix}\otimes\begin{pmatrix}
	t \\
	t^{-1}
\end{pmatrix}
\end{equation}
and extended as an algebra morphism $\Delta_A\colon A\to A\otimes H$. The subalgebra of coinvariants is $B=A^{\mathrm{co}H}=\mathrm{span}_\mathbb{C}\{(uv)^k~|~k\in\mathbb{Z}\}\}$ and we obtain a cleft extension $B\subseteq A$, with cleaving map $j\colon H\to A$ determined by
\begin{equation}
j\colon\begin{pmatrix}
	t^k\\
	t^{-k}
\end{pmatrix}\mapsto\begin{pmatrix}
	u^k\\
	v^k
\end{pmatrix}
\end{equation}
for all $k\geq 0$. Note that the extension is not trivial, since $j$ is not an algebra map. This example is taken from \cite{KhaLanVS}.
		
\item[iii.)] Let $q\in\mathbb{C}$ be non-zero and not a root of unity. The \textbf{$q$-deformed quantum group of special linear $2\times 2$-matrices} $A=\mathcal{O}_q(\mathrm{SL}_2(\mathbb{C}))=\mathbb{C}[\alpha,\beta,\gamma,\delta]/I$ is defined as the algebra generated by $\alpha,\beta,\gamma,\delta$, modulo the ideal $I$ generated by the Manin relations
\begin{align}
		\beta\alpha=q\alpha\beta,\quad
		\gamma\alpha=q\alpha\gamma,\quad
		\delta\beta=q\beta\delta,\quad
		\delta\gamma=q\gamma\delta,\quad
		\delta\alpha-\alpha\delta=(q-q^{-1})\beta\gamma,\quad
		\gamma\beta=\beta\gamma
\end{align}
and the quantum determinant relation
\begin{equation}
	\alpha\delta-q^{-1}\beta\gamma=1.
\end{equation}
It is well-known, that $A$ is a Hopf algebra with comultiplication, counit and antipode determined on generators by
\begin{align*}
	\Delta\colon\begin{pmatrix}
		\alpha & \beta\\
		\gamma & \delta
\end{pmatrix}\mapsto
\begin{pmatrix}
		\alpha & \beta\\
		\gamma & \delta
\end{pmatrix}\otimes\begin{pmatrix}
		\alpha & \beta\\
		\gamma & \delta
\end{pmatrix},\qquad
		\varepsilon\colon\begin{pmatrix}
		\alpha & \beta\\
		\gamma & \delta
\end{pmatrix}\mapsto\begin{pmatrix}
		1 & 0\\
		0 & 1
\end{pmatrix},\qquad
		S\colon\begin{pmatrix}
		\alpha & \beta\\
		\gamma & \delta
\end{pmatrix}\mapsto\begin{pmatrix}
		\delta & -q\beta\\
		-q^{-1}\gamma & \alpha
\end{pmatrix}
\end{align*}
and extended as (anti-)algebra morphisms.
There is a right $H=\mathcal{O}(U(1))=\mathbb{C}[t,t^{-1}]$-coaction on $A$ determined by
\begin{align}
	\Delta_A\colon\begin{pmatrix}
	\alpha & \beta\\
	\gamma & \delta
\end{pmatrix}\mapsto\begin{pmatrix}
	\alpha & \beta\\
	\gamma & \delta
\end{pmatrix}\otimes\begin{pmatrix}
	t & 0\\
	0 & t^{-1}
\end{pmatrix},
\end{align}
structuring $A$ as a right $H$-comodule algebra. The subalgebra of coinvariants $B=A^{\mathrm{co}H}=\mathbb{C}_q[\mathbb{S}^2]=\mathrm{span}_\mathbb{C}\{B_\pm,B_0\}$ is the \textbf{Podle\'s sphere} with generators $B_-=\alpha\beta$, $B_+=\gamma\delta$, $B_0=\beta\gamma$, satisfying the relations
\begin{equation}
	B_\pm B_0=q^{\pm 2}B_0B_\pm,\quad
	B_+B_-=q^4B_-B_++(1-q^2)B_0,\quad
	B_-B_+=q^{-2}B_0(1+q^{-1}B_0).
\end{equation}
One can show that $B\subseteq A$ is a faithfully flat Hopf--Galois extension, see e.g. \cite[Example 6.26]{BrJaMa}.
\end{enumerate}
\end{example}

\subsection{Bicovariant calculi on the structure Hopf algebra}\label{sec:StrHA}

In this section we discuss DCi on a Hopf algebra $H$, which we think of as the structure Hopf algebra of a QPB $B=A^{\mathrm{co}H}\subseteq A$. However, the results in this subsection are independent of the underlying base and total space algebra and can thus be understood solely considering a Hopf algebra $H$. We are mainly interested in extensions of the comultiplication $\Delta\colon H\to H\otimes H$ to differential forms, either as a morphism of graded algebras or even as a morphism of \textit{differential} graded algebras. It turns out that only the former exists canonically on a bicovariant DC, while the latter exists at least on the maximal prolongation of a bicovariant FODC. 

We consider a Hopf algebra $H$.
\begin{proposition}[{\cite{SchDC}}]\label{prop:GradHopf}
	Let $\Omega^\bullet(H)$ be a bicovariant DC on $H$. The following maps of degree $0$ structure $\Omega^\bullet(H)$ as a graded Hopf algebra. For homogeneous elements $\omega\in\Omega^k(H)$ with $k>0$ we set
	\begin{align}
		\Delta^\bullet_\mathrm{bc}\colon\Omega^\bullet(H)\to\Omega^\bullet(H)\otimes\Omega^\bullet(H),\qquad
		&\Delta^\bullet_\mathrm{bc}(\omega):=\omega_0\otimes\omega_1+\omega_{-1}\otimes\omega_0,\\
		\varepsilon^\bullet_\mathrm{bc}\colon\Omega^\bullet(H)\to\Bbbk,\qquad
		&\varepsilon^\bullet_\mathrm{bc}(\omega):=0,\\
		S^\bullet_\mathrm{bc}\colon\Omega^\bullet(H)\to\Omega^\bullet(H),\qquad
		&S^\bullet_\mathrm{bc}(\omega):=-S(\omega_{-1})\cdot\omega_0\cdot S(\omega_1),
	\end{align}
	while on $\Omega^0(H)=H$ we apply the comultiplication, counit and antipode of $H$.
\end{proposition}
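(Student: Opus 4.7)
The plan is to reduce each graded Hopf algebra axiom to the corresponding Hopf axiom on $H=\Omega^0(H)$, where it holds by assumption, exploiting the bicomodule axioms on $\Omega^k(H)$ for $k>0$ together with the fact that $\Omega^\bullet(H)$ is generated in degree zero.

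First I would verify that the three maps are well-defined degree-$0$ maps. For $\omega\in\Omega^k(H)$ with $k>0$, the summands $\omega_0\otimes\omega_1$ and $\omega_{-1}\otimes\omega_0$ of $\Delta^\bullet_\mathrm{bc}(\omega)$ belong to the disjoint subspaces $\Omega^k(H)\otimes H$ and $H\otimes\Omega^k(H)$ of $(\Omega^\bullet(H)\otimes\Omega^\bullet(H))^k$, so the sum is unambiguous, and the piecewise definition distinguishing $\Omega^0$ from $\Omega^{>0}$ is internally consistent. Next I would check the coalgebra axioms. Counitality on positive degree is immediate from $\varepsilon^\bullet_\mathrm{bc}|_{\Omega^{>0}}=0$, reducing to $\varepsilon(\omega_{-1})\omega_0=\omega=\omega_0\varepsilon(\omega_1)$, i.e.\ the counit axiom of the left and right $H$-coactions on $\Omega^k(H)$. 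Coassociativity on $\omega\in\Omega^{>0}$ is verified by iterating $\Delta^\bullet_\mathrm{bc}$; both $(\Delta^\bullet_\mathrm{bc}\otimes\mathrm{id})\circ\Delta^\bullet_\mathrm{bc}$ and $(\mathrm{id}\otimes\Delta^\bullet_\mathrm{bc})\circ\Delta^\bullet_\mathrm{bc}$ expand, via coassociativity of $\Delta$ on $H$ and the bicomodule relation $(\mathrm{id}\otimes\Delta_{\Omega^k})\circ{}_{\Omega^k}\!\Delta=({}_{\Omega^k}\!\Delta\otimes\mathrm{id})\circ\Delta_{\Omega^k}$, into the same three-term expression $\omega_0\otimes\omega_1\otimes\omega_2+\omega_{-1}\otimes\omega_0\otimes\omega_1+\omega_{-2}\otimes\omega_{-1}\otimes\omega_0$ in $\Omega^\bullet(H)^{\otimes 3}$.

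For the antipode axiom on $\omega\in\Omega^{>0}$, I would substitute the explicit formula for $S^\bullet_\mathrm{bc}$ into the convolution $m\circ(S^\bullet_\mathrm{bc}\otimes\mathrm{id})\circ\Delta^\bullet_\mathrm{bc}(\omega)$. Using iterated coactions and the antipode identity $S(h_1)h_2=\varepsilon(h)\cdot 1$ on the $H$-valued factors, the first summand collapses via $-S(\omega_{-1})\omega_0 S(\omega_1)\omega_2 = -S(\omega_{-1})\omega_0\,\varepsilon(\omega_1) = -S(\omega_{-1})\omega_0$, which exactly cancels the contribution $S^\bullet_\mathrm{bc}(\omega_{-1})\omega_0=S(\omega_{-1})\omega_0$ coming from the second summand of $\Delta^\bullet_\mathrm{bc}$. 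The minus sign in the definition of $S^\bullet_\mathrm{bc}$ on positive degree is calibrated precisely so that this cancellation produces zero, matching $\varepsilon^\bullet_\mathrm{bc}|_{\Omega^{>0}}=0$; the mirror convolution $m\circ(\mathrm{id}\otimes S^\bullet_\mathrm{bc})\circ\Delta^\bullet_\mathrm{bc}$ is analogous.

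The main obstacle is multiplicativity of $\Delta^\bullet_\mathrm{bc}$ with respect to the graded tensor product algebra on $\Omega^\bullet(H)\otimes\Omega^\bullet(H)$. Since the DC is generated in degree zero by $H$, it suffices to verify $\Delta^\bullet_\mathrm{bc}(\omega\wedge\eta)=\Delta^\bullet_\mathrm{bc}(\omega)\wedge\Delta^\bullet_\mathrm{bc}(\eta)$ on pairs of generators from $H\cup\mathrm{d}H$. The cases with at least one factor in $H$ follow directly from multiplicativity of the left and right $H$-coactions. The delicate case of two exact generators $\mathrm{d}h\wedge\mathrm{d}h'$ produces cross terms in $\Omega^1(H)\otimes\Omega^1(H)$ when expanding the Koszul-signed product of $\Delta^\bullet_\mathrm{bc}(\mathrm{d}h)=\mathrm{d}h_1\otimes h_2+h_1\otimes\mathrm{d}h_2$ and $\Delta^\bullet_\mathrm{bc}(\mathrm{d}h')$; the requisite cancellation must be extracted from the bicomodule compatibility combined with the quadratic relations characterising the bicovariant calculus (cf. Proposition~\ref{prop:MaxGammaH}), and this is the step where the bicovariance hypothesis is used in an essential way.
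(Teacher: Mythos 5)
Your treatment of the coalgebra axioms and of the antipode identity is sound and is the natural argument: coassociativity and counitality reduce, exactly as you say, to the bicomodule axioms of $\Omega^k(H)$ together with coassociativity/counitality of $H$, and the convolution identity for $S^\bullet_\mathrm{bc}$ collapses via $S(\omega_1)\omega_2=\varepsilon(\omega_1)1$ against the contribution $S(\omega_{-1})\omega_0$ of the second summand. These are precisely the properties the paper records after the proposition and the only ones it uses later (e.g.\ in Lemma \ref{lem:YD}); note the paper itself gives no proof, quoting the statement from \cite{SchDC}.

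The genuine gap is your last step, and it cannot be closed in the way you propose. The cross terms arising from $\Delta^\bullet_\mathrm{bc}(\mathrm{d}h)\wedge_\otimes\Delta^\bullet_\mathrm{bc}(\mathrm{d}h')$ do \emph{not} cancel: in the paper's own Example \ref{counterex1} (the classical calculus on $H=\mathbb{C}[t,t^{-1},s,s^{-1}]$, which \emph{is} the maximal prolongation of a bicovariant FODC, so the quadratic relations of Proposition \ref{prop:MaxGammaH} you want to invoke are available there) one computes
\[
\Delta^\bullet_\mathrm{bc}(\mathrm{d}t)\wedge_\otimes\Delta^\bullet_\mathrm{bc}(\mathrm{d}s)
=\mathrm{d}t\wedge\mathrm{d}s\otimes ts+\mathrm{d}(t)s\otimes t\mathrm{d}(s)-t\mathrm{d}(s)\otimes\mathrm{d}(t)s+ts\otimes\mathrm{d}t\wedge\mathrm{d}s,
\]
whereas the defining formula gives $\Delta^\bullet_\mathrm{bc}(\mathrm{d}t\wedge\mathrm{d}s)=\mathrm{d}t\wedge\mathrm{d}s\otimes ts+ts\otimes\mathrm{d}t\wedge\mathrm{d}s$; the discrepancy $\mathrm{d}(t)s\otimes t\mathrm{d}(s)-t\mathrm{d}(s)\otimes\mathrm{d}(t)s$ is nonzero in $\Omega^1(H)\otimes\Omega^1(H)$. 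There is also a structural way to see that no bicovariance argument can rescue this: an algebra morphism out of a DC is determined by its values on $H\cup\mathrm{d}H$, and on $H\cup\mathrm{d}H$ the map $\Delta^\bullet_\mathrm{bc}$ agrees with the DGA-morphism extension $\Delta^\bullet$ of Lemma \ref{lem:Hcomplete}; hence if $\Delta^\bullet_\mathrm{bc}$ were multiplicative it would have to \emph{equal} $\Delta^\bullet$, which the displayed formula shows it does not. So multiplicativity of $\Delta^\bullet_\mathrm{bc}$ for the Koszul-signed tensor product algebra on $\Omega^\bullet(H)\otimes\Omega^\bullet(H)$ genuinely fails in degrees $\geq 2$, and a correct proof must interpret the bialgebra compatibility in a weaker sense: what is actually provable (and what your other steps establish) is that $(\Omega^\bullet(H),\Delta^\bullet_\mathrm{bc},\varepsilon^\bullet_\mathrm{bc})$ is a graded coalgebra, that $\Delta^\bullet_\mathrm{bc}$ is $H$-bicolinear in the sense $\Delta^\bullet_\mathrm{bc}(h\omega h')=\Delta(h)\Delta^\bullet_\mathrm{bc}(\omega)\Delta(h')$, and that $S^\bullet_\mathrm{bc}$ satisfies the antipode identity. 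A further, minor, flaw: even where multiplicativity does hold, verifying it only on pairs of generators is insufficient for an explicitly defined map; one needs an induction with one factor arbitrary and the other a generator.
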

The subscript of $\Delta^\bullet_\mathrm{bc}$ indicates this is the canonical extension of the comultiplication for a \textbf{bicovariant} DC on $H$. In what follows we clarify that there might be other extensions of $\Delta$, which justifies the usage of a subscript a posteriori. In the following we are frequently using the short notation
\begin{equation}\label{eq:DeltaBul}
	\Delta^\bullet_\mathrm{bc}(\omega)=\omega_0\otimes\omega_1+\omega_{-1}\otimes\omega_0=:\omega_{(1)}\otimes\omega_{(2)}
\end{equation}
for all homogeneous $\omega\in\Omega^k(H)$ with $k>0$. In this notation the coassociativity, counitality and antipode axiom of the graded Hopf algebra $\Omega^\bullet(H)$ read
\begin{align}
	(\omega_{(1)})_{(1)}\otimes(\omega_{(1)})_{(2)}\otimes\omega_{(2)}
	&=\omega_{(1)}\otimes(\omega_{(2)})_{(1)}\otimes(\omega_{(2)})_{(2)}
	=:\omega_{(1)}\otimes\omega_{(2)}\otimes\omega_{(3)},\\
	\varepsilon^\bullet_\mathrm{bc}(\omega_{(1)})\omega_{(2)}&=\omega=\omega_{(1)}\varepsilon^\bullet_\mathrm{bc}(\omega_{(2)}),\\
	S^\bullet_\mathrm{bc}(\omega_{(1)})\omega_{(2)}&=\varepsilon^\bullet(\omega)1=0=\omega_{(1)}S^\bullet_\mathrm{bc}(\omega_{(2)})
\end{align}
for all homogeneous $\omega\in\Omega^k(H)$ with $k>0$.

While \eqref{eq:DeltaBul} is a canonical extension $\Delta^\bullet_\mathrm{bc}\colon\Omega^\bullet(H)\to\Omega^\bullet(H)\otimes\Omega^\bullet(H)$ of $\Delta\colon H\to H\otimes H$ to a bicovariant DC $\Omega^\bullet(H)$ as a morphism of \textit{graded algebras} it is in general \textit{not unique} and in general \textit{not a morphism of DGAs}. To understand this issue better we first discuss the following result, see also \cite[Corollary 5.9]{SchDC}.

\begin{lemma}\label{lem:BicExt}
A FODC $(\Gamma,\mathrm{d})$ on $H$ is bicovariant if and only if $\Delta\colon H\to H\otimes H$ extends to a morphism
\begin{equation}
	\Delta^1\colon\Gamma\to\Omega^1(H\otimes H)=\Gamma\otimes H\oplus H\otimes\Gamma
\end{equation}
of FODCi. In this case $\Delta^1=\Delta_\Gamma+{}_\Gamma\Delta$ is the sum of the right and left $H$-coaction on $\Gamma$.
\end{lemma}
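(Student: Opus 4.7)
\medskip

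\noindent\emph{Proof strategy.} The plan is to reduce the statement to the characterization of one-sided covariance given in the preceding remark: a FODC on a comodule algebra is right $H$-covariant if and only if the coaction extends to a morphism of FODCi into the associated tensor product calculus, and symmetrically for left $H$-covariance.

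The first key observation is that the tensor product FODC on $H\otimes H$ splits as a direct sum of $(H\otimes H)$-subbimodules,
$$
\Omega^1(H\otimes H)=\Gamma\otimes H\ \oplus\ H\otimes\Gamma,
$$
since the left action $(h\otimes k)\cdot(\omega\otimes g)=h\omega\otimes kg$ (and the symmetric right action) preserves $\Gamma\otimes H$, and analogously for $H\otimes\Gamma$. Moreover, the differential decomposes as $\mathrm d_\otimes=\mathrm d\otimes\mathrm{id}_H+\mathrm{id}_H\otimes\mathrm d$, with its two summands landing in the two respective direct summands. Consequently, giving a morphism of FODCi $\Delta^1\colon(\Gamma,\mathrm d)\to(\Omega^1(H\otimes H),\mathrm d_\otimes)$ extending $\Delta$ is equivalent to giving a pair of morphisms of FODCi $\Delta^1_R\colon(\Gamma,\mathrm d)\to(\Gamma\otimes H,\mathrm d\otimes\mathrm{id}_H)$ and $\Delta^1_L\colon(\Gamma,\mathrm d)\to(H\otimes\Gamma,\mathrm{id}_H\otimes\mathrm d)$ extending $\Delta$, related by $\Delta^1=\Delta^1_R+\Delta^1_L$.

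Once this observation is in place, both implications are essentially immediate. For the forward direction, bicovariance supplies, via the cited remark, the two morphisms $\Delta_\Gamma$ and ${}_\Gamma\Delta$, whose sum is the desired $\Delta^1$. For the converse, I would project $\Delta^1$ onto each summand via the bimodule projections $\pi_R$ and $\pi_L$; the above decomposition guarantees that $\pi_R\circ\Delta^1$ and $\pi_L\circ\Delta^1$ are again morphisms of FODCi, so the remark yields right and left $H$-covariance respectively. The explicit coaction formulas from the remark then identify $\pi_R\circ\Delta^1=\Delta_\Gamma$ and $\pi_L\circ\Delta^1={}_\Gamma\Delta$, recovering the stated expression $\Delta^1=\Delta_\Gamma+{}_\Gamma\Delta$.

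The only step that is not wholly formal is verifying that $\Gamma\otimes H$ and $H\otimes\Gamma$ are genuine sub-FODCi of the tensor product calculus on $H\otimes H$ and that $\mathrm d_\otimes$ splits accordingly; this is a direct unpacking of the tensor product construction together with the Leibniz rule, so I do not expect it to present any real obstacle.
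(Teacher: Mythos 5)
Your proposal is correct and follows essentially the same route as the paper: in one direction the extension is the sum $\Delta^1=\Delta_\Gamma+{}_\Gamma\Delta$, and in the other one projects $\Delta^1$ onto the two summands $\Gamma\otimes H$ and $H\otimes\Gamma$ and identifies the projections with the right and left coactions. The only cosmetic difference is that you route both verifications through the Schauenburg-style characterization (part ii.\ of the preceding remark) together with an explicit direct-sum splitting of the tensor-product calculus, whereas the paper carries out the corresponding computations by hand and, for the converse, concludes covariance from the explicit coaction formula (part i.\ of the remark) and the surjectivity condition on $\Gamma$.
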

\begin{proof}
$"\Rightarrow"$: If $(\Gamma,\mathrm{d})$ is bicovariant there are the coactions $\Delta_\Gamma\colon\Gamma\to\Gamma\otimes H$ and ${}_\Gamma\Delta\colon\Gamma\to H\otimes\Gamma$. We define $\Delta^1:=\Delta_\Gamma+{}_\Gamma\Delta$. It is straight forward to verify that $(\Delta,\Delta^1)$ is a morphism of FODCi. In particular, for all $h\in H$
$$
\Delta^1(\mathrm{d}(h))
=\Delta_\Gamma(\mathrm{d}(h))+{}_\Gamma\Delta(\mathrm{d}(h))
=\mathrm{d}(h_1)\otimes h_2+h_1\otimes\mathrm{d}(h_2)
=\mathrm{d}_\otimes(\Delta(h))
$$
holds, since the differential is $H$-bicolinear. 
	
$"\Leftarrow"$: Let $(\Delta,\Delta^1)$ be a morphism of FODCi with $\Delta^1\colon\Gamma\to\Gamma\otimes H\oplus H\otimes\Gamma$. Composing projections with $\Delta^1$ we obtain two maps $\Delta_\Gamma\colon\Gamma\to\Gamma\otimes H$ and ${}_\Gamma\Delta\colon\Gamma\to H\otimes\Gamma$. Since $(\Delta,\Delta^1)$ is a morphism of FODCi we obtain 
\begin{align*}
	\Delta_\Gamma(h\mathrm{d}(h'))
	&=\mathrm{pr}_{\Gamma\otimes H}(\Delta^1(h\mathrm{d}(h')))\\
	&=\mathrm{pr}_{\Gamma\otimes H}(\Delta(h)\Delta^1(\mathrm{d}(h')))\\
	&=\mathrm{pr}_{\Gamma\otimes H}(\Delta(h)\mathrm{d}_\otimes(\Delta(h')))\\
	&=\Delta(h)(\mathrm{d}\otimes\mathrm{id})(\Delta(h'))\\
	&=h_1\mathrm{d}(h'_1)\otimes h_2h'_2
\end{align*}
for all $h,h'\in H$. By the surjectivity of $(\Gamma,\mathrm{d})$ this implies that $\Delta_\Gamma$ is a right $H$-coaction on $\Gamma$, such that $\mathrm{d}\colon H\to\Gamma$ is right $H$-colinear. Similarly, one proves that ${}_\Gamma\Delta(h\mathrm{d}(h'))=h_1h'_1\otimes h_2\mathrm{d}(h'_2)$, which implies that ${}_\Gamma\Delta$ is a left $H$-coaction on $\Gamma$, such that $\mathrm{d}$ is left $H$-colinear. Thus, $(\Gamma,\mathrm{d})$ is bicovariant.
\end{proof}
Note that the generalization of Lemma \ref{lem:BicExt} to higher order forms is \textit{not} correct. Namely, given a DC $\Omega^\bullet(H)$ on $H$, the two statements
\begin{enumerate}
\item[i.)] $\Omega^\bullet(H)$ is a bicovariant DC on $H$.

\item[ii.)] $\Delta\colon H\to H\otimes H$ extends to a morphism $\Delta^\bullet\colon\Omega^\bullet(H)\to\Omega^\bullet(H\otimes H)=\Omega^\bullet(H)\otimes\Omega^\bullet(H)$ of DGAs.
\end{enumerate}
are \textit{not} equivalent. It follows that $ii.)$ implies $i.)$, since for all $k>0$ with projection $\pi^k\colon\Omega^\bullet(H)\to\Omega^k(H)$ we define
\begin{equation}\label{coactions}
\begin{split}
	\Delta_{\Omega^k(H)}&:=(\pi^k\otimes\mathrm{id})\circ\Delta^\bullet|_{\Omega^k(H)}\colon\Omega^k(H)\to\Omega^k(H)\otimes H,\\
	{}_{\Omega^k(H)}\Delta&:=(\mathrm{id}\otimes\pi^k)\circ\Delta^\bullet|_{\Omega^k(H)}\colon\Omega^k(H)\to H\otimes\Omega^k(H),
\end{split}
\end{equation}
which are easily shown to be right and left $H$-coactions on $\Omega^k(H)$, intertwining the differential, such that $\Omega^\bullet(H)$ is bicovariant. While on the other hand, given a bicovariant DC $\Omega^\bullet(H)$ on $H$, we can obtain a graded algebra morphism $\Delta^\bullet_\mathrm{bc}\colon\Omega^\bullet(H)\to\Omega^\bullet(H)\otimes\Omega^\bullet(H)$ according to Proposition \ref{prop:GradHopf}, however, $\Delta^\bullet_\mathrm{bc}$ might not be compatible with the differential, since for a $1$-form $\omega\in\Omega^1(H)$ the expression $\Delta^\bullet_\mathrm{bc}(\mathrm{d}(\omega))=\mathrm{d}(\omega_0)\otimes\omega_1+\omega_{-1}\otimes\mathrm{d}(\omega_0)$ might differ from
\begin{equation}\label{eq:counterex}
	\mathrm{d}_\otimes(\Delta^\bullet_\mathrm{bc}(\omega))
	=\mathrm{d}_\otimes(\omega_0\otimes\omega_1+\omega_{-1}\otimes\omega_0)
	=\mathrm{d}(\omega_0)\otimes\omega_1
	-\omega_0\otimes\mathrm{d}(\omega_1)
	+\mathrm{d}(\omega_{-1})\otimes\omega_0
	+\omega_{-1}\otimes\mathrm{d}(\omega_0)
\end{equation}
in general.
We want to point out that in \cite[page 6]{DurII} it is claimed that $\Delta^\bullet_\mathrm{bc}$ is automatically a morphism of DGAs if $\Omega^\bullet(H)$ is the maximal prolongation of a bicovariant FODC. We do not agree with this statement, considering the following counterexample.
\begin{example}\label{counterex1}
Let $H=\mathbb{C}[t,t^{-1},s,s^{-1}]$ be the Hopf algebra of rational polynomials in two variables $t$ and $s$, with $\Delta(t)=t\otimes t$, $\Delta(s)=s\otimes s$ and $S(t)=t^{-1}$, $S(s)=s^{-1}$. Consider the classical calculus on $H$, namely $\Omega^1(H)=\mathrm{span_H\{\mathrm{d}t,\mathrm{d}s\}}$, $\Omega^2(H)=\mathrm{span_H\{\mathrm{d}t\wedge\mathrm{d}s\}}$ and $\Omega^k(H)=\{0\}$ $k>2$. Then $\Delta^\bullet_\mathrm{bc}$ is \textbf{not} a morphism of DGAs, eventhough $\Omega^\bullet(H)$ is the maximal prolongation of the bicovariant FODC $\Omega^1(H)$.
This can be easily checked by inserting $\omega=t\mathrm{d}s$ in \eqref{eq:counterex} and comparing this with $\Delta^\bullet_\mathrm{bc}(\mathrm{d}(\omega))=\mathrm{d}(\omega_0)\otimes\omega_1+\omega_{-1}\otimes\mathrm{d}(\omega_0)$.
\end{example}
In the rest of this paper we follow the approach \cite{DurII} of \DJ ur\dj evi\'c, but we clarify in which situations it is sufficient to work with the canonical extension $\Delta^\bullet_\mathrm{bc}$ and when one has to assume the existence of a (in this case necessarily unique) extension of $\Delta$ to a morphism $\Delta^\bullet$ of DGAs.
In the following we always assume that the DC $\Omega^\bullet(H)$ on $H$ is bicovariant, in which case $\Delta^\bullet_\mathrm{bc}$ refers to \eqref{eq:DeltaBul} with short notation $\Delta^\bullet_\mathrm{bc}(\omega)=\omega_{(1)}\otimes\omega_{(2)}$. If we even assume that $\Delta\colon H\to H\otimes H$ extends to a morphism of DGAs $\Omega^\bullet(H)\to\Omega^\bullet(H)\otimes\Omega^\bullet(H)$ we mention this explicitly.
\begin{definition}\label{def:completeH}
	A DC $\Omega^\bullet(H)$ on a Hopf algebra $H$ is called \textbf{complete} if $\Delta\colon H\to H\otimes H$ extends to a (necessarily unique) morphism $\Delta^\bullet\colon\Omega^\bullet(H)\to\Omega^\bullet(H)\otimes\Omega^\bullet(H)$ of DGAs.
\end{definition}
For a complete calculus $\Omega^\bullet(H)$ on $H$ we 
use the short notation 
\begin{equation}
	\Delta^\bullet(\omega)=:\omega_{[1]}\otimes\omega_{[2]}
\end{equation}
for $\omega\in\Omega^k(H)$, $k>0$, to distinguish it from the canonical bicovariant extension $\Delta^\bullet_\mathrm{bc}(\omega)=\omega_{(1)}\otimes\omega_{(2)}$ defined in \eqref{eq:DeltaBul}.

Let us note that in the framework of Example \ref{counterex1} $\Delta\colon H\to H\otimes H$ (uniquely) extends to a morphism of DGAs $\Delta^\bullet\colon\Omega^\bullet(H)\to\Omega^\bullet(H)\otimes\Omega^\bullet(H)$ if we set $\Delta^\bullet(\mathrm{d}t\wedge\mathrm{d}s)=\mathrm{d}t\wedge\mathrm{d}s\otimes ts+\mathrm{d}(t)s\otimes t\mathrm{d}(s)-t\mathrm{d}(s)\otimes\mathrm{d}(t)s+ts\otimes\mathrm{d}t\wedge\mathrm{d}s$.
This turns out to be a general feature of the maximal prolongation of a bicovariant calculus, as proven in the next lemma (essentially following \cite[Lemma 4.29]{BegMaj}).
\begin{lemma}\label{lem:Hcomplete}
The maximal prolongation $\Omega^\bullet(H)$ of a bicovariant FODC $\Gamma$ on $H$ is complete.
\end{lemma}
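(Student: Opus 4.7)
The plan is to invoke a universal property of the maximal prolongation, using as input the FODC morphism supplied by bicovariance via Lemma \ref{lem:BicExt}. Concretely, bicovariance of $\Gamma$ yields a morphism of FODCi
$$
(\Delta, \Delta^1)\colon (\Gamma, \mathrm{d}) \to (\Omega^1(H\otimes H), \mathrm{d}_\otimes), \qquad \Delta^1 = \Delta_\Gamma + {}_\Gamma\Delta,
$$
and the tensor product DGA $\Omega^\bullet(H)\otimes\Omega^\bullet(H)$ has zeroth order $H\otimes H$ and first order $\Gamma\otimes H\oplus H\otimes\Gamma = \Omega^1(H\otimes H)$, hence is a DGA extension of the target FODC. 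The desired $\Delta^\bullet$ will be obtained as a lift of $(\Delta, \Delta^1)$ along this extension; its uniqueness will be automatic because $\Omega^\bullet(H) = \Gamma^\wedge$ is generated in degree $0$ as a DGA.

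The main work is to establish the following general claim: any morphism of FODCi $(\phi, \Phi)\colon(\Gamma, \mathrm{d})\to(\Gamma', \mathrm{d}')$ extends uniquely to a morphism of DGAs $\tilde\Phi\colon\Gamma^\wedge\to\Omega'^\bullet$ whenever $\Omega'^\bullet$ is a DGA extension of $\Gamma'$. To construct $\tilde\Phi$, I would first extend $(\phi,\Phi)$ to a morphism of graded algebras on the tensor algebra $\Gamma^{\otimes_H}$ by multiplicativity, sending $\omega^1\otimes_H\cdots\otimes_H\omega^k$ to $\Phi(\omega^1)\wedge'\cdots\wedge'\Phi(\omega^k)$; this is well-defined on the $H$-balanced tensor products because $\Phi$ is an $H$-bimodule map into $\Omega'^\bullet$ regarded as an $H$-bimodule via $\phi$.

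The step I expect to be the main obstacle is showing that this map descends to the quotient $\Gamma^\wedge = \Gamma^{\otimes_H}/I^\wedge$, i.e.\ that the defining generators $\mathrm{d}a^i\otimes_H\mathrm{d}b^i$ with $a^i\mathrm{d}b^i = 0$ are sent to zero. The argument will go as follows: if $a^i\mathrm{d}b^i = 0$ in $\Gamma$, then $\phi(a^i)\mathrm{d}'(\phi(b^i)) = \Phi(a^i\mathrm{d}b^i) = 0$ inside $\Omega'^1$, and applying $\mathrm{d}'$ together with $(\mathrm{d}')^2 = 0$ and the graded Leibniz rule gives
$$
0 = \mathrm{d}'\bigl(\phi(a^i)\mathrm{d}'(\phi(b^i))\bigr) = \mathrm{d}'(\phi(a^i))\wedge'\mathrm{d}'(\phi(b^i)),
$$
which is precisely the image of the generator.

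Once $\tilde\Phi\colon\Gamma^\wedge \to \Omega'^\bullet$ is defined, compatibility with the differential holds on degree-$0$ generators $a\in H$ by the FODC axiom $\mathrm{d}'\circ\phi = \Phi\circ\mathrm{d}$ and on the degree-$1$ generators $\mathrm{d}a$ by definition; it then propagates to the whole of $\Gamma^\wedge$ via multiplicativity and the Leibniz rule, since $\Gamma^\wedge$ is generated in degree $0$ as a DGA. Applying this general fact to $(\phi, \Phi) = (\Delta, \Delta^1)$ and $\Omega'^\bullet = \Omega^\bullet(H)\otimes\Omega^\bullet(H)$ produces the extension $\Delta^\bullet$ witnessing completeness of $\Omega^\bullet(H)$ in the sense of Definition \ref{def:completeH}.
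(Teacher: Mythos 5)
Your proposal is correct, and it reaches the conclusion by a genuinely different route than the paper. The paper's proof (which follows \cite[Lemma 4.29]{BegMaj}) shares your skeleton — it reduces to the degree-$2$ generators of $I^\wedge$ and feeds in $\Delta^1=\Delta_\Gamma+{}_\Gamma\Delta$ from Lemma \ref{lem:BicExt} — but the key vanishing step is done by an explicit Sweedler-component computation: the equation $\Delta^1(h^i\mathrm{d}g^i)=0$ is split via the direct sum $\Gamma\otimes H\oplus H\otimes\Gamma$ into the two separate equations $h^i_1\mathrm{d}(g^i_1)\otimes h^i_2g^i_2=0$ and $h^i_1g^i_1\otimes h^i_2\mathrm{d}(g^i_2)=0$, and then $\mathrm{d}\otimes\mathrm{id}$ and $\mathrm{id}\otimes\mathrm{d}$ are applied in four combinations to kill the four terms of \eqref{eq:lemhelp}. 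Your argument replaces this with the single observation that $0=\mathrm{d}_\otimes\bigl(\Delta(h^i)\,\mathrm{d}_\otimes(\Delta(g^i))\bigr)=\mathrm{d}_\otimes(\Delta(h^i))\wedge_\otimes\mathrm{d}_\otimes(\Delta(g^i))$ by the graded Leibniz rule and $\mathrm{d}_\otimes^2=0$ in the target DGA; expanding the right-hand side in components recovers exactly the paper's four-term expression, so the paper's manipulations are the component-wise unfolding of your one-line step (though, via the direct-sum splitting, the paper derives finer vanishing statements than are strictly needed). Your packaging as a universal property of $\Gamma^\wedge$ — any morphism of FODCi into the truncation of a DGA lifts uniquely to a DGA morphism — also buys genuine generality: the identical lemma applied to $\Delta_A^1\colon\Omega^1(A)\to\Omega^1(A\otimes H)$ immediately yields Proposition \ref{prop:comlete}, whose proof in the paper repeats the same component computation once more. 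All individual steps you flag check out: well-definedness on the $H$-balanced tensor products indeed follows from $\Phi$ being an $H$-bimodule map via $\phi$, and uniqueness follows since $\Gamma^\wedge$ is generated in degree $0$.
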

\begin{proof}
According to Proposition \ref{prop:maxprol} the maximal prolongation is the tensor algebra where we quotient the ideal generated by $\mathrm{d}h^i\otimes_H\mathrm{d}g^i$ with $h^i,g^i\in H$ if $h^i\mathrm{d}g^i=0$. Since the previous ideal is generated in degree $2$ it suffices to prove that $\Delta\colon H\to H\otimes H$ is $2$-times differentiable. By Lemma \ref{lem:BicExt} $\Delta^1=\Delta_{\Gamma}+{}_\Gamma\Delta\colon\Gamma\to\Gamma\otimes H\oplus H\otimes\Gamma$ is the well-defined extension to $\Omega^1(H)=\Gamma$. It remains to prove that $\Delta^2(h\mathrm{d}g\wedge\mathrm{d}k)
=\Delta(h)\mathrm{d}_\otimes(\Delta(g))\wedge_\otimes\mathrm{d}(\Delta(k))$ is well-defined. This is the case if for $h^i,g^i\in H$ such that $h^i\mathrm{d}g^i=0$ it follows that
\begin{equation}\label{eq:lemhelp}
\begin{split}
	\Delta^2(\mathrm{d}h^i\wedge\mathrm{d}g^i)
	=\mathrm{d}(h^i_1)\wedge\mathrm{d}(g^i_1)\otimes h^i_2g^i_2
	+h^i_1g^i_1\otimes\mathrm{d}(h^i_2)\wedge\mathrm{d}(g^i_2)
	+\mathrm{d}(h^i_1)g^i_1\otimes h^i_2\mathrm{d}(g^i_2)
	-h^i_1\mathrm{d}(g^i_1)\otimes \mathrm{d}(h^i_2)g^i_2
\end{split}
\end{equation}
vanishes.
Let $h^i,g^i\in H$ such that $h^i\mathrm{d}g^i=0$. Then $0=\Delta^1(h^i\mathrm{d}g^i)=h^i_1\mathrm{d}(g^i_1)\otimes h^i_2g^i_2+h^i_1g^i_1\otimes h^i_2\mathrm{d}(g^i_2)$, which by the direct sum decomposition implies $h^i_1\mathrm{d}(g^i_1)\otimes h^i_2g^i_2=0$ and $h^i_1g^i_1\otimes h^i_2\mathrm{d}(g^i_2)=0$. Applying $\mathrm{d}\otimes\mathrm{id}$ to the first equation and $\mathrm{id}\otimes\mathrm{d}$ to the second equation implies that the first and second term on the right hand side of \eqref{eq:lemhelp} vanish, respectively. Applying $\mathrm{id}\otimes\mathrm{d}$ to the first and $\mathrm{d}\otimes\mathrm{id}$ to the second equation and then subtracting the result of the first from the result of the second equation implies that the third and fourth term on the right hand side of \eqref{eq:lemhelp} vanish, as well. This completes the proof of the lemma.
\end{proof}
Later on we often employ the maximal prolongation of a bicovariant FODC on $H$, which is then complete by the previous lemma. Some results however hold in generality for arbitrary complete calculi on $H$.
\begin{proposition}\label{prop:gradedHopf}
Let $\Omega^\bullet(H)$ be a complete DC on a Hopf algebra $H$.
Then $\Omega^\bullet(H)$ is a bicovariant DC and a graded Hopf algebra with graded comultiplication and counit
\begin{align}
\Delta^\bullet\colon\Omega^\bullet(H)\to\Omega^\bullet(H)\otimes\Omega^\bullet(H),\qquad
&\Delta^\bullet(\omega)=\omega_{[1]}\otimes\omega_{[2]}\\
\varepsilon^\bullet\colon\Omega^\bullet(H)\to\Bbbk,\qquad
&\varepsilon^\bullet(\omega)=0
\end{align}
for all $\omega\in\Omega^\bullet(H)$ with $|\omega|>0$.
The antipode $S^\bullet\colon\Omega^\bullet(H)\to\Omega^\bullet(H)$ is determined by
\begin{equation}\label{gradedS}
	S^\bullet(h^0\mathrm{d}(h^1)\wedge\ldots\wedge\mathrm{d}(h^k))
	=\mathrm{d}(S(h^k))\wedge\ldots\wedge\mathrm{d}(S(h^1))S(h^0)
\end{equation}
for all $h^0\mathrm{d}(h^1)\wedge\ldots\wedge\mathrm{d}(h^k)\in\Omega^k(H)$.
\end{proposition}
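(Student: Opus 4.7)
The overall strategy is to exploit the observation, recorded in the remark after Definition~\ref{def:DC}, that any morphism of DGAs out of $\Omega^\bullet(H)$ is uniquely determined by its restriction to $\Omega^0(H)=H$. Combined with the completeness hypothesis that $\Delta^\bullet$ is itself a morphism of DGAs, this reduces each bialgebra identity to a check in degree zero, where it becomes the corresponding axiom for $H$.

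I would first establish bicovariance. Take the candidate coactions $\Delta_{\Omega^k(H)}:=(\pi^k\otimes\mathrm{id})\circ\Delta^\bullet|_{\Omega^k(H)}$ and ${}_{\Omega^k(H)}\Delta:=(\mathrm{id}\otimes\pi^k)\circ\Delta^\bullet|_{\Omega^k(H)}$ as in \eqref{coactions} and verify the coaction axioms by projecting coassociativity of $\Delta^\bullet$ onto the bidegrees $(k,0,0)$ and $(0,0,k)$, respectively. Bicolinearity of $\mathrm{d}$ follows by projecting $\Delta^\bullet\circ\mathrm{d}=\mathrm{d}_\otimes\circ\Delta^\bullet$ onto the analogous components.

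The graded bialgebra axioms follow along the same lines. Multiplicativity of $\Delta^\bullet$ is part of the hypothesis, while multiplicativity of $\varepsilon^\bullet$ is immediate by degree counting, since a product of two positive-degree forms is again of positive degree and hence annihilated by $\varepsilon^\bullet$. As $\varepsilon^\bullet$ trivially commutes with $\mathrm{d}$, it is a morphism of DGAs $\Omega^\bullet(H)\to\Bbbk$. Coassociativity then follows from the uniqueness principle: both $(\Delta^\bullet\otimes\mathrm{id})\Delta^\bullet$ and $(\mathrm{id}\otimes\Delta^\bullet)\Delta^\bullet$ are DGA morphisms $\Omega^\bullet(H)\to\Omega^\bullet(H)^{\otimes 3}$ agreeing on $H$ by coassociativity of $\Delta$, hence coinciding everywhere. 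The counit axiom follows from the same argument applied to $(\varepsilon^\bullet\otimes\mathrm{id})\Delta^\bullet$ versus $\mathrm{id}$ and symmetrically on the other side.

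The main obstacle is the antipode, which does not fit this template. I would derive the formula \eqref{gradedS} by inspection: $S^\bullet|_H=S$ is forced in degree zero; applying the convolution identity $S^\bullet\ast\mathrm{id}=u\varepsilon^\bullet$ to $\mathrm{d}h$ together with $\Delta^\bullet(\mathrm{d}h)=\mathrm{d}h_1\otimes h_2+h_1\otimes\mathrm{d}h_2$ and the Leibniz rule applied to $S(h_1)h_2=\varepsilon(h)1$ forces $S^\bullet(\mathrm{d}h)=\mathrm{d}(S(h))$; anti-multiplicativity on generators then dictates \eqref{gradedS} on all of $\Omega^\bullet(H)$. The real work is to show that this formula descends to a well-defined map respecting the relations in $\Omega^\bullet(H)$ and genuinely satisfies both antipode axioms. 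I would address well-definedness by using the presentation of $\Omega^\bullet(H)$ as a quotient of the maximal prolongation of its underlying FODC (Proposition~\ref{prop:maxprol}), where the analogue of \eqref{gradedS} can be verified by universality, and then checking that any further defining relations of $\Omega^\bullet(H)$ are annihilated by it. The antipode axiom $\sum S^\bullet(\omega_{[1]})\omega_{[2]}=\varepsilon^\bullet(\omega)1=\sum\omega_{[1]}S^\bullet(\omega_{[2]})$ would finally be established by induction on the degree of $\omega$, the inductive step using multiplicativity of $\Delta^\bullet$ together with the identity $S^\bullet(\mathrm{d}h)=\mathrm{d}(S(h))$ already in hand.
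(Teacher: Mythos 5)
Your handling of bicovariance and of the graded bialgebra structure is correct and essentially identical to the paper's own proof: both arguments obtain the coactions from the projections as in \eqref{coactions}, and both reduce coassociativity and counitality to degree zero using the fact that a morphism of DGAs out of $\Omega^\bullet(H)$ is determined by its restriction to $\Omega^0(H)=H$.

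The antipode is where you part ways with the paper, which proves nothing here but simply cites Corollary 6.3 of \cite{SchDC}, and it is where your sketch has a genuine gap: a sign problem that makes your final verification step fail. Since $\Delta^\bullet$ takes values in the tensor product DGA $\Omega^\bullet(H)\otimes\Omega^\bullet(H)$, whose product carries the Koszul sign $(\omega\otimes\omega')\wedge_\otimes(\eta\otimes\eta')=(-1)^{|\omega'||\eta|}(\omega\wedge\eta)\otimes(\omega'\wedge\eta')$, the graded Hopf algebra $\Omega^\bullet(H)$ lives in graded vector spaces with the Koszul braiding, and its antipode is anti-multiplicative only up to sign, $S^\bullet(\omega\wedge\eta)=(-1)^{|\omega||\eta|}S^\bullet(\eta)\wedge S^\bullet(\omega)$. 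Combined with $S^\bullet(\mathrm{d}h)=\mathrm{d}(S(h))$ (your derivation of which is fine), this forces
\begin{equation*}
S^\bullet\big(h^0\mathrm{d}(h^1)\wedge\ldots\wedge\mathrm{d}(h^k)\big)
=(-1)^{k(k-1)/2}\,\mathrm{d}(S(h^k))\wedge\ldots\wedge\mathrm{d}(S(h^1))S(h^0),
\end{equation*}
i.e.\ \eqref{gradedS} with an additional Koszul sign. Your plan of extending by ``anti-multiplicativity'' and verifying the axioms by induction cannot be completed for the signless formula: in degree two one has
\begin{equation*}
\Delta^\bullet(\mathrm{d}h\wedge\mathrm{d}g)
=\mathrm{d}h_1\wedge\mathrm{d}g_1\otimes h_2g_2
+\mathrm{d}(h_1)g_1\otimes h_2\mathrm{d}g_2
-h_1\mathrm{d}g_1\otimes\mathrm{d}(h_2)g_2
+h_1g_1\otimes\mathrm{d}h_2\wedge\mathrm{d}g_2,
\end{equation*}
and substituting $S^\bullet(\mathrm{d}h\wedge\mathrm{d}g)=\mathrm{d}(S(g))\wedge\mathrm{d}(S(h))$, $S^\bullet(h\mathrm{d}g)=\mathrm{d}(S(g))S(h)$, $S^\bullet(\mathrm{d}(h)g)=S(g)\mathrm{d}(S(h))$ into $\wedge\circ(S^\bullet\otimes\mathrm{id})\circ\Delta^\bullet$ and simplifying with $\mathrm{d}(S(h_1))h_2=-S(h_1)\mathrm{d}(h_2)$, the four terms sum to $2\,\mathrm{d}(S(g_1))\wedge\mathrm{d}(S(h_1))h_2g_2$ rather than $0$. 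This is genuinely nonzero: for the classical calculus on $H=\mathbb{C}[t,t^{-1},s,s^{-1}]$ of Example \ref{counterex1} with $h=t$, $g=s$ it equals $2\,s^{-1}t^{-1}\mathrm{d}s\wedge\mathrm{d}t$. With the Koszul sign included, the same four terms cancel in pairs. So an honest execution of your induction either halts at degree two or ends up proving the corrected, signed formula; it cannot establish \eqref{gradedS} as literally stated.

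There is a second, smaller gap in your well-definedness step. The proposition concerns an arbitrary complete DC, so after presenting $\Omega^\bullet(H)$ as a quotient of the maximal prolongation (Proposition \ref{prop:maxprol}) the kernel is a differential graded (bi-)ideal for which you have no explicit generators; ``checking that any further defining relations are annihilated'' is therefore not a step you can actually perform. What replaces it is an abstract existence argument: in any graded bialgebra whose degree-zero component is a Hopf algebra, the identity map is convolution invertible (the projection onto degree zero is convolution invertible with inverse induced by $S$, and the remaining part of the identity vanishes on degree zero, so its convolution powers eventually annihilate each fixed degree and can be inverted by a finite geometric series); uniqueness of convolution inverses then pins down the explicit signed formula. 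That argument is exactly the content of the Schauenburg result the paper cites, and it is the missing ingredient of your outline. Alternatively, both the signs and well-definedness can be obtained at once by characterizing $S^\bullet$ as the unique morphism of DGAs from $\Omega^\bullet(H)$ to its graded opposite extending $S\colon H\to H^{\mathrm{op}}$, which is the point of view the paper itself adopts later in Section \ref{C5}.
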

\begin{proof}
The first statement is clear, using the $H$-coactions defined in \eqref{coactions}. For the second statement we follow \cite[Lemma 5.4]{SchDC}. By assumption $\Delta^\bullet\colon\Omega^\bullet(H)\to\Omega^\bullet(H)\otimes\Omega^\bullet(H)$ is a morphism of DGAs. In particular, $\Delta^\bullet$ is a graded algebra morphism and so is $\varepsilon^\bullet$. It remains to prove coassociativity
\begin{equation}\label{eq:gradedCoAs}
(\Delta^\bullet\otimes\mathrm{id})\circ\Delta^\bullet=(\mathrm{id}\otimes\Delta^\bullet)\circ\Delta^\bullet,
\end{equation}
counitality
\begin{equation}\label{eq:gradedCoUni}
	(\varepsilon^\bullet\otimes\mathrm{id})\circ\Delta^\bullet=\mathrm{id}=(\mathrm{id}\otimes\varepsilon^\bullet)\circ\Delta^\bullet
\end{equation}
and that \eqref{gradedS} is a well-defined antipode. For coassociativity we observe that both sides of \eqref{eq:gradedCoAs} are morphisms of DGAs. In particular, they are uniquely determined by their zeroth order, which are $(\Delta\otimes\mathrm{id})\circ\Delta$ and $(\mathrm{id}\otimes\Delta)\circ\Delta$, respectively. Since the latter coincide by the coassociativity of $\Delta$, the DGA morphisms in \eqref{eq:gradedCoAs} have to coincide, as well. Note that $\varepsilon^\bullet\colon\Omega^\bullet(H)\to\Bbbk$ is a morphism of DGAs, as well. Since the DGA $\Bbbk$ is concentrated in degree zero $\varepsilon^\bullet$ has to vanish on higher degrees. All maps in \eqref{eq:gradedCoUni} are morphisms of DGAs, as such determined by their zeroth order, which implies that \eqref{eq:gradedCoUni} is an equality. For the antipode we refer to \cite[Corollary 6.3]{SchDC}.
\end{proof}
Note that up to degree $1$ the morphisms $\Delta^\bullet$ and $\Delta^\bullet_\mathrm{bc}$ coincide. The same is true for the antipode, since
\begin{equation}
	S^1_\mathrm{bc}(h\mathrm{d}h')
	=-S(h_1h'_1)h_2\mathrm{d}(h'_2)S(h_3h'_3)
	=-S(h'_1)\mathrm{d}(h'_2)S(h'_3)S(h)
	=\mathrm{d}(S(h'))S(h)
	=S^1(h\mathrm{d}h')
\end{equation}
for all $h,h'\in H$.

\subsection{Coinvariant forms and vertical forms}\label{sec:VF}

Fix a quantum principal bundle (QPB) $A$ with coinvariant subalgebra $B:=A^{\mathrm{co}H}$. Recall from Definition \ref{def:QPB} that this means $B\subseteq A$ is a faithfully flat Hopf--Galois extension. We further fix a bicovariant DC $\Omega^\bullet(H)$ on $H$ and denote the \textbf{left coinvariant forms} by 
\begin{equation}
	\Lambda^\bullet:={}^{\mathrm{co}H}\Omega^\bullet(H)
	=\bigoplus_{k\geq 0}\{\omega\in\Omega^k(H)~|~{}_{\Omega^k(H)}\Delta(\omega)=1\otimes\omega\},
\end{equation}
where ${}_{\Omega^k(H)}\Delta\colon\Omega^k(H)\to H\otimes\Omega^k(H)$ denotes the left $H$-coaction on $\Omega^k(H)$. Clearly, $\Lambda^\bullet\subseteq\Omega^\bullet(H)$ is a differential graded subalgebra and $\Lambda^0\cong\Bbbk$.
\begin{lemma}\label{lem:YD}
For any bicovariant DC $\Omega^\bullet(H)$ on $H$ it follows that
\begin{enumerate}
\item[i.)] with respect to the
right $H$-action $\leftharpoonup\colon\Lambda^\bullet\otimes H\to\Lambda^\bullet$ defined by $\vartheta\leftharpoonup h:=S(h_1)\vartheta h_2$ the graded algebra $\Lambda^\bullet$ is a graded right $H$-module algebra, i.e.
\begin{equation}
	(\vartheta\wedge\vartheta')\leftharpoonup h=(\vartheta\leftharpoonup h_1)\wedge(\vartheta'\leftharpoonup h_2)\qquad\text{and}\qquad
	1\leftharpoonup h=\varepsilon(h)1
\end{equation}
for all $\vartheta,\vartheta'\in\Lambda^\bullet$ and $h\in H$.
	
\item[ii.)] $\Lambda^\bullet$ is a graded right-right Yetter-Drinfel'd module with
right $H$-action $\leftharpoonup\colon\Lambda^\bullet\otimes H\to\Lambda^\bullet$ and right $H$-coaction defined as the restriction $\Delta^\mathrm{bc}_{\Lambda^\bullet}:=\Delta^\bullet_\mathrm{bc}|_{\Lambda^\bullet}\colon\Lambda^\bullet\to\Lambda^\bullet\otimes\Omega^\bullet(H)$ of $\Delta^\bullet_\mathrm{bc}\colon\Omega^\bullet(H)\to\Omega^\bullet(H)\otimes\Omega^\bullet(H)$. Explicitly, the compatibility condition reads
\begin{equation}\label{eq:YDbc}
(\vartheta\leftharpoonup h)_{(1)}\otimes(\vartheta\leftharpoonup h)_{(2)}
=\vartheta_{(1)}\leftharpoonup h_2\otimes S(h_1)\vartheta_{(2)}h_3
\end{equation}
for all $\vartheta\in\Lambda^\bullet$ and $h\in H$.

\item[iii.)] there is a surjective morphism
\begin{equation}
	\pi_{\Lambda^\bullet}\colon\Omega^\bullet(H)\twoheadrightarrow\Lambda^\bullet,\qquad
	\omega\mapsto S(\omega_{-1})\omega_0
\end{equation}
of graded vector spaces.
\end{enumerate}
\end{lemma}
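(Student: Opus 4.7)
My approach is a direct Sweedler-index verification, relying on two structural facts: ${}_{\Omega^\bullet(H)}\Delta$ is a graded algebra morphism (since $\Omega^\bullet(H)$ is a graded left $H$-comodule algebra), and $\Delta^\bullet_\mathrm{bc}$ is a graded algebra morphism by Proposition \ref{prop:GradHopf}. I would treat (iii) first, since it isolates the key coassociativity manipulation, and then adapt the same ideas to (i) and (ii).

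For (iii), I would compute ${}_{\Omega^\bullet(H)}\Delta(S(\omega_{-1})\omega_0) = \Delta(S(\omega_{-1})) \cdot {}_{\Omega^\bullet(H)}\Delta(\omega_0)$ using multiplicativity of the left coaction (which on $H \subseteq \Omega^0(H)$ reduces to $\Delta$), expand $\Delta(S(\omega_{-1})) = S((\omega_{-1})_2) \otimes S((\omega_{-1})_1)$, and combine the Sweedler indices of $\Delta$ on $\omega_{-1}$ with those of ${}_{\Omega^\bullet(H)}\Delta$ on $\omega_0$ via the coassociativity relation $(\omega_{-1})_1 \otimes (\omega_{-1})_2 \otimes \omega_0 = \omega_{-1} \otimes (\omega_0)_{-1} \otimes (\omega_0)_0$. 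After this identification, the antipode axiom $S(x_1)x_2 = \varepsilon(x)1$ applied to the two middle factors and the counit property $\varepsilon(\omega_{-1})\omega_0 = \omega$ of the left coaction collapse the expression to $1 \otimes S(\omega_{-1})\omega_0$, proving left coinvariance. Surjectivity is immediate from $\pi_{\Lambda^\bullet}(\vartheta) = S(1)\vartheta = \vartheta$ for every $\vartheta \in \Lambda^\bullet$, so $\pi_{\Lambda^\bullet}$ is in fact a projection onto $\Lambda^\bullet$.

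Part (i) proceeds analogously. Stability ${}_{\Omega^\bullet(H)}\Delta(S(h_1)\vartheta h_2) = 1 \otimes S(h_1)\vartheta h_2$ follows from the same multiplicativity combined with ${}_{\Omega^\bullet(H)}\Delta(\vartheta) = 1 \otimes \vartheta$ and the antipode axiom $S(h_2)h_3 = \varepsilon(h_2)1$. The right-action and unit axioms reduce to anti-multiplicativity of $S$ under $\Delta$ and counitality (in particular $1 \leftharpoonup h = S(h_1)h_2 = \varepsilon(h)1$), and the graded module-algebra identity $(\vartheta \wedge \vartheta') \leftharpoonup h = (\vartheta \leftharpoonup h_1) \wedge (\vartheta' \leftharpoonup h_2)$ is obtained by inserting $h_2 S(h_3) = \varepsilon(h_2)1$ into the product $S(h_1)\vartheta h_2 S(h_3)\vartheta' h_4$; note that elements of $H \subseteq \Omega^0(H)$ pass through forms of arbitrary degree without producing a Koszul sign.

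For (ii), I would first observe that $\Delta^\bullet_\mathrm{bc}(\vartheta) = \vartheta_0 \otimes \vartheta_1 + 1 \otimes \vartheta$ lies in $\Lambda^\bullet \otimes \Omega^\bullet(H)$ for $\vartheta \in \Lambda^\bullet$: left $H$-colinearity of $\Delta_{\Omega^\bullet(H)}$ forces the first summand into $\Lambda^\bullet \otimes H$, while the second lies trivially in $\Lambda^\bullet \otimes \Omega^\bullet(H)$. The YD compatibility \eqref{eq:YDbc} is then verified by direct expansion: writing $\Delta^\bullet_\mathrm{bc}(S(h_1)\vartheta h_2) = \Delta(S(h_1))\Delta^\bullet_\mathrm{bc}(\vartheta)\Delta(h_2)$ and substituting the two summands, the $\vartheta_0 \otimes \vartheta_1$ piece produces $S(h_2)\vartheta_0 h_3 \otimes S(h_1)\vartheta_1 h_4$, while the $1 \otimes \vartheta$ piece collapses via $S(h_2)h_3 = \varepsilon(h_2)1$ to $1 \otimes S(h_1)\vartheta h_2$. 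These two terms match the right-hand side of \eqref{eq:YDbc} term by term under the same splitting of $\vartheta_{(1)} \otimes \vartheta_{(2)}$. The main technical obstacle is the four-index coassociativity bookkeeping in (iii); once this is set up, everything else is routine Sweedler manipulation.
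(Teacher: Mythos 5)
Your proposal is correct and follows essentially the same route as the paper: part (i) is the routine verification the paper dismisses as ``clear,'' part (ii) is exactly the paper's computation (expanding $\Delta^\bullet_\mathrm{bc}(\vartheta)=\vartheta_0\otimes\vartheta_1+1\otimes\vartheta$, using multiplicativity of $\Delta^\bullet_\mathrm{bc}$ and the commutativity of the two coactions to see that $\vartheta_0\otimes\vartheta_1\in\Lambda^\bullet\otimes H$, then matching the two summands on each side of \eqref{eq:YDbc}), and part (iii) is a correct filling-in of the coinvariance/section check that the paper leaves as ``one easily verifies.'' The only difference is presentational: you invoke multiplicativity of $\Delta^\bullet_\mathrm{bc}$ as a graded algebra morphism from Proposition \ref{prop:GradHopf}, whereas the paper applies the bimodule-covariance of $\Delta_{\Omega^\bullet(H)}$ and ${}_{\Omega^\bullet(H)}\Delta$ separately, which amounts to the same computation.
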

\begin{proof}
\begin{enumerate}
\item[i.)] The map $\leftharpoonup$ is clearly a right $H$-action which preserves $\Lambda^\bullet$. Then the statement follows by the associativity of the wedge product $\wedge$.

\item[ii.)] Note that $\Delta_\mathrm{bc}^\bullet(\Lambda^\bullet)\subseteq\Lambda^\bullet\otimes\Omega^\bullet(H)$ since for $\vartheta\in\Lambda^k$ we obtain $\Delta_\mathrm{bc}^k(\vartheta)=\vartheta_0\otimes\vartheta_1+1\otimes\vartheta$ and while $1\in\Lambda^0$ we also have $\vartheta_0\otimes\vartheta_1\in\Lambda^k\otimes H$ by the commutativity of $\Delta_{\Omega^\bullet(H)}$ and ${}_{\Omega^\bullet(H)}\Delta$.
In order to verify the Yetter-Drinfel'd property \eqref{eq:YDbc} we observe that
\begin{align*}
(\vartheta\leftharpoonup h)_{(1)}\otimes(\vartheta\leftharpoonup h)_{(2)}
&=(S(h_1)\vartheta h_2)_{(1)}\otimes(S(h_1)\vartheta h_2)_{(2)}\\
&=\Delta_{\Omega^\bullet(H)}(S(h_1)\vartheta h_2)+{}_{\Omega^\bullet(H)}\Delta(S(h_1)\vartheta h_2)\\
&=S(h_2)\vartheta_0h_3\otimes S(h_1)\vartheta_1h_4+1\otimes S(h_1)\vartheta h_2
\end{align*}
coincides with
\begin{align*}
\vartheta_{(1)}\leftharpoonup h_2\otimes S(h_1)\vartheta_{(2)}h_3
&=\vartheta_0\leftharpoonup h_2\otimes S(h_1)\vartheta_1h_3
+1\leftharpoonup h_2\otimes S(h_1)\vartheta h_3\\
&=S(h_2)\vartheta_0h_3\otimes S(h_1)\vartheta_1h_4
+1\otimes S(h_1)\vartheta h_2
\end{align*}
for all $h\in H$ and $\vartheta\in\Lambda^\bullet$.

\item[iii.)] One easily verifies that $\pi_{\Lambda^\bullet}$ is a degree preserving section of the inclusion $\Lambda^\bullet\subseteq\Omega^\bullet(H)$.
\end{enumerate}
\end{proof}
Recall from the previous section that $\Delta^\bullet_\mathrm{bc}$ is an extension of the comultiplication, but not as a DGA morphism in general. In case the DGA morphism extension $\Delta^\bullet\colon\Omega^\bullet(H)\to\Omega^\bullet(H)\otimes\Omega^\bullet(H)$, $\Delta^\bullet(\omega)=\omega_{[1]}\otimes\omega_{[2]}$ exists we call $\Omega^\bullet(H)$ complete, according to Definition \ref{def:completeH}. Using $\Delta^\bullet$ instead of $\Delta^\bullet_\mathrm{bc}$ we prove an analogue of Proposition \ref{lem:YD}. We further provide an identification of $\Omega^\bullet(H)$ and $H\otimes\Lambda^\bullet$ as DCi via the fundamental theorem of Hopf modules. In particular, the following content extends (and clarifies) the considerations in \cite{DurII}.
\begin{proposition}\label{prop:gradedYD}
Let $\Omega^\bullet(H)$ be a complete DC on $H$. Then
\begin{enumerate}
\item[i.)] $\Lambda^\bullet$ is a graded right-right Yetter-Drinfel'd module with
right $H$-action $\leftharpoonup\colon\Lambda^\bullet\otimes H\to\Lambda^\bullet$ defined by $\vartheta\leftharpoonup h:=S(h_1)\vartheta h_2$
and right $H$-coaction defined as the restriction $\Delta_{\Lambda^\bullet}:=\Delta^\bullet|_{\Lambda^\bullet}\colon\Lambda^\bullet\to\Lambda^\bullet\otimes\Omega^\bullet(H)$ of $\Delta^\bullet\colon\Omega^\bullet(H)\to\Omega^\bullet(H)\otimes\Omega^\bullet(H)$. Explicitly, the compatibility condition reads
\begin{equation}\label{eq:YD}
(\vartheta\leftharpoonup h)_{[1]}\otimes(\vartheta\leftharpoonup h)_{[2]}
=\vartheta_{[1]}\leftharpoonup h_2\otimes S(h_1)\vartheta_{[2]}h_3
\end{equation}
for all $\vartheta\in\Lambda^\bullet$ and $h\in H$.

\item[ii.)] there is an isomorphism
\begin{equation}\label{iso:OmegaH}
\Xi\colon\Omega^\bullet(H)\xrightarrow{\cong}H\otimes\Lambda^\bullet,\qquad
\omega\mapsto\omega_{-2}\otimes S(\omega_{-1})\omega_0
\end{equation}
of bicovariant DCi on $H$ with inverse $H\otimes\Lambda^\bullet\to\Omega^\bullet(H)$, $h\otimes\vartheta\mapsto h\vartheta$. The multiplication and differential on $H\otimes\Lambda^\bullet$ is defined by
\begin{equation}\label{eq:FundWedge}
\begin{split}
	(h\otimes\vartheta)\wedge(h'\otimes\vartheta')&:=hh'_1\otimes(\vartheta\leftharpoonup h'_2)\wedge\vartheta',\\
	\mathrm{d}(h\otimes\vartheta)&:=h\otimes\mathrm{d}\vartheta+h_1\otimes\varpi(\pi_\varepsilon(h_2))\wedge\vartheta,
\end{split}
\end{equation}
respectively, where $h\otimes\vartheta,h'\otimes\vartheta'\in H\otimes\Lambda^\bullet$.
\end{enumerate}
\end{proposition}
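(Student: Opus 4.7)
For part (i), the plan is to first verify that the adjoint action $\vartheta \leftharpoonup h = S(h_1)\vartheta h_2$ preserves $\Lambda^\bullet$. Since ${}_{\Omega^\bullet(H)}\Delta$ is an algebra morphism restricting to $\Delta$ on $H \subseteq \Omega^0(H)$, a direct application to $S(h_1)\vartheta h_2$ gives $1 \otimes S(h_1)\vartheta h_2$ by left coinvariance of $\vartheta$ together with the antipode axiom, and associativity of $\wedge$ makes $\Lambda^\bullet$ into a graded right $H$-module algebra exactly as in Lemma \ref{lem:YD}. The real subtlety is to show $\Delta^\bullet(\Lambda^\bullet) \subseteq \Lambda^\bullet \otimes \Omega^\bullet(H)$. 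Decomposing $\Delta^\bullet(\vartheta) = \sum_{i+j=k}(\Delta^\bullet(\vartheta))_{i,j}$ bihomogeneously for $\vartheta \in \Lambda^k$, I would apply the projection $\pi^0 \otimes \pi^i \otimes \pi^j$ to the coassociativity identity $(\Delta^\bullet \otimes \mathrm{id})\Delta^\bullet(\vartheta) = (\mathrm{id} \otimes \Delta^\bullet)\Delta^\bullet(\vartheta)$, which is valid since $\Delta^\bullet$ is a DGA morphism by completeness. The left-hand side produces $({}_{\Omega^i(H)}\Delta \otimes \mathrm{id})(\Delta^\bullet(\vartheta))_{i,j}$ via the definition \eqref{coactions} of ${}_{\Omega^i(H)}\Delta$, while the right-hand side collapses to $1 \otimes (\Delta^\bullet(\vartheta))_{i,j}$ via the a priori left coinvariance ${}_{\Omega^k(H)}\Delta(\vartheta) = 1 \otimes \vartheta$; this is precisely the required coinvariance of the first tensor factor. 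Coassociativity and counitality of $\Delta_{\Lambda^\bullet}$ are then inherited directly from those of $\Delta^\bullet$.

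The Yetter--Drinfel'd condition \eqref{eq:YD} is then a direct multiplicative calculation exploiting that $\Delta^\bullet$ is an algebra morphism extending $\Delta$: expanding $\Delta^\bullet(S(h_1)\vartheta h_2) = \Delta(S(h_1)) \cdot \Delta^\bullet(\vartheta) \cdot \Delta(h_2)$, substituting $\Delta(S(h_1)) = S(h_2) \otimes S(h_1)$ (from $\Delta \circ S = (S \otimes S)\tau\Delta$) and $\Delta(h_2) = h_3 \otimes h_4$ in iterated Sweedler notation, and rearranging yields precisely $(\vartheta_{[1]} \leftharpoonup h_2) \otimes S(h_1)\vartheta_{[2]} h_3$.

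For part (ii), the fundamental theorem of Hopf modules (Lemma \ref{FundThm}) applied to each $\Omega^k(H)$ provides an isomorphism $\Omega^k(H) \cong H \otimes \Lambda^k$ of left covariant $H$-bimodules, so summing over degrees yields $\Xi$ as an isomorphism of graded vector spaces with inverse $h \otimes \vartheta \mapsto h\vartheta$ (the inverse is verified directly using ${}_{\Omega^k(H)}\Delta(\vartheta) = 1 \otimes \vartheta$ and counitality). For multiplicativity, writing $\omega = h\vartheta$ and $\eta = h'\vartheta'$, the bicovariant bimodule identity $\vartheta h' = h'_1 (\vartheta \leftharpoonup h'_2)$ yields $\omega \wedge \eta = hh'_1 (\vartheta \leftharpoonup h'_2) \wedge \vartheta'$; since $(\vartheta \leftharpoonup h'_2) \wedge \vartheta' \in \Lambda^\bullet$, applying $\Xi$ reproduces the first formula in \eqref{eq:FundWedge}. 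Compatibility with $\mathrm{d}$ follows from the Leibniz rule $\mathrm{d}(h\vartheta) = \mathrm{d}h \wedge \vartheta + h\mathrm{d}\vartheta$ together with the identity $\mathrm{d}h = h_1 \varpi(\pi_\varepsilon(h_2))$ (an immediate consequence of the definition of $\varpi$ and the antipode axiom), combined with $\mathrm{d}\vartheta \in \Lambda^\bullet$ by left $H$-colinearity of $\mathrm{d}$ under bicovariance. I expect the main obstacle to be the degree-bookkeeping in part (i): the projection argument must carefully identify the tridegree components on both sides of the coassociativity identity, and it is precisely completeness (i.e., $\Delta^\bullet$ being a DGA morphism rather than merely a graded algebra morphism) that makes this route viable; once this step is secured, everything else reduces to standard Hopf-algebraic manipulations.
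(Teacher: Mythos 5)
Your proposal is correct and follows essentially the same route as the paper's proof: part (i) hinges on combining coassociativity of $\Delta^\bullet$ with the identity $(\pi^0\otimes\mathrm{id})\circ\Delta^\bullet={}_{\Omega^\bullet(H)}\Delta$ and left coinvariance of $\vartheta$ (the paper does this in one stroke with $\pi^0\otimes\mathrm{id}\otimes\mathrm{id}$, your bihomogeneous refinement $\pi^0\otimes\pi^i\otimes\pi^j$ is the same computation with finer bookkeeping), the Yetter--Drinfel'd condition is the same multiplicativity calculation, and part (ii) is the same transport of $\wedge$ and $\mathrm{d}$ through the fundamental-theorem isomorphism $\Xi$, merely phrased via $\vartheta h'=h'_1(\vartheta\leftharpoonup h'_2)$ and $\mathrm{d}h=h_1\varpi(\pi_\varepsilon(h_2))$ instead of the paper's direct Sweedler-notation evaluation.
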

\begin{proof}
\begin{enumerate}
\item[i.)] As in the proof of Lemma \ref{lem:YD} we have that $\leftharpoonup$ is a right $H$-action on $\Lambda^\bullet$.
From Proposition \ref{prop:gradedHopf} we know that $\Omega^\bullet(H)$ is a graded Hopf algebra. We show that $\Delta^\bullet(\Lambda^\bullet)\subseteq\Lambda^\bullet\otimes\Omega^\bullet(H)$. Let $\vartheta\in\Lambda^k$ and recall that $(\pi^0\otimes\mathrm{id})\circ\Delta^\bullet={}_{\Omega^k(H)}\Delta$ is the left $H$-coaction on $\Omega^k(H)$, where $\pi^0\colon\Omega^\bullet(H)\to H$ is the natural projection. Then, using the coassociativity of $\Delta^\bullet$ and that $\vartheta$ is left coinvariant, we obtain
\begin{align*}
({}_{\Omega^k(H)}\Delta\otimes\mathrm{id})\Delta^\bullet(\vartheta)
&=((\pi^0\otimes\mathrm{id}\otimes\mathrm{id})\circ(\Delta^\bullet\otimes\mathrm{id})\circ\Delta^\bullet)(\vartheta)\\
&=((\pi^0\otimes\mathrm{id}\otimes\mathrm{id})\circ(\mathrm{id}\otimes\Delta^\bullet)\circ\Delta^\bullet)(\vartheta)\\
&=((\mathrm{id}\otimes\Delta^\bullet)\circ(\pi^0\otimes\mathrm{id})\circ\Delta^\bullet)(\vartheta)\\
&=((\mathrm{id}\otimes\Delta^\bullet)\circ{}_{\Omega^k(H)}\Delta)(\vartheta)\\
&=1\otimes\Delta^\bullet(\vartheta).
\end{align*}
Thus, the restriction $\Delta_{\Lambda^\bullet}:=\Delta^\bullet|_{\Lambda^\bullet}\colon\Lambda^\bullet\to\Lambda^\bullet\otimes\Omega^\bullet(H)$ of $\Delta^\bullet\colon\Omega^\bullet(H)\to\Omega^\bullet(H)\otimes\Omega^\bullet(H)$ gives is a right $\Omega^\bullet(H)$-comodule structure on $\Lambda^\bullet$.
For the Yetter-Drinfel'd property \eqref{eq:YD} we use that $\Delta^\bullet$ is an algebra morphism to conclude
\begin{align*}
(\vartheta\leftharpoonup h)_{[1]}\otimes(\vartheta\leftharpoonup h)_{[2]}
&=(S(h_1)\vartheta h_2)_{[1]}\otimes(S(h_1)\vartheta h_2)_{[2]}\\
&=S(h_2)\vartheta_{[1]}h_3\otimes S(h_1)\vartheta_{[2]}h_4\\
&=\vartheta_{[1]}\leftharpoonup h_2\otimes S(h_1)\vartheta_{[2]}h_3
\end{align*}
for all $\vartheta\in\Lambda^\bullet$ and $h\in H$.

\item[ii.)] According to the fundamental theorem of Hopf modules (in the form of Lemma \ref{FundThm} ii.)) the map $\Xi$ defined in \eqref{iso:OmegaH} is an isomorphism of bicovariant $H$-bimodules and it is clearly degree preserving. We show that the wedge product and differential of $\Omega^\bullet(H)$ are transformed into \eqref{eq:FundWedge} under the isomorphism $\Xi$, which then concludes the proposition. In fact, for $h,h'\in H$ and $\vartheta,\vartheta'\in\Lambda^\bullet$ we obtain
\begin{align*}
\Xi((h\vartheta)\wedge(h'\vartheta'))
&=(h\vartheta)_{-2}(h'\vartheta')_{-2}\otimes S((h\vartheta)_{-1}(h'\vartheta')_{-1})(h\vartheta)_0\wedge(h'\vartheta')_0\\
&=h_1h'_1\otimes S(h_2h'_2)h_3\vartheta\wedge h'_3\vartheta'\\
&=hh'_1\otimes(\vartheta\leftharpoonup h'_2)\wedge\vartheta'
\end{align*}
and
\begin{align*}
\Xi(\mathrm{d}(h\vartheta))
&=\Xi(\mathrm{d}(h)\wedge\vartheta+h\mathrm{d}(\vartheta))\\
&=h_1\vartheta_{-2}\otimes S(h_2\vartheta_{-1})(\mathrm{d}(h_3)\wedge\vartheta_0
+h_3\mathrm{d}(\vartheta_0))\\
&=h_1\otimes\varpi(\pi_\varepsilon(h_2))\wedge\vartheta+h\otimes\mathrm{d}\vartheta,
\end{align*}
where we recall that $\varpi(g):=S(g_1)\mathrm{d}(g_2)$ on $g\in H^+$ and that $\vartheta$ is left $H$-covariant.
\end{enumerate}
\end{proof}
Using the identification $\Xi\colon\Omega^\bullet(H)\xrightarrow{\cong} H\otimes\Lambda^\bullet$ we show that the right $H$-coaction on $H\otimes\Lambda^\bullet$ extends to a morphism of DGAs if $\Omega^\bullet(H)$ is complete. 
\begin{lemma}
Let $\Omega^\bullet(H)$ be a complete DC on $H$. Then the right $H$-coaction 
\begin{equation}
	\Delta_v^0\colon H\otimes\Lambda^0\to (H\otimes\Lambda^0)\otimes H,\qquad
	h\otimes 1\mapsto(h_1\otimes 1)\otimes h_2
\end{equation}
extends to a morphism
\begin{equation}\label{eq:DelVer}
\Delta_v^\bullet\colon H\otimes\Lambda^\bullet\to(H\otimes\Lambda^\bullet)\otimes\Omega^\bullet(H),\qquad
h\otimes\vartheta\mapsto\Xi(h_1\vartheta_{[1]})\otimes h_2\vartheta_{[2]}
\end{equation}
of DGAs.
\end{lemma}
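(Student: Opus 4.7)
The plan is to recognize $\Delta_v^\bullet$ as the transport of $\Delta^\bullet\colon\Omega^\bullet(H)\to\Omega^\bullet(H)\otimes\Omega^\bullet(H)$ along the DGA isomorphism $\Xi$ of Proposition~\ref{prop:gradedYD}. Since $\Omega^\bullet(H)$ is complete by assumption, $\Delta^\bullet$ is already a morphism of DGAs by Definition~\ref{def:completeH}, so the result will reduce to the functoriality of tensor products of DGAs with essentially no further computation.

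Concretely, I would first establish the identification
$$
\Delta_v^\bullet = (\Xi\otimes\mathrm{id}_{\Omega^\bullet(H)})\circ\Delta^\bullet\circ\Xi^{-1},
$$
where $\Xi^{-1}(h\otimes\vartheta) = h\vartheta$. Since $\Delta^\bullet$ is a graded algebra morphism (Proposition~\ref{prop:gradedHopf}) and both slots of $\Delta(h) = h_1\otimes h_2$ sit in degree zero, the Koszul signs coming from the tensor product of graded algebras vanish and one obtains
$$
\Delta^\bullet(h\vartheta) = h_1\vartheta_{[1]}\otimes h_2\vartheta_{[2]}.
$$
Applying $\Xi\otimes\mathrm{id}$ to the first tensor factor recovers the formula~\eqref{eq:DelVer}.

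Next, I would invoke the fact that the tensor product of morphisms of DGAs is again a morphism of DGAs. Since $\Xi$ is an isomorphism of DGAs by Proposition~\ref{prop:gradedYD}, so is $\Xi\otimes\mathrm{id}_{\Omega^\bullet(H)}$, from $\Omega^\bullet(H)\otimes\Omega^\bullet(H)$ to $(H\otimes\Lambda^\bullet)\otimes\Omega^\bullet(H)$, where both spaces carry the natural tensor-product DGA structure; on the target this is precisely the structure induced by the wedge product and differential of~\eqref{eq:FundWedge} tensored with those of $\Omega^\bullet(H)$. Since $\Xi^{-1}$ and $\Delta^\bullet$ are both morphisms of DGAs, so is their composition $\Delta_v^\bullet$.

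Finally, the agreement with the right $H$-coaction in degree zero is immediate: unitality of $\Delta^\bullet$ forces $1_{[1]}\otimes 1_{[2]} = 1\otimes 1$, and $\Xi$ restricted to $H = \Omega^0(H)$ is $h\mapsto h\otimes 1$, so $\Delta_v^0(h\otimes 1) = \Xi(h_1)\otimes h_2 = (h_1\otimes 1)\otimes h_2$. The only delicate point I foresee is bookkeeping of the tensor-product DGA structures, but this is automatic once $\Xi$ has been established as a DGA isomorphism in Proposition~\ref{prop:gradedYD}.
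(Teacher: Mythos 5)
Your proposal is correct and is essentially identical to the paper's own proof: the paper also defines $\Delta_v^\bullet$ by transporting $\Delta^\bullet$ along $\Xi$, i.e.\ via the commutative square $\Delta_v^\bullet=(\Xi\otimes\mathrm{id})\circ\Delta^\bullet\circ\Xi^{-1}$, and concludes from completeness (so that $\Delta^\bullet$ is a DGA morphism) together with Proposition~\ref{prop:gradedYD}~ii.) (so that $\Xi$ is an isomorphism of DCi). Your extra remarks on the vanishing Koszul signs and the degree-zero check are fine and merely make explicit what the paper leaves implicit.
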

\begin{proof}
This follows from the commutativity of the diagram
\begin{equation}
\begin{tikzcd}
H\otimes\Lambda^\bullet \arrow{rr}{\Delta^\bullet_v} \arrow{d}[swap]{\Xi^{-1}}
& & (H\otimes\Lambda^\bullet)\otimes\Omega^\bullet(H)\\
\Omega^\bullet(H) \arrow{rr}{\Delta^\bullet}
& & \Omega^\bullet(H)\otimes\Omega^\bullet(H) \arrow{u}{\Xi\otimes\mathrm{id}}
\end{tikzcd},
\end{equation}
together with the assumption that $\Delta^\bullet$ is a morphism of DGAs and the fact that $\Xi$ is an isomorphism of DCi, as shown in Proposition \ref{prop:gradedYD} ii.).
\end{proof}
The idea is now to take the model $H\otimes\Lambda^\bullet$ and extend it to $A\otimes\Lambda^\bullet$ for a total space algebra $A$. Note that in this more general case there is no fundamental theorem of Hopf modules to induce a DC structure on $A\otimes\Lambda^\bullet$ from a certain "total space calculus" on $A$. Nevertheless, we can mimic the operations \eqref{eq:FundWedge} to obtain a DC on $A\otimes\Lambda^\bullet$. The following proposition is taken from \cite[Lemma 3.1]{DurII}. Below, we provide an elementary algebraic proof of this result.
\begin{proposition}
Let $B=A^{\mathrm{co}H}\subseteq A$ be a QPB and $\Omega^\bullet(H)$ the maximal prolongation of a bicovariant FODC on $H$. Then there is a DC $\mathrm{ver}^\bullet$ on $A$ defined by $\mathrm{ver}^\bullet:=A\otimes\Lambda^\bullet$ with wedge product and differential determined by
\begin{equation}\label{verwedge}
\begin{split}
(a\otimes\vartheta)\wedge(a'\otimes\vartheta')
&:=aa'_0\otimes(\vartheta\leftharpoonup a'_1)\wedge\vartheta',\\
\mathrm{d}_v(a\otimes\vartheta)
&:=a\otimes\mathrm{d}\vartheta+a_0\otimes\varpi(\pi_\varepsilon(a_1))\wedge\vartheta
\end{split}
\end{equation}
for all $a\otimes\vartheta,a'\otimes\vartheta'\in\mathrm{ver}^\bullet$.
We call $\mathrm{ver}^\bullet$ the \textbf{vertical forms} on $A$.
\end{proposition}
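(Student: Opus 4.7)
The plan is to verify directly that $(A\otimes\Lambda^\bullet,\wedge,\mathrm{d}_v)$ is a DGA generated in degree zero. The formulas in \eqref{verwedge} are the formal analogs of those in \eqref{eq:FundWedge} with the comultiplication $\Delta$ of $H$ replaced by the coaction $\Delta_A$, so the algebraic manipulations mirror those implicit in Proposition \ref{prop:gradedYD}, but now rely only on $A$ being a right $H$-comodule algebra and on $\Lambda^\bullet$ being a graded right $H$-module algebra via $\leftharpoonup$ (Lemma \ref{lem:YD} $i.$).

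First I would check that $\wedge$ is associative with unit $1_A\otimes 1$ by expanding both parenthesizations of a triple product; the match follows from coassociativity of $\Delta_A$, its multiplicativity as an algebra map, and the compatibility of $\leftharpoonup$ with the wedge product on $\Lambda^\bullet$. The Leibniz rule for $\mathrm{d}_v$ is then verified by a direct computation: expand $\mathrm{d}_v((a\otimes\vartheta)\wedge(a'\otimes\vartheta'))$ using \eqref{verwedge} and compare it with $\mathrm{d}_v(a\otimes\vartheta)\wedge(a'\otimes\vartheta')+(-1)^{|\vartheta|}(a\otimes\vartheta)\wedge\mathrm{d}_v(a'\otimes\vartheta')$; matching uses the Leibniz rule for $\mathrm{d}$ on $\Lambda^\bullet\subseteq\Omega^\bullet(H)$ and coassociativity of $\Delta_A$.

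For $\mathrm{d}_v\circ\mathrm{d}_v=0$, I would apply $\mathrm{d}_v$ twice to $a\otimes\vartheta$, producing four terms. The one involving $\mathrm{d}^2\vartheta$ vanishes, and the remaining three cancel after invoking the Cartan--Maurer equation \eqref{eq:CartanMaurer}, $\mathrm{d}(\varpi(\pi_\varepsilon(h)))=-\varpi(\pi_\varepsilon(h_1))\wedge\varpi(\pi_\varepsilon(h_2))$, together with coassociativity of $\Delta_A$.

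The most subtle part is the surjectivity condition $\mathrm{ver}^k=\mathrm{span}_\Bbbk\{a^0\mathrm{d}_v a^1\wedge\ldots\wedge\mathrm{d}_v a^k\}$. On $\mathrm{ver}^0=A$ one has $\mathrm{d}_v(a)=a_0\otimes\varpi(\pi_\varepsilon(a_1))$, and using the Hopf--Galois translation map together with \eqref{tau6} one finds $h^{\langle 1\rangle}\mathrm{d}_v(h^{\langle 2\rangle})=1\otimes\varpi(\pi_\varepsilon(h))$ for every $h\in H$, producing all of $\Lambda^1$ inside $\mathrm{ver}^1$. Multiplying on the left by $a\in A$ via the smash product gives all of $A\otimes\Lambda^1$. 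For higher degrees one argues by induction, using the Leibniz rule to rewrite $a^0\mathrm{d}_v a^1\wedge b^0\mathrm{d}_v b^1\wedge\cdots$ as a sum of terms with only a single $A$-coefficient up front, of the required form. The main obstacle lies precisely in combining the Hopf--Galois property with the Leibniz rule to achieve generation in degree zero in all degrees; associativity, Leibniz, and $\mathrm{d}_v^2=0$ themselves are routine (albeit involved) bookkeeping.
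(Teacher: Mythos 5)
Your proposal is correct, and for the DGA axioms (associativity, graded Leibniz rule, $\mathrm{d}_v^2=0$ via the Cartan--Maurer equation) it follows essentially the paper's own verification. Where you genuinely diverge is in the surjectivity (generation-in-degree-zero) step, which is indeed the only non-routine part. The paper lifts the whole tensor $a\otimes h^1\otimes\ldots\otimes h^k$ at once through the iterated canonical map $f\colon A^{\otimes(k+1)}\twoheadrightarrow A\otimes H^{\otimes k}$ built from $\chi'$, and then verifies directly that the resulting word $(a^0\otimes 1)\mathrm{d}_v(a^1\otimes 1)\wedge\ldots\wedge\mathrm{d}_v(a^k\otimes 1)$ equals the given form; this requires a cross-term cancellation argument that hinges on the components $h^i$ lying in $H^+$ (e.g.\ for $k=2$ the observation that $a^0a^1_0a^2_0\otimes a^1_1a^2_1\otimes a^2_2\in A\otimes H^+\otimes H^+$ forces $a^0a^1a^2_0\otimes a^2_1=0$). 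You instead handle one invariant $1$-form at a time via the translation-map identity $h^{\langle 1\rangle}\mathrm{d}_v(h^{\langle 2\rangle})=1\otimes\varpi(\pi_\varepsilon(h))$, which follows from \eqref{tau6}, and then build higher degrees inductively from the exact product formula $(a\otimes\vartheta)\wedge(1\otimes\vartheta')=a\otimes\vartheta\wedge\vartheta'$ together with the standard Leibniz-rule closure argument (the span of words $a^0\mathrm{d}_va^1\wedge\ldots\wedge\mathrm{d}_va^k$ is a subalgebra, since $\mathrm{d}_v(a)b=\mathrm{d}_v(ab)-a\,\mathrm{d}_v(b)$ lets you absorb interior coefficients). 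Both routes use the same two inputs --- the Hopf--Galois property and the fact that $\Lambda^\bullet$ is generated by $\Lambda^1=\varpi(H^+)$ (Proposition \ref{prop:MaxGammaH}) --- but yours trades the paper's multi-index cross-term bookkeeping for the Leibniz rewriting, which is arguably cleaner, while the paper's is more explicitly constructive. One point you should make explicit: for $h^{\langle 1\rangle}\mathrm{d}_v(h^{\langle 2\rangle})$ to be meaningful you need the map $A\otimes A\to\mathrm{ver}^1$, $a\otimes a'\mapsto a\,\mathrm{d}_v(a')$, to descend to $A\otimes_BA$; this is a short check using that elements of $B$ are coinvariant, so $\mathrm{d}_v(ba')=b\,\mathrm{d}_v(a')$ in $\mathrm{ver}^1$, but it should not be omitted.
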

\begin{proof}
The operations \eqref{verwedge} close in $\mathrm{ver}^\bullet=A\otimes\Lambda^\bullet$, since $\Lambda^\bullet$ is a right $H$-module algebra by Proposition \ref{prop:gradedYD} i.) and the image of the Cartan--Maurer form is in $\Lambda^1$. 
Given $a,a',a''\in A$ and $\vartheta,\vartheta',\vartheta''\in\Lambda^\bullet$, the associativity of the wedge product follows from
\begin{align*}
\big((a\otimes\vartheta)\wedge(a'\otimes\vartheta')\big)\wedge(a''\otimes\vartheta'')
&=(aa'_0\otimes(\vartheta\leftharpoonup a'_1)\wedge\vartheta')\wedge(a''\otimes\vartheta'')\\
&=aa'_0a''_0\otimes((\vartheta\leftharpoonup a'_1)\wedge\vartheta')\leftharpoonup a''_1)\wedge\vartheta''\\
&=aa'_0a''_0\otimes(\vartheta\leftharpoonup(a'_1a''_1))\wedge(\vartheta'\leftharpoonup a''_2)\wedge\vartheta''\\
&=(a\otimes\vartheta)\wedge(a'a''_0\otimes(\vartheta'\leftharpoonup a''_1)\wedge\vartheta'')\\
&=(a\otimes\vartheta)\wedge\big((a'\otimes\vartheta')\wedge(a''\otimes\vartheta'')\big),
\end{align*}
where we used that $\Lambda^\bullet$ is a right $H$-module algebra. The map $\mathrm{d}_v$ is of degree $1$ and squares to zero, since
\begin{align*}
\mathrm{d}_v(\mathrm{d}_v(a\otimes\vartheta))
&=\mathrm{d}_v(a\otimes\mathrm{d}\vartheta+a_0\otimes\varpi(\pi_\varepsilon(a_1))\wedge\vartheta)\\
&=a\otimes\mathrm{d}^2\vartheta
+a_0\otimes\varpi(\pi_\varepsilon(a_1))\wedge\mathrm{d}\vartheta
+a_0\otimes\mathrm{d}(\varpi(\pi_\varepsilon(a_1))\wedge\vartheta)
+a_0\otimes\varpi(\pi_\varepsilon(a_1))\wedge\varpi(\pi_\varepsilon(a_2))\wedge\vartheta\\
&=a_0\otimes\varpi(\pi_\varepsilon(a_1))\wedge\mathrm{d}\vartheta
+a_0\otimes\mathrm{d}(\varpi(\pi_\varepsilon(a_1)))\wedge\vartheta
-a_0\otimes\varpi(\pi_\varepsilon(a_1))\wedge\mathrm{d}\vartheta\\
&\quad+a_0\otimes\varpi(\pi_\varepsilon(a_1))\wedge\varpi(\pi_\varepsilon(a_2))\wedge\vartheta\\
&=0,
\end{align*}
using the Cartan--Maurer equation \eqref{eq:CartanMaurer}. Moreover, the graded Leibniz rule holds since
\begin{align*}
\mathrm{d}_v(a\otimes\vartheta)&\wedge(a'\otimes\vartheta')
+(-1)^{|\vartheta|}(a\otimes\vartheta)\wedge\mathrm{d}_v(a'\otimes\vartheta')\\
&=(a\otimes\mathrm{d}\vartheta
+a_0\otimes\varpi(\pi_\varepsilon(a_1))\wedge\vartheta)\wedge(a'\otimes\vartheta')
+(-1)^{|\vartheta|}(a\otimes\vartheta)\wedge(a'\otimes\mathrm{d}\vartheta'
+a'_0\otimes\varpi(\pi_\varepsilon(a'_1))\wedge\vartheta')\\
&=aa'_0\otimes((\mathrm{d}\vartheta)\leftharpoonup a'_1)\wedge\vartheta'
+a_0a'_0\otimes((\varpi(\pi_\varepsilon(a_1))\wedge\vartheta)\leftharpoonup a'_1)\wedge\vartheta'\\
&\quad+(-1)^{|\vartheta|}aa'_0\otimes(\vartheta\leftharpoonup a'_1)\wedge\mathrm{d}\vartheta'
+(-1)^{|\vartheta|}aa'_0\otimes(\vartheta\leftharpoonup a'_1)\wedge\otimes\varpi(\pi_\varepsilon(a'_2))\wedge\vartheta'\\
&=aa'_0\otimes((\mathrm{d}\vartheta)\leftharpoonup a'_1)\wedge\vartheta'
+a_0a'_0\otimes\varpi(\pi_\varepsilon(a_1a'_1))\wedge(\vartheta\leftharpoonup a'_2)\wedge\vartheta'
-aa'_0\otimes\varpi(\pi_\varepsilon(a'_1))\wedge(\vartheta\leftharpoonup a'_2)\wedge\vartheta'\\
&\quad+(-1)^{|\vartheta|}aa'_0\otimes(\vartheta\leftharpoonup a'_1)\wedge\mathrm{d}\vartheta'
+(-1)^{|\vartheta|}aa'_0\otimes(\vartheta\leftharpoonup a'_1)\wedge\otimes\varpi(\pi_\varepsilon(a'_2))\wedge\vartheta'
\end{align*}
coincides with
\allowdisplaybreaks
\begin{align*}
\mathrm{d}_v((a\otimes\vartheta)\wedge(a'\otimes\vartheta'))
&=\mathrm{d}_v(aa'_0\otimes(\vartheta\leftharpoonup a'_1)\wedge\vartheta')\\
&=aa'_0\otimes\mathrm{d}((\vartheta\leftharpoonup a'_1)\wedge\vartheta')
+a_0a'_0\otimes\varpi(\pi_\varepsilon(a_1a'_1))\wedge(\vartheta\leftharpoonup a'_2)\wedge\vartheta'\\
&=aa'_0\otimes\mathrm{d}(\vartheta\leftharpoonup a'_1)\wedge\vartheta'
+(-1)^{|\vartheta|}aa'_0\otimes(\vartheta\leftharpoonup a'_1)\wedge\mathrm{d}\vartheta'\\
&\quad+a_0a'_0\otimes\varpi(\pi_\varepsilon(a_1a'_1))\wedge(\vartheta\leftharpoonup a'_2)\wedge\vartheta'\\
&=aa'_0\otimes((\mathrm{d}\vartheta)\leftharpoonup a'_1)\wedge\vartheta'
+aa'_0\otimes\mathrm{d}(S(a'_1))\wedge\vartheta a'_2\wedge\vartheta'\\
&\quad+(-1)^{|\vartheta|}aa'_0\otimes S(a'_1)\wedge\vartheta\wedge\mathrm{d}(a'_2)\wedge\vartheta'\\
&\quad+(-1)^{|\vartheta|}aa'_0\otimes(\vartheta\leftharpoonup a'_1)\wedge\mathrm{d}\vartheta'\\
&\quad+a_0a'_0\otimes\varpi(\pi_\varepsilon(a_1a'_1))\wedge(\vartheta\leftharpoonup a'_2)\wedge\vartheta'\\
&=aa'_0\otimes((\mathrm{d}\vartheta)\leftharpoonup a'_1)\wedge\vartheta'
-aa'_0\otimes S(a'_1)\mathrm{d}(a'_2)\wedge S(a'_3)\vartheta a'_4\wedge\vartheta'\\
&\quad+(-1)^{|\vartheta|}aa'_0\otimes S(a'_1)\wedge\vartheta\wedge a'_2S(a'_3)\mathrm{d}(a'_4)\wedge\vartheta'\\
&\quad+(-1)^{|\vartheta|}aa'_0\otimes(\vartheta\leftharpoonup a'_1)\wedge\mathrm{d}\vartheta'\\
&\quad+a_0a'_0\otimes\varpi(\pi_\varepsilon(a_1a'_1))\wedge(\vartheta\leftharpoonup a'_2)\wedge\vartheta',
\end{align*}
where we used that $\varpi(\pi_\varepsilon(hh'))=\varpi(\pi_\varepsilon(h))\leftharpoonup h'+\varepsilon(h)\varpi(\pi_\varepsilon(h'))$ for all $h,h'\in H$.

It remains to prove that $\mathrm{ver}^\bullet$ is generated by $\mathrm{ver}^0=A\otimes 1$. By Proposition \ref{prop:MaxGammaH} we know that $\Lambda^\bullet$ is generated by $\Lambda^1$. Together with the surjectivity of $\varpi\colon H^+\to\Lambda^1$ this means that it is sufficient to prove that every element $\omega=a\otimes\varpi(h^1)\wedge\ldots\wedge\varpi(h^k)$ for $a\in A$ and $h^1,\ldots,h^k\in H^+\subseteq H$ is generated by $\mathrm{ver}^0=A\otimes 1$. Since $B\subseteq A^{\mathrm{co}H}\subseteq A$ is a QPB, there is a surjection 
$$
\chi'\colon A\otimes A\twoheadrightarrow A\otimes_BA\twoheadrightarrow A\otimes H,\qquad
\chi'(a\otimes a'):=aa'_0\otimes a'_1.
$$
Successive application of $\chi'$ gives a surjection
\begin{align*}
f:=\chi'\circ(\mathrm{id}_A\otimes\chi')\circ\ldots\circ(\mathrm{id}_{A^{\otimes k}}\otimes\chi')\colon A^{\otimes(k+1)}&\twoheadrightarrow A^{\otimes k}\otimes H\twoheadrightarrow\ldots\twoheadrightarrow A\otimes H^{\otimes k}\\
a^0\otimes a^1 \otimes\ldots\otimes a^k&\xmapsto{\mathrm{id}_{A^{\otimes k}}\otimes\chi'} a^0\otimes a^1\otimes\ldots a^{k-1}a^k_0 \otimes a^k_1\\
&\cdots\\
&\xmapsto{\chi'} a^0a^1_0\ldots a^k_0\otimes a^1_1\ldots a^k_1\otimes\ldots\otimes a^{k-1}_{k-1}a^k_{k-1}\otimes a^k_k
\end{align*}
For $\omega=a\otimes\varpi(h^1)\wedge\ldots\wedge\varpi(h^k)$ as before we now consider an element $a^0\otimes a^1\otimes\ldots\otimes a^k\in A^{\otimes (k+1)}$ in the preimage $f^{-1}(a\otimes h^1\otimes\ldots\otimes h^k)$.
For $k=1$ it follows that
$$
(a^0\otimes 1)\mathrm{d}_v(a^1\otimes 1)
=(a^0\otimes 1)\wedge(a^1_0\otimes\varpi(\pi_\varepsilon(a^1_1)))
=a^0a^1_0\otimes\varpi(\pi_\varepsilon(a^1_1))
=a\otimes\varpi(\pi_\varepsilon(h^1))
=a\otimes\varpi(h^1)
=\omega,
$$
while for $k=2$ we obtain
\begin{align*}
	(a^0\otimes 1)\mathrm{d}_v(a^1\otimes 1)\wedge\mathrm{d}_v(a^2\otimes 1)
	&=(a^0\otimes 1)\wedge(a^1_0\otimes\varpi(\pi_\varepsilon(a^1_1)))\wedge(a^2_0\otimes\varpi(\pi_\varepsilon(a^2_1)))\\
	&=(a^0a^1_0\otimes\varpi(\pi_\varepsilon(a^1_1)))\wedge(a^2_0\otimes\varpi(\pi_\varepsilon(a^2_1)))\\
	&=a^0a^1_0a^2_0\otimes\varpi(\pi_\varepsilon(a^1_1a^2_1))\wedge\varpi(\pi_\varepsilon(a^2_2))\\
	&\quad-a^0a^1a^2_0\otimes\varpi(\pi_\varepsilon(a^2_1))\wedge\varpi(\pi_\varepsilon(a^2_2))\\
	&=a\otimes\varpi(\pi_\varepsilon(h^1))\wedge\varpi(\pi_\varepsilon(h^2))
	+a^0a^1a^2_0\otimes\mathrm{d}(\varpi(\pi_\varepsilon(a^2_1)))\\
	&=a\otimes\varpi(h^1)\wedge\varpi(h^2)\\
	&=\omega,
\end{align*}
using that $a^0a^1_0a^2_0\otimes a^1_1a^2_1\otimes a^2_2\in A\otimes H^+\otimes H^+$, which implies that $a^0a^1a^2_0\otimes a^2_1=0$.
The proof for $k>2$ follows in complete analogy.
\end{proof}

\subsection{Complete calculus and horizontal forms}\label{sec:totalforms}

Fix a QPB $A$ with subalgebra of coinvariants $B:=A^{\mathrm{co}H}$ and a bicovariant FODC $(\Gamma_H,\mathrm{d}_H)$ on $H$ with maximal prolongation $\Omega^\bullet(H)$. In particular, according to Lemma \ref{lem:Hcomplete} $\Delta\colon H\to H\otimes H$ extends to a morphism $\Delta^\bullet\colon\Omega^\bullet(H)\to\Omega^\bullet(H)\otimes\Omega^\bullet(H)$ of DGAs, i.e. $\Omega^\bullet(H)$ is a complete DC on $H$ in the sense of Definition \ref{def:completeH}.
We now come to the central definition of this paper.
\begin{definition}[\cite{DurII} page 10]\label{def:complete}
A DC $\Omega^\bullet(A)$ on $A$ is called \textbf{complete} if the right $H$-coaction $\Delta_A\colon A\to A\otimes H$ extends to a morphism
\begin{equation}\label{DeltaABullet}
\Delta_A^\bullet\colon\Omega^\bullet(A)\to\Omega^\bullet(A)\otimes\Omega^\bullet(H)
\end{equation}
of DGAs. In this case we refer to $\Omega^\bullet(A)$ as the \textbf{total space forms}.
\end{definition}
In the following we employ the short notation
\begin{equation}
	\Delta_A^\bullet(\omega)=:\omega_{[0]}\otimes\omega_{[1]}
\end{equation}
for the extended coaction \eqref{DeltaABullet} of a complete calculus. Clearly Definition \ref{def:complete} generalizes Definition \ref{def:completeH} which treats the QPB $\Bbbk=H^{\mathrm{co}H}\subseteq H$. The following lemma is a conceptualized version of \cite[Lemma 3.3]{DurII}.
\begin{lemma}
Let $\Omega^\bullet(A)$ be a complete calculus on $A$. Then $\Omega^\bullet(A)$ is a graded right $\Omega^\bullet(H)$-comodule algebra, i.e. for all $\omega,\eta\in\Omega^\bullet(A)$
\begin{equation}\label{GradComAlg}
\Delta_A^\bullet(\omega\wedge\eta)=\Delta_A^\bullet(\omega)\Delta_A^\bullet(\eta),\qquad\qquad\Delta_A^\bullet(1)=1\otimes 1_H,
\end{equation}
and the diagrams
\begin{equation}\label{DelABulCoAct}
	\begin{tikzcd}
		\Omega^\bullet(A) \arrow{rr}{\Delta_A^\bullet} \arrow{d}[swap]{\Delta_A^\bullet} 
		& & \Omega^\bullet(A)\otimes\Omega^\bullet(H) \arrow{d}{\Delta_A^\bullet\otimes\mathrm{id}}\\
		\Omega^\bullet(A)\otimes\Omega^\bullet(H) \arrow{rr}{\mathrm{id}\otimes\Delta^\bullet}
		& & \Omega^\bullet(A)\otimes\Omega^\bullet(H)\otimes\Omega^\bullet(H)
	\end{tikzcd}
	\quad
	\begin{tikzcd}
		\Omega^\bullet(A) \arrow{r}{\Delta_A^\bullet} \arrow{dr}[swap]{\mathrm{id}}
		& \Omega^\bullet(A)\otimes\Omega^\bullet(H) \arrow{d}{\mathrm{id}\otimes\varepsilon^\bullet}\\
		& \Omega^\bullet(A)
	\end{tikzcd}
\end{equation}
commute. In particular, $\Omega^\bullet(A)$ is a right $H$-covariant DC on $A$.
\end{lemma}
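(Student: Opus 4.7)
The identities in \eqref{GradComAlg} are immediate from Definition \ref{def:complete}, since a morphism of DGAs is in particular a morphism of graded algebras and therefore both multiplicative and unit-preserving. So the real content of the first half of the lemma is the commutativity of the two diagrams in \eqref{DelABulCoAct}. The plan is to exploit that $\Omega^\bullet(A)$, as a DC, is generated in degree zero by $A$, so that any DGA morphism out of $\Omega^\bullet(A)$ is completely determined by its restriction to $\Omega^0(A)=A$, as noted in the remark following Definition \ref{def:DC}.

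For the coassociativity diagram, observe that $(\Delta_A^\bullet\otimes\mathrm{id})\circ\Delta_A^\bullet$ and $(\mathrm{id}\otimes\Delta^\bullet)\circ\Delta_A^\bullet$ are both compositions of DGA morphisms into the triple tensor product DGA $\Omega^\bullet(A)\otimes\Omega^\bullet(H)\otimes\Omega^\bullet(H)$. Here we use that $\Delta^\bullet$ is a DGA morphism, which holds by the running assumption that $\Omega^\bullet(H)$ is complete (Lemma \ref{lem:Hcomplete}). By the degree-zero determination principle it suffices to compare the two maps on $A$, where they reduce to $(\Delta_A\otimes\mathrm{id})\circ\Delta_A$ and $(\mathrm{id}\otimes\Delta)\circ\Delta_A$ respectively; these agree by coassociativity of the right $H$-coaction on $A$. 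The same argument handles the counitality diagram: $(\mathrm{id}\otimes\varepsilon^\bullet)\circ\Delta_A^\bullet$ and $\mathrm{id}_{\Omega^\bullet(A)}$ are both DGA endomorphisms of $\Omega^\bullet(A)$ (recall that $\varepsilon^\bullet$ is a DGA morphism by Proposition \ref{prop:gradedHopf}) and their restrictions to $A$ coincide by counitality of $\Delta_A$.

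For the final claim, define the right $H$-coaction $\Delta_{\Omega^k(A)}\colon\Omega^k(A)\to\Omega^k(A)\otimes H$ as the bidegree-$(k,0)$ component of $\Delta_A^\bullet|_{\Omega^k(A)}$, i.e.\ the post-composition with the projection $\mathrm{id}\otimes\pi_H^0\colon\Omega^\bullet(A)\otimes\Omega^\bullet(H)\to\Omega^\bullet(A)\otimes H$ onto the $\Omega^0(H)=H$ slot. Extracting the tridegree-$(k,0,0)$ component from the coassociativity and counitality diagrams established above immediately yields that $\Delta_{\Omega^k(A)}$ is a right $H$-coaction, while extracting bidegree $(k+\ell,0)$ from the multiplicativity identity in \eqref{GradComAlg} gives the graded multiplicativity of the family $\{\Delta_{\Omega^k(A)}\}$ with respect to the wedge product (the Koszul sign in the tensor product multiplication is trivial since both $H$-factors sit in degree zero). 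Finally, right $H$-colinearity of the differential is read off from the bidegree-$(k+1,0)$ component of the identity $\Delta_A^\bullet(\mathrm{d}\omega)=\mathrm{d}_\otimes\Delta_A^\bullet(\omega)$: only the $\mathrm{d}\otimes\mathrm{id}$ summand of $\mathrm{d}_\otimes$ contributes, because $\mathrm{id}\otimes\mathrm{d}$ would require components of $\Delta_A^\bullet(\omega)$ in $H$-degree $-1$. There is no genuinely difficult step in this proof; it is a formal consequence of the DGA-morphism assumption combined with the degree-zero generation of $\Omega^\bullet(A)$, and the only care required is the bookkeeping of bidegrees in the last paragraph.
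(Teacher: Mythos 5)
Your proof is correct and follows the same strategy as the paper's: both identities in \eqref{DelABulCoAct} are equalities of DGA morphisms out of $\Omega^\bullet(A)$, hence determined by their restrictions to degree zero, where they reduce to the coaction axioms for $\Delta_A$, and you correctly flag that completeness of $\Omega^\bullet(H)$ (i.e. $\Delta^\bullet$ being a DGA morphism) is the crucial input, exactly as the paper emphasizes. Your final paragraph, extracting the bidegree-$(k,0)$ components to establish right $H$-covariance, spells out correctly what the paper leaves implicit behind its ``in particular''.
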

\begin{proof}
The equations in \eqref{GradComAlg} hold, since by assumption $\Delta_A^\bullet$ is a morphism of graded algebras. In order to show that $\Delta_A^\bullet$ is a graded right $\Omega^\bullet(H)$-coaction we have to prove commutativity of the diagrams \eqref{DelABulCoAct}, which read
\begin{equation}
	(\Delta_A^\bullet\otimes\mathrm{id})\circ\Delta_A^\bullet=(\mathrm{id}\otimes\Delta^\bullet)\circ\Delta_A^\bullet,\qquad
	(\mathrm{id}\otimes\varepsilon^\bullet)\circ\Delta_A^\bullet=\mathrm{id}.
\end{equation}
Since both sides of the above equations are morphisms of DGAs they coincide if their lowest orders coincide. This is the case since $\Delta_A\colon A\to A\otimes H$ is a right $H$-coaction. 
Note that here the assumption that $\Delta\colon H\to H\otimes H$ extends to a morphisms of DGAs is crucial.
This concludes the proof of the lemma.
\end{proof}
We continue to prove that there is a canonical surjection $\Omega^\bullet(A)\twoheadrightarrow\mathrm{ver}^\bullet$ from the total space forms the vertical forms. This is taken from \cite[Proposition 3.6]{DurII}. We clarify however, that the complete calculus structure on $\mathrm{ver}^\bullet$ is not automatic, but rather induced from a complete calculus on the total space algebra (if such a calculus exists).
\begin{proposition}\label{prop:piver}
Let $\Omega^\bullet(A)$ be a complete calculus on $A$. Then
\begin{enumerate}
\item[i.)] there is a surjective morphism of DGAs
\begin{equation}
	\pi_v\colon\Omega^\bullet(A)\to\mathrm{ver}^\bullet
\end{equation}
extending the identity $\mathrm{id}\colon A\to A$. It is determined on homogeneous elements by
\begin{equation}\label{piver}
\pi_v|_{\Omega^k(A)}:=(\mathrm{id}\otimes(\pi_{\Lambda^\bullet}\circ\pi^k))\circ\Delta_A^\bullet|_{\Omega^k(A)}\colon\Omega^k(A)\to\mathrm{ver}^k
\end{equation}
for all $k>0$, where $\pi^k\colon\Omega^\bullet(A)\to\Omega^k(A)$ is the obvious projection. Explicitly, we have
\begin{equation}\label{piverExpl}
\pi_v(a^0\mathrm{d}a^1\wedge\ldots\wedge\mathrm{d}a^k)
=a^0_0a^1_0\ldots a^k_0\otimes S(a^0_1a^1_1\ldots a^k_1)a^0_2\mathrm{d}a^1_2\wedge\ldots\wedge\mathrm{d}a^k_2
\end{equation}
for all $a^0,\ldots,a^k\in A$.

\item[ii.)] the vertical forms $\mathrm{ver}^\bullet$ are complete, i.e. the right $H$-coaction $\Delta_A\colon A\to A\otimes H$ extends to a morphism $\Delta_v^\bullet\colon\mathrm{ver}^\bullet\to\mathrm{ver}^\bullet\otimes\Omega^\bullet(H)$ of DGAs. Moreover, the diagram
\begin{equation}\label{diag:DeltaVer}
\begin{tikzcd}
\Omega^\bullet(A) \arrow{rr}{\pi_v} \arrow{d}[swap]{\Delta_A^\bullet}
& & \mathrm{ver}^\bullet \arrow{d}{\Delta_v^\bullet}\\
\Omega^\bullet(A)\otimes\Omega^\bullet(H) \arrow{rr}{\pi_v\otimes\mathrm{id}}
& & \mathrm{ver}^\bullet\otimes\Omega^\bullet(H)
\end{tikzcd}
\end{equation}
commutes.
\end{enumerate}
\end{proposition}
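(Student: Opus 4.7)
The plan is to prove (i) by establishing the explicit formula (\ref{piverExpl}), which will reveal $\pi_v$ as the unique morphism of DGAs extending the identity on $A$, and then to obtain (ii) by exhibiting $\Delta_v^\bullet$ via an explicit formula suggested by the $A=H$ case.

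For (i), I would first verify that $\pi_v$ as given by (\ref{piver}) lands in $\mathrm{ver}^k = A\otimes\Lambda^k$ via a degree count: since $\Delta_A^\bullet$ is a morphism of graded algebras, $\Delta_A^\bullet(\Omega^k(A))\subseteq\bigoplus_{i+j=k}\Omega^i(A)\otimes\Omega^j(H)$, and the projection $\pi^k$ on the $\Omega^\bullet(H)$-factor forces $i=0$, after which $\pi_{\Lambda^\bullet}$ restricts the $H$-form factor into $\Lambda^k$. Next I would derive (\ref{piverExpl}) by expanding
\[
\Delta_A^\bullet(a^0\mathrm{d}a^1\wedge\ldots\wedge\mathrm{d}a^k)=\Delta_A(a^0)\wedge_\otimes\mathrm{d}_\otimes\Delta_A(a^1)\wedge_\otimes\ldots\wedge_\otimes\mathrm{d}_\otimes\Delta_A(a^k)
\]
using that $\Delta_A^\bullet$ is a morphism of DGAs; each $\mathrm{d}_\otimes\Delta_A(a^j)=\mathrm{d}(a^j_0)\otimes a^j_1+a^j_0\otimes\mathrm{d}(a^j_1)$ contributes to the top $H$-degree only through its second summand. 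Applying $\mathrm{id}\otimes\pi_{\Lambda^\bullet}$ and computing the left $H$-coaction of the resulting $H$-form via the graded algebra morphism property of ${}_{\Omega^\bullet(H)}\Delta$ then yields exactly (\ref{piverExpl}).

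With (\ref{piverExpl}) in hand, the antipode identity $S(a^0_1)a^0_2=\varepsilon(a^0_1)1$ on the iterated right $H$-coaction of $a^0$ rewrites the expression as $a^0\cdot\mathrm{d}_v(a^1)\wedge\ldots\wedge\mathrm{d}_v(a^k)$, computed in $\mathrm{ver}^\bullet$ via (\ref{verwedge}). This exhibits $\pi_v$ as a morphism of DGAs extending $\mathrm{id}_A$; it is necessarily the unique such morphism. Surjectivity is then free since $\mathrm{ver}^\bullet$ is generated in degree zero by $A\cong A\otimes 1$, on which $\pi_v$ restricts to the identity.

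For (ii), my plan is to define $\Delta_v^\bullet$ by the explicit formula
\[
\Delta_v^\bullet(a\otimes\vartheta):=(a_0\otimes\vartheta_{[1]})\otimes a_1\vartheta_{[2]}
\]
suggested by the $A=H$ case (\ref{eq:DelVer}) via the identification $\Xi$ of Proposition \ref{prop:gradedYD}, where $\Delta_A(a)=a_0\otimes a_1$ and $\Delta^\bullet(\vartheta)=\vartheta_{[1]}\otimes\vartheta_{[2]}\in\Lambda^\bullet\otimes\Omega^\bullet(H)$. Well-definedness as a map into $\mathrm{ver}^\bullet\otimes\Omega^\bullet(H)$ follows from the fact that $\Delta^\bullet$ restricts to a right $\Omega^\bullet(H)$-coaction on $\Lambda^\bullet$ by Proposition \ref{prop:gradedYD} i.). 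The main obstacle will be verifying that $\Delta_v^\bullet$ is a morphism of DGAs: multiplicativity is checked against (\ref{verwedge}) using the Yetter-Drinfel'd property (\ref{eq:YD}), and compatibility with $\mathrm{d}_v$ uses the Cartan-Maurer equation together with the differential formula in (\ref{verwedge}). Commutativity of (\ref{diag:DeltaVer}) is then established by evaluating both sides on the generators $a\in A$ and $\mathrm{d}a\in\Omega^1(A)$, where they agree by inspection with $a_0\otimes a_1$ and $\mathrm{d}_v(a_0)\otimes a_1+a_0\otimes\mathrm{d}(a_1)$ respectively; agreement on all of $\Omega^\bullet(A)$ propagates from the generators because both sides of the diagram are morphisms of DGAs and $\Omega^\bullet(A)$ is generated in degree zero.
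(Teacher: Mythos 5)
Your proposal is correct, and while part (i) follows essentially the paper's own route, part (ii) takes a genuinely different one. For (i) you do what the paper does: expand $\Delta_A^\bullet$ as a DGA morphism to get \eqref{piverExpl} from \eqref{piver}, recognize the result as $\pi_v(a^0)\mathrm{d}_v(\pi_v(a^1))\wedge_v\ldots\wedge_v\mathrm{d}_v(\pi_v(a^k))$, and conclude the morphism property and surjectivity from generation in degree zero (the paper just carries out the multiplicativity and differential checks by hand). For (ii) the paper never writes a formula for $\Delta_v^\bullet$: it sets $\widetilde{\Delta_v^\bullet}:=(\pi_v\otimes\mathrm{id})\circ\Delta_A^\bullet$ and shows this descends through the quotient $\mathrm{ver}^\bullet\cong\Omega^\bullet(A)/\ker\pi_v$, the technical core being $\ker\pi_v\subseteq\ker\widetilde{\Delta_v^\bullet}$, proved by applying $\Delta_A\otimes\Delta_{\Omega^k(A)}$ (and its variants with $(\pi^{k-\ell}\otimes\pi^\ell)\circ\Delta^\bullet$) to the equation $\pi_v(\omega)=0$ and multiplying tensor legs; the DGA property and the diagram \eqref{diag:DeltaVer} then come for free. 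You instead define $\Delta_v^\bullet(a\otimes\vartheta)=(a_0\otimes\vartheta_{[1]})\otimes a_1\vartheta_{[2]}$, which is indeed the transport of \eqref{eq:DelVer} under $\Xi$ (one checks $\Xi(h_1\vartheta_{[1]})=h_1\otimes\vartheta_{[1]}$ since $\vartheta_{[1]}$ is left coinvariant), verify directly that it is a DGA morphism, and recover the diagram from uniqueness of DGA morphisms out of $\Omega^\bullet(A)$ agreeing in degree zero. Your verification plan does go through: multiplicativity follows from the Yetter--Drinfel'd identity \eqref{eq:YD} combined with the collapse $a'_1S(a'_2)=\varepsilon(a'_1)1$, and differential compatibility follows from $\Delta^\bullet(\varpi(\pi_\varepsilon(h)))=\varpi(\pi_\varepsilon(h_2))\otimes S(h_1)h_3+1\otimes\varpi(\pi_\varepsilon(h))$ together with $h_1\varpi(\pi_\varepsilon(h_2))=\mathrm{d}h$ --- this coaction formula, rather than the Cartan--Maurer equation itself, is the identity you actually need, and you must keep track of the Koszul signs in $\wedge_\otimes$. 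As for what each approach buys: the paper's descent argument avoids all direct computation with the vertical DGA structure, but it only produces $\Delta_v^\bullet$ under the hypothesis that a complete calculus on $A$ exists; your construction never uses $\Omega^\bullet(A)$, so it proves the stronger statement that $\mathrm{ver}^\bullet$ is complete as soon as $\Omega^\bullet(H)$ is, with total-space completeness entering only in the commutativity of \eqref{diag:DeltaVer}. This actually sharpens the paper's remark preceding the proposition that completeness of $\mathrm{ver}^\bullet$ is ``not automatic, but rather induced from a complete calculus on the total space algebra''.
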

\begin{proof}
\begin{enumerate}
\item[i.)] By \eqref{piver} we define a graded map $\pi_v\colon\Omega^\bullet(A)\to\mathrm{ver}^\bullet$ extending the identity on $A$. On a $k$-form $a^0\mathrm{d}a^1\wedge\ldots\wedge\mathrm{d}a^k\in\Omega^k(A)$ this produces in fact \eqref{piverExpl}, as one easily verifies. We show that $\pi_v$ is a morphism of DGAs. Then, since it extends the identity, it will automatically be surjective.
Let $a^0\mathrm{d}a^1\wedge\ldots\wedge\mathrm{d}a^k\in\Omega^k(A)$ and $b^0\mathrm{d}b^1\wedge\ldots\wedge\mathrm{d}b^\ell\in\Omega^\ell(A)$. Then $\pi_v$ is compatible with the wedge products, since
{\small\begin{align*}
&\pi_v(a^0\mathrm{d}a^1\wedge\ldots\wedge\mathrm{d}a^k)\wedge_v\pi_v(b^0\mathrm{d}b^1\wedge\ldots\wedge\mathrm{d}b^\ell)\\
&=(a^0_0a^1_0\ldots a^k_0\otimes S(a^0_1a^1_1\ldots a^k_1)a^0_2\mathrm{d}a^1_2\wedge\ldots\wedge\mathrm{d}a^k_2)\wedge_v(b^0_0b^1_0\ldots b^\ell_0\otimes S(b^0_1b^1_1\ldots b^\ell_1)b^0_2\mathrm{d}b^1_2\wedge\ldots\wedge\mathrm{d}b^\ell_2)\\
&=a^0_0a^1_0\ldots a^k_0b^0_0b^1_0\ldots b^\ell_0\otimes((S(a^0_1a^1_1\ldots a^k_1)a^0_2\mathrm{d}a^1_2\wedge\ldots\wedge\mathrm{d}a^k_2)\leftharpoonup(b^0_1b^1_1\ldots b^\ell_1))\wedge S(b^0_2b^1_2\ldots b^\ell_2)b^0_3\mathrm{d}b^1_3\wedge\ldots\wedge\mathrm{d}b^\ell_3\\
&=a^0_0a^1_0\ldots a^k_0b^0_0b^1_0\ldots b^\ell_0\otimes S(a^0_1a^1_1\ldots a^k_1b^0_1b^1_1\ldots b^\ell_1)a^0_2\mathrm{d}a^1_2\wedge\ldots\wedge\mathrm{d}a^k_2\wedge b^0_2\mathrm{d}b^1_2\wedge\ldots\wedge\mathrm{d}b^\ell_2\\
&=\pi_v((a^0\mathrm{d}a^1\wedge\ldots\wedge\mathrm{d}a^k)\wedge(b^0\mathrm{d}b^1\wedge\ldots\wedge\mathrm{d}b^\ell))
\end{align*}} 
holds. Moreover, $\pi_v$ respects the differentials, as shown by
\allowdisplaybreaks
\begin{align*}
\mathrm{d}_v(\pi_v(a^0\mathrm{d}a^1\wedge\ldots\wedge\mathrm{d}a^k))
&=\mathrm{d}_v(a^0_0a^1_0\ldots a^k_0\otimes S(a^0_1a^1_1\ldots a^k_1)a^0_2\mathrm{d}a^1_2\wedge\ldots\wedge\mathrm{d}a^k_2)\\
&=a^0_0a^1_0\ldots a^k_0\otimes\mathrm{d}(S(a^0_1a^1_1\ldots a^k_1)a^0_2\mathrm{d}a^1_2\wedge\ldots\wedge\mathrm{d}a^k_2)\\
&\quad+a^0_0a^1_0\ldots a^k_0\otimes\varpi(\pi_\varepsilon(a^0_1a^1_1\ldots a^k_1))\wedge S(a^0_2a^1_2\ldots a^k_2)a^0_3\mathrm{d}a^1_3\wedge\ldots\wedge\mathrm{d}a^k_3\\
&=a^0_0a^1_0\ldots a^k_0\otimes\mathrm{d}(S(a^0_1a^1_1\ldots a^k_1))\wedge a^0_2\mathrm{d}a^1_2\wedge\ldots\wedge\mathrm{d}a^k_2\\
&\quad+a^0_0a^1_0\ldots a^k_0\otimes S(a^0_1a^1_1\ldots a^k_1)\mathrm{d}a^0_2\wedge\mathrm{d}a^1_2\wedge\ldots\wedge\mathrm{d}a^k_2)\\
&\quad+a^0_0a^1_0\ldots a^k_0\otimes S(a^0_1a^1_1\ldots a^k_1)\underbrace{\mathrm{d}(a^0_2a^1_2\ldots a^k_2)\wedge S(a^0_3a^1_3\ldots a^k_3)}_{=-a^0_2a^1_2\ldots a^k_2\mathrm{d}(S(a^0_3a^1_3\ldots a^k_3))}a^0_4\mathrm{d}a^1_4\wedge\ldots\wedge\mathrm{d}a^k_4\\
&=a^0_0a^1_0\ldots a^k_0\otimes S(a^0_1a^1_1\ldots a^k_1)\mathrm{d}a^0_2\wedge\mathrm{d}a^1_2\wedge\ldots\wedge\mathrm{d}a^k_2)\\
&=\pi_v(\mathrm{d}a^0\wedge\mathrm{d}a^1\wedge\ldots\wedge\mathrm{d}a^k)\\
&=\pi_v(\mathrm{d}(a^0\mathrm{d}a^1\wedge\ldots\wedge\mathrm{d}a^k)),
\end{align*}
where we used the Leibniz rule and $\mathrm{d}^2=0$ several times. This DGA morphism extends the identity $A\ni a\mapsto a\otimes 1\in A\otimes\Bbbk=A\otimes\Lambda^0$ on $A$ and we recover \eqref{piverExpl}, namely
\begin{align*}
\pi_v(a^0)\mathrm{d}_v(\pi_v(a^1))\wedge_v\ldots\wedge_v\mathrm{d}_v(\pi_v(a^k))
&=(a^0\otimes 1)\mathrm{d}_v(a^1\otimes 1)\wedge_v\ldots\wedge_v\mathrm{d}_v(a^k\otimes 1)\\
&=(a^0\otimes 1)\wedge_v(a^1_0\otimes S(a^1_1)\mathrm{d}a^1_2)\wedge_v\ldots\wedge_v(a^1_0\otimes S(a^k_1)\mathrm{d}a^k_2)\\
&=a^0a^1_0a^2_0\ldots a^k_0\otimes((S(a^1_1)\mathrm{d}a^1_2)\leftharpoonup(a^2_1\ldots a^k_1))\wedge\ldots\wedge S(a^k_k)\mathrm{d}a^k_{k+1}\\
&=a^0_0a^1_0\ldots a^k_0\otimes S(a^0_1a^1_1\ldots a^k_1)a^0_2\mathrm{d}a^1_2\wedge\ldots\wedge\mathrm{d}a^k_2\\
&=\pi_v(a^0\mathrm{d}a^1\wedge\ldots\wedge\mathrm{d}a^k)
\end{align*}
for all $a^0,a^1,\ldots,a^k\in A$.

\item[ii.)] We show that one can define $\Delta_v^\bullet$ by the commutativity of the diagram \eqref{diag:DeltaVer}. Then $\Delta_v^\bullet$ is automatically a DGA morphism extending the right $H$-coaction. 
We define $\widetilde{\Delta_v^\bullet}:=(\pi_v\otimes\mathrm{id})\circ\Delta_A^\bullet\colon\Omega^\bullet(A)\to\mathrm{ver}^\bullet\otimes\Omega^\bullet(H)$ and prove that $\widetilde{\Delta_v^\bullet}$ descends to the quotient $\mathrm{ver}^\bullet\cong\Omega^\bullet(A)/\ker\pi_v$. By definition the descended map will be $\Delta_v^\bullet$.
Let $a^0\mathrm{d}a^1\wedge\ldots\wedge\mathrm{d}a^k\in\Omega^k(A)$ such that 
\begin{equation}\label{eq:pivzero}
	0=\pi_v(a^0\mathrm{d}a^1\wedge\ldots\wedge\mathrm{d}a^k)=a^0_0a^1_0\ldots a^k_0\otimes S(a^0_1a^1_1\ldots a^k_1)a^0_2\mathrm{d}a^1_2\wedge\ldots\wedge\mathrm{d}a^k_2.
\end{equation}
Then, applying  $\Delta_A\otimes\Delta_{\Omega^k(A)}$ to the last equation, we obtain
\begin{align*}
0
&=a^0_0a^1_0\ldots a^k_0\otimes a^0_1a^1_1\ldots a^k_1\otimes\big(S(a^0_2a^1_2\ldots a^k_2)a^0_3\mathrm{d}a^1_3\wedge\ldots\wedge\mathrm{d}a^k_3\big)_0\otimes\big(S(a^0_2a^1_2\ldots a^k_2)a^0_3\mathrm{d}a^1_3\wedge\ldots\wedge\mathrm{d}a^k_3\big)_1\\
&=a^0_0a^1_0\ldots a^k_0\otimes a^0_1a^1_1\ldots a^k_1\otimes S(a^0_2a^1_2\ldots a^k_2)_1a^0_3\mathrm{d}a^1_3\wedge\ldots\wedge\mathrm{d}a^k_3\otimes S(a^0_2a^1_2\ldots a^k_2)_2a^0_4a^1_4\ldots a^k_4\\
&=a^0_0a^1_0\ldots a^k_0\otimes a^0_1a^1_1\ldots a^k_1\otimes S(a^0_3a^1_3\ldots a^k_3)_1a^0_4\mathrm{d}a^1_4\wedge\ldots\wedge\mathrm{d}a^k_4\otimes S(a^0_2a^1_2\ldots a^k_2)_2a^0_5a^1_5\ldots a^k_5.
\end{align*}
If we multiply the second and the last tensor component of the previous equation this implies
$$
0=a^0_0a^1_0\ldots a^k_0\otimes S(a^0_1a^1_1\ldots a^k_1)a^0_2\mathrm{d}a^1_2\wedge\ldots\wedge\mathrm{d}a^k_2\otimes a^0_3a^1_3\ldots a^k_3.
$$
Thus,
\begin{align*}
(\mathrm{id}\otimes\pi^k\otimes\mathrm{id})(\widetilde{\Delta_v^\bullet}(a^0\mathrm{d}a^1\wedge\ldots\wedge\mathrm{d}a^k))
=a^0_0a^1_0\ldots a^k_0\otimes S(a^0_1a^1_1\ldots a^k_1)a^0_2\mathrm{d}a^1_2\wedge\ldots\wedge\mathrm{d}a^k_2\otimes a^0_3a^1_3\ldots a^k_3
=0
\end{align*}
To see that also $(\mathrm{id}\otimes\pi^{k-\ell}\otimes\mathrm{id})(\widetilde{\Delta_v^\bullet}(a^0\mathrm{d}a^1\wedge\ldots\wedge\mathrm{d}a^k))
=0$ for $0<\ell\leq k$ one applies $\Delta_A\otimes(\pi^{k-\ell}\otimes\pi^\ell)\circ\Delta^\bullet$ rather than $\Delta_A\otimes\Delta_{\Omega^k(A)}$ to \eqref{eq:pivzero} and then multiplies the second and last tensor component of the result.
This then implies that $\widetilde{\Delta_v^\bullet}$ descends to the quotient $\mathrm{ver}^\bullet\cong\Omega^\bullet(A)/\ker\pi_v$ and consequently the descended map $\Delta_v^\bullet$ makes the diagram \eqref{diag:DeltaVer} commute, which means that $\Delta_v^\bullet$ is a DGA morphism extending the right $H$-coaction.
\end{enumerate}
\end{proof}
Given the DGA morphism $\Delta_A^\bullet\colon\Omega^\bullet(A)\to\Omega^\bullet(A)\otimes\Omega^\bullet(H)$ there is a canonical notion of horizontal forms, given by the total space forms which have trivial vertical components.
\begin{definition}[{\cite[Definition 3.2]{DurII}}]
For a complete calculus $\Omega^\bullet(A)$ on a QPB $A$ we define the \textbf{horizontal forms} as the preimage
\begin{equation}
	\mathrm{hor}^\bullet:=(\Delta_A^\bullet)^{-1}(\Omega^\bullet(A)\otimes H)
\end{equation} 
of $\Omega^\bullet(A)\otimes H$ under $\Delta_A^\bullet\colon\Omega^\bullet(A)\to\Omega^\bullet(A)\otimes\Omega^\bullet(H)$.
\end{definition}
The following result is a conceptualized version of \cite[Lemma 3.4]{DurII}.
\begin{corollary}
$\mathrm{hor}^\bullet$ is a graded right $H$-comodule algebra with right $H$-coaction given by the restriction
\begin{equation}
	\Delta_A^\bullet|_{\mathrm{hor}^\bullet}\colon\mathrm{hor}^\bullet\to\mathrm{hor}^\bullet\otimes H
\end{equation}
of $\Delta_A^\bullet$.
\end{corollary}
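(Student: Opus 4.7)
The plan is to verify successively the three pieces needed: (a) $\mathrm{hor}^\bullet$ is a graded subalgebra of $\Omega^\bullet(A)$, (b) the restriction of $\Delta_A^\bullet$ actually lands in $\mathrm{hor}^\bullet\otimes H$ (and not merely in $\Omega^\bullet(A)\otimes H$), and (c) the restricted map is a right $H$-coaction compatible with the multiplication. Part (a) is straightforward from the fact, established in the previous lemma, that $\Delta_A^\bullet$ is a morphism of graded algebras: since $\Omega^\bullet(A)\otimes H\subseteq\Omega^\bullet(A)\otimes\Omega^\bullet(H)$ is a graded subalgebra (being the preimage of $\Omega^0(H)=H$ under the natural projection, which is itself a graded subalgebra), its preimage $\mathrm{hor}^\bullet$ under $\Delta_A^\bullet$ is a graded subalgebra too; the unit $1\in\Omega^0(A)$ belongs to $\mathrm{hor}^\bullet$ because $\Delta_A^\bullet(1)=1\otimes 1_H\in\Omega^\bullet(A)\otimes H$.

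For part (b), which is the crux, I would use coassociativity of the graded coaction, established in diagram \eqref{DelABulCoAct}. Fix $\omega\in\mathrm{hor}^\bullet$ and write $\Delta_A^\bullet(\omega)=\sum_i\alpha_i\otimes h_i$ with $\alpha_i\in\Omega^\bullet(A)$ and $\{h_i\}\subseteq H$ linearly independent. Applying $\Delta_A^\bullet\otimes\mathrm{id}$ to this expression gives $\sum_i\Delta_A^\bullet(\alpha_i)\otimes h_i$, while applying $(\mathrm{id}\otimes\Delta^\bullet)\circ\Delta_A^\bullet$ and using that $\Delta^\bullet$ agrees with $\Delta$ on $\Omega^0(H)=H$ gives $\sum_i\alpha_i\otimes\Delta(h_i)\in\Omega^\bullet(A)\otimes H\otimes H$. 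Coassociativity equates the two. Decomposing $\Delta_A^\bullet(\alpha_i)=\sum_k X_i^k$ according to the $\Omega^\bullet(H)$-degree and using linear independence of $\{h_i\}$, each component $X_i^k$ with $k>0$ must vanish, so $\Delta_A^\bullet(\alpha_i)\in\Omega^\bullet(A)\otimes H$, i.e.\ $\alpha_i\in\mathrm{hor}^\bullet$. Hence $\Delta_A^\bullet(\omega)\in\mathrm{hor}^\bullet\otimes H$, as required.

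For part (c), coassociativity of $\Delta_A^\bullet|_{\mathrm{hor}^\bullet}$ with respect to the $H$-comultiplication $\Delta$ is just the computation done in (b), reread as the identity $(\Delta_A^\bullet|_{\mathrm{hor}^\bullet}\otimes\mathrm{id})\circ\Delta_A^\bullet|_{\mathrm{hor}^\bullet}=(\mathrm{id}\otimes\Delta)\circ\Delta_A^\bullet|_{\mathrm{hor}^\bullet}$; counitality follows analogously from $\varepsilon^\bullet|_H=\varepsilon$ combined with the counit axiom in the right diagram of \eqref{DelABulCoAct}. Multiplicativity and unitality of the coaction are inherited from the corresponding properties of $\Delta_A^\bullet$, already stated in \eqref{GradComAlg}. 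The main obstacle is really the argument in part (b): that $\mathrm{hor}^\bullet\otimes H$ is stable is not a formal consequence of $\Omega^\bullet(A)\otimes H$ being so, and one must exploit coassociativity together with the fact that the image of $H\subseteq\Omega^\bullet(H)$ under $\Delta^\bullet$ stays in $H\otimes H$ (which is precisely where completeness of $\Omega^\bullet(H)$ in the sense of Definition \ref{def:completeH} is used implicitly via its degree-zero restriction).
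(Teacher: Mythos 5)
Your proposal is correct and follows essentially the same route as the paper's proof: multiplicativity of $\Delta_A^\bullet$ gives the graded algebra structure on $\mathrm{hor}^\bullet$, and coassociativity combined with the fact that $\Delta^\bullet$ restricts to $\Delta$ on $\Omega^0(H)=H$ shows that $\Delta_A^\bullet(\mathrm{hor}^\bullet)\subseteq\mathrm{hor}^\bullet\otimes H$. Your part (b) is in fact slightly more careful than the paper's version, since you make explicit the linear-independence argument needed to pass from $(\Delta_A^\bullet\otimes\mathrm{id})(\Delta_A^\bullet(\omega))\in\Omega^\bullet(A)\otimes H\otimes H$ to the conclusion that each $\alpha_i$ is horizontal, a step the paper leaves implicit.
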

\begin{proof}
Let $\theta\in\mathrm{hor}^k$, $\theta'\in\mathrm{hor}^\ell$ with corresponding $\omega^i\in\Omega^k(A)$, $\eta^j\in\Omega^\ell(A)$ and $h^i,g^j\in H$ such that $\Delta_A^\bullet(\theta)=\omega^i\otimes h^i$ and $\Delta_A^\bullet(\theta')=\eta^j\otimes g^j$ (sum over $i$ and $j$ understood). Then $\Delta_A^\bullet(\theta\wedge\theta')=\Delta_A^\bullet(\theta)\wedge_\otimes\Delta_A^\bullet(\theta')=\omega^i\wedge\eta^j\otimes h^ig^j$ shows that $\mathrm{hor}^\bullet$ is a graded algebra. Furthermore, $\Delta_A^\bullet(\theta)\in\mathrm{hor}^k\otimes H$, since 
$$
(\Delta_A^\bullet\otimes\mathrm{id})(\Delta_A^\bullet(\theta))
=(\mathrm{id}\otimes\Delta^\bullet)(\Delta_A^\bullet(\theta))
=\omega^i\otimes\Delta^\bullet(h^i)
=\omega^i\otimes h^i_1\otimes h^i_2\in\Omega^k(A)\otimes H\otimes H,
$$
using the coaction axioms of $\Delta_A^\bullet$. Since the restriction of the comodule algebra morphism $\Delta_A^\bullet$ closes in $\mathrm{hor}^\bullet$ it is a comodule algebra morphism.
\end{proof}
Note that $\mathrm{hor}^\bullet$ is \textbf{not} a DGA, since for $\theta\in\mathrm{hor}^k$ with $\Delta_A^\bullet(\theta)=\omega^i\otimes h^i\in\Omega^k(A)\otimes H$ one obtains
\begin{equation}
	\Delta_A^\bullet(\mathrm{d}\theta)
	=\mathrm{d}_\otimes(\Delta_A^\bullet(\theta))
	=\mathrm{d}_\otimes(\omega^i\otimes h^i)
	=\mathrm{d}\omega^i\otimes h^i+(-1)^{|\omega^i|}\omega^i\otimes\mathrm{d}h^i,
\end{equation}
using that $\Delta_A^\bullet$ is a DGA morphism. Clearly, the above is in general not in $\Omega^k(A)\otimes H\otimes H$, i.e. the second term might not vanish.

We end this section by proving a decomposition result, expressing $\Delta_A^{k+\ell}\colon\Omega^{k+\ell}(A)\to\bigoplus_{m=0}^{k+\ell}\Omega^{k+\ell-m}(A)\otimes\Omega^m(H)$ in terms of its graded components
\begin{equation}
	\mathrm{ver}^{k,\ell}:=(\pi^k_A\otimes\pi^\ell_H)\circ\Delta_A^{k+\ell}\colon\Omega^{k+\ell}(A)\to\Omega^k(A)\otimes\Omega^\ell(H).
\end{equation}
We are colloquially referring to them as the \textbf{(higher) vertical maps}.
These will be particularly instrumental in the examples of Section \ref{sec:ex}.
\begin{proposition}
Given a complete calculus $\Omega^\bullet(A)$ the higher vertical maps on a wedge product decompose as
\begin{equation}\label{eq:higherVM}
	\mathrm{ver}^{k,\ell}(\omega\wedge\eta)
	=\sum_{m=0}^{|\omega|}\mathrm{ver}^{m,|\omega|-m}(\omega)\mathrm{ver}^{k-m,|\eta|-k+m}(\eta)
\end{equation}
for $\omega,\eta\in\Omega^\bullet(A)$ such that $|\omega|+|\eta|=k+\ell$, where in the above sum we set the terms with $\mathrm{ver}^{m,n}$ and $m<0$ or $n<0$ to zero. The product on the right hand side of \eqref{eq:higherVM} is the one of $\Omega^\bullet(A\otimes H)$.
\end{proposition}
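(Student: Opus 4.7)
The plan is to exploit the fact that $\Delta_A^\bullet\colon\Omega^\bullet(A)\to\Omega^\bullet(A\otimes H)$ is a morphism of graded algebras (part of being a DGA morphism) and to unpack that identity by bidegree. The only real input is the bookkeeping of graded components; no cohomological or algebraic subtlety is required.

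First, for any homogeneous $\omega\in\Omega^{|\omega|}(A)$ the image $\Delta_A^{|\omega|}(\omega)$ lies in $\bigoplus_{m+n=|\omega|}\Omega^m(A)\otimes\Omega^n(H)$, and by the definition of the higher vertical maps it decomposes as
\begin{equation*}
\Delta_A^{|\omega|}(\omega)=\sum_{m=0}^{|\omega|}\mathrm{ver}^{m,|\omega|-m}(\omega),
\end{equation*}
and likewise for $\eta$. Next, using multiplicativity of $\Delta_A^\bullet$ with respect to the wedge product of the tensor product DC, I would write
\begin{equation*}
\Delta_A^{|\omega|+|\eta|}(\omega\wedge\eta)=\Delta_A^{|\omega|}(\omega)\wedge\Delta_A^{|\eta|}(\eta)=\sum_{m=0}^{|\omega|}\sum_{n=0}^{|\eta|}\mathrm{ver}^{m,|\omega|-m}(\omega)\wedge\mathrm{ver}^{n,|\eta|-n}(\eta),
\end{equation*}
where each summand sits in bidegree $(m+n,(|\omega|-m)+(|\eta|-n))$ of $\Omega^\bullet(A\otimes H)$.

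To obtain $\mathrm{ver}^{k,\ell}(\omega\wedge\eta)$ I would then apply the projection $\pi^k_A\otimes\pi^\ell_H$ to this identity. The projection picks out precisely the terms with $m+n=k$, forcing $n=k-m$; the matching condition on the $H$-degree, $(|\omega|-m)+(|\eta|-(k-m))=\ell$, is then automatic from the hypothesis $|\omega|+|\eta|=k+\ell$. The bounds $0\le m\le|\omega|$ and $0\le k-m\le|\eta|$ cut the sum down to exactly the indices for which both vertical maps are nonzero, which is consistent with the stated convention that $\mathrm{ver}^{m,n}$ is zero when $m$ or $n$ is negative. This yields the claimed formula
\begin{equation*}
\mathrm{ver}^{k,\ell}(\omega\wedge\eta)=\sum_{m=0}^{|\omega|}\mathrm{ver}^{m,|\omega|-m}(\omega)\,\mathrm{ver}^{k-m,|\eta|-k+m}(\eta).
\end{equation*}

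The only conceptual point to be careful about is the sign convention: the Koszul sign $(-1)^{|\omega'|\cdot|\eta|}$ appearing in the tensor product DC $\Omega^\bullet(A\otimes H)$ is already built into the product on the right-hand side of the proposition, so no additional factors appear in the bidegree-projected identity. I do not expect a genuine obstacle; the argument is essentially just unwinding the definition of a morphism of graded algebras component by component, and extending to general, non-homogeneous $\omega$ and $\eta$ is immediate by linearity.
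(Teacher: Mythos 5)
Your proposal is correct and follows essentially the same route as the paper: both proofs use that $\Delta_A^\bullet$ is a morphism of graded algebras to write $\Delta_A^\bullet(\omega\wedge\eta)=\Delta_A^\bullet(\omega)\wedge_\otimes\Delta_A^\bullet(\eta)$ and then match bidegree $(k,\ell)$-components, the sum over $m$ exhausting all ways the bidegrees can add up. The paper merely dresses this up by writing $\omega$ and $\eta$ explicitly as $a^0\mathrm{d}a^1\wedge\ldots$ before making the same projection argument, so your version is, if anything, a slightly cleaner presentation of the identical idea.
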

\begin{proof}
Let $\omega=a^0\mathrm{d}a^1\wedge\ldots\wedge\mathrm{d}a^r$ and $\eta=c^0\mathrm{d}c^1\wedge\ldots\wedge\mathrm{d}c^s$ be arbitrary homogeneous elements of $\Omega^\bullet(A)$. The left hand side of \eqref{eq:higherVM} reads
\begin{align*}
	\mathrm{ver}^{k,\ell}(\omega\wedge\eta)
	&=(\pi^k_A\otimes\pi^\ell_H)(\Delta^\bullet_A(\omega\wedge\eta))\\
	&=(\pi^k_A\otimes\pi^\ell_H)(\Delta_A(a^0)\mathrm{d}_\otimes(\Delta_A(a^1))\wedge_\otimes\ldots\wedge_\otimes\mathrm{d}_\otimes(\Delta_A(a^r))\\
	&\qquad\wedge_\otimes\Delta_A(c^0)\mathrm{d}_\otimes(\Delta_A(c^1))\wedge_\otimes\ldots\wedge_\otimes\mathrm{d}_\otimes(\Delta_A(c^s))),
\end{align*}
while the right hand side of \eqref{eq:higherVM} is
\begin{align*}
	\sum_{m=0}^{|\omega|}\mathrm{ver}^{m,|\omega|-m}(\omega)\mathrm{ver}^{k-m,|\eta|-k+m}(\eta)
	&=\sum_{m=0}^{|\omega|}(\pi^m_A\otimes\pi^{|\omega|-m}_H)(\Delta_A(a^0)\mathrm{d}_\otimes(\Delta_A(a^1))\wedge_\otimes\ldots\wedge_\otimes\mathrm{d}_\otimes(\Delta_A(a^r))\\
	&\qquad\wedge_\otimes(\pi^{k-m}_A\otimes\pi^{|\eta|-k+m})(\Delta_A(c^0)\mathrm{d}_\otimes(\Delta_A(c^1))\wedge_\otimes\ldots\wedge_\otimes\mathrm{d}_\otimes(\Delta_A(c^s))).
\end{align*}
In the first expression we obtain the $(k,\ell)$-component of $\Delta_A^\bullet(\omega\wedge\eta)$, while the second expression computes the $\wedge_\otimes$-product of the $(m,|\omega|-m)$-component of $\Delta_A^\bullet(\omega)$ and the $(k-m,|\eta|-k+m)$-component of $\Delta_A^\bullet(\eta)$ for all $m=0,\ldots,|\omega|$. The latter exhausts all possible combinations to obtain a $(m,|\omega|-m)$-component in $\Delta_A^\bullet(\omega\wedge\eta)$ and thus the equality follows.
\end{proof}

\subsection{Base forms and the Atiyah sequence}\label{sec:baseforms}

After introducing vertical forms, total space forms and horizontal forms it remains to define base forms to complete the differential picture for QPBs.
\begin{definition}[{\cite[page 13]{DurII}}]
Let $\Omega^\bullet(A)$ be a complete calculus on a QPB $B=A^{\mathrm{co}H}\subseteq A$. The corresponding \textbf{base forms} are defined as the graded subspace $\Omega^\bullet(B)$ of $\Omega^\bullet(A)$ which is invariant under the right $\Omega^\bullet(H)$-coaction $\Delta_A^\bullet\colon\Omega^\bullet(A)\to\Omega^\bullet(A)\otimes\Omega^\bullet(H)$, i.e.
\begin{equation}
	\Omega^\bullet(B):=\{\omega\in\Omega^\bullet(A)~|~\Delta_A^\bullet(\omega)=\omega\otimes 1_H\}.
\end{equation}
\end{definition}
Since $\Delta_A^\bullet$ is a morphism of DGAs the following result is immediate.
\begin{lemma}
$\Omega^\bullet(B)\subseteq\Omega^\bullet(A)$ is a differential graded subalgebra.
\end{lemma}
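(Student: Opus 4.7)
The plan is to verify directly the three conditions making $\Omega^\bullet(B)$ a differential graded subalgebra of $\Omega^\bullet(A)$: it contains the unit, it is closed under the wedge product, and it is closed under the differential. All three will follow immediately from the fact that $\Delta_A^\bullet$ is a morphism of DGAs (so it is multiplicative, unital, and intertwines differentials) combined with the obvious facts $\Delta^\bullet(1_H)=1_H\otimes 1_H$ and $\mathrm{d}(1_H)=0$.

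For the unit, since $\Delta_A^\bullet(1)=1\otimes 1_H$ by unitality of $\Delta_A^\bullet$, we have $1\in\Omega^\bullet(B)$. For closure under $\wedge$, if $\omega,\eta\in\Omega^\bullet(B)$ then
$$\Delta_A^\bullet(\omega\wedge\eta)=\Delta_A^\bullet(\omega)\wedge_\otimes\Delta_A^\bullet(\eta)=(\omega\otimes 1_H)\wedge_\otimes(\eta\otimes 1_H)=\omega\wedge\eta\otimes 1_H,$$
where no sign appears because $1_H$ has degree $0$. Hence $\omega\wedge\eta\in\Omega^\bullet(B)$. For closure under $\mathrm{d}$, if $\omega\in\Omega^\bullet(B)$ then
$$\Delta_A^\bullet(\mathrm{d}\omega)=\mathrm{d}_\otimes(\Delta_A^\bullet(\omega))=\mathrm{d}_\otimes(\omega\otimes 1_H)=\mathrm{d}\omega\otimes 1_H+(-1)^{|\omega|}\omega\otimes\mathrm{d}(1_H)=\mathrm{d}\omega\otimes 1_H,$$
so $\mathrm{d}\omega\in\Omega^\bullet(B)$.

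There is essentially no obstacle here; the content of the lemma is entirely absorbed by the DGA-morphism property of $\Delta_A^\bullet$ which was built into Definition \ref{def:complete}. The only subtle point worth flagging is that one must check the assertion for inhomogeneous $\omega,\eta$, but since $\Omega^\bullet(B)$ is graded (it is the preimage of the graded subspace $\Omega^\bullet(A)\otimes 1_H$) it suffices to restrict to homogeneous elements, which is what the computations above do.
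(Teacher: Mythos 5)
Your proof is correct and takes exactly the approach the paper intends: the paper dispatches this lemma with the single remark that it is immediate since $\Delta_A^\bullet$ is a morphism of DGAs, and your computations (unitality, multiplicativity with trivial Koszul sign because $1_H$ has degree $0$, and compatibility with the differentials using $\mathrm{d}(1_H)=0$) simply spell out that observation. The closing remark that one may reduce to homogeneous elements because $\Omega^\bullet(B)$ is a graded subspace is a correct and worthwhile detail that the paper leaves implicit.
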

Clearly $\Omega^0(B)=B$, however it is not obvious that $\Omega^\bullet(B)$ is generated by $B$. Thus, $\Omega^\bullet(B)$ is \textit{not} a DC on $B$ in general, only a differential graded subalgebra of $\Omega^\bullet(A)$. However, in all explicit examples which we encounter in this article $\Omega^\bullet(B)$ will be a DC on $B$.
\begin{proposition}\label{prop:baseintersection}
Base forms equal the intersection of horizontal and right $H$-coinvariant forms, i.e.
\begin{equation}
	\Omega^\bullet(B)=\mathrm{hor}^\bullet\cap\Omega^\bullet(A)^{\mathrm{co}H}.
\end{equation}
\end{proposition}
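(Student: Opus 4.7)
The plan is a straightforward double inclusion, whose crux is simply unpacking what $\Delta_A^\bullet$ decomposes into by bidegree. The key observation is that, since $\Omega^\bullet(A)$ is a complete calculus, it is in particular a right $H$-covariant DC on $A$, and the underlying right $H$-coaction $\Delta_{\Omega^k(A)}\colon\Omega^k(A)\to\Omega^k(A)\otimes H$ at degree $k$ coincides with the $(k,0)$-component $\mathrm{ver}^{k,0}$ of $\Delta_A^k$. Meanwhile, belonging to $\mathrm{hor}^k$ is precisely the condition that the other components $\mathrm{ver}^{k-\ell,\ell}$ vanish for $\ell>0$.

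For the inclusion $\Omega^\bullet(B)\subseteq\mathrm{hor}^\bullet\cap\Omega^\bullet(A)^{\mathrm{co}H}$ I would argue as follows. Take a homogeneous $\omega\in\Omega^k(B)$, so $\Delta_A^\bullet(\omega)=\omega\otimes 1_H$. Since $\omega\otimes 1_H\in\Omega^k(A)\otimes H\subseteq\Omega^\bullet(A)\otimes\Omega^\bullet(H)$, the definition of $\mathrm{hor}^\bullet$ gives $\omega\in\mathrm{hor}^k$. Moreover, applying the projection $\mathrm{id}\otimes\pi^0_H$ yields $\Delta_{\Omega^k(A)}(\omega)=\omega\otimes 1_H$, so $\omega\in\Omega^k(A)^{\mathrm{co}H}$. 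The non-homogeneous case follows by extending linearly over the direct sum decomposition.

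For the reverse inclusion, pick a homogeneous $\omega\in(\mathrm{hor}^k)\cap\Omega^k(A)^{\mathrm{co}H}$. The horizontal condition forces $\Delta_A^\bullet(\omega)\in\Omega^k(A)\otimes H$, which means in the bidegree decomposition only the $(k,0)$-component survives, i.e.\
\begin{equation}
\Delta_A^\bullet(\omega)=\mathrm{ver}^{k,0}(\omega)=\Delta_{\Omega^k(A)}(\omega).
\end{equation}
Coinvariance under the right $H$-coaction gives $\Delta_{\Omega^k(A)}(\omega)=\omega\otimes 1_H$, and combining these yields $\Delta_A^\bullet(\omega)=\omega\otimes 1_H$, so $\omega\in\Omega^k(B)$. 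Again one extends to non-homogeneous elements by the bigraded decomposition of $\Delta_A^\bullet$.

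There is no serious obstacle here; the proposition is really a tautological consequence of how $\Delta_A^\bullet$ refines the bare right $H$-coaction on the covariant DC $\Omega^\bullet(A)$. The only mild subtlety to flag is that one must remember that the $H$-coaction entering the definition of $\Omega^\bullet(A)^{\mathrm{co}H}$ is precisely the degree-$0$ $H$-projection of $\Delta_A^\bullet$, so that saying "the whole $\Delta_A^\bullet(\omega)$ lies in $\Omega^\bullet(A)\otimes H$ and its $H$-component is $1_H$" is the same as saying $\Delta_A^\bullet(\omega)=\omega\otimes 1_H$.
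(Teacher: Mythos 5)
Your proof is correct and follows essentially the same route as the paper: the inclusion $\Omega^\bullet(B)\subseteq\mathrm{hor}^\bullet\cap\Omega^\bullet(A)^{\mathrm{co}H}$ is immediate, and for the converse both arguments use that horizontality forces $\Delta_A^\bullet(\omega)$ to coincide with the bare right $H$-coaction $\Delta_{\Omega^\bullet(A)}(\omega)$ (the $(\bullet,0)$-component), whereupon coinvariance gives $\Delta_A^\bullet(\omega)=\omega\otimes 1_H$. Your explicit identification $\Delta_{\Omega^k(A)}=\mathrm{ver}^{k,0}$ and the remark on extending over the grading merely spell out what the paper treats as implicit.
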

\begin{proof}
The inclusion $\subseteq$ is trivially satisfies. For the other inclusion we note that a horizontal form $\omega\in\mathrm{hor}^\bullet$ satisfies $\Delta_A^\bullet(\omega)=\Delta_{\Omega^\bullet(A)}(\omega)=\eta^i\otimes h^i$ for some $\eta^i\in\Omega^\bullet(A)$ and $h^i\in H$. If $\omega$ is right $H$-coinvariant, as well, this implies $\Delta_A^\bullet(\omega)=\omega\otimes 1$, i.e. $\omega\in\Omega^\bullet(B)$. 
\end{proof}
We conclude this section by proving one of the main results of this paper (compare to \cite[Lemma 3.7]{DurII}), namely exactness of the Atiyah sequence. It states that total space forms surject in vertical forms, with kernel given precisely by the horizontal forms. 
\begin{theorem}\label{thm:exseq}
For any complete calculus $\Omega^\bullet(A)$ on a QPB $B=A^{\mathrm{co}H}\subseteq A$ the \textbf{Atiyah sequence}
\begin{equation}\label{exactsequence}
	0\to\mathrm{hor}^1\hookrightarrow\Omega^1(A)\xrightarrow{\pi_v}\mathrm{ver}^1\to 0
\end{equation}
is exact in the category ${}_A\mathcal{M}_A^H$ of right $H$-covariant $A$-bimodules.
\end{theorem}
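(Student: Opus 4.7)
The plan is to establish exactness at each of the three positions. Exactness at $\mathrm{hor}^1$ is the injectivity of the inclusion, which is immediate, and exactness at $\mathrm{ver}^1$ is the surjectivity of $\pi_v|_{\Omega^1(A)}$, which follows from Proposition~\ref{prop:piver}~i.)\ since the DGA morphism $\pi_v$ extends $\mathrm{id}_A$. All three objects are right $H$-covariant $A$-bimodules and the two maps are morphisms in ${}_A\mathcal{M}_A^H$: $\mathrm{hor}^1$ is an $A$-subbimodule and right $H$-subcomodule of $\Omega^1(A)$ because $\Delta_A^\bullet$ is a DGA morphism (so $\mathrm{hor}^\bullet$ is closed under the restricted coaction and the $A$-actions), while $\pi_v|_{\Omega^1(A)}$ is $A$-bilinear by the DGA-morphism property and right $H$-colinear by the commutativity of~\eqref{diag:DeltaVer} projected onto the $\mathrm{ver}^\bullet\otimes H$ component. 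The nontrivial content is the exactness at $\Omega^1(A)$, namely $\ker(\pi_v|_{\Omega^1(A)})=\mathrm{hor}^1$.

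For $\omega\in\Omega^1(A)$, decompose $\Delta_A^1(\omega)=\mathrm{ver}^{1,0}(\omega)+\mathrm{ver}^{0,1}(\omega)\in\Omega^1(A)\otimes H\oplus A\otimes\Gamma_H$ with $\Gamma_H=\Omega^1(H)$. Then $\omega\in\mathrm{hor}^1$ iff $\mathrm{ver}^{0,1}(\omega)=0$, while from~\eqref{piver} one has $\pi_v|_{\Omega^1(A)}=(\mathrm{id}_A\otimes\pi_{\Lambda^1})\circ\mathrm{ver}^{0,1}$, giving $\mathrm{hor}^1\subseteq\ker\pi_v$ immediately. For the converse, the idea is to bridge $\pi_v$ (valued in $A\otimes\Lambda^1$) and $\mathrm{ver}^{0,1}$ (valued in $A\otimes\Gamma_H$) by means of the isomorphism $\Xi\colon\Gamma_H\xrightarrow{\cong}H\otimes\Lambda^1$ of Proposition~\ref{prop:gradedYD}~ii.), which acts as $h\mathrm{d}h'\mapsto hh'_1\otimes\varpi(\pi_\varepsilon(h'_2))$.

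Cancellation of $S(a^0_1)a^0_2$ via the antipode axiom simplifies~\eqref{piverExpl} in degree one to $\pi_v(a^0\mathrm{d}a^1)=a^0a^1_0\otimes\varpi(\pi_\varepsilon(a^1_1))$. Applying $\Delta_A$ to the first tensor slot and invoking the coaction axiom to iterate the Sweedler indices on $a^1$ yields $a^0_0a^1_0\otimes a^0_1a^1_1\otimes\varpi(\pi_\varepsilon(a^1_2))$, which coincides with $(\mathrm{id}_A\otimes\Xi)$ applied to $\mathrm{ver}^{0,1}(a^0\mathrm{d}a^1)=a^0_0a^1_0\otimes a^0_1\mathrm{d}a^1_1$. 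By linearity this establishes the identity
\begin{equation*}
(\Delta_A\otimes\mathrm{id}_{\Lambda^1})\circ\pi_v|_{\Omega^1(A)}=(\mathrm{id}_A\otimes\Xi)\circ\mathrm{ver}^{0,1}|_{\Omega^1(A)}.
\end{equation*}
Since $\mathrm{id}_A\otimes\Xi$ is injective, $\pi_v(\omega)=0$ forces $\mathrm{ver}^{0,1}(\omega)=0$, i.e.\ $\omega\in\mathrm{hor}^1$. The main obstacle is the Sweedler bookkeeping required for this bridging identity; once it is in place, the injectivity of $\Xi$ coming from the fundamental theorem of Hopf modules closes the argument.
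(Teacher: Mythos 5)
Your proof is correct and is essentially the paper's argument: the end terms are handled identically (injectivity of the inclusion, surjectivity of $\pi_v$ from Proposition \ref{prop:piver} i.)), and exactness in the middle is reduced in both cases to comparing $\pi_v$ with the component $\mathrm{ver}^{0,1}$ of $\Delta_A^1$. Your bridging identity $(\Delta_A\otimes\mathrm{id}_{\Lambda^1})\circ\pi_v|_{\Omega^1(A)}=(\mathrm{id}_A\otimes\Xi)\circ\mathrm{ver}^{0,1}$ combined with injectivity of $\Xi$ is precisely the paper's manipulation ``apply $\Delta_A\otimes\mathrm{id}$ and multiply the last two tensor factors'' (that multiplication being $\Xi^{-1}$), so the two proofs coincide up to packaging---yours just names the maps $\pi_{\Lambda^1}$ and $\Xi$, and additionally records why all objects and maps live in ${}_A\mathcal{M}_A^H$, which the paper leaves implicit.
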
 
\begin{proof}
By definition $\mathrm{hor}^1\subseteq\Omega^1(A)$ and by Proposition \ref{prop:piver} $i.)$ the map $\pi_v$ is surjective. Thus, it remains to prove $\ker\pi_v=\mathrm{hor}^1$. Let $\omega=a\mathrm{d}a'\in\Omega^1(A)$ be an arbitrary $1$-form for the moment. Then
\begin{equation}\label{eq:helpsequence}
\Delta_A^1(\omega)=a_0\mathrm{d}a'_0\otimes a_1a'_1
+a_0a'_0\otimes a_1\mathrm{d}a'_1.
\end{equation}
If $\omega\in\mathrm{hor}^1$, i.e. $\Delta_A^1(\omega)\in\Omega^1(A)\otimes H$, the equation \eqref{eq:helpsequence} implies $a_0a'_0\otimes a_1\mathrm{d}a'_1=0$. Applying $(\mathrm{id}\otimes S\otimes\mathrm{id})\circ(\Delta_A\otimes\mathrm{id})$ to the last equation gives
$a_0a'_0\otimes S(a_1a'_1)\otimes a_2\mathrm{d}a'_2=0$. Multiplying the last two tensor factors implies
$$
0
=a_0a'_0\otimes S(a_1a'_1)a_2\mathrm{d}a'_2
=\pi_v(a\mathrm{d}a')
=\pi_v(\omega)
$$
and thus $\mathrm{hor}^1\subseteq\ker\pi_v$. For the other inclusion assume that $\omega=a\mathrm{d}a'\in\ker\pi_v$, i.e. $a_0a'_0\otimes S(a_1a'_1)a_2\mathrm{d}a'_2=0$. Applying $\Delta_A\otimes\mathrm{id}$ to the last equation and multiplying the last two tensor factors gives
$a_0a'_0\otimes a_1\mathrm{d}a'_1=0$, which together with \eqref{eq:helpsequence} implies $\omega\in\mathrm{hor}^1$, i.e. $\ker\pi_v\subseteq\mathrm{hor}^1$. This concludes the proof of the theorem.
\end{proof}
In Section \ref{sec:connections} we are going to consider connections of QPBs as section of the Atiyah sequence. Furthermore, in the following section we compare the above Atiyah sequence of \DJ ur\dj evi\'c with the more established Atiyah sequence of Brzezi\'nski--Majid.

\subsection{First order and the Brzezi\'nski--Majid approach}\label{sec:compare}

The name "quantum principal bundle" has been given to many structures in the context of comodule algebras. We recall the ones which are relevant for us (not claiming to be exhaustive) and clarify their relation to each other. We start with the simplest and add more complexity.

In some contexts, "quantum principal bundle" is used synonymous to Hopf--Galois extension. This is justified by the fact that in (classical) differential geometry a $G$-principal bundles $P\to M$ gives rise a Hopf--Galois extension $B:=\mathscr{C}^\infty(M)\subseteq A:=\mathscr{C}^\infty(P)$, see for example \cite[Example 2.13]{AsBiPaSc}. Others \cite{AFLW} assume the Hopf--Galois extension to be faithfully flat in addition in order to apply Schneider's equivalence \cite{Schn90}. However, faithfully flat Hopf--Galois extensions are more commonly called \textbf{principal comodule algebras} \cite{DGH}. This is also the point of view of \DJ ur\dj evi\'c in \cite{DurHG} and implicitly in \cite{DurI,DurDS,DurII}.

Probably the most established notion of "quantum principal bundle" is the one introduced by Brzezi\'nski--Majid in \cite{BrzMaj}, see also \cite[Definition 5.39]{BegMaj}, which we are going to review in more detail in the following.
\begin{definition}\label{def:QPB-BrMa}
Given a right $H$-covariant FODC $(\Gamma_A,\mathrm{d}_A)$ on a right $H$-comodule algebra $A$ and a bicovariant FODC $(\Gamma_H,\mathrm{d}_H)$ on $H$ we have a \textbf{quantum principal bundle à la Brzezi\'nski--Majid} if the \textbf{vertical map}
\begin{equation}
	\mathrm{ver}_\mathrm{BM}\colon\Gamma_A\to A\otimes\Lambda^1,\qquad
	\mathrm{ver}_\mathrm{BM}(a\mathrm{d}_A(a'))=aa'_0\otimes\varpi(a'_1)=aa'_0\otimes S(a'_1)\mathrm{d}_H(a'_2)
\end{equation}
is well-defined and the sequence
\begin{equation}\label{sequenceBM}
	0\to A\mathrm{d}_A(B)A\hookrightarrow\Gamma_A\xrightarrow{\mathrm{ver}_\mathrm{BM}}A\otimes\Lambda^1\to 0
\end{equation}
is exact. We call $\Omega^1_\mathrm{hor}:=A\mathrm{d}_A(B)A$ the \textbf{horizontal $1$-forms à la Brzezi\'nski--Majid}.
\end{definition}

To distinguish a quantum principal bundle as in Definition \ref{def:QPB-BrMa} from a quantum principal bundle as in Definition \ref{def:QPB} we always refer to the former as a quantum principal bundle \textit{à la Brzezi\'nski--Majid} and we call the latter a quantum principal bundle (QPB) (or a quantum principal bundle \textit{à la \DJ ur\dj evi\'c} if we want to emphasize the context). In the rest of this section we show that in the context of faithfully flat Hopf--Galois extensions QPB à la Brzezi\'nski--Majid correspond to DCi which are "first order complete". Namely, up to a certain equality of horizontal forms, the Brzezi\'nski--Majid QPB approach can be understood as the first order of the \DJ ur\dj evi\'c approach.
\begin{definition}\label{def:FOcomplete}
A DC $\Omega^\bullet(A)$ on a quantum principal bundle à la \DJ ur\dj evi\'c is called \textbf{first order complete} if the right $H$-coaction $\Delta_A\colon A\to A\otimes H$ is $1$-differentiable, i.e. if there is a morphism of FODCi 
\begin{equation}
	\Delta_A^1\colon\Omega^1(A)\to\Omega^1(A\otimes H)=(\Omega^1(A)\otimes H)\oplus(A\otimes\Omega^1(H))
\end{equation}
extending $\Delta_A$:
\begin{equation}
\begin{tikzcd}
\Omega^1(A) \arrow{rr}{\Delta_A^1}
& & \Omega^1(A\otimes H)\\
A \arrow{u}{\mathrm{d}} \arrow{rr}{\Delta_A}
& & A\otimes H \arrow{u}[swap]{\mathrm{d}_\otimes}
\end{tikzcd}
\end{equation}
\end{definition}

In the following proposition we clarify the relation between quantum principal bundles à la Brzezi\'nski--Majid and differential calculi on quantum principal bundles à la \DJ ur\dj evi\'c which are first order complete. 
The context of Brzezi\'nski--Majid is more general in the sense that only a right $H$-comodule algebra is assumed, rather than a faithfully flat Hopf--Galois extension $B=A^{\mathrm{co}H}\subseteq A$ as in \DJ ur\dj evi\'c's approach. Thus, to compare the two we have to restrict our attention to faithfully flat Hopf--Galois extensions.
\begin{proposition}\label{prop:compare}
Let $B=A^{\mathrm{co}H}\subseteq A$ be a faithfully flat Hopf--Galois extension.
\begin{enumerate}
\item[i.)] If there is a quantum principal bundle $(\Gamma_H,\Gamma_A)$ à la Brzezi\'nski--Majid on $A$, then the maximal prolongation of $\Gamma_A$ is first order complete.

\item[ii.)] If there is a first order complete DC $\Omega^\bullet(A)$ on $A$ with corresponding structure Hopf algebra DC $\Omega^\bullet(H)$, then the truncation $(\Omega^1(H),\Omega^1(A))$ is a quantum principal bundle à la Brzezi\'nski--Majid if and only if the horizontal $1$-forms of Brzezi\'nski--Majid and \DJ ur\dj evi\'c coincide, i.e. if and only if $A\mathrm{d}(B)A=\mathrm{hor}^1$.
\end{enumerate}
\end{proposition}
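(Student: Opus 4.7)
For part (i), my plan is to define $\Delta_A^1\colon \Gamma_A \to \Omega^1(A\otimes H)$ on generators by the natural candidate $a\mathrm{d}_A a' \mapsto \Delta_A(a)\cdot\mathrm{d}_\otimes(\Delta_A(a'))$. Under the tensor product decomposition $\Omega^1(A\otimes H) = (\Gamma_A\otimes H) \oplus (A\otimes\Gamma_H)$ this candidate splits as $a_0\mathrm{d}_A a'_0 \otimes a_1 a'_1$ in the first summand plus $a_0 a'_0 \otimes a_1\mathrm{d}_H a'_1$ in the second. The first component is precisely the right coaction $\Delta_{\Gamma_A}$, well-defined by the right $H$-covariance of $\Gamma_A$ built into the BM hypothesis. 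For the second, the identity $\mathrm{d}_H h = h_1\varpi(\pi_\varepsilon(h_2))$ allows me to rewrite it as $(\mathrm{id}_A \otimes m_H) \circ (\Delta_A \otimes \mathrm{id}_{\Gamma_H}) \circ \mathrm{ver}_\mathrm{BM}(a\mathrm{d}_A a')$, where $m_H$ is left multiplication of $H$ on $\Gamma_H$; this is well-defined because $\mathrm{ver}_\mathrm{BM}$ is. Hence $\Delta_A^1$ descends to $\Gamma_A$, and compatibility with the bimodule action (via $\Delta_A$) and with $\mathrm{d}_\otimes$ is built into the defining formula. Since first order completeness concerns only the degree one part and $\Gamma_A^{\wedge 1} = \Gamma_A$, the maximal prolongation inherits this extension.

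For part (ii), my strategy is to identify \DJ ur\dj evi\'c's degree one vertical projection from Proposition~\ref{prop:piver} with the Brzezi\'nski--Majid vertical map $\mathrm{ver}_\mathrm{BM}$, after the identification $\mathrm{ver}^1 \cong A\otimes\Lambda^1$. A short Sweedler simplification of the explicit formula $\pi_v(a\mathrm{d}a') = a_0 a'_0 \otimes S(a_1 a'_1) a_2\mathrm{d}a'_2$, using $S(a_1 a'_1) a_2 = S(a'_1) S(a_1) a_2 = S(a'_1)\varepsilon(a_1)$, reduces it to $a a'_0 \otimes S(a'_1)\mathrm{d}a'_2 = \mathrm{ver}_\mathrm{BM}(a\mathrm{d}a')$. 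Once this identification is in place, Proposition~\ref{prop:piver} supplies well-definedness and surjectivity of $\mathrm{ver}_\mathrm{BM}$ for free, while Theorem~\ref{thm:exseq} identifies its kernel as $\mathrm{hor}^1$.

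To close part (ii), I would note the automatic inclusion $A\mathrm{d}(B)A \subseteq \ker\mathrm{ver}_\mathrm{BM}$: directly $\mathrm{ver}_\mathrm{BM}(a\mathrm{d}b) = ab\otimes\varpi(1) = 0$ for $b \in B$, and the Leibniz rule extends this to $a\mathrm{d}(b)a''$. With this inclusion automatic and $\mathrm{ver}_\mathrm{BM}$ already known to be surjective, the Brzezi\'nski--Majid sequence is a complex whose exactness reduces to the single equality $\ker\mathrm{ver}_\mathrm{BM} = A\mathrm{d}(B)A$, i.e. $\mathrm{hor}^1 = A\mathrm{d}(B)A$. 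I expect no genuine obstacle in either part: the crux is the short computation that identifies $\pi_v|_{\Omega^1(A)}$ with $\mathrm{ver}_\mathrm{BM}$, which is the conceptual bridge between the \DJ ur\dj evi\'c and Brzezi\'nski--Majid frameworks and makes the two exact sequences literally the same once horizontal forms are matched.
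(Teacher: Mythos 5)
Your proposal is correct and follows essentially the same route as the paper: in part (i) you obtain the $(0,1)$-component of $\Delta_A^1$ as the composition $(\mathrm{id}_A\otimes m_H)\circ(\Delta_A\otimes\mathrm{id}_{\Gamma_H})\circ\mathrm{ver}_\mathrm{BM}$, which is exactly the paper's argument (with $m_H=\lambda_{\Gamma_H}$), and in part (ii) your Sweedler identification $\pi_v|_{\Omega^1(A)}=\mathrm{ver}_\mathrm{BM}$ combined with the exactness of the \DJ ur\dj evi\'c Atiyah sequence (Theorem \ref{thm:exseq}) is precisely how the paper reduces exactness of the Brzezi\'nski--Majid sequence to the single condition $\mathrm{hor}^1=A\mathrm{d}(B)A$. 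The only cosmetic difference is that you get well-definedness of $\mathrm{ver}_\mathrm{BM}$ from Proposition \ref{prop:piver} via this identification, while the paper derives it by composing $\mathrm{ver}^{0,1}$ with $(\mathrm{id}\otimes\lambda_{\Gamma_H})\circ(\mathrm{id}\otimes S\otimes\mathrm{id})\circ(\Delta_A\otimes\mathrm{id})$; both rest on the same degree-one content of the completeness hypothesis.
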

\begin{proof}
\begin{enumerate}
\item[i.)] Let us assume a QPB à la Brzezi\'nski--Majid $(\Gamma_H,\Gamma_A)$. We show that the vertical maps
\begin{equation*}
\begin{split}
\Delta_{\Gamma_A}=\mathrm{ver}^{1,0}\colon\Gamma_A&\to\Gamma_A\otimes H\\
a\mathrm{d}_A(a')&\mapsto a_0\mathrm{d}_A(a'_0)\otimes a_1a'_1
\end{split}\qquad,\qquad
\begin{split}
\mathrm{ver}=\mathrm{ver}^{0,1}\colon\Gamma_A&\to A\otimes\Gamma_H\\
a\mathrm{d}_A(a')&\mapsto a_0a'_0\otimes a_1\mathrm{d}_H(a'_1)
\end{split}
\end{equation*}
are well-defined, which is precisely first order completeness of the maximal prolongation of $\Gamma_A$. By assumption $\Delta_{\Gamma_A}$ is well-defined, since $\Gamma_A$ is a right $H$-covariant FODC. Moreover, by assumption the map $\mathrm{ver}_\mathrm{BM}\colon\Gamma_A\to A\otimes\Gamma_H$, $a\mathrm{d}_A(a')\mapsto aa'_0\otimes S(a'_1)\mathrm{d}_H(a'_2)=a_0a'_0\otimes S(a_1a'_1)a_2\mathrm{d}_H(a'_2)$ is well-defined. Thus, $(\mathrm{id}\otimes \lambda_{\Gamma_H})\circ(\Delta_A\otimes\mathrm{id})\circ\mathrm{ver}_\mathrm{BM}\colon\Gamma_A\to H\otimes\Gamma_H$ is well-defined, where $\lambda_{\Gamma_H}\colon H\otimes\Gamma_H\to\Gamma_H$ is the left $H$-module action on $\Gamma_H$. The previous composition of maps coincides with $\mathrm{ver}$, since
\begin{align*}
(\mathrm{id}\otimes \lambda_{\Gamma_H})((\Delta_A\otimes\mathrm{id})(\mathrm{ver}_\mathrm{BM}(a\mathrm{d}_A(a'))))
&=a_0a'_0\otimes\lambda_{\Gamma_H}(a_1a'_1\otimes S(a_2a'_2)a_3\mathrm{d}_H(a'_3))\\
&=a_0a'_0\otimes a_1\mathrm{d}_H(a'_1)\\
&=\mathrm{ver}(a\mathrm{d}_A(a'))
\end{align*}
and thus $\mathrm{ver}$ is well-defined.

\item[ii.)] Consider a calculus $\Omega^\bullet(A)$ which is first order complete, with corresponding DC $\Omega^\bullet(H)$ on $H$. In particular, $\Omega^1(A)$ is right $H$-covariant and $\Omega^1(H)$ is bicovariant. Moreover, the vertical map $\mathrm{ver}=\mathrm{ver}^{0,1}\colon\Omega^1(A)\to A\otimes\Omega^1(H)$ is well-defined, implying that
\begin{equation}
	\mathrm{ver}_\mathrm{BM}=(\mathrm{id}\otimes \lambda_{\Gamma_H})\circ(\mathrm{id}\otimes S\otimes\mathrm{id})\circ(\Delta_A\otimes\mathrm{id})\circ\mathrm{ver}
\end{equation}
is well-defined. It remains to prove that \eqref{sequenceBM} is exact. From Theorem \ref{thm:exseq} we know that the sequence \eqref{exactsequence} is exact. From \eqref{piverExpl} we read that $\mathrm{ver}_\mathrm{BM}$ coincides with $\pi_v|_{\Omega^1(A)}\colon\Omega^1(A)\to A\otimes\Lambda^1$. Thus $\mathrm{ver}_\mathrm{BM}$ is surjective. Since $\Omega^1_\mathrm{hor}\hookrightarrow\Omega^1(A)$ is automatically injective it remains to prove that $\ker\mathrm{ver}_\mathrm{BM}=\Omega^1_\mathrm{hor}$. By the exactness of \eqref{exactsequence} we know that $\mathrm{hor}^1=\ker\pi_v=\ker\mathrm{ver}_\mathrm{BM}$. Thus, we obtain a QPB à la Brzezi\'nski--Majid if and only if $\mathrm{hor}^1=\Omega^1_\mathrm{hor}$.
\end{enumerate}
\end{proof}
In all explicit examples which we encounter in Section \ref{sec:ex} the condition $\mathrm{hor}^1=\Omega^1_\mathrm{hor}$ is satisfied and thus the complete calculus approach of this paper is an extension of the QPB approach of Brzezi\'nski--Majid. We would like to remark that \DJ ur\dj evi\'c briefly comments on the relation with the Brzezi\'nski--Majid approach in \cite[Section 4]{DurHG}.

It turns out that first order completeness is sufficient to prove completeness in \textit{all} degrees if the DC is the maximal prolongation. We prove this statement in the following proposition, following \cite[Lemma 4.29]{BegMaj}.
\begin{proposition}\label{prop:comlete}
Let $B=A^{\mathrm{co}H}\subseteq A$ be a faithfully flat Hopf--Galois extension and $\Omega^\bullet(A)$ a first-order complete calculus on $A$. If $\Omega^\bullet(A)$ is the maximal prolongation of $\Omega^1(A)$ then $\Omega^\bullet(A)$ is a complete DC (in all orders).
\end{proposition}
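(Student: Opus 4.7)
The plan is to mirror the argument of Lemma \ref{lem:Hcomplete}, transposed from the structure Hopf algebra to the comodule algebra side. Since the codomain $\Omega^1(A\otimes H)=\Omega^1(A)\otimes H\oplus A\otimes\Omega^1(H)$ of $\Delta_A^1$ singles out a bicovariant FODC $\Omega^1(H)$ on $H$, Lemma \ref{lem:Hcomplete} promotes its maximal prolongation $\Omega^\bullet(H)$ to a complete DC. This supplies the target DGA $\Omega^\bullet(A)\otimes\Omega^\bullet(H)$ in which the sought extension $\Delta_A^\bullet$ must land.

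By Proposition \ref{prop:maxprol} the maximal prolongation is $\Omega^\bullet(A)=T_A(\Omega^1(A))/I^\wedge$, where $I^\wedge$ is the graded ideal generated in degree $2$ by elements $\mathrm{d}a^i\otimes_A\mathrm{d}b^i$ with $a^i\mathrm{d}b^i=0$. I would first define $\Delta_A^\bullet$ on the tensor algebra $T_A(\Omega^1(A))$ as the unique DGA morphism extending $\Delta_A^1$, explicitly
\[
\Delta_A^\bullet(a^0\mathrm{d}a^1\wedge\cdots\wedge\mathrm{d}a^k)=\Delta_A(a^0)\,\mathrm{d}_\otimes(\Delta_A(a^1))\wedge_\otimes\cdots\wedge_\otimes\mathrm{d}_\otimes(\Delta_A(a^k)).
\]
Because $I^\wedge$ is concentrated (as a set of generators) in degree $2$, well-definedness on the quotient reduces to the single check that $\Delta_A^2(\mathrm{d}a^i\wedge\mathrm{d}b^i)=0$ whenever $a^i\mathrm{d}b^i=0$.

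A direct expansion using the Koszul signs of the tensor product DGA yields the four-term expression
\[
\Delta_A^2(\mathrm{d}a^i\wedge\mathrm{d}b^i)=\mathrm{d}(a^i_0)\wedge\mathrm{d}(b^i_0)\otimes a^i_1b^i_1+a^i_0b^i_0\otimes\mathrm{d}(a^i_1)\wedge\mathrm{d}(b^i_1)+\mathrm{d}(a^i_0)b^i_0\otimes a^i_1\mathrm{d}(b^i_1)-a^i_0\mathrm{d}(b^i_0)\otimes\mathrm{d}(a^i_1)b^i_1,
\]
structurally identical to equation \eqref{eq:lemhelp}. Applying $\Delta_A^1$ to the hypothesis $a^i\mathrm{d}b^i=0$ and invoking the direct sum decomposition of $\Omega^1(A\otimes H)$ produces the two separate vanishings $a^i_0\mathrm{d}(b^i_0)\otimes a^i_1b^i_1=0$ and $a^i_0b^i_0\otimes a^i_1\mathrm{d}(b^i_1)=0$. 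Applying $\mathrm{d}\otimes\mathrm{id}$ to the first and $\mathrm{id}\otimes\mathrm{d}$ to the second annihilates the first two terms above; applying the opposite differentials to each and subtracting the resulting relations annihilates the remaining two.

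The main point demanding care is conceptual rather than computational: one has to recognize that, because the relations generating $I^\wedge$ live entirely in degree $2$, first-order completeness together with the universal property of the maximal prolongation automatically forces compatibility with the coaction in every higher degree — exactly as in Lemma \ref{lem:Hcomplete}, just with an $A$-entry on the left of the tensor product. Faithful flatness of the Hopf--Galois extension plays no role in this argument, making it a clean algebraic transposition of the Hopf-algebraic case.
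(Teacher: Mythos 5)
Your proof is correct and takes essentially the same route as the paper's own: reduce everything to the degree-$2$ generators of $I^\wedge$, expand $\Delta_A^1(\mathrm{d}a^i)\Delta_A^1(\mathrm{d}b^i)$ into the same four terms with the same Koszul signs, and annihilate them by splitting $\Delta_A^1(a^i\mathrm{d}b^i)=0$ along the direct sum and applying $\mathrm{d}\otimes\mathrm{id}$ and $\mathrm{id}\otimes\mathrm{d}$ to the two resulting equations exactly as the paper does. Your framing remarks (Lemma \ref{lem:Hcomplete} supplying the target DGA, and faithful flatness playing no role) are consistent with the paper's setup and add nothing that changes the argument.
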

\begin{proof}
As discussed in and before Proposition \ref{prop:maxprol}, the maximal prolongation is given as the quotient of the tensor algebra with an ideal in degree $2$. The ideal consists of all $\mathrm{d}a^i\otimes_A\mathrm{d}b^i$ with $a^i,b^i\in A$ (sum understood), such that $a^i\mathrm{d}b^i=0$. It is thus sufficient to prove that $\Delta_A^1$ extends as an algebra morphism to degree $2$ in order to ensure that it extends to all degrees. Let $\mathrm{d}a^i\otimes_A\mathrm{d}b^i$ be in the ideal as before. We show that
\begin{equation}\label{eq:idealvanish}
\begin{split}
\Delta^1_A(\mathrm{d}a^i)\Delta_A^1(\mathrm{d}b^i)
&=(\mathrm{d}(a^i_0)\otimes a^i_1+a^i_0\otimes\mathrm{d}(a^i_1))(\mathrm{d}(b^i_0)\otimes b^i_1+b^i_0\otimes\mathrm{d}(b^i_1))\\
&=\mathrm{d}(a^i_0)\wedge\mathrm{d}(b^i_0)\otimes a^i_1b^i_1
-a^i_0\mathrm{d}(b^i_0)\otimes \mathrm{d}(a^i_1)b^i_1
+\mathrm{d}(a^i_0)b^i_0\otimes a^i_1\mathrm{d}(b^i_1)
+a^i_0b^i_0\otimes \mathrm{d}(a^i_1)\wedge\mathrm{d}(b^i_1)
\end{split}
\end{equation}
vanishes on $\Omega^\bullet(A)\otimes\Omega^\bullet(H)$. Now since $a^i\mathrm{d}b^i=0$ and $\Delta_A^1$ is well-defined by assumption obtain
$
0=\Delta_A^1(a^i\mathrm{d}b^i)
=a^i_0\mathrm{d}(b^i_0)\otimes a^i_1b^i_1
+a^i_0b^i_0\otimes a^i_1\mathrm{d}(b^i_1).
$
From the direct sum we obtain the two equations
\begin{align}
	0&=a^i_0\mathrm{d}(b^i_0)\otimes a^i_1b^i_1,\label{eq:idealvanish1}\\
	0&=a^i_0b^i_0\otimes a^i_1\mathrm{d}(b^i_1).\label{eq:idealvanish2}
\end{align}
Applying $\mathrm{d}\otimes\mathrm{id}$ to \eqref{eq:idealvanish1} shows that the first term of the right hand side of \eqref{eq:idealvanish} vanishes, while applying $\mathrm{id}\otimes\mathrm{d}$ to \eqref{eq:idealvanish2} shows that the last term of the right hand side of \eqref{eq:idealvanish} vanishes. Applying $\mathrm{id}\otimes\mathrm{d}$ to \eqref{eq:idealvanish1}, while applying $\mathrm{d}\otimes\mathrm{id}$ to \eqref{eq:idealvanish2} and subtracting the former from the latter implies that the second and third term on the right hand side of \eqref{eq:idealvanish} vanish, as well. Thus \eqref{eq:idealvanish} is zero and we conclude the proof.
\end{proof}
When facing examples one often encounters the maximal prolongation and thus Proposition \ref{prop:comlete} implies that it is sufficient to check completeness in first order, which itself is "morally" equivalent to a quantum principal bundle à la Brzezi\'nski--Majid according to Proposition \ref{prop:compare}. However, we would like to stress that the maximal prolongation is not always the recipe of choice, for example when working with higher order bicovariant calculi à la Woronowicz \cite{Wor89} or when considering higher order inner calculi on graphs \cite[Corollary 1.54]{BegMaj}. In such situations higher order completeness is not automatic and has to be checked case by case.

\section{Connections}\label{sec:connections}

We discuss the notion of connections on first order complete calculi. They appear as sections of the Atiyah sequence \eqref{exactsequence}. Most of the material presented here is standard (see for example \cite{BegMaj,DGH,Ma99} and references therein) eventhough adapted to the slightly more general setup of \DJ ur\dj evi\'c (recall that $\Omega^1_\mathrm{hor}\subsetneq\mathrm{hor}^1$ in general). When considering strong connections we are going to restrict our attention to the situation $\Omega^1_\mathrm{hor}=\mathrm{hor}^1$, namely for which the first order \DJ ur\dj evi\'c approach coincides with the one of Brzezi\'nski--Majid.

\subsection{Connections and connection $1$-forms}\label{C4.1}

In this section we fix a QPB $B=A^{\mathrm{co}H}\subseteq A$ and a DC $\Omega^\bullet(A)$ on $A$ which is first order complete (as in Definition \ref{def:FOcomplete}). The corresponding bicovariant FODC on $H$ is denoted by $\Gamma\cong H\otimes\Lambda^1\cong H\otimes H^+/I$.

\begin{definition}
A \textbf{connection} on $\Omega^1(A)$ is a morphism
\begin{equation}
	\Pi\colon\Omega^1(A)\to\Omega^1(A)
\end{equation}
in ${}_A\mathcal{M}^H$, such that
\begin{enumerate}
\item[i.)] $\Pi$ is a projector, i.e. $\Pi^2=\Pi$ and

\item[ii.)] $\ker\Pi=\mathrm{hor}^1$.
\end{enumerate}
\end{definition}

\begin{definition}\label{def:con}
A \textbf{connection $1$-form} on $\Omega^1(A)$ is a morphism
\begin{equation}
	s\colon\Lambda^1\to\Omega^1(A)
\end{equation}
in $\mathcal{M}^H$, where we endow $\Lambda^1$ with the adjoint coaction $\mathrm{Ad}\colon\Lambda^1\to\Lambda^1\otimes H$, $\varpi(\pi_\varepsilon(h))\mapsto\varpi(\pi_\varepsilon(h_2))\otimes S(h_1)h_3$, such that
\begin{equation}
	\pi_v\circ s=1_A\otimes\mathrm{id}_{\Lambda^1}
\end{equation}
is an equality of maps $\Lambda^1\to A\otimes\Lambda^1$.
\end{definition}
Clearly, a connection $1$-form $s\colon\Lambda^1\to\Omega^1(A)$ is equivalent to a splitting $\hat{s}\colon A\otimes\Lambda^1\to\Omega^1(A)$ in ${}_A\mathcal{M}^H$ of the exact sequence
\begin{equation}
\begin{tikzcd}
0 \arrow{r}{~}
& \mathrm{hor}^1 \arrow{r}{~}
& \Omega^1(A) \arrow{r}{\pi_v}
& A\otimes\Lambda^1 \arrow{r}{~} \arrow[bend left=33]{l}{\hat{s}}
& 0
\end{tikzcd}
\end{equation}
in ${}_A\mathcal{M}^H$. From $s$ we define $\hat{s}$ by $\hat{s}(a\otimes\vartheta):=as(\vartheta)$ and given $\hat{s}$ we determine $s$ via $s(\vartheta):=\hat{s}(1\otimes\vartheta)$ for all $a\in A$ and $\vartheta\in\Lambda^1$.

\begin{proposition}\label{prop:1:1connection}
There are $1:1$-correspondences between
\begin{equation}
\begin{Bmatrix}
\text{connections}\\
\text{on }\Omega^1(A)
\end{Bmatrix}\xleftrightarrow{1:1}
\begin{Bmatrix}
	\text{connection $1$-forms}\\
	\text{on }\Omega^1(A)
\end{Bmatrix}\xleftrightarrow{1:1}
\begin{Bmatrix}
	\kappa\colon\Omega^1(A)\xrightarrow{\cong}\mathrm{ver}^1\oplus\mathrm{hor}^1\\
	\text{in }{}_A\mathcal{M}^H
\end{Bmatrix}.
\end{equation}
Explicitly,
\begin{enumerate}
\item[i.)] given a connection $\Pi\colon\Omega^1(A)\to\Omega^1(A)$ one defines a connection $1$-form by
\begin{equation}
	s(\vartheta):=\Pi(\widetilde{\pi_v}^{-1}(1\otimes\vartheta))
\end{equation}
for all $\vartheta\in\Lambda^1$, where $\widetilde{\pi_v}\colon\Omega^1(A)/\ker\pi_v\xrightarrow{\cong}A\otimes\Lambda^1$, while given a connection $1$-form $s\colon\Lambda^1\to\Omega^1(A)$ one defines
\begin{equation}
	\Pi(a\mathrm{d}a'):=aa'_0s(\varpi(\pi_\varepsilon(a'_1)))
\end{equation} 
for all $a\mathrm{d}a'\in\Omega^1(A)$.

\item[ii.)] given a connection $\Pi\colon\Omega^1(A)\to\Omega^1(A)$ with corresponding connection $1$-form $s\colon\Lambda^1\to\Omega^1(A)$ we define
\begin{equation}
\begin{split}
	\kappa\colon\Omega^1(A)&\to\mathrm{ver}^1\oplus\mathrm{hor}^1,\\
	\omega&\mapsto\pi_v(\omega)+(\mathrm{id}-\Pi)(\omega)
\end{split}
\end{equation}
with inverse
\begin{equation}
\begin{split}
	\kappa^{-1}\colon\mathrm{ver}^1\oplus\mathrm{hor}^1&\to\Omega^1(A),\\
	a\otimes\vartheta+\omega&\mapsto as(\vartheta)+\omega.
\end{split}
\end{equation}
On the other hand, given an isomorphism $\kappa\colon\Omega^1(A)\xrightarrow{\cong}\mathrm{ver}^1\oplus\mathrm{hor}^1$ we obtain a connection via
\begin{equation}
	\Pi:=\mathrm{id}-\mathrm{pr}_{\mathrm{hor}^1}\circ\kappa\colon\Omega^1(A)\to\Omega^1(A),
\end{equation}
where $\mathrm{pr}_{\mathrm{hor}^1}\colon\mathrm{ver}^1\oplus\mathrm{hor}^1\to\mathrm{hor}^1$ denotes the canonical projection.
\end{enumerate} 
\end{proposition}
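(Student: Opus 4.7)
The plan is to view all three sets as parameterising $A$-linear, $H$-colinear splittings of the Atiyah sequence from Theorem \ref{thm:exseq}, and to translate between the three presentations.

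First I would treat the correspondence between connections $\Pi$ and connection $1$-forms $s$. Given $\Pi$, the hypothesis $\ker\Pi=\mathrm{hor}^1$ ensures that $\pi_v$ restricts to an isomorphism $\mathrm{im}\,\Pi\xrightarrow{\cong}\mathrm{ver}^1=A\otimes\Lambda^1$, so the prescription $s(\vartheta):=\Pi(\widetilde{\pi_v}^{-1}(1\otimes\vartheta))$ makes sense and automatically satisfies $\pi_v\circ s=1_A\otimes\mathrm{id}_{\Lambda^1}$. Conversely, given $s$ I would extend it $A$-linearly to $\hat{s}\colon A\otimes\Lambda^1\to\Omega^1(A)$, $\hat{s}(a\otimes\vartheta):=as(\vartheta)$, and set $\Pi:=\hat{s}\circ\pi_v$; using formula \eqref{piverExpl} at degree one this recovers $\Pi(a\mathrm{d}a')=aa'_0\,s(\varpi(\pi_\varepsilon(a'_1)))$. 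Idempotency $\Pi^2=\Pi$ reduces to $\pi_v\circ\hat{s}=\mathrm{id}_{\mathrm{ver}^1}$, which by $A$-linearity is immediate from $\pi_v\circ s=1_A\otimes\mathrm{id}_{\Lambda^1}$; the kernel equality $\ker\Pi=\mathrm{hor}^1$ then follows from exactness of the Atiyah sequence together with injectivity of $\hat{s}$.

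For the correspondence with isomorphisms $\kappa$, I would verify that $\omega\mapsto\pi_v(\omega)+(\mathrm{id}-\Pi)(\omega)$ is inverse to the map given in the statement. Injectivity follows from noting that if the first component vanishes then $\omega\in\mathrm{hor}^1$, forcing $\Pi(\omega)=0$, so the second component collapses to $\omega$; surjectivity is read off the explicit inverse. In the opposite direction, from $\kappa$ the composition $\mathrm{id}-\mathrm{pr}_{\mathrm{hor}^1}\circ\kappa$ is manifestly idempotent in ${}_A\mathcal{M}^H$ with kernel $\kappa^{-1}(0\oplus\mathrm{hor}^1)=\mathrm{hor}^1$.

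The main obstacle I anticipate is matching the $H$-coactions across the three pictures. The $H$-coaction on $\mathrm{ver}^1=A\otimes\Lambda^1$ inherited from $\Delta_A^\bullet$ via the surjection $\pi_v$ must be shown to agree with the tensor product coaction built from $\Delta_A$ on $A$ and the adjoint coaction $\mathrm{Ad}$ on $\Lambda^1$ appearing in Definition \ref{def:con}; only then does $H$-colinearity of $s$ (with respect to $\mathrm{Ad}$) translate into $H$-colinearity of $\hat{s}$, and hence of $\Pi$ and $\kappa$. This identification can be extracted from the explicit formula \eqref{piverExpl} together with the Yetter--Drinfel'd property from Proposition \ref{prop:gradedYD}, after which the three equivalences follow by the reasoning above.
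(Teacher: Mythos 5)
Your handling of the first correspondence (connections $\Pi$ versus connection $1$-forms $s$) is sound, and it supplies content the paper omits entirely: the paper's own proof is a one-sentence remark that the verification is elementary. You correctly reduce idempotency of $\Pi=\hat{s}\circ\pi_v$ to $\pi_v\circ\hat{s}=\mathrm{id}_{\mathrm{ver}^1}$, get $\ker\Pi=\mathrm{hor}^1$ from exactness of the Atiyah sequence (Theorem \ref{thm:exseq}) together with split injectivity of $\hat{s}$, recover the explicit formula $\Pi(a\mathrm{d}a')=aa'_0s(\varpi(\pi_\varepsilon(a'_1)))$ from \eqref{piverExpl}, and you rightly isolate the one genuinely nontrivial point, namely that the $H$-coaction induced on $\mathrm{ver}^1=A\otimes\Lambda^1$ through $\pi_v$ coincides with the diagonal coaction built from $\Delta_A$ and $\mathrm{Ad}$, so that colinearity of $s$ with respect to $\mathrm{Ad}$ transfers to $\hat{s}$, $\Pi$ and $\kappa$.

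There is, however, a genuine gap in your treatment of the third set. For an \emph{arbitrary} isomorphism $\kappa\colon\Omega^1(A)\xrightarrow{\cong}\mathrm{ver}^1\oplus\mathrm{hor}^1$ in ${}_A\mathcal{M}^H$, the map $\Pi:=\mathrm{id}-\mathrm{pr}_{\mathrm{hor}^1}\circ\kappa$ is \emph{not} manifestly idempotent, and its kernel is neither $\kappa^{-1}(0\oplus\mathrm{hor}^1)$ nor $\mathrm{hor}^1$ in general. Concretely: if $\kappa_0=\pi_v+(\mathrm{id}-\Pi_0)$ is the isomorphism attached to a connection $\Pi_0$ and $\lambda\in\Bbbk\setminus\{0,1\}$, then $\kappa:=\lambda\kappa_0$ is again an isomorphism in ${}_A\mathcal{M}^H$, but $p:=\mathrm{pr}_{\mathrm{hor}^1}\circ\kappa=\lambda(\mathrm{id}-\Pi_0)$ satisfies $p^2=\lambda^2(\mathrm{id}-\Pi_0)\neq p$ whenever $\mathrm{hor}^1\neq 0$, so $\mathrm{id}-p$ is not a projector; likewise $\kappa^{-1}(0\oplus\mathrm{hor}^1)=\mathrm{hor}^1$ presupposes $\kappa(\mathrm{hor}^1)=0\oplus\mathrm{hor}^1$, which is not automatic. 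A correct proof must restrict the third set to isomorphisms splitting the Atiyah sequence, i.e. those $\kappa$ with $\mathrm{pr}_{\mathrm{ver}^1}\circ\kappa=\pi_v$ and $\kappa|_{\mathrm{hor}^1}$ the canonical inclusion into the second summand; under these normalizations $p$ restricts to the identity on $\mathrm{hor}^1$ and has image in $\mathrm{hor}^1$, whence $p^2=p$, $\ker(\mathrm{id}-p)=\mathrm{hor}^1$, and the two assignments are mutually inverse. This restriction is also needed for the statement itself to be a bijection (e.g. $\kappa_0$ and the isomorphism obtained by rescaling only the $\mathrm{ver}^1$-component give the same $\Pi$). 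Admittedly this imprecision originates in the paper's formulation, and the paper's proof does not address it either; but your write-up asserts the failing steps as obvious instead of identifying the missing hypothesis, so as written the argument for part ii.) does not go through.
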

\begin{proof}
The proof is elementary and simply requires to verify all properties, since we explicitly listed all operations.
\end{proof}

\subsection{Strong connections}\label{C4.2}

In this section we briefly recall the notion of strong connection as introduced in \cite{DGH}, see also \cite[Section 6.3]{BrJaMa} and \cite[Section 5.4]{BegMaj}. We recall that the datum of a faithfully flat Hopf--Galois extension is equivalent to existence of a strong connection on the universal calculus and outline how to induce connections on arbitrary covariant calculi on QPBs.
\begin{definition}
A connection $\Pi\colon\Omega^1(A)\to\Omega^1(A)$ on $\Omega^1(A)$ is called \textbf{strong} if
\begin{equation}
	(\mathrm{id}-\Pi)(\mathrm{d}A)\subseteq\Omega^1(B)A.
\end{equation}
\end{definition}
The following existence theorem for strong connections is taken from \cite[Theorem 6.19 and Theorem 6.20]{BrJaMa}.
\begin{theorem}\label{thm:Hajac}
Let $A$ be a right $H$-comodule algebra and $B=A^{\mathrm{co}H}$.
The following are equivalent.
\begin{enumerate}
\item[i.)] $B\subseteq A$ is a QPB (faithfully flat Hopf--Galois extension).

\item[ii.)] There is a strong connection on the universal calculus $\Omega^1_u(A)$.

\item[iii.)] There is a $\Bbbk$-linear map $\ell\colon H\to A\otimes A$ such that, using the short notation
\begin{equation}
	\ell(h)=:h^{(1)}\otimes h^{(2)}
\end{equation}
for $h\in H$, the following equalities are satisfied for all $h\in H$.
\begin{align}
	\ell(1_H)
	&=1_A\otimes 1_A\\
	h^{(1)}(h^{(2)})_0\otimes(h^{(2)})_1
	&=1_A\otimes h\text{ (equivalently }h^{(1)}h^{(2)}=\varepsilon(h)1_A)\\
	h^{(1)}\otimes(h^{(2)})_0\otimes(h^{(2)})_1
	&=(h_1)^{(1)}\otimes(h_1)^{(2)}\otimes h_2\\
	(h^{(1)})_0\otimes(h^{(1)})_1\otimes h^{(2)}
	&=(h_2)^{(1)}\otimes S(h_1)\otimes(h_2)^{(2)}
\end{align}
\end{enumerate}
\end{theorem}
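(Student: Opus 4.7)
The plan is to prove the cycle of implications (iii) $\Rightarrow$ (ii) $\Rightarrow$ (i) $\Rightarrow$ (iii), exploiting the universal calculus setup in which $\Omega^1_u(A)=\ker m_A\subseteq A\otimes A$ with $\mathrm{d}_u(a)=1\otimes a-a\otimes 1$, and where the relevant invariant forms identify with $\Lambda^1_u=H^+$ via the universal Cartan--Maurer form. Under these identifications, the vertical map of Proposition \ref{prop:piver} restricts to $\pi_v\colon a\otimes a'-aa'\otimes 1\mapsto aa'_0\otimes\pi_\varepsilon(a'_1)\in A\otimes H^+$, and horizontal $1$-forms in the sense of Brzezi\'nski--Majid agree with $\mathrm{hor}^1$ for the universal calculus.

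For (iii) $\Rightarrow$ (ii), I would define the connection $1$-form $s\colon H^+\to\Omega^1_u(A)$ by $s(h):=\ell(h)=h^{(1)}\otimes h^{(2)}$. The axiom $h^{(1)}h^{(2)}=\varepsilon(h)1$ guarantees $s(h)\in\ker m_A$ for $h\in H^+$. The second property of $\ell$ in (iii), together with the formula for $\pi_v$, gives $\pi_v(s(h))=1\otimes h$, while the third and fourth properties together yield $\mathrm{Ad}$-colinearity: the third shows $\ell$ intertwines $\Delta$ with the right coaction on the second leg, and the fourth reinserts the $S(h_1)$-twist corresponding to $\mathrm{Ad}$. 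The corresponding connection $\Pi$ via Proposition \ref{prop:1:1connection} then satisfies $(\mathrm{id}-\Pi)(\mathrm{d}_u a)=1\otimes a-a_0(a_1)^{(1)}\otimes(a_1)^{(2)}$; using the Hopf--Galois-like relation $a_0(a_1)^{\langle 1\rangle}\otimes_B(a_1)^{\langle 2\rangle}=1\otimes_B a$ (now lifted to $A\otimes A$) and the property $\ell(1)=1\otimes 1$, one rewrites this as an element of $\Omega^1_u(B)A$, verifying strongness. The reverse direction (ii) $\Rightarrow$ (iii) at this stage is mostly mechanical: extract $s$ from $\Pi$ via Proposition \ref{prop:1:1connection}, set $\ell(h):=s(\pi_\varepsilon(h))+\varepsilon(h)\,1\otimes 1$, and check the four axioms by reversing the above manipulations, with the strongness condition ensuring the fourth (left-colinearity-type) axiom.

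For (iii) $\Rightarrow$ (i), I would first read off the inverse of the canonical map directly from $\ell$: the formula
\begin{equation*}
\chi^{-1}(a\otimes h):=ah^{(1)}\otimes_B h^{(2)}
\end{equation*}
is checked to be a two-sided inverse using $h^{(1)}(h^{(2)})_0\otimes(h^{(2)})_1=1\otimes h$ (one direction) and the ``translation lifting'' axioms together with the centrality-of-$\tau$ statement \eqref{tau7} applied after passing to $A\otimes_B A$ (other direction). This establishes the Hopf--Galois property. For faithful flatness, I would use $\ell$ to construct an explicit contracting homotopy for the canonical resolution of a left $B$-module $N$ by modules of the form $A^{\otimes n}\otimes_B N$: concretely, the splitting $A\otimes_B N\to N$, $a\otimes_B n\mapsto a_0\cdot\big(a_1^{(1)}n\cdot a_1^{(2)}\big)_?$ constructed from $\ell$ together with the coaction witnesses that $A\otimes_B(-)$ reflects exactness.

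For the remaining direction (i) $\Rightarrow$ (iii), I would construct $\ell$ as a lift of the translation map $\tau\colon H\to A\otimes_B A$ to $A\otimes A$. The surjection $A\otimes A\twoheadrightarrow A\otimes_B A$ is a morphism of right $H$-comodules, and faithful flatness of $A$ over $B$ provides enough $B$-bilinear sections for a $\Bbbk$-linear lift of $\tau$ to exist with the desired equivariance encoded in properties (iii).3 and (iii).4; the remaining normalisations (iii).1 and (iii).2 are imposed by adjusting $\ell$ on the constant term. I expect the main obstacle to lie in this last direction together with the faithful-flatness half of (iii) $\Rightarrow$ (i): the algebraic identities for $\ell$ fall out of direct computation once it exists, but producing $\ell$ from mere existence of a faithfully flat Hopf--Galois extension, and conversely recovering faithful flatness from $\ell$, both require a genuinely homological input (splitting of the relative bar resolution / explicit contracting homotopy), for which I would follow the construction of Da\c{b}rowski--Grosse--Hajac as presented in \cite[Theorem~6.19, Theorem~6.20]{BrJaMa}.
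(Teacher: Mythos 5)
The first thing to note is that the paper does not prove Theorem \ref{thm:Hajac} at all: it is imported with the sentence ``The following existence theorem for strong connections is taken from \cite[Theorem 6.19 and Theorem 6.20]{BrJaMa}'', so there is no in-paper argument to compare against. Since you defer the two steps you yourself identify as hard (the direction $i.)\Rightarrow iii.)$ and faithful flatness inside $iii.)\Rightarrow i.)$) to exactly that same reference, your proposal is, at that level, consistent with the paper's treatment. However, the portions you attempt to prove independently contain genuine gaps, so the proposal cannot stand as a self-contained proof.

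The main gap is in your strongness verification in $iii.)\Rightarrow ii.)$. You claim that the relation $a_0(a_1)^{\langle 1\rangle}\otimes_B(a_1)^{\langle 2\rangle}=1\otimes_Ba$ (equation \eqref{tau5}) can be ``lifted to $A\otimes A$''. It cannot: identities holding in $A\otimes_BA$ do not lift along $\pi_B\colon A\otimes A\to A\otimes_BA$, and the entire point of $\ell$ versus $\tau$ is that $\ell$ obeys strictly stronger axioms on $A\otimes A$. What your manipulation actually gives is $1\otimes a-a_0\ell(a_1)\in\ker\pi_B\cong A\,\Omega^1_u(B)\,A$, which is weaker than the required membership in $\Omega^1_u(B)A$. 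The correct argument uses the fourth axiom of $\ell$: applying $\Delta_A\otimes\mathrm{id}$ to $a_0(a_1)^{(1)}\otimes(a_1)^{(2)}$ and using $(h^{(1)})_0\otimes(h^{(1)})_1\otimes h^{(2)}=(h_2)^{(1)}\otimes S(h_1)\otimes(h_2)^{(2)}$ shows that $a_0(a_1)^{(1)}\otimes(a_1)^{(2)}\in B\otimes A$; writing this element as $\sum_ib_i\otimes c_i$ with $\sum_ib_ic_i=a$ (axiom two) then yields
\begin{equation*}
1\otimes a-a_0\ell(a_1)=\sum_i(1\otimes b_i-b_i\otimes 1)c_i=\sum_i\mathrm{d}_u(b_i)\,c_i\in\Omega^1_u(B)A .
\end{equation*}
Second, your faithful-flatness argument in $iii.)\Rightarrow i.)$ is not an argument: the displayed map contains a literal placeholder subscript and the claimed contracting homotopy is never constructed. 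The standard route runs through the same observation as above: $a\mapsto a_0(a_1)^{(1)}\otimes(a_1)^{(2)}$ is a unital, left $B$-linear, right $H$-colinear splitting of the multiplication $B\otimes A\to A$ (equivariant projectivity), from which projectivity and then faithful flatness of $A$ over $B$ follow; this is precisely the content of the cited theorems and does not fall out of a routine homotopy. Finally, $i.)\Rightarrow iii.)$, which needs Schneider-type structure theory to produce $\ell$ from faithful flatness, is only attributed to the reference — which is acceptable here solely because the paper itself does the same for the whole theorem.
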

Let $B=A^{\mathrm{co}H}\subseteq A$ a QPB and consider the canonical surjection
\begin{equation}
	\pi_B\colon A\otimes A\to A\otimes_BA
\end{equation}
with its kernel $\ker\pi_B\cong A\Omega^1_u(B)A$ (see \cite[Lemma 5.7]{BegMaj}). Given a map $\ell\colon H\to A\otimes A$ as in Theorem \ref{thm:Hajac} the translation map reads $\tau=\pi_B\circ\ell$. Define $\chi'\colon A\otimes A\to A\otimes H$, $\chi'(a\otimes a'):=aa'_0\otimes a'_1$, which is clearly surjective with kernel $\ker\pi_B$. We apply Proposition \ref{prop:1:1connection} to obtain the following composition of the universal FODC.
\begin{corollary}\label{cor:decUniCal}
For every QPB $B=A^{\mathrm{co}H}\subseteq A$ there is a decomposition isomorphism
\begin{equation}
\begin{split}
	\kappa\colon\Omega^1_u(A)&\to(A\otimes H^+)\oplus\ker\pi_B\\
	a\otimes a'&\mapsto\chi'(a\otimes a')+a\otimes a'-aa'_0\ell(a'_1)
\end{split}
\end{equation}
in ${}_A\mathcal{M}^H$ with inverse
\begin{equation}
\begin{split}
	\kappa^{-1}\colon(A\otimes H^+)\oplus\ker\pi_B&\to\Omega^1_u(A)\\
	a\otimes h+a'\otimes a''&\mapsto a\ell(h)+a'\otimes a''.
\end{split}
\end{equation}
\end{corollary}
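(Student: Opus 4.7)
The plan is to realize this decomposition as an instance of Proposition \ref{prop:1:1connection}(ii) applied to the universal FODC on $A$, which is first order complete when paired with the universal FODC on $H$ (the extension $\Delta_A^1\colon \ker m_A \to \ker m_A \otimes H \oplus A \otimes \ker m_H$ being induced by $\Delta_A$ and the Leibniz rule). Under this identification one has $\Lambda^1 \cong H^+$, and the vertical map $\pi_v$ restricted to $\Omega^1_u(A) = \ker m_A$ coincides with $\chi'$. Since $\chi'$ factors as $\chi \circ \pi_B$ with $\chi$ bijective, we obtain $\mathrm{hor}^1_u = \ker \pi_v|_{\Omega^1_u(A)} = \ker \pi_B$, matching the second summand of the target.

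The role of connection $1$-form is played by the restriction $s := \ell|_{H^+}\colon H^+ \to \Omega^1_u(A)$. Its image lies in $\ker m_A$ because the identity $h^{(1)} h^{(2)} = \varepsilon(h) 1_A$ from Theorem \ref{thm:Hajac}(iii) vanishes on $H^+$; the splitting condition $\pi_v \circ s = 1_A \otimes \mathrm{id}_{H^+}$ is precisely the other form of that same identity, $h^{(1)}(h^{(2)})_0 \otimes (h^{(2)})_1 = 1_A \otimes h$. Proposition \ref{prop:1:1connection}(ii) then supplies $\kappa(\omega) = \pi_v(\omega) + (\mathrm{id} - \Pi)(\omega)$ with $\Pi(a \mathrm{d}_u a') = a a'_{(0)} \ell(a'_{(1)})$, where the correction arising from $\varepsilon(a'_{(1)}) \ell(1_H)$ drops out for elements in $\ker m_A$. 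This reproduces the stated formula for $\kappa$, and well-definedness of the second summand in $\ker \pi_B$ follows from the translation-map identity \eqref{tau5}: $\pi_B(a a'_{(0)} \ell(a'_{(1)})) = a a'_{(0)} \tau(a'_{(1)}) = a \otimes_B a'$. The inverse formula $\kappa^{-1}(a \otimes h + a' \otimes a'') = a \ell(h) + a' \otimes a''$ is then obtained directly, again using $h^{(1)} h^{(2)} = 0$ on $H^+$ to ensure the image lies in $\ker m_A$.

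The main obstacle is the verification of right $H$-colinearity, which reduces to proving the adjoint-equivariance of $\ell$, namely that the diagonal coaction on $\ell(h) \in A \otimes A$ satisfies $(h^{(1)})_0 \otimes (h^{(2)})_0 \otimes (h^{(1)})_1 (h^{(2)})_1 = \ell(h_2) \otimes S(h_1) h_3$. This is obtained by first invoking the third identity of Theorem \ref{thm:Hajac}(iii) on the second tensor factor, rewriting $(h^{(1)})_0 \otimes (h^{(2)})_0 \otimes (h^{(1)})_1 \otimes (h^{(2)})_1 = ((h_1)^{(1)})_0 \otimes (h_1)^{(2)} \otimes ((h_1)^{(1)})_1 \otimes h_2$, and then applying the fourth identity on the first factor to obtain $(h_2)^{(1)} \otimes (h_2)^{(2)} \otimes S(h_1) \otimes h_3 = \ell(h_2) \otimes S(h_1) \otimes h_3$; multiplying the last two $H$-components then produces the desired formula. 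The calculation is routine but requires careful Sweedler bookkeeping of antipode placements.
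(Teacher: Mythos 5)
Your proposal is correct and follows essentially the same route as the paper: the paper obtains Corollary \ref{cor:decUniCal} precisely by applying Proposition \ref{prop:1:1connection} to the universal calculus, with $\chi'|_{\Omega^1_u(A)}$ as the vertical map onto $A\otimes H^+\cong A\otimes\Lambda^1$, $\ker\pi_B$ as the horizontal forms, and $\ell|_{H^+}$ as the connection $1$-form coming from Theorem \ref{thm:Hajac}. Your write-up merely makes explicit the verifications the paper leaves implicit (first order completeness of the universal calculus, the vanishing of the $\varepsilon(a'_1)\ell(1_H)$ correction on $\ker m_A$, and the $\mathrm{Ad}$-equivariance of $\ell$ via the third and fourth identities of Theorem \ref{thm:Hajac}), all of which are carried out correctly.
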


We now extend this approach from the universal calculus to arbitrary right $H$-covariant calculi on QPBs.
Recall that any right $H$-covariant FODC $\Omega^1(A)$ admits an isomorphism
\begin{equation}\label{quot:A}
\begin{split}
	\Omega^1(A)&\to\Omega^1_u(A)/N\\
	a\mathrm{d}a'&\mapsto[a\otimes a'-aa'\otimes 1],\\
	a\mathrm{d}a'&\mapsfrom[a\otimes a'],
\end{split}
\end{equation}
where $N\subseteq\Omega^1_u(A)$ is a subobject in ${}_A\mathcal{M}_A^H$.
Moreover, for any bicovariant FODC $\Omega^1(H)$ there is a right ideal $I\subseteq H^+$ with $\mathrm{Ad}(I)\subseteq I\otimes H$ such that
\begin{equation}\label{quot:H}
\begin{split}
	\Omega^1(H)&\to H\otimes H^+/I\\
	h\varpi(h')&\mapsto h\otimes[h'],\\
	h\varpi(h')&\mapsfrom h\otimes[h'].
\end{split}
\end{equation}
Assuming a QPB $B\subseteq A$ with corresponding map $\ell\colon H\to A\otimes A$ induced by Theorem \ref{thm:Hajac} one easily shows that the restriction of $\ell$ to $H^+$ maps to the kernel of the multiplication, i.e. $\ell|_{H^+}\colon H^+\to\Omega^1_u(A)$.
It is natural to ask if this map descends to the quotients $\Lambda^1\to\Omega^1(A)$. For this, we first note that the restriction of $\chi'\colon A\otimes A\to A\otimes H$ to $\Omega^1_u(A)$ is a surjective map $\chi'|_{\Omega^1_u(A)}\colon\Omega^1_u(A)\to A\otimes H^+$. Then, we obtain the following result (compare to \cite[Proposition 5.41]{BegMaj}).
\begin{lemma}\label{lem:ell}
Let $B=A^{\mathrm{co}H}\subseteq A$ be a QPB. As before, consider a right $H$-covariant FODC $\Omega^1(A)$ on $A$ and a bicovariant FODC $\Omega^1(H)$ on $H$.
Then the vertical map $\pi_v\colon\Omega^1(A)\to A\otimes\Lambda^1$ is well-defined if and only if $\chi'(N)\subseteq A\otimes I$.

Moreover, $\ell\colon H^+\to\Omega^1_u(A)$ descends to a well-defined map $\ell\colon H^+/I\to\Omega^1_u(A)/N$ if and only if $\chi'(N)=A\otimes I$.
\end{lemma}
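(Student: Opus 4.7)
The first equivalence follows from a direct diagram chase. Write $q_A\colon\Omega^1_u(A)\twoheadrightarrow\Omega^1_u(A)/N\cong\Omega^1(A)$ and $q_H\colon A\otimes H^+\twoheadrightarrow A\otimes H^+/I\cong A\otimes\Lambda^1$ for the quotient maps coming from \eqref{quot:A} and \eqref{quot:H}. The vertical map $\pi_v\colon\Omega^1(A)\to A\otimes\Lambda^1$ is uniquely characterised by $\pi_v\circ q_A=q_H\circ\chi'|_{\Omega^1_u(A)}$, and this factorisation through $q_A$ exists if and only if the right-hand side vanishes on $N$, i.e.\ $\chi'(N)\subseteq\ker q_H=A\otimes I$.

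For the second equivalence the central input is the identity $\chi'(\ell(h))=1\otimes h$ valid for $h\in H^+$, which is property~(ii) of Theorem~\ref{thm:Hajac} applied to $h\in\ker\varepsilon$. The forward direction ``$\ell(I)\subseteq N\Rightarrow\chi'(N)=A\otimes I$'' is immediate: applying $\chi'$ yields $1\otimes I\subseteq\chi'(N)$, whence $A\otimes I\subseteq\chi'(N)$ by left $A$-linearity; the reverse inclusion $\chi'(N)\subseteq A\otimes I$ is deduced from part~(1) once one observes that the existence of the descent $\bar\ell\colon H^+/I\to\Omega^1(A)$ is compatible with the formula $\pi_v\circ\bar\ell=1_A\otimes\mathrm{id}_{\Lambda^1}$, forcing $\pi_v$ itself to be well-defined.

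For the converse, I would exploit the decomposition isomorphism $\kappa$ of Corollary~\ref{cor:decUniCal}. Given $\chi'(N)=A\otimes I$ and $h\in I$, the identity $\chi'(\ell(h))=1\otimes h\in\chi'(N)$ produces some $n\in N$ with $n-\ell(h)\in\ker\chi'\cap\Omega^1_u(A)=\ker\pi_B$. The main obstacle is to promote this to $\ell(h)\in N$: one must show that the image $\kappa(N)\subseteq(A\otimes H^+)\oplus\ker\pi_B$ splits compatibly as a direct sum, so that the preimage of $A\otimes I$ under $\chi'|_N$ is precisely $A\cdot\ell(I)+(N\cap\ker\pi_B)$. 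This compatibility is expected to follow from the right $A$-module structure of $N$ --- since $\kappa$ is only left $A$-linear, the right $A$-action is what couples the two summands --- together with right $H$-covariance of $N$, and constitutes the core technical content of the argument.
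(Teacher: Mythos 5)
Your treatment of the first equivalence coincides with the paper's (both sides descend $\chi'$ through the two quotients), and the first half of your forward direction for the second statement is fine: $\ell(I)\subseteq N$ plus $\chi'(\ell(h))=1\otimes h$ and left $A$-linearity gives $A\otimes I\subseteq\chi'(N)$. But your argument for the reverse inclusion $\chi'(N)\subseteq A\otimes I$ is circular: you appeal to the formula $\pi_v\circ\bar\ell=1_A\otimes\mathrm{id}_{\Lambda^1}$ to conclude that ``$\pi_v$ itself is well-defined'', yet that formula only makes sense once $\pi_v$ exists, which by part (1) is exactly the inclusion you are trying to establish. The bare existence of the descent cannot force it: take $N=\Omega^1_u(A)$ (so $\Omega^1(A)=0$, a perfectly good right $H$-covariant FODC) and any proper $\mathrm{Ad}$-stable right ideal $I\subsetneq H^+$; then $\ell$ descends trivially, while $\chi'(N)=A\otimes H^+\not\subseteq A\otimes I$. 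This direction only works if one reads the second statement with the first condition $\chi'(N)\subseteq A\otimes I$ as a standing assumption, which is also how the paper's ``as before, the above diagram forces $\chi'(N)=A\otimes I$'' has to be read.

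The decisive gap is the converse direction $\chi'(N)=A\otimes I\Rightarrow\ell(I)\subseteq N$, which you explicitly leave open (``is expected to follow \dots constitutes the core technical content''). This is not a deferrable technicality: the hypothesis only yields, for $h\in I$, some $n\in N$ with $\chi'(n)=1\otimes h$, hence $\ell(h)\in N+\ker\pi_B$, exactly as you observe; promoting this to $\ell(h)\in N$ is equivalent (under the hypothesis) to the assertion that the idempotent $\hat\ell\circ\chi'$ maps $N$ into $N$. The structures you invoke do not deliver this: $\hat\ell\circ\chi'$ is left $A$-linear and right $H$-colinear but not right $A$-linear, and a left $A$-linear, colinear projector has no general reason to preserve a subobject $N$ of ${}_A\mathcal{M}_A^H$ --- projectors do not preserve arbitrary submodules. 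So the ``compatible splitting of $\kappa(N)$'' you postulate is precisely the statement to be proven, not a consequence of the bimodule and comodule structures you list. For comparison, the paper disposes of this direction in one line (``applying $\mathrm{id}\otimes\ell$ we obtain $\ell(I)\subseteq N$''), i.e. it applies $a\otimes h\mapsto a\ell(h)$ to the equality $\chi'(N)=A\otimes I$ and implicitly uses that this operation carries $\chi'(N)$ back into $N$ --- the very same unproven assertion. You have therefore located correctly where the substance of the lemma lies, but your proposal does not supply it, so as written the proof is incomplete.
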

\begin{proof}
If $\chi'(N)\subseteq A\otimes I$ then the vertical map descends from $\chi'$ (or rather the restriction of $\chi'$ to $\Omega^1_u(A)$) via the quotient procedure
\begin{equation}\label{diag:chi'}
\begin{tikzcd}
a\otimes a'
& \Omega^1_u(A) \arrow{r}{\chi'}
\arrow{d}{~}
& A\otimes H^+ \arrow{d}{~}
& aa'_0\otimes a'_1\\
{[a\otimes a']}
& \Omega^1(A) \arrow{r}{\pi_v}
& A\otimes\Lambda^1
& aa'_0\otimes\varpi(a'_1)
\end{tikzcd}
\end{equation}
and thus $\pi_v$ is well-defined. If, on the other hand, $\pi_v$ is well-defined, the diagram \eqref{diag:chi'} commutes, forcing $\chi'(N)\subseteq A\otimes I$.
For the second statement, assume first that $\chi'(N)=A\otimes I$. Then, applying $\mathrm{id}\otimes\ell$ we obtain $\ell(I)\subseteq N$ and thus $\ell$ descends to the quotient
\begin{equation}
\begin{tikzcd}
h
& H^+ \arrow{r}{\ell}
\arrow{d}{~}
& \Omega^1_u(A) \arrow{d}{~}
& h^{(1)}\otimes h^{(2)}\\
\varpi(h)
& \Lambda^1 \arrow{r}{\ell}
& \Omega^1(A)
& h^{(1)}\mathrm{d}h^{(2)}
\end{tikzcd}
\end{equation}
As before, the above diagram forces $\chi'(N)=A\otimes I$ to hold. 
\end{proof}
It turns out that the condition $\chi'(N)=A\otimes I$ forces the equality of the horizontal forms of \DJ ur\dj evi\'c and Brzezi\'nski--Majid $\mathrm{hor}^1=\Omega^1_\mathrm{hor}$. And in this case the corresponding Atiyah sequence admits a splitting, i.e. we can find a connection.
\begin{proposition}
Let $B=A^{\mathrm{co}H}\subseteq A$ be a QPB and $\Omega^\bullet(A)$ a DC on $A$ which is first order complete such that $\mathrm{hor}^1=\Omega^1_\mathrm{hor}$.
Then, the exact sequence
\begin{equation}
	\begin{tikzcd}
		0 \arrow{r}{~}
		& \mathrm{hor}^1 \arrow{r}{~}
		& \Omega^1(A) \arrow{r}{\pi_v}
		& A\otimes\Lambda^1 \arrow{r}{~} \arrow[bend left=33]{l}{\hat{\ell}}
		& 0
	\end{tikzcd}
\end{equation}
admits the splitting
\begin{equation}
\hat{\ell}\colon A\otimes\Lambda^1\to\Omega^1(A),\qquad
\hat{\ell}(a\otimes\varpi(h)):=ah^{(1)}\mathrm{d}h^{(2)}.
\end{equation}
\end{proposition}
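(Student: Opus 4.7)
The plan is to verify three things: (i) $\hat\ell$ is well defined on the quotient $A\otimes\Lambda^1=A\otimes H^+/I$; (ii) $\hat\ell$ is a morphism in ${}_A\mathcal{M}^H$; and (iii) $\pi_v\circ\hat\ell=\mathrm{id}_{A\otimes\Lambda^1}$.

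For (i), I would first invoke Lemma \ref{lem:ell}: well-definedness of $\ell\colon\Lambda^1\to\Omega^1(A)$, $\varpi(h)\mapsto h^{(1)}\mathrm{d}h^{(2)}$, is equivalent to $\chi'(N)=A\otimes I$, where $N\subseteq\Omega^1_u(A)$ is the subobject with $\Omega^1(A)\cong\Omega^1_u(A)/N$ in the sense of \eqref{quot:A}. First order completeness already gives $\chi'(N)\subseteq A\otimes I$. The reverse inclusion is extracted from the hypothesis $\mathrm{hor}^1=\Omega^1_\mathrm{hor}$ by a short diagram chase: writing $q\colon\Omega^1_u(A)\to\Omega^1(A)$ for the quotient and using that $\ker\chi'|_{\Omega^1_u(A)}=A\,\mathrm{d}_uBA$ for a Hopf--Galois extension, one identifies
\[
\mathrm{hor}^1=q\bigl((\chi')^{-1}(A\otimes I)\bigr),\qquad\Omega^1_\mathrm{hor}=q(A\,\mathrm{d}_uBA).
\]
The assumed equality forces $(\chi')^{-1}(A\otimes I)\subseteq A\,\mathrm{d}_uBA+N$; applying $\chi'$ and using the surjectivity of $\chi'|_{\Omega^1_u(A)}\colon\Omega^1_u(A)\to A\otimes H^+$ yields $A\otimes I\subseteq\chi'(N)$. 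Hence $\ell$ descends and $\hat\ell(a\otimes\varpi(h))=ah^{(1)}\mathrm{d}h^{(2)}$ is well defined and left $A$-linear by construction.

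For right $H$-colinearity in step (ii), I would combine the third and fourth translation identities of Theorem \ref{thm:Hajac}, applied in succession (first to the $h^{(1)}$-slot via relation (4) and then to the $h^{(2)}$-slot via relation (3)), to obtain
\[
(h^{(1)})_0\otimes(h^{(1)})_1\otimes(h^{(2)})_0\otimes(h^{(2)})_1=(h_2)^{(1)}\otimes S(h_1)\otimes(h_2)^{(2)}\otimes h_3.
\]
This yields $\Delta_{\Omega^1(A)}(h^{(1)}\mathrm{d}h^{(2)})=(h_2)^{(1)}\mathrm{d}((h_2)^{(2)})\otimes S(h_1)h_3$, which matches the adjoint coaction on $\Lambda^1$ tensored with the coaction on $A$:
\[
\Delta_{\Omega^1(A)}\bigl(\hat\ell(a\otimes\varpi(h))\bigr)=\hat\ell\bigl(a_0\otimes\varpi(h_2)\bigr)\otimes a_1S(h_1)h_3.
\]
For step (iii), the second identity of Theorem \ref{thm:Hajac}, $h^{(1)}(h^{(2)})_0\otimes(h^{(2)})_1=1_A\otimes h$, directly gives
\[
\pi_v\bigl(\hat\ell(a\otimes\varpi(h))\bigr)=ah^{(1)}(h^{(2)})_0\otimes\varpi\bigl((h^{(2)})_1\bigr)=a\otimes\varpi(h),
\]
so $\hat\ell$ splits $\pi_v$.

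The main obstacle is step (i), namely the diagram chase establishing $\chi'(N)=A\otimes I$ from the horizontal-form equality, which ensures that the strong translation map passes to the quotient $\Lambda^1$. The remaining verifications are essentially direct applications of the defining identities of $\ell$.
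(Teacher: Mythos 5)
Your proposal is correct and follows the same route as the paper, whose own proof of this proposition consists entirely of citing Lemma \ref{lem:ell} and \cite[Lemma 5.40]{BegMaj}: your steps (ii) and (iii) are exactly the content that the citation of Beggs--Majid is meant to cover (Ad-covariance and the splitting property of $\hat\ell$ follow from identities (2)--(4) of Theorem \ref{thm:Hajac}, precisely as you compute), while your step (i) is the appeal to Lemma \ref{lem:ell}. The one place where you supply something the paper does not write down is the diagram chase in (i), and this step is genuinely needed: the text preceding the proposition asserts only the implication $\chi'(N)=A\otimes I\Rightarrow\mathrm{hor}^1=\Omega^1_\mathrm{hor}$, whereas the proposition assumes the equality of horizontal forms, so it is the converse that makes Lemma \ref{lem:ell} applicable. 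Your chase is correct: $q^{-1}(\mathrm{hor}^1)=(\chi')^{-1}(A\otimes I)$ by $\mathrm{hor}^1=\ker\pi_v$ (exactness, which holds already at first order, as the paper itself uses in Proposition \ref{prop:compare}) together with the square \eqref{diag:chi'}; $q^{-1}(\Omega^1_\mathrm{hor})=A\,\mathrm{d}_u(B)A+N$; and applying $\chi'$, which kills $A\,\mathrm{d}_u(B)A=\ker\pi_B$ and surjects onto $A\otimes H^+$, converts the equality of preimages into $\chi'(N)=A\otimes I$.

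One caveat, which your argument shares with the paper's rather than introduces: the final inference ``$\chi'(N)=A\otimes I$, hence $\ell$ descends'' rests on the ``if'' direction of the second statement of Lemma \ref{lem:ell}, and that direction is more delicate than either your proof or the paper's proof of the lemma acknowledges. From $\chi'(N)=A\otimes I$ one gets, for $h\in I$, some $n_h\in N$ with $\chi'(n_h)=1\otimes h$, and since $\chi'(\ell(h))=1\otimes h$ this yields only $\ell(h)-n_h\in\ker\chi'=A\,\mathrm{d}_u(B)A$, i.e. $q(\ell(I))\subseteq\Omega^1_\mathrm{hor}$, not $q(\ell(I))=0$. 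Because strong connections are far from unique (one may add to $\ell$ any map $H^+\to\ker\pi_B$ compatible with identities (3) and (4) of Theorem \ref{thm:Hajac}, and such perturbations generally change $q(\ell(I))$ inside $\Omega^1_\mathrm{hor}$), well-definedness of $\hat\ell$ is really a property of the particular strong connection chosen. A fully self-contained proof should therefore either verify $\ell(I)\subseteq N$ for the specific $\ell$ at hand, or record this compatibility as an additional requirement on $\ell$; as it stands, this is the thinnest point of both your argument and the paper's.
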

\begin{proof}
This is a direct consequence of Lemma \ref{lem:ell} and \cite[Lemma 5.40]{BegMaj}.
\end{proof}
In particular, applying Proposition \ref{prop:1:1connection} to the previous result we obtain a decomposition of $\Omega^1(A)$ into vertical and horizontal forms, just as for the universal calculus in Corollary \ref{cor:decUniCal}.

\section{The \DJ ur\dj evi\'c braiding and graded Hopf--Galois extension}\label{C5}

In this section we are building on the concepts of quantum principal bundle and complete calculus introduced in the previous sections and elaborate on the structures they are accompanied by. It turns out that the total space algebra $A$ of any QPB $B=A^{\mathrm{co}H}\subseteq A$ is canonically braided-commutative with respect to the \DJ ur\dj evi\'c braiding. We explain in Section \ref{sec:BCA} how this braided structure naturally arises from pulling back the tensor product multiplication of $A\otimes H$ to $A\otimes_BA$ via the Hopf--Galois map. In this way, $A\otimes_BA$ becomes an associative algebra and $\chi,\tau$ can be understood as algebra morphisms. In Section \ref{sec:GradedHG} the considered objects are extended to graded maps, leading to a braiding operation on total space forms, such that $\Omega^\bullet(A)$ becomes graded braided-commutative. Furthermore, the extended Hopf--Galois map is bijective, leading to a graded Hopf--Galois extension. This can be seen as an a posteriori justification why to consider complete calculi: braidings are essential tools (for example in the context of bimodule connections) and obtaining a graded Hopf--Galois extension elegantly prolongs the original Hopf--Galois extension $B=A^{\mathrm{co}H}\subseteq A$. Our main references are the preprints \cite{DurGauge,DurHG}.

\subsection{Braided-commutative algebra}\label{sec:BCA}

The canonical map $\chi\colon A\otimes_BA\to A\otimes H$ of a Hopf--Galois extension $B=A^{\mathrm{co}H}\subseteq A$ is an isomorphism of vector spaces\footnote{$\chi$ is even an isomorphism of left $A$-modules and right $H$-comodules if endowed with the obvious left $A$-actions and the diagonal right $H$-coactions, where $H$ is endowed with the adjoint coaction $\mathrm{Ad}\colon H\to H\otimes H$, $\mathrm{Ad}(h)=h_2\otimes S(h_1)h_3$}. While $A\otimes H$ has a natural algebra structure, namely the tensor product one $(a\otimes h)(a'\otimes h')=aa'\otimes hh'$, there is no obvious product on the balanced tensor product $A\otimes_BA$. However, using the vector space isomorphism $\chi$ we induce an associative multiplication $m_{A\otimes_BA}$ on $A\otimes_BA$ from the one $m_{A\otimes H}$ on $A\otimes H$. Explicitly,
\begin{equation}\label{diag:sigma}
\begin{tikzcd}
(aa'_0\otimes a'_1)\otimes(cc'_0\otimes c'_1)
& (A\otimes H)\otimes(A\otimes H) \arrow{rr}{m_{A\otimes H}}
& & A\otimes H \arrow{d}{\chi^{-1}}
& aa'_0cc'_0\otimes a'_1c'_1\\
(a\otimes_Ba')\otimes(c\otimes_Bc')
& (A\otimes_BA)\otimes(A\otimes_BA) \arrow{u}{\chi\otimes\chi}
\arrow{rr}{m_{A\otimes_BA}}
& & A\otimes_BA
& aa'_0cc'_0\tau(a'_1c'_1)
\end{tikzcd}
\end{equation}
Namely, $m_{A\otimes_BA}\colon(A\otimes_BA)\otimes(A\otimes_BA)\to A\otimes_BA$ is given by
\begin{align*}
m_{A\otimes_BA}((a\otimes_Ba')\otimes(c\otimes_Bc'))
:&=aa'_0cc'_0\tau(a'_1c'_1)\\
&=aa'_0cc'_0(c'_1)^{\langle 1\rangle}(a'_1)^{\langle 1\rangle}\otimes_B(a'_1)^{\langle 2\rangle}(c'_1)^{\langle 2\rangle}\\
&=aa'_0c(a'_1)^{\langle 1\rangle}\otimes_B(a'_1)^{\langle 2\rangle}c'\\
&=a\sigma(a'\otimes_Bc)c',
\end{align*}
where we used \eqref{tau2} and \eqref{tau5}. Above we also invoked the $\Bbbk$-linear map $\sigma\colon A\otimes_BA\to A\otimes_BA$ defined by
\begin{equation}
	\sigma(a\otimes_Ba'):=a_0a'\tau(a_1)
	=a_0a'(a_1)^{\langle 1\rangle}\otimes_B(a_1)^{\langle 2\rangle},
\end{equation}
to which we colloquially refer to as the \textbf{\DJ ur\dj evi\'c braiding}.
\begin{proposition}[{\cite[Proposition 2.1]{DurGauge}}]\label{prop:sigma}
Given $B=A^{\mathrm{co}H}\subseteq A$ a QPB the following hold true.
\begin{enumerate}
\item[i.)] $(A\otimes_BA,m_{A\otimes_BA})$ is an associative algebra with unit element $1\otimes_B1$. The canonical map $\chi\colon A\otimes_BA\to A\otimes H$ and the translation map $\tau\colon H\to A\otimes_BA$ become algebra morphisms if $A\otimes_BA$ is endowed with $m_{A\otimes_BA}$.
	
\item[ii.)] The map $\sigma\colon A\otimes_BA\to A\otimes_BA$ is an isomorphism in ${}_B\mathcal{M}_B$. Its inverse is
\begin{equation}
	\sigma^{-1}(a\otimes_Ba'):=\tau(S^{-1}(a'_1))aa'_0
	=S^{-1}(a'_1)^{\langle 1\rangle}\otimes_BS^{-1}(a'_1)^{\langle 2\rangle}aa'_0.
\end{equation}

\item[iii.)] $\sigma$ satisfies the braid equation
\begin{equation}\label{QYB}
	(\sigma\otimes_B\mathrm{id})\circ(\mathrm{id}\otimes_B\sigma)\circ(\sigma\otimes_B\mathrm{id})
	=(\mathrm{id}\otimes_B\sigma)\circ(\sigma\otimes_B\mathrm{id})\circ(\mathrm{id}\otimes_B\sigma).
\end{equation}

\item[iv.)] $A$ is braided-commutative with respect to $\sigma$, i.e.
\begin{equation}
	m_A\circ\sigma=m_A,
\end{equation}
where $m_A$ denotes the multiplication $A\otimes_BA\to A$.

\item[v.)] 
\begin{align}
	\sigma\circ(m_A\otimes_B\mathrm{id})
	&=(\mathrm{id}\otimes_Bm_A)\circ(\sigma\otimes_B\mathrm{id})\circ(\mathrm{id}\otimes_B\sigma)\label{hexagon1}\\
	\sigma\circ(\mathrm{id}\otimes_Bm_A)
	&=(m_A\otimes_B\mathrm{id})\circ(\mathrm{id}\otimes_B\sigma)\circ(\sigma\otimes_B\mathrm{id})\label{hexagon2}
\end{align}
hold as equations $A\otimes_BA\otimes_BA\to A\otimes_BA$.
\end{enumerate}
\end{proposition}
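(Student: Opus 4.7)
The overall strategy is to work throughout via the $\Bbbk$-linear isomorphism $\chi$, using it to transport structures to and from the simpler algebra $(A\otimes H, m_{A\otimes H})$. Part (i) is essentially built into the setup: once $\chi$ is a $\Bbbk$-linear bijection, the formula $m_{A\otimes_B A} := \chi^{-1}\circ m_{A\otimes H}\circ(\chi\otimes\chi)$ automatically endows $A\otimes_B A$ with an associative multiplication and unit $1\otimes_B 1 = \chi^{-1}(1\otimes 1)$, and makes $\chi$ an algebra isomorphism by construction. That $\tau$ is a unital algebra morphism then follows at once: $\tau(hg) = \chi^{-1}(1\otimes hg) = \chi^{-1}((1\otimes h)(1\otimes g)) = \tau(h)\tau(g)$ and $\tau(1_H) = \chi^{-1}(1\otimes 1) = 1\otimes_B 1$.

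For (ii), I would begin with the structural identity $\sigma(a\otimes_B a') = (1\otimes_B a)\cdot(a'\otimes_B 1)$ in $A\otimes_B A$, which is read off directly from the definition of $m_{A\otimes_B A}$. Conjugating by $\chi$ and simplifying using \eqref{tau4} yields the transparent expression $F(a\otimes h) := \chi\circ\sigma\circ\chi^{-1}(a\otimes h) = a_0\otimes a_1 S(h)$, which is manifestly invertible with $F^{-1}(b\otimes k) = b_0\otimes S^{-1}(k)b_1$ by the standard antipode identities together with the coaction axiom for $\Delta_A$. Transporting $F^{-1}$ back through $\chi^{-1}$ produces the claimed formula for $\sigma^{-1}$; equivalently, one can verify it directly by computing $\sigma\circ\sigma^{-1}(a\otimes_B a')$ using \eqref{tau4}, \eqref{tau1} and \eqref{tau5}. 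The $B$-bilinearity of $\sigma$ and $\sigma^{-1}$ reduces to $\Delta_A(b) = b\otimes 1$ for $b\in B$ combined with the centrality property \eqref{tau7}.

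Parts (iv) and (v) reduce to short direct computations with the translation map identities. For (iv), one line via \eqref{tau1} is enough: $m_A(\sigma(a\otimes_B a')) = a_0 a'\cdot m_A(\tau(a_1)) = a_0 a'\varepsilon(a_1) = aa'$. Each of the two hexagons in (v) is verified by expanding both sides through $\sigma(a\otimes_B a') = a_0 a'\tau(a_1)$ and collapsing them to a common normal form: the first hexagon reduces both sides to $(aa')_0 a''\tau((aa')_1) = a_0 a'_0 a''\tau(a_1 a'_1)$, using the multiplicativity of $\tau$ established in (i); the second reduces both sides to $a_0 a' a''\tau(a_1)$, the key absorption being \eqref{tau6}, which turns the factor $(a_1)^{\langle 1\rangle}((a_1)^{\langle 2\rangle})_0\otimes((a_1)^{\langle 2\rangle})_1 = 1\otimes a_1$ appearing on the right-hand side into a trivial contribution.

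For the braid equation (iii), my preferred route is to transport it to $A\otimes H\otimes H$ via the iterated isomorphism $\chi^{(2)} := (\chi\otimes\mathrm{id}_H)\circ(\mathrm{id}_A\otimes_B\chi)\colon A\otimes_B A\otimes_B A\xrightarrow{\cong} A\otimes H\otimes H$. Under this identification, $\sigma\otimes_B\mathrm{id}$ and $\mathrm{id}\otimes_B\sigma$ become explicit polynomial expressions in $\Delta_A$, $\Delta$, $S$ and the multiplications of $A$ and $H$, so that the braid relation reduces to coassociativity of $\Delta_A$ and $\Delta$ combined with the antipode and coaction axioms. Alternatively, (iii) can be deduced from (iv)--(v) and the associativity of $m_{A\otimes_B A}$, following the classical derivation of the braid equation in a braided extension. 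The main obstacle is precisely the Sweedler-index bookkeeping required in (iii); the conceptual content, that every property of $\sigma$ is inherited from the single fact that $(A\otimes_B A, m_{A\otimes_B A})\cong(A\otimes H, m_{A\otimes H})$ as algebras via $\chi$, reduces the whole proposition to essentially mechanical verification rather than a genuinely difficult argument.
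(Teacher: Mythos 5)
Your proposal is correct, but it takes a genuinely different route from the paper for parts (ii) and (iii). The paper proves (i) exactly as you do (by construction via $\chi$), and its proofs of (iv) and (v) coincide with yours: one line from \eqref{tau1}, respectively an element-wise computation whose key steps are the multiplicativity identity for $\tau$ and the absorption \eqref{tau6}. The difference is in (ii) and (iii), which the paper handles by long direct Sweedler computations inside $A\otimes_BA$ using \eqref{tau1}--\eqref{tau6}, whereas you conjugate everything through $\chi$. Your conjugated braiding $F=\chi\circ\sigma\circ\chi^{-1}$, $F(a\otimes h)=a_0\otimes a_1S(h)$, is indeed correct (it follows from \eqref{tau4}, \eqref{tau1} and \eqref{tau6}), its inverse $F^{-1}(b\otimes k)=b_0\otimes S^{-1}(k)b_1$ transports back to the stated formula for $\sigma^{-1}$, and for (iii) the two legs of the braiding transport under $\chi^{(2)}$ to
\begin{equation*}
G_1(c\otimes h\otimes g)=c_0\otimes c_1S(h)g_1\otimes g_2,\qquad
G_2(c\otimes h\otimes g)=c\otimes h_1\otimes h_2S(g),
\end{equation*}
for which the braid equation is a short computation using only coassociativity, the antipode axioms and anti-(co)multiplicativity of $S$; both sides collapse to $c_1\otimes c_2S(g)\otimes c_3S(h)$. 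What your approach buys is a clean separation of concerns: all translation-map bookkeeping is spent once on computing $F$, $G_1$, $G_2$ (this still requires \eqref{tau3}, \eqref{tau4}, \eqref{tau6}), after which invertibility and the Yang--Baxter equation become transparent Hopf-algebra identities. What the paper's approach buys is that it never leaves $A\otimes_BA$, and its computational pattern is reused verbatim for the graded extension $\sigma^\bullet$ in Section \ref{sec:GradedHG}, where the analogue of your transport argument would require the full strength of Theorem \ref{thm:gradedHG} first.

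One caveat: your fallback suggestion to deduce (iii) from (iv), (v) and associativity of $m_{A\otimes_BA}$, ``following the classical derivation of the braid equation,'' is not sound as stated. In the categorical argument the hexagon axioms alone do not imply the Yang--Baxter equation; one also needs naturality of the braiding with respect to itself, i.e.\ precisely an identity of the shape $(\mathrm{id}\otimes_B\sigma)\circ\sigma_{12,3}=\sigma_{12,3}\circ(\sigma\otimes_B\mathrm{id})$, which is equivalent to what you are trying to prove. Since your primary route through $\chi^{(2)}$ is valid, this is not a gap in the proposal as a whole, but that sentence should be dropped or replaced by the explicit transported computation.
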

\begin{proof}
\begin{enumerate}
\item[i.)] This is an immediate consequence of the commutativity of the diagram \eqref{diag:sigma}.
	
\item[ii.)] Observe that $\sigma$ and $\sigma^{-1}$ are well-defined on the balanced tensor product. They are $B$-bilinear thanks to \eqref{tau7}. Furthermore, they are inverse to each other, since
\begin{align*}
	\sigma(\sigma^{-1}(a\otimes_Ba'))
	&=\sigma(S^{-1}(a'_1)^{\langle 1\rangle}\otimes_BS^{-1}(a'_1)^{\langle 2\rangle}aa'_0)\\
	&=(S^{-1}(a'_1)^{\langle 1\rangle})_0S^{-1}(a'_1)^{\langle 2\rangle}aa'_0((S^{-1}(a'_1)^{\langle 1\rangle})_1)^{\langle 1\rangle}\otimes_B((S^{-1}(a'_1)^{\langle 1\rangle})_1)^{\langle 2\rangle}\\
	&\overset{\eqref{tau4}}{=}aa'_0(a'_1)^{\langle 1\rangle}\otimes_B(a'_1)^{\langle 2\rangle}\\
	&\overset{\eqref{tau5}}{=}a\otimes_Ba',
\end{align*}
and
\begin{align*}
	\sigma^{-1}(\sigma(a\otimes_Ba'))
	&=\sigma^{-1}(a_0a'(a_1)^{\langle 1\rangle}\otimes_B(a_1)^{\langle 2\rangle})\\
	&=S^{-1}(((a_1)^{\langle 2\rangle})_1)^{\langle 1\rangle}\otimes_BS^{-1}(((a_1)^{\langle 2\rangle})_1)^{\langle 2\rangle}a_0a'(a_1)^{\langle 1\rangle}((a_1)^{\langle 2\rangle})_0\\
	&\overset{\eqref{tau6}}{=}S^{-1}(a_1)^{\langle 1\rangle}\otimes_BS^{-1}(a_1)^{\langle 2\rangle}a_0a'\\
	&=S^{-1}(a_1)^{\langle 1\rangle}S^{-1}(a_1)^{\langle 2\rangle}a_0\otimes_Ba'\\
	&\overset{\eqref{tau1}}{=}a\otimes_Ba',
\end{align*}
where we used in the second to last equation that $S^{-1}(a_1)^{\langle 1\rangle}\otimes_BS^{-1}(a_1)^{\langle 2\rangle}a_0\in A\otimes_BB\subseteq A\otimes_BA$. This is the case, since
\begin{align*}
	S^{-1}(a_2)^{\langle 1\rangle}\otimes_B(S^{-1}(a_2)^{\langle 2\rangle})_0a_0\otimes(S^{-1}(a_2)^{\langle 2\rangle})_1a_1
	&\overset{\eqref{tau3}}{=}(S^{-1}(a_2)_1)^{\langle 1\rangle}\otimes_B(S^{-1}(a_2)_1)^{\langle 2\rangle}a_0\otimes S^{-1}(a_2)_2a_1\\
	&=S^{-1}(a_3)^{\langle 1\rangle}\otimes_BS^{-1}(a_3)^{\langle 2\rangle}a_0\otimes S^{-1}(a_2)a_1\\
	&=S^{-1}(a_1)^{\langle 1\rangle}\otimes_BS^{-1}(a_1)^{\langle 2\rangle}a_0\otimes 1.
\end{align*}

\item[iii.)] On elements $x,y,z\in A$ the left hand side of \eqref{QYB} reads
\allowdisplaybreaks
{\small\begin{align*}
	&(\sigma\otimes_B\mathrm{id})(x_0y(x_1)^{\langle 1\rangle}\otimes_B\sigma((x_1)^{\langle 2\rangle}\otimes_Bz))\\
	&=\sigma\big(x_0y(x_1)^{\langle 1\rangle}\otimes_B((x_1)^{\langle 2\rangle})_0z(((x_1)^{\langle 2\rangle})_1)^{\langle 1\rangle}\big)\otimes_B(((x_1)^{\langle 2\rangle})_1)^{\langle 2\rangle}\\
	&=x_0y_0((x_2)^{\langle 1\rangle})_0((x_2)^{\langle 2\rangle})_0z(((x_2)^{\langle 2\rangle})_1)^{\langle 1\rangle}\big(x_1y_1((x_2)^{\langle 1\rangle})_1\big)^{\langle 1\rangle}
	\otimes_B\big(x_1y_1((x_2)^{\langle 1\rangle})_1\big)^{\langle 2\rangle}
	\otimes_B(((x_2)^{\langle 2\rangle})_1)^{\langle 2\rangle}\\
	&\overset{\eqref{tau2}}{=}x_0y_0((x_2)^{\langle 1\rangle})_0((x_2)^{\langle 2\rangle})_0z(((x_2)^{\langle 2\rangle})_1)^{\langle 1\rangle}
	(((x_2)^{\langle 1\rangle})_1)^{\langle 1\rangle}(y_1)^{\langle 1\rangle}(x_1)^{\langle 1\rangle}
	\otimes_B(x_1)^{\langle 2\rangle}(y_1)^{\langle 2\rangle}(((x_2)^{\langle 1\rangle})_1)^{\langle 2\rangle}
	\otimes_B(((x_2)^{\langle 2\rangle})_1)^{\langle 2\rangle}\\
	&=x_0y_0((x_2)^{\langle 1\rangle})_0((x_2)^{\langle 2\rangle})_0z(((x_2)^{\langle 2\rangle})_1)^{\langle 1\rangle}
	(((x_2)^{\langle 1\rangle})_1)^{\langle 1\rangle}(y_1)^{\langle 1\rangle}(x_1)^{\langle 1\rangle}
	\otimes_B(x_1)^{\langle 2\rangle}(y_1)^{\langle 2\rangle}(((x_2)^{\langle 1\rangle})_1)^{\langle 2\rangle}
	\otimes_B(((x_2)^{\langle 2\rangle})_1)^{\langle 2\rangle}\\
	&\overset{\eqref{tau4}}{=}x_0y_0(x_3)^{\langle 1\rangle}((x_3)^{\langle 2\rangle})_0z(((x_3)^{\langle 2\rangle})_1)^{\langle 1\rangle}
	S(x_2)^{\langle 1\rangle}(y_1)^{\langle 1\rangle}(x_1)^{\langle 1\rangle}
	\otimes_B(x_1)^{\langle 2\rangle}(y_1)^{\langle 2\rangle}S(x_2)^{\langle 2\rangle}
	\otimes_B(((x_3)^{\langle 2\rangle})_1)^{\langle 2\rangle}\\
	&\overset{\eqref{tau6}}{=}x_0y_0z(x_3)^{\langle 1\rangle}
	S(x_2)^{\langle 1\rangle}(y_1)^{\langle 1\rangle}(x_1)^{\langle 1\rangle}
	\otimes_B(x_1)^{\langle 2\rangle}(y_1)^{\langle 2\rangle}S(x_2)^{\langle 2\rangle}
	\otimes_B(x_3)^{\langle 2\rangle}\\
	&\overset{\eqref{tau4}}{=}x_0y_0z((x_2)^{\langle 1\rangle})_0
	(((x_2)^{\langle 1\rangle})_1)^{\langle 1\rangle}(y_1)^{\langle 1\rangle}(x_1)^{\langle 1\rangle}
	\otimes_B(x_1)^{\langle 2\rangle}(y_1)^{\langle 2\rangle}(((x_2)^{\langle 1\rangle})_1)^{\langle 2\rangle}
	\otimes_B(x_2)^{\langle 2\rangle}\\
	&\overset{\eqref{tau5}}{=}x_0y_0z(y_1)^{\langle 1\rangle}(x_1)^{\langle 1\rangle}
	\otimes_B(x_1)^{\langle 2\rangle}(y_1)^{\langle 2\rangle}(x_2)^{\langle 1\rangle}
	\otimes_B(x_2)^{\langle 2\rangle},
\end{align*}}
while the right hand side of \eqref{QYB} reads
{\small\begin{align*}
	(\mathrm{id}\otimes_B\sigma)(\sigma(x\otimes_By_0z(y_1)^{\langle 1\rangle})\otimes_B(y_1)^{\langle 2\rangle})
	&=x_0y_0z(y_1)^{\langle 1\rangle}(x_1)^{\langle 1\rangle}\otimes_B\sigma((x_1)^{\langle 2\rangle}\otimes_B(y_1)^{\langle 2\rangle})\\
	&=x_0y_0z(y_1)^{\langle 1\rangle}(x_1)^{\langle 1\rangle}
	\otimes_B((x_1)^{\langle 2\rangle})_0(y_1)^{\langle 2\rangle}(((x_1)^{\langle 2\rangle})_1)^{\langle 1\rangle}
	\otimes_B(((x_1)^{\langle 2\rangle})_1)^{\langle 2\rangle}\\
	&\overset{\eqref{tau3}}{=}x_0y_0z(y_1)^{\langle 1\rangle}(x_1)^{\langle 1\rangle}
	\otimes_B(x_1)^{\langle 2\rangle}(y_1)^{\langle 2\rangle}(x_2)^{\langle 1\rangle}
	\otimes_B(x_2)^{\langle 2\rangle}
\end{align*}}
Thus, the two sides coincide and \eqref{QYB} is proven.

\item[iv.)] This follows immediately from \eqref{tau1}.

\item[v.)] On elements $x\otimes_By\otimes_Bz$ the left hand side of \eqref{hexagon1} reads
\begin{align*}
	\sigma(xy\otimes_Bz)
	=(xy)_0z((xy)_1)^{\langle 1\rangle}\otimes_B((xy)_1)^{\langle 2\rangle}
	\overset{\eqref{tau2}}{=}x_0y_0z(y_1)^{\langle 1\rangle}(x_1)^{\langle 1\rangle}\otimes_B(x_1)^{\langle 2\rangle}(y_1)^{\langle 2\rangle},
\end{align*}
which coincides with the right hand side of \eqref{hexagon1}, since in fact
\begin{align*}
	(\mathrm{id}\otimes_Bm_A)((\sigma\otimes_B\mathrm{id})(x\otimes_B\sigma(y\otimes_Bz)))
	&=(\mathrm{id}\otimes_Bm_A)(\sigma(x\otimes_By_0z(y_1)^{\langle 1\rangle})\otimes_B(y_1)^{\langle 2\rangle})\\
	&=x_0y_0z(y_1)^{\langle 1\rangle}(x_1)^{\langle 1\rangle}\otimes_Bm_A((x_1)^{\langle 2\rangle}\otimes_B(y_1)^{\langle 2\rangle}).
\end{align*}
Similarly, for the left hand side of \eqref{hexagon2} we obtain
$
\sigma(x\otimes_Byz)
=x_0yz(x_1)^{\langle 1\rangle}\otimes_B(x_1)^{\langle 2\rangle},
$
which coincides with the right hand side of \eqref{hexagon2}, since
\begin{align*}
	(m_A\otimes_B\mathrm{id})((\mathrm{id}\otimes_B\sigma)((\sigma(x\otimes_By)\otimes_Bz)))
	&=(m_A\otimes_B\mathrm{id})(x_0y(x_1)^{\langle 1\rangle}\otimes_B\sigma((x_1)^{\langle 2\rangle}\otimes_Bz))\\
	&=m_A(x_0y(x_1)^{\langle 1\rangle}\otimes_B((x_1)^{\langle 2\rangle})_0z(((x_1)^{\langle 2\rangle})_1)^{\langle 1\rangle})\otimes_B(((x_1)^{\langle 2\rangle})_1)^{\langle 2\rangle}\\
	&=x_0y(x_1)^{\langle 1\rangle}((x_1)^{\langle 2\rangle})_0z(((x_1)^{\langle 2\rangle})_1)^{\langle 1\rangle}\otimes_B(((x_1)^{\langle 2\rangle})_1)^{\langle 2\rangle}\\
	&\overset{\eqref{tau6}}{=}x_0yz(x_1)^{\langle 1\rangle}\otimes_B(x_1)^{\langle 2\rangle}.
\end{align*}
\end{enumerate}
This completes the proof of the proposition.
\end{proof}

\subsection{Graded Hopf--Galois extension}\label{sec:GradedHG}

Let $\Omega^\bullet(A)$ be a complete calculus on a QPB $B=A^{\mathrm{co}H}\subseteq A$. As before, we denote the base forms by $\Omega^\bullet(B)$, the bicovariant FODC on the corresponding structure Hopf algebra by $\Gamma\cong H\otimes\Lambda^1\cong H\otimes H^+/I$ and its maximal prolongation by $\Omega^\bullet(H)$. Since $\Omega^\bullet(A)$ is a DGA and $\Omega^\bullet(B)$ a differential graded subalgebra it follows that $\Omega^\bullet(A)$ is an $\Omega^\bullet(B)$-bimodule. In particular
\begin{equation}\label{GradedHGLeft}
	\Omega^\bullet(A\otimes_BA):=\Omega^\bullet(A)\otimes_{\Omega^\bullet(B)}\Omega^\bullet(A)
\end{equation}
is well-defined as an $\Omega^\bullet(A)$-bimodule with the obvious left and right actions on the first and second tensor factor. It becomes a graded $\Omega^\bullet(A)$-bimodule with degree induced by the tensor product DGA $\Omega^\bullet(A)\otimes\Omega^\bullet(A)$. In this section we show that \eqref{GradedHGLeft} is a DGA and even a DC over $A\otimes_BA$. We will show that
\begin{equation}\label{eq:chibullet}
\begin{split}
	\chi^\bullet\colon\Omega^\bullet(A\otimes_BA)&\to\Omega^\bullet(A)\otimes\Omega^\bullet(H)\\
	\omega\otimes_{\Omega^\bullet(B)}\eta&\mapsto\omega\Delta_A^\bullet(\eta)=\omega\wedge\eta_{[0]}\otimes\eta_{[1]}
\end{split}
\end{equation}
is an isomorphism of graded vector spaces, or, in other words, that $\Omega^\bullet(B)=\Omega^\bullet(A)^{\Omega^\bullet(H)}\subseteq\Omega^\bullet(A)$ is a graded Hopf--Galois extension. Then we pull back the natural tensor product DGA structure of $\Omega^\bullet(A)\otimes\Omega^\bullet(H)$ to $\Omega^\bullet(A\otimes_BA)$ using the graded vector space isomorphism \eqref{eq:chibullet}, making $\chi^\bullet$ into an isomorphism of DGAs by construction. We further obtain an extension of the \DJ ur\dj evi\'c braiding to all degrees of $\Omega^\bullet(A\otimes_BA)$. The main ingredient of this approach is the extension of the translation map to differential forms, which is obtained in the following proposition.  
\begin{proposition}\label{lem:taubullet}
	There is a well-defined graded map
	\begin{equation}\label{taubullet}
		\tau^\bullet\colon\Omega^\bullet(H)\to\Omega^\bullet(A)\otimes_{\Omega^\bullet(B)}\Omega^\bullet(A)
	\end{equation}
	extending $\tau$. On $1$-forms and $2$-forms it reads
	\begin{equation}\label{eq:tau1}
		\begin{split}
			\tau^1\colon\Omega^1(H)&\to\Omega^\bullet(A)\otimes_{\Omega^\bullet(B)}\Omega^\bullet(A)\\
			h\mathrm{d}g&\mapsto\mathrm{d}(g^{\langle 1\rangle})h^{\langle 1\rangle}\otimes_{\Omega^\bullet(B)}h^{\langle 2\rangle}g^{\langle 2\rangle}
			+g^{\langle 1\rangle}h^{\langle 1\rangle}\otimes_{\Omega^\bullet(B)}h^{\langle 2\rangle}\mathrm{d}(g^{\langle 2\rangle})
		\end{split}
	\end{equation}
	and
	\begin{equation}\label{tau2form}
		\begin{split}
			\tau^2\colon\Omega^2(H)&\to\Omega^\bullet(A)\otimes_{\Omega^\bullet(B)}\Omega^\bullet(A)\\
			h\mathrm{d}g\wedge\mathrm{d}k&\mapsto-\mathrm{d}(k^{\langle 1\rangle})\wedge\mathrm{d}(g^{\langle 1\rangle})h^{\langle 1\rangle}\otimes_{\Omega^\bullet(B)}h^{\langle 2\rangle}g^{\langle 2\rangle}k^{\langle 2\rangle}\\
			&\qquad-\mathrm{d}(k^{\langle 1\rangle})g^{\langle 1\rangle}h^{\langle 1\rangle}\otimes_{\Omega^\bullet(B)}h^{\langle 2\rangle}\mathrm{d}(g^{\langle 2\rangle})k^{\langle 2\rangle}\\
			&\qquad+k^{\langle 1\rangle}\mathrm{d}(g^{\langle 1\rangle})h^{\langle 1\rangle}\otimes_{\Omega^\bullet(B)}h^{\langle 2\rangle}g^{\langle 2\rangle}\mathrm{d}(k^{\langle 2\rangle})\\
			&\qquad+k^{\langle 1\rangle}g^{\langle 1\rangle}h^{\langle 1\rangle}\otimes_{\Omega^\bullet(B)}h^{\langle 2\rangle}\mathrm{d}(g^{\langle 2\rangle})\wedge\mathrm{d}(k^{\langle 2\rangle}),
		\end{split}
	\end{equation}
	respectively.
	For higher forms one keeps the "inside-out" structure, distributes the differentials and obtains a minus sign when any tensor leg of $\tau(g)$ and the first tensor leg of $\tau(k)$ admit a differential for some arbitrary $k$-form $h\mathrm{d}h'\wedge\ldots\wedge\mathrm{d}g\wedge\ldots\wedge\mathrm{d}k\wedge\ldots\in\Omega^k(H)$.
\end{proposition}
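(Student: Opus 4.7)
The plan is to construct $\tau^\bullet$ explicitly by an inside-out formula generalizing \eqref{eq:tau1}--\eqref{tau2form}, and then to establish well-definedness relative to the defining relations of the maximal prolongation $\Omega^\bullet(H)$. The underlying motivation, which matches the displayed low-degree formulas, is that $\tau^\bullet$ should be characterized by $\chi^\bullet\circ\tau^\bullet=1_A\otimes\mathrm{id}_{\Omega^\bullet(H)}$ for $\chi^\bullet$ as in \eqref{eq:chibullet}; since invertibility of $\chi^\bullet$ is not yet available, one must construct $\tau^\bullet$ directly and subsequently use it to build the inverse of $\chi^\bullet$.

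Concretely, I would define $\tau^k(h^0\mathrm{d}h^1\wedge\cdots\wedge\mathrm{d}h^k)$ by first forming the ``inside-out'' nesting
$$
(h^k)^{\langle 1\rangle}\cdots(h^1)^{\langle 1\rangle}(h^0)^{\langle 1\rangle}\otimes_{\Omega^\bullet(B)}(h^0)^{\langle 2\rangle}(h^1)^{\langle 2\rangle}\cdots(h^k)^{\langle 2\rangle},
$$
obtained by iterating \eqref{tau2}, and then distributing each of the $k$ differentials over the two tensor legs of the corresponding $\tau(h^i)$. Summing over all $2^k$ such distributions and tracking the graded signs coming from anti-commuting the resulting differential $1$-forms past each other on the left-hand tensor factor produces a well-posed element of $\Omega^\bullet(A)\otimes_{\Omega^\bullet(B)}\Omega^\bullet(A)$ which specializes to \eqref{eq:tau1} and \eqref{tau2form} in degrees one and two.

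The main work is verifying that this prescription descends across the two families of relations defining $\Omega^\bullet(H)$. The first-order relations come from the quotient $\Omega^1(H)\cong H\otimes H^+/I$ for the right $H$-ideal $I\subseteq H^+$, and one must check that $\tau^1$ vanishes on the kernel; this is a direct computation using the right-ideal condition on $I$ together with the translation identities \eqref{tau3}--\eqref{tau5}. The higher-order relations arise from the defining ideal $I^\wedge$ of the maximal prolongation, and here one can adapt the argument of Lemma \ref{lem:Hcomplete}: if $\sum a^i\mathrm{d}b^i=0$ in $\Omega^1(H)$, then by first-order well-definedness $\sum\tau^1(a^i\mathrm{d}b^i)=0$, and the bidegree decomposition of $\Omega^\bullet(A)\otimes_{\Omega^\bullet(B)}\Omega^\bullet(A)$ separates this into two vanishing identities; applying the differential to each then forces $\sum\tau^2(\mathrm{d}a^i\wedge\mathrm{d}b^i)$ to vanish, and a straightforward inductive extension handles all higher degrees.

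The main obstacle I expect is the first-order well-definedness, since the matching identity on the target side genuinely requires the Hopf--Galois bijectivity together with the translation-map relations, and must be synthesized with the bicovariant structure of the chosen FODC on $H$. Once this is established, the extension to higher degrees and the specialization to the explicit formulas \eqref{eq:tau1}--\eqref{tau2form} are essentially formal consequences of the construction.
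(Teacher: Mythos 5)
Your construction of $\tau^\bullet$ and your plan for the first-order step match the paper's, but the argument you propose for the higher-order relations contains a genuine gap. You want to transplant the trick of Lemma \ref{lem:Hcomplete}: from $\sum_i a^i\mathrm{d}b^i=0$ deduce $\sum_i\tau^1(a^i\mathrm{d}b^i)=0$, then let ``the bidegree decomposition of $\Omega^\bullet(A)\otimes_{\Omega^\bullet(B)}\Omega^\bullet(A)$'' separate this into two vanishing identities, and finally apply differentials. The separation step fails: unlike the targets in Lemma \ref{lem:Hcomplete} and Proposition \ref{prop:comlete}, which are tensor products over $\Bbbk$ (namely $\Omega^\bullet(H)\otimes\Omega^\bullet(H)$ and $\Omega^\bullet(A)\otimes\Omega^\bullet(H)$) and therefore carry a direct-sum decomposition by bidegree, the balanced tensor product $\Omega^\bullet(A)\otimes_{\Omega^\bullet(B)}\Omega^\bullet(A)$ carries only a total grading. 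Indeed, for $\beta\in\Omega^1(B)$ and $a,a'\in A$ one has $a\beta\otimes_{\Omega^\bullet(B)}a'=a\otimes_{\Omega^\bullet(B)}\beta a'$, so elements of ``bidegree $(1,0)$'' and ``bidegree $(0,1)$'' are identified in the quotient and there are no projections onto bidegree components. Consequently you cannot conclude from $\sum_i\tau^1(a^i\mathrm{d}b^i)=0$ that the two summands
\begin{equation*}
\sum_i\mathrm{d}\big((b^i)^{\langle 1\rangle}\big)(a^i)^{\langle 1\rangle}\otimes_{\Omega^\bullet(B)}(a^i)^{\langle 2\rangle}(b^i)^{\langle 2\rangle}
\qquad\text{and}\qquad
\sum_i(b^i)^{\langle 1\rangle}(a^i)^{\langle 1\rangle}\otimes_{\Omega^\bullet(B)}(a^i)^{\langle 2\rangle}\mathrm{d}\big((b^i)^{\langle 2\rangle}\big)
\end{equation*}
vanish separately, and the induction built on this collapses. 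This identification across $\otimes_{\Omega^\bullet(B)}$ is not a removable nuisance: the verification in degree one itself requires sliding basic $1$-forms from one tensor leg to the other, so you cannot retreat to $\Omega^\bullet(A)\otimes_B\Omega^\bullet(A)$ (where bidegrees do split); nor can you restore the splitting by applying $\chi^\bullet$, since injectivity of $\chi^\bullet$ is exactly what is established downstream of this proposition.

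The paper proceeds differently at this point. It first proves an injectivity/centrality lemma ($\omega\tau(h)=\tau(h)\omega$ for $\omega\in\Omega^\bullet(B)$, $h\in H$), then establishes well-definedness of $\tau^1$ by an explicit computation (this part is close to your plan, also invoking $\mathrm{Ad}(I)\subseteq I\otimes H$, $S(I)\subseteq I$ and the recognition of certain expressions as basic forms), and for degree two and higher it replaces your presentation of the maximal prolongation by the Cartan--Maurer presentation of Proposition \ref{prop:MaxGammaH}: writing forms as $h\,\varpi(g^1)\wedge\ldots\wedge\varpi(g^k)$, the only relations to kill are those with some $g^j\in I$ together with $\varpi(\pi_\varepsilon(h_1))\wedge\varpi(\pi_\varepsilon(h_2))$ for $h\in I$, and these are annihilated by direct computation with the translation-map identities \eqref{tau1}--\eqref{tau5} and the centrality of base forms. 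If you insist on your presentation via the ideal $I^\wedge$ of Proposition \ref{prop:maxprol}, you would in addition need a quasi-multiplicativity statement for the not-yet-well-defined $\tau^\bullet$ to pass from the degree-two generators to words containing them, since $\tau^\bullet$ is not an algebra map for any product available at this stage (the product $\wedge_{\otimes_B}$ is only constructed after $\chi^\bullet$ is known to be invertible). The higher-order step therefore has to be redone along the paper's lines.
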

The longish and technical proof of Proposition \ref{lem:taubullet} is shifted to Appendix \ref{app:taubullet} for the convenience of the reader. 

We are now prepared to prove that there is a graded Hopf--Galois extension in the context of complete calculi. Note that in \cite[{Proposition 2}]{DurHG} the proof of this statement invokes the representation theory of compact quantum groups, while we give a fully algebraic proof.
\begin{theorem}\label{thm:gradedHG}
Let $\Omega^\bullet(A)$ be a complete calculus on a QPB $B=A^{\mathrm{co}H}\subseteq A$ with corresponding complete calculus $\Omega^\bullet(H)$ on the structure Hopf algebra $H$. Then
\begin{equation}
\Omega^\bullet(B)=\Omega^\bullet(A)^{\Omega^\bullet(H)}\subseteq\Omega^\bullet(A)
\end{equation}
is a graded Hopf--Galois extension, meaning that $\chi^\bullet\colon\Omega^\bullet(A)\otimes_{\Omega^\bullet(B)}\Omega^\bullet(A)\to\Omega^\bullet(A)\otimes\Omega^\bullet(H)$ is a bijection of graded vector spaces. Its inverse is given by
\begin{equation}
(\chi^\bullet)^{-1}\colon\Omega^\bullet(A)\otimes\Omega^\bullet(H)\to\Omega^\bullet(A)\otimes_{\Omega^\bullet(B)}\Omega^\bullet(A),\qquad
\omega\otimes\theta\mapsto\omega\wedge\tau^\bullet(\theta),
\end{equation}
with $\tau^\bullet\colon\Omega^\bullet(H)\to\Omega^\bullet(A)\otimes_{\Omega^\bullet(B)}\Omega^\bullet(A)$ defined in Proposition \ref{lem:taubullet}.
\end{theorem}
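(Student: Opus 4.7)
The plan is to establish bijectivity of $\chi^\bullet$ by verifying that the proposed formula $(\chi^\bullet)^{-1}$ is indeed both a left and a right inverse, after first recording well-definedness and left $\Omega^\bullet(A)$-linearity of both maps to cut the problem down to two explicit identities. For well-definedness, $\chi^\bullet$ descends to the balanced tensor product because for $\beta \in \Omega^\bullet(B)$ coinvariance gives $\Delta_A^\bullet(\beta \wedge \eta) = (\beta \otimes 1_H) \cdot \Delta_A^\bullet(\eta)$, while the candidate inverse is well-defined because $\tau^\bullet$ lands in $\Omega^\bullet(A)\otimes_{\Omega^\bullet(B)}\Omega^\bullet(A)$ by Proposition \ref{lem:taubullet}. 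Both maps are morphisms of left $\Omega^\bullet(A)$-modules with respect to the obvious action on the first tensor factor: for $\chi^\bullet$ this is immediate, and for $(\chi^\bullet)^{-1}$ it follows from associativity of the wedge product. Therefore the two compositions are left $\Omega^\bullet(A)$-linear and it suffices to check they are the identity on elements of the form $1 \otimes \theta$ and $1 \otimes_{\Omega^\bullet(B)} \eta$, respectively.

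This reduction turns $\chi^\bullet \circ (\chi^\bullet)^{-1} = \mathrm{id}$ into the graded Hopf--Galois identity
\begin{equation*}
\chi^\bullet(\tau^\bullet(\theta)) \;=\; 1 \otimes \theta, \qquad \theta \in \Omega^\bullet(H),
\end{equation*}
and $(\chi^\bullet)^{-1} \circ \chi^\bullet = \mathrm{id}$ into the identity
\begin{equation*}
\eta_{[0]} \wedge \tau^\bullet(\eta_{[1]}) \;=\; 1 \otimes_{\Omega^\bullet(B)} \eta, \qquad \eta \in \Omega^\bullet(A).
\end{equation*}
In degree zero these are precisely the Hopf--Galois identities \eqref{tau6} and \eqref{tau5}. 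For higher degrees I would argue by induction on degree: write $\theta = h^0 \mathrm{d}h^1 \wedge \ldots \wedge \mathrm{d}h^k$ (resp.\ $\eta = a^0 \mathrm{d}a^1 \wedge \ldots \wedge \mathrm{d}a^k$), expand $\tau^\bullet$ via the inside-out formula of Proposition \ref{lem:taubullet}, distribute $\Delta_A^\bullet$ through products and differentials using the DGA morphism property, and collapse the resulting sum through repeated application of \eqref{tau6} together with \eqref{tau3} (respectively \eqref{tau5} together with \eqref{tau4}).

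The main obstacle is controlling the combinatorics of this induction: unwinding $\tau^\bullet$ on a $k$-form together with the Leibniz distribution of $\mathrm{d}$ through $\Delta_A^\bullet$ produces a sum with many terms, and the cancellations depend on careful Koszul-sign bookkeeping. The cleanest way to organize the argument is to observe that $\chi^\bullet$ intertwines the natural Leibniz differential on $\Omega^\bullet(A)\otimes_{\Omega^\bullet(B)}\Omega^\bullet(A)$ with the one on $\Omega^\bullet(A)\otimes\Omega^\bullet(H)$ (a direct consequence of $\Delta_A^\bullet$ being a morphism of DGAs), and that $\tau^\bullet$ intertwines $\mathrm{d}$ on $\Omega^\bullet(H)$ with the Leibniz differential on $\Omega^\bullet(A)\otimes_{\Omega^\bullet(B)}\Omega^\bullet(A)$, which is already built into the recursive formula of Proposition \ref{lem:taubullet}. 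Thus $\chi^\bullet \circ \tau^\bullet$ commutes with $\mathrm{d}$, and combined with the base cases \eqref{tau6} and \eqref{tau5} this propagates both target identities to arbitrary degree, completing the proof of bijectivity and exhibiting $(\chi^\bullet)^{-1}$ as the stated inverse.
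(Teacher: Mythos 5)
Your setup is correct and matches the paper's: both maps are left $\Omega^\bullet(A)$-linear, so everything reduces to the two identities $\chi^\bullet(\tau^\bullet(\theta))=1\otimes\theta$ and $\eta_{[0]}\wedge\tau^\bullet(\eta_{[1]})=1\otimes_{\Omega^\bullet(B)}\eta$, whose degree-zero cases are \eqref{tau6} and \eqref{tau5}. The problem is the mechanism you propose for propagating these identities to higher degree. Knowing that $\chi^\bullet\circ\tau^\bullet$ and $\theta\mapsto 1\otimes\theta$ both intertwine differentials and agree in degree $0$ does \emph{not} force them to agree in all degrees: two chain maps agreeing in degree $0$ only agree on exact forms in degree $k+1$, and $\mathrm{d}\Omega^k(H)$ is a proper subspace of $\Omega^{k+1}(H)$ in general. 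The statement that $\Omega^\bullet(H)$ is generated in degree zero means $\Omega^{k+1}(H)=H\cdot\mathrm{d}\Omega^k(H)$, so exploiting it requires controlling left multiplication by $H$ (and wedge products), not just the differential. The natural way to supply this would be multiplicativity of $\chi^\bullet$ and $\tau^\bullet$, but that is unavailable at this stage without circularity: in the paper the algebra structure on $\Omega^\bullet(A)\otimes_{\Omega^\bullet(B)}\Omega^\bullet(A)$ is \emph{defined} by pulling back along $\chi^\bullet$ (Proposition \ref{prop:DGAotimesB}), and the braided anti-multiplicativity \eqref{TauBul3} of $\tau^\bullet$ is derived in Proposition \ref{prop:gradSigma} \emph{from} the bijectivity you are trying to prove. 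The same objection applies to your second identity, since $\Omega^{k+1}(A)=A\cdot\mathrm{d}\Omega^k(A)$.

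What closes the gap is exactly the computation you set aside as a combinatorial obstacle, and it is what the paper does: in each degree one expands $\tau^\bullet$ via its inside-out structure, applies $\Delta_A^\bullet$ as a DGA morphism, and collapses the resulting sum using \eqref{tau2}, \eqref{tau5} and \eqref{tau6}; the paper carries this out explicitly for $k=1$ and $k=2$ (where the eight terms cancel in pairs) and observes that the inductive step for $k>2$ handles left multiplication by $H$, respectively $A$, through the same identities — this is precisely the input that the chain-map argument cannot see. Your intertwining observations (that $\chi^\bullet$ intertwines the Leibniz differentials as a consequence of $\Delta_A^\bullet$ being a DGA morphism, and that $\tau^\bullet$ does so by construction) are correct and reappear in the paper after the theorem, but on their own they prove the identities only on exact forms.
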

\begin{proof}
Since $\chi^\bullet$ and $(\chi^\bullet)^{-1}$ are left $\Omega^\bullet(A)$-linear it is sufficient to prove that $(\chi^\bullet)^{-1}(\chi^\bullet(1\otimes_{\Omega^\bullet(B)}\omega))=1\otimes_{\Omega^\bullet(B)}\omega$ and $\chi^\bullet(\tau^\bullet(\theta))=1\otimes\theta$ for all $\omega\in\Omega^k(A)$ and $\theta\in\Omega^k(H)$. For $k=0$ this is the case since $B=A^{\mathrm{co}H}\subseteq A$ is a Hopf--Galois extension. In the rest of this proof we write $\otimes_B$ instead of $\otimes_{\Omega^\bullet(B)}$ in order to ease the notation.
\begin{enumerate}
\item[$k=1$:] Let $\omega=a\mathrm{d}a'$. Then $\chi^\bullet(1\otimes_B\omega)
=\omega_{[0]}\otimes\omega_{[1]}
=a_0\mathrm{d}a'_0\otimes a_1a'_1+a_0a'_0\otimes a_1\mathrm{d}a'_1$ and with \eqref{eq:tau1} we obtain
\begin{align*}
	(\chi^\bullet)^{-1}(\chi^\bullet(1\otimes_B\omega))
	&=(\chi^\bullet)^{-1}(a_0\mathrm{d}a'_0\otimes a_1a'_1+a_0a'_0\otimes a_1\mathrm{d}a'_1)\\
	&\overset{\eqref{tau2}}{=}a_0\mathrm{d}(a'_0)(a'_1)^{\langle 1\rangle}(a_1)^{\langle 1\rangle}\otimes_B(a_1)^{\langle 2\rangle}(a'_1)^{\langle 2\rangle}
	+a_0a'_0\mathrm{d}((a'_1)^{\langle 1\rangle})(a_1)^{\langle 1\rangle}\otimes_B(a_1)^{\langle 2\rangle}(a'_1)^{\langle 2\rangle}\\
	&\quad+a_0a'_0(a'_1)^{\langle 1\rangle}(a_1)^{\langle 1\rangle}\otimes_B(a_1)^{\langle 2\rangle}\mathrm{d}((a'_1)^{\langle 2\rangle})\\
	&\overset{\eqref{tau5}}{=}a_0\mathrm{d}(a'_0(a'_1)^{\langle 1\rangle})(a_1)^{\langle 1\rangle}\otimes_B(a_1)^{\langle 2\rangle}(a'_1)^{\langle 2\rangle}
	+1\otimes_Ba\mathrm{d}a'\\
	&\overset{\eqref{tau5}}{=}1\otimes_B\omega.
\end{align*}
For $\theta=h\mathrm{d}g$ we similarly get
\begin{align*}
	\chi^\bullet(\tau^\bullet(\theta))
	&=\chi^\bullet(\mathrm{d}(g^{\langle 1\rangle})h^{\langle 1\rangle}\otimes_Bh^{\langle 2\rangle}g^{\langle 2\rangle}
	+g^{\langle 1\rangle}h^{\langle 1\rangle}\otimes_Bh^{\langle 2\rangle}\mathrm{d}(g^{\langle 2\rangle}))\\
	&=\mathrm{d}(g^{\langle 1\rangle})h^{\langle 1\rangle}(h^{\langle 2\rangle})_0(g^{\langle 2\rangle})_0\otimes(h^{\langle 2\rangle})_1(g^{\langle 2\rangle})_1
	+g^{\langle 1\rangle}h^{\langle 1\rangle}(h^{\langle 2\rangle})_0\mathrm{d}((g^{\langle 2\rangle})_0)\otimes(h^{\langle 2\rangle})_1(g^{\langle 2\rangle})_1\\
	&\quad+g^{\langle 1\rangle}h^{\langle 1\rangle}(h^{\langle 2\rangle})_0(g^{\langle 2\rangle})_0\otimes(h^{\langle 2\rangle})_1\mathrm{d}((g^{\langle 2\rangle})_1)\\
	&\overset{\eqref{tau6}}{=}\mathrm{d}(g^{\langle 1\rangle})(g^{\langle 2\rangle})_0\otimes h(g^{\langle 2\rangle})_1
	+g^{\langle 1\rangle}\mathrm{d}((g^{\langle 2\rangle})_0)\otimes h(g^{\langle 2\rangle})_1
	+1\otimes h\mathrm{d}g\\
	&\overset{\eqref{tau6}}{=}1\otimes\theta.
\end{align*}

\item[$k=2$:] Let $\omega=a\mathrm{d}a'\wedge\mathrm{d}a''$. Then 
\begin{align*}
	\chi^\bullet(1\otimes_B\omega)
	&=\omega_{[0]}\otimes\omega_{[1]}\\
	&=a_0\mathrm{d}(a'_0)\wedge\mathrm{d}(a''_0)\otimes a_1a'_1a''_1
	+a_0\mathrm{d}(a'_0)a''_0\otimes a_1a'_1\mathrm{d}(a''_1)\\
	&\quad-a_0a'_0\mathrm{d}(a''_0)\otimes a_1\mathrm{d}(a'_1)a''_1
	+a_0a'_0a''_0\otimes a_1\mathrm{d}(a'_1)\wedge\mathrm{d}(a''_1)
\end{align*}
and applying \eqref{eq:tau1} and \eqref{tau2form} we obtain
\allowdisplaybreaks
\begin{align*}
	(\chi^\bullet)^{-1}(\chi^\bullet(1\otimes_B\omega))
	&=a_0\mathrm{d}(a'_0)\wedge\mathrm{d}(a''_0)\tau(a_1a'_1a''_1)
	+a_0\mathrm{d}(a'_0)a''_0\tau^1(a_1a'_1\mathrm{d}(a''_1))\\
	&\quad-a_0a'_0\mathrm{d}(a''_0)\tau^1(a_1\mathrm{d}(a'_1)a''_1)
	+a_0a'_0a''_0\tau^2(a_1\mathrm{d}(a'_1)\wedge\mathrm{d}(a''_1))\\
	&\overset{\eqref{tau2}}{=}
	a_0\mathrm{d}(a'_0)\wedge\mathrm{d}(a''_0)(a''_1)^{\langle 1\rangle}(a'_1)^{\langle 1\rangle}(a_1)^{\langle 1\rangle}\otimes_B(a_1)^{\langle 2\rangle}(a'_1)^{\langle 2\rangle}(a''_1)^{\langle 2\rangle}\\
	&\quad+a_0\mathrm{d}(a'_0)a''_0\mathrm{d}((a''_1)^{\langle 1\rangle})(a'_1)^{\langle 1\rangle}(a_1)^{\langle 1\rangle}\otimes_B(a_1)^{\langle 2\rangle}(a'_1)^{\langle 2\rangle}(a''_1)^{\langle 2\rangle}\\
	&\quad+a_0\mathrm{d}(a'_0)a''_0(a''_1)^{\langle 1\rangle}(a'_1)^{\langle 1\rangle}(a_1)^{\langle 1\rangle}\otimes_B(a_1)^{\langle 2\rangle}(a'_1)^{\langle 2\rangle}\mathrm{d}((a''_1)^{\langle 2\rangle})\\
	&\quad-a_0a'_0\mathrm{d}(a''_0)(a''_1)^{\langle 1\rangle}\wedge\mathrm{d}((a'_1)^{\langle 1\rangle})(a_1)^{\langle 1\rangle}\otimes_B(a_1)^{\langle 2\rangle}(a'_1)^{\langle 2\rangle}(a''_1)^{\langle 2\rangle}\\
	&\quad-a_0a'_0\mathrm{d}(a''_0)(a''_1)^{\langle 1\rangle}(a'_1)^{\langle 1\rangle}(a_1)^{\langle 1\rangle}\otimes_B(a_1)^{\langle 2\rangle}\mathrm{d}((a'_1)^{\langle 2\rangle})(a''_1)^{\langle 2\rangle}\\
	&\quad-a_0a'_0a''_0\mathrm{d}((a''_1)^{\langle 1\rangle})\wedge\mathrm{d}((a'_1)^{\langle 1\rangle})(a_1)^{\langle 1\rangle}\otimes_B(a_1)^{\langle 2\rangle}(a'_1)^{\langle 2\rangle}(a''_1)^{\langle 2\rangle}\\
	&\quad-a_0a'_0a''_0\mathrm{d}((a''_1)^{\langle 1\rangle})(a'_1)^{\langle 1\rangle}(a_1)^{\langle 1\rangle}\otimes_B(a_1)^{\langle 2\rangle}\mathrm{d}((a'_1)^{\langle 2\rangle})(a''_1)^{\langle 2\rangle}\\
	&\quad+a_0a'_0a''_0(a''_1)^{\langle 1\rangle}\mathrm{d}((a'_1)^{\langle 1\rangle})(a_1)^{\langle 1\rangle}\otimes_B(a_1)^{\langle 2\rangle}(a'_1)^{\langle 2\rangle}\mathrm{d}((a''_1)^{\langle 2\rangle})\\
	&\quad+a_0a'_0a''_0(a''_1)^{\langle 1\rangle}(a'_1)^{\langle 1\rangle}(a_1)^{\langle 1\rangle}\otimes_B(a_1)^{\langle 2\rangle}\mathrm{d}((a'_1)^{\langle 2\rangle})\wedge\mathrm{d}((a''_1)^{\langle 2\rangle})\\
	&\overset{\eqref{tau5}}{=}1\otimes_B\omega,
\end{align*}
where only the last line remains and the following lines cancel each other: 1 and 2, 3 and 8, 4 and 6, 5 and 7.
On the other hand, for $\theta=h\mathrm{d}g\wedge\mathrm{d}k$ we obtain by similar arguments
\allowdisplaybreaks
\begin{align*}
	\chi^\bullet(\tau^\bullet(\theta))
	&=\chi^\bullet(-\mathrm{d}(k^{\langle 1\rangle})\wedge\mathrm{d}(g^{\langle 1\rangle})h^{\langle 1\rangle}\otimes_Bh^{\langle 2\rangle}g^{\langle 2\rangle}k^{\langle 2\rangle}\\
	&\qquad-\mathrm{d}(k^{\langle 1\rangle})g^{\langle 1\rangle}h^{\langle 1\rangle}\otimes_Bh^{\langle 2\rangle}\mathrm{d}(g^{\langle 2\rangle})k^{\langle 2\rangle}\\
	&\qquad+k^{\langle 1\rangle}\mathrm{d}(g^{\langle 1\rangle})h^{\langle 1\rangle}\otimes_Bh^{\langle 2\rangle}g^{\langle 2\rangle}\mathrm{d}(k^{\langle 2\rangle})\\
	&\qquad+k^{\langle 1\rangle}g^{\langle 1\rangle}h^{\langle 1\rangle}\otimes_Bh^{\langle 2\rangle}\mathrm{d}(g^{\langle 2\rangle})\wedge\mathrm{d}(k^{\langle 2\rangle}))\\
	&=-\mathrm{d}(k^{\langle 1\rangle})\wedge\mathrm{d}(g^{\langle 1\rangle})h^{\langle 1\rangle}(h^{\langle 2\rangle})_0(g^{\langle 2\rangle})_0(k^{\langle 2\rangle})_0\otimes(h^{\langle 2\rangle})_1(g^{\langle 2\rangle})_1(k^{\langle 2\rangle})_1\\
	&\quad-\mathrm{d}(k^{\langle 1\rangle})g^{\langle 1\rangle}h^{\langle 1\rangle}(h^{\langle 2\rangle})_0\wedge\mathrm{d}((g^{\langle 2\rangle})_0)(k^{\langle 2\rangle})_0\otimes(h^{\langle 2\rangle})_1(g^{\langle 2\rangle})_1(k^{\langle 2\rangle})_1\\
	&\quad-\mathrm{d}(k^{\langle 1\rangle})g^{\langle 1\rangle}h^{\langle 1\rangle}(h^{\langle 2\rangle})_0(g^{\langle 2\rangle})_0(k^{\langle 2\rangle})_0\otimes(h^{\langle 2\rangle})_1\mathrm{d}((g^{\langle 2\rangle})_1)(k^{\langle 2\rangle})_1\\
	&\quad+k^{\langle 1\rangle}\mathrm{d}(g^{\langle 1\rangle})h^{\langle 1\rangle}(h^{\langle 2\rangle})_0(g^{\langle 2\rangle})_0\wedge\mathrm{d}((k^{\langle 2\rangle})_0)\otimes(h^{\langle 2\rangle})_1(g^{\langle 2\rangle})_1(k^{\langle 2\rangle})_1\\
	&\quad+k^{\langle 1\rangle}\mathrm{d}(g^{\langle 1\rangle})h^{\langle 1\rangle}(h^{\langle 2\rangle})_0(g^{\langle 2\rangle})_0(k^{\langle 2\rangle})_0)\otimes(h^{\langle 2\rangle})_1(g^{\langle 2\rangle})_1\mathrm{d}((k^{\langle 2\rangle})_1)\\
	&\quad+k^{\langle 1\rangle}g^{\langle 1\rangle}h^{\langle 1\rangle}(h^{\langle 2\rangle})_0\mathrm{d}((g^{\langle 2\rangle})_0)\wedge\mathrm{d}((k^{\langle 2\rangle})_0)\otimes(h^{\langle 2\rangle})_1(g^{\langle 2\rangle})_1(k^{\langle 2\rangle})_1\\
	&\quad+k^{\langle 1\rangle}g^{\langle 1\rangle}h^{\langle 1\rangle}(h^{\langle 2\rangle})_0\mathrm{d}((g^{\langle 2\rangle})_0)(k^{\langle 2\rangle})_0\otimes(h^{\langle 2\rangle})_1(g^{\langle 2\rangle})_1\mathrm{d}((k^{\langle 2\rangle})_1)\\
	&\quad-k^{\langle 1\rangle}g^{\langle 1\rangle}h^{\langle 1\rangle}(h^{\langle 2\rangle})_0(g^{\langle 2\rangle})_0\mathrm{d}((k^{\langle 2\rangle})_0)\otimes(h^{\langle 2\rangle})_1\mathrm{d}((g^{\langle 2\rangle})_1)(k^{\langle 2\rangle})_1\\
	&\quad+k^{\langle 1\rangle}g^{\langle 1\rangle}h^{\langle 1\rangle}(h^{\langle 2\rangle})_0(g^{\langle 2\rangle})_0(k^{\langle 2\rangle})_0\otimes(h^{\langle 2\rangle})_1\mathrm{d}((g^{\langle 2\rangle})_1)\wedge\mathrm{d}((k^{\langle 2\rangle})_1)\\
	&\overset{\eqref{tau6}}{=}
	1\otimes\theta,
\end{align*}
where only the last term remains and the following terms cancel each other: 1 and 2, 3 and 8, 4 and 6, 5 and 7.

\item[$k>2$:] Invertibility for higher forms follows inductively from the same principles, making use of the inside-out structure of $\tau^\bullet$.
\end{enumerate}
\end{proof}

We would like to remark that Theorem \ref{thm:gradedHG} is an extension of the first order result in \cite[Theorem 3.23]{AFLW}.

The extension $\tau^\bullet\colon\Omega^\bullet(H)\to\Omega^\bullet(A\otimes_BA)$ of the translation map satisfies graded analogs of \eqref{tau6}-\eqref{tau7}, which reduce to the latter in zero degree.
\begin{proposition}\label{prop:gradSigma}
	For a complete calculus we have
	\begin{align}
		\theta^{\langle 1\rangle}\wedge(\theta^{\langle 2\rangle})_{[0]}\otimes(\theta^{\langle 2\rangle})_{[1]}
		&=1\otimes\theta\label{TauBul1}\\
		\omega_{[0]}\wedge(\omega_{[1]})^{\langle 1\rangle}\otimes_{\Omega^\bullet(B)}(\omega_{[1]})^{\langle 2\rangle}
		&=1\otimes_{\Omega^\bullet(B)}\omega\label{TauBul2}\\
		\tau^\bullet(\theta\wedge\xi)
		&=(-1)^{|\theta||\xi^{\langle 1\rangle}|}\xi^{\langle 1\rangle}\wedge\theta^{\langle 1\rangle}\otimes_{\Omega^\bullet(B)}\theta^{\langle 2\rangle}\wedge\xi^{\langle 2\rangle}\label{TauBul3}\\
		\theta^{\langle 1\rangle}\wedge\theta^{\langle 2\rangle}
		&=\varepsilon(\theta)1\label{TauBul4}\\
		\theta^{\langle 1\rangle}\otimes_{\Omega^\bullet(B)}(\theta^{\langle 2\rangle})_{[0]}\otimes(\theta^{\langle 2\rangle})_{[1]}
		&=(\theta_{[1]})^{\langle 1\rangle}\otimes_{\Omega^\bullet(B)}(\theta_{[1]})^{\langle 2\rangle}\otimes\theta_{[2]}\label{TauBul5}\\
		(\theta^{\langle 1\rangle})_{[0]}\otimes_{\Omega^\bullet(B)}\theta^{\langle 2\rangle}\otimes(\theta^{\langle 1\rangle})_{[1]}
		&=(\theta_{[2]})^{\langle 1\rangle}\otimes_{\Omega^\bullet(B)}(\theta_{[2]})^{\langle 2\rangle}\otimes S^\bullet(\theta_{[1]})\label{TauBul6}
	\end{align}
	for all $\theta,\xi\in\Omega^\bullet(H)$ and $\omega\in\Omega^\bullet(A)$. Moreover, the centrality property
	\begin{equation}\label{gradedcentral}
		\eta\wedge\tau^\bullet(\theta)=(-1)^{|\eta||\theta|}\tau^\bullet(\theta)\wedge\eta
	\end{equation}
	holds for all $\eta\in\Omega^\bullet(B)$ and $\theta\in\Omega^\bullet(H)$.
\end{proposition}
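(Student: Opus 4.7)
The unifying strategy is to leverage the graded Hopf--Galois isomorphism $\chi^\bullet$ established in Theorem \ref{thm:gradedHG}. By construction $\Omega^\bullet(A\otimes_B A)$ carries the pulled-back DGA structure making $\chi^\bullet$ a DGA isomorphism, and $\tau^\bullet$ is characterized by $\chi^\bullet\circ\tau^\bullet=(\theta\mapsto 1\otimes\theta)$; in particular $\tau^\bullet$ is a morphism of graded algebras. Consequently each identity is established either by applying a suitable map to \eqref{TauBul1} (which is itself just a restatement of $\chi^\bullet\circ(\chi^\bullet)^{-1}=\mathrm{id}$ evaluated on $1\otimes\theta$), or by applying $\chi^\bullet$ (or $\chi^\bullet\otimes\mathrm{id}$) to both sides and reducing to an identity in the model algebra $\Omega^\bullet(A)\otimes\Omega^\bullet(H)$ where the graded Hopf algebra axioms of $\Omega^\bullet(H)$ and the coaction axioms of $\Delta_A^\bullet$ are directly available.

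Concretely, \eqref{TauBul1} and \eqref{TauBul2} are the two equalities $\chi^\bullet\circ(\chi^\bullet)^{-1}=\mathrm{id}$ and $(\chi^\bullet)^{-1}\circ\chi^\bullet=\mathrm{id}$ applied to $1\otimes\theta$ and $1\otimes_{\Omega^\bullet(B)}\omega$ respectively. For \eqref{TauBul4} I apply $\mathrm{id}\otimes\varepsilon^\bullet$ to \eqref{TauBul1}: the right-hand side collapses to $\varepsilon^\bullet(\theta)\cdot 1$, and on the left the counit axiom $(\mathrm{id}\otimes\varepsilon^\bullet)\circ\Delta_A^\bullet=\mathrm{id}$ gives $\theta^{\langle 1\rangle}\wedge\theta^{\langle 2\rangle}$. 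Similarly, for \eqref{TauBul5} I apply $\mathrm{id}\otimes\Delta^\bullet$ to \eqref{TauBul1} and use coassociativity of the right coaction $(\mathrm{id}\otimes\Delta^\bullet)\circ\Delta_A^\bullet=(\Delta_A^\bullet\otimes\mathrm{id})\circ\Delta_A^\bullet$; both sides of \eqref{TauBul5}, after applying $\chi^\bullet\otimes\mathrm{id}$, become $1\otimes\theta_{[1]}\otimes\theta_{[2]}$, and the conclusion follows from injectivity of $\chi^\bullet\otimes\mathrm{id}$.

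For \eqref{TauBul3} the key point is that $\tau^\bullet(\theta\wedge\xi)=\tau^\bullet(\theta)\cdot\tau^\bullet(\xi)$ in the pulled-back product. I apply $\chi^\bullet$ to the proposed right-hand side and use that $\Delta_A^\bullet$ is a graded algebra morphism, which introduces the Koszul sign $(-1)^{|\theta^{\langle 2\rangle}_{[1]}||\xi^{\langle 2\rangle}_{[0]}|}$; after two successive applications of \eqref{TauBul1} (first for $\theta$, then for $\xi$), the bidegree constraints in the surviving summands force $|\theta^{\langle 2\rangle}_{[1]}|=|\theta|$ and $|\xi^{\langle 1\rangle}|+|\xi^{\langle 2\rangle}_{[0]}|=0$, and these conspire with the prefactor $(-1)^{|\theta||\xi^{\langle 1\rangle}|}$ in the statement to yield $1\otimes\theta\wedge\xi=\chi^\bullet(\tau^\bullet(\theta\wedge\xi))$. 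The centrality relation \eqref{gradedcentral} is then immediate from the tensor DGA structure: for $\eta\in\Omega^\bullet(B)$ both $\eta\otimes_{\Omega^\bullet(B)}1$ and $1\otimes_{\Omega^\bullet(B)}\eta$ have image $\eta\otimes 1$ under $\chi^\bullet$, and the tensor products $(\eta\otimes 1)\cdot(1\otimes\theta)=\eta\otimes\theta$ versus $(1\otimes\theta)\cdot(\eta\otimes 1)=(-1)^{|\theta||\eta|}\eta\otimes\theta$ transfer back through $(\chi^\bullet)^{-1}$ to the desired identity.

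The subtlest step is \eqref{TauBul6}, which entwines $\Delta_A^\bullet$ with the graded antipode $S^\bullet$ and is the graded analogue of the classical identity \eqref{tau4}. The plan is to apply $\chi^\bullet\otimes\mathrm{id}$ to both sides: the right-hand side reduces directly to $1\otimes\theta_{[2]}\otimes S^\bullet(\theta_{[1]})$, whereas on the left one applies $\Delta_A^\bullet\otimes\mathrm{id}$ to \eqref{TauBul1} and then repeatedly uses coassociativity together with the graded algebra morphism property of $\Delta_A^\bullet$ to move all occurrences of $\theta^{\langle 1\rangle}$ to the outer positions, invoking the antipode axiom $S^\bullet\ast\mathrm{id}=\varepsilon^\bullet\cdot 1$ in $\Omega^\bullet(H)$ at the crucial step to produce the antipode factor. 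The main difficulty throughout, and particularly here, is the disciplined bookkeeping of Sweedler indices together with the Koszul signs introduced by $\Delta_A^\bullet$ and by $S^\bullet$ as a graded anti-algebra morphism, and ensuring that the degree constraints imposed by each invocation of \eqref{TauBul1} inside larger expressions are consistent.
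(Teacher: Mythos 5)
Your proposal is correct and follows essentially the same route as the paper: every identity is reduced, via the bijectivity of $\chi^\bullet$ from Theorem \ref{thm:gradedHG}, to an application of \eqref{TauBul1} together with the coaction axioms, the algebra-morphism property of $\Delta_A^\bullet$ and the graded antipode axiom, concluding by injectivity of $\chi^\bullet$ (or $\chi^\bullet\otimes\mathrm{id}$). The only difference is cosmetic — the paper packages \eqref{TauBul5} and \eqref{TauBul6} as operator identities (e.g.\ the map $\nu$ with $(\mathrm{id}\otimes\chi^\bullet)\circ(\Delta_A^{\bullet\mathrm{op}}\otimes_{\Omega^\bullet(B)}\mathrm{id})=\nu\circ\chi^\bullet$), whereas you run the same computation element-wise in Sweedler notation.
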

\begin{proof}
	We use that $\chi^\bullet(\omega\otimes_B\eta)=\omega\wedge\eta_{[0]}\otimes\eta_{[1]}$ is an isomorphism of graded vector spaces with inverse $(\chi^\bullet)^{-1}(\omega\otimes\theta)=\omega\wedge\tau^\bullet(\theta)$. Then 
	$
	\theta^{\langle 1\rangle}\wedge(\theta^{\langle 2\rangle})_{[0]}\otimes(\theta^{\langle 2\rangle})_{[1]}
	=\chi^\bullet((\chi^\bullet)^{-1}(1\otimes\theta))
	=1\otimes\theta,
	$
	which proves \eqref{TauBul1}, while
	$
	\omega_{[0]}\wedge(\omega_{[1]})^{\langle 1\rangle}\otimes_{\Omega^\bullet(B)}(\omega_{[1]})^{\langle 2\rangle}
	=(\chi^\bullet)^{-1}(\chi^\bullet(1\otimes_B\omega))=1\otimes_B\omega
	$
	shows that \eqref{TauBul2} holds. For \eqref{TauBul3} we note that
	\begin{align*}
		(-1)^{|\theta||\xi^{\langle 1\rangle}|}\chi^\bullet(\xi^{\langle 1\rangle}\wedge\theta^{\langle 1\rangle}\otimes_{\Omega^\bullet(B)}\theta^{\langle 2\rangle}\wedge\xi^{\langle 2\rangle})
		&=(-1)^{|\theta||\xi^{\langle 1\rangle}|}\xi^{\langle 1\rangle}\wedge\theta^{\langle 1\rangle}\wedge\Delta_A^\bullet(\theta^{\langle 2\rangle})\wedge_\otimes\Delta_A^\bullet(\xi^{\langle 2\rangle})\\
		&=(-1)^{|\theta||\xi^{\langle 1\rangle}|}(\xi^{\langle 1\rangle}\otimes\theta)\wedge_\otimes\Delta_A^\bullet(\xi^{\langle 2\rangle})\\
		&=(1\otimes\theta)\wedge_\otimes(\xi^{\langle 1\rangle}\otimes 1)\wedge_\otimes\Delta_A^\bullet(\xi^{\langle 2\rangle})\\
		&=1\otimes\theta\wedge\xi,
	\end{align*}
	using \eqref{TauBul1} twice, the DGA structure of $\Omega^\bullet(A)\otimes\Omega^\bullet(H)$ and that $\Delta_A^\bullet$ is a morphism of DGAs.
	Since $\chi^\bullet$ is bijective this implies that \eqref{TauBul3} holds. For \eqref{TauBul4} we apply $\mathrm{id}\otimes\varepsilon^\bullet$ to \eqref{TauBul1} and use that $\Delta_A^\bullet$ is a right $\Omega^\bullet(H)$-coaction. Next we prove \eqref{TauBul5}. Since $\Delta_A^\bullet$ is a coaction we have $\omega\wedge\eta_{[0]}\otimes(\eta_{[1]})_{[1]}\otimes(\eta_{[1]})_{[2]}=\omega\wedge(\eta_{[0]})_{[0]}\otimes(\eta_{[0]})_{[1]}\otimes\eta_{[1]}$,
	which in terms of maps reads
	$(\mathrm{id}\otimes\Delta^\bullet)\circ\chi^\bullet=(\chi^\bullet\otimes\mathrm{id})\circ(\mathrm{id}\otimes_B\Delta_A^\bullet)$. Concatenating $((\chi^\bullet)^{-1}\otimes\mathrm{id})$ from the left and $\tau^\bullet$ from the right gives
	$
	(\tau^\bullet\otimes\mathrm{id})\circ\Delta_A^\bullet=((\chi^\bullet)^{-1}\otimes\mathrm{id})\circ(1\otimes\Delta_{A}^\bullet)=(\mathrm{id}\otimes_B\Delta_A^\bullet)\circ\tau^\bullet,
	$
	which reads \eqref{TauBul5} applied to $\theta\in\Omega^\bullet(H)$.
	For \eqref{TauBul6} we note that
	\begin{equation}\label{eq:last}
		(\mathrm{id}\otimes\chi^\bullet)\circ(\Delta_{A}^{\bullet^\mathrm{op}}\otimes_{\Omega^\bullet(B)}\mathrm{id})=\nu\circ\chi^\bullet
	\end{equation}
	as maps $\Omega^\bullet(A\otimes_BA)\to\Omega^\bullet(H)\otimes\Omega^\bullet(A)\otimes\Omega^\bullet(H)$, where $\nu\colon\Omega^\bullet(A)\otimes\Omega^\bullet(H)\to\Omega^\bullet(H)\otimes\Omega^\bullet(A)\otimes\Omega^\bullet(H)$ is defined by $\nu(\omega\otimes\theta):=\omega_{[1]}\wedge S(\theta_{[1]})\otimes\omega_{[0]}\otimes\theta_{[2]}$ and $\Delta_{A}^{\bullet^\mathrm{op}}(\omega):=\omega_{[1]}\otimes\omega_{[0]}$. In fact, \eqref{eq:last} holds, since
	\begin{align*}
		\omega_{[1]}\otimes\omega_{[0]}\wedge\eta_{[0]}\otimes\eta_{[1]}
		=\omega_{[1]}\wedge\eta_{[1]}\wedge S(\eta_{[2]})\otimes\omega_{[0]}\wedge\eta_{[0]}\otimes\eta_{[3]}.
	\end{align*}
	Concatenating \eqref{eq:last} with $\tau^\bullet$ from the right and $\mathrm{id}\otimes(\chi^\bullet)^{-1}$ from the left gives
	$$
	(\Delta_{A}^{\bullet^\mathrm{op}}\otimes_{\Omega^\bullet(B)}\mathrm{id})\circ\tau^\bullet
	=(\mathrm{id}\otimes(\chi^\bullet)^{-1})\circ\nu\circ(1_A\otimes\mathrm{id})
	\colon\Omega^\bullet(H)\to\Omega^\bullet(H)\otimes\Omega^\bullet(A\otimes_BA),
	$$ 
	which reads $(\theta^{\langle 1\rangle})_{[1]}\otimes(\theta^{\langle 1\rangle})_{[0]}\otimes_{\Omega^\bullet(B)}\theta^{\langle 2\rangle}
	=S(\theta_{[1]})\otimes(\theta_{[1]})^{\langle 1\rangle}\otimes_{\Omega^\bullet(B)}(\theta_{[1]})^{\langle 2\rangle}$ on elements. A tensor flip then gives \eqref{TauBul6}. 
	Finally, for \eqref{gradedcentral} we observe that for $\eta\in\Omega^\bullet(B)$ and $\theta\in\Omega^\bullet(H)$ the term $\chi^\bullet(\eta\wedge\tau^\bullet(\theta))=\eta\otimes\theta$ coincides with
	\begin{align*}
		(-1)^{|\eta||\theta|}\chi^\bullet(\tau^\bullet\wedge\eta)
		&=(-1)^{|\eta||\theta|}\theta^{\langle 1\rangle}\wedge\Delta_A^\bullet(\theta^{\langle 2\rangle}\wedge\eta)\\
		&=(-1)^{|\eta||\theta|}(\theta^{\langle 1\rangle}\wedge(\theta^{\langle 2\rangle})_{[0]}\otimes(\theta^{\langle 2\rangle})_{[1]})\wedge_\otimes(\eta\otimes 1)\\
		&\overset{\eqref{TauBul1}}{=}(-1)^{|\eta||\theta|}(1\otimes\theta)\wedge_\otimes(\eta\otimes 1)\\
		&=\eta\otimes\theta,
	\end{align*}
	using that $\Delta_A^\bullet$ is an algebra morphism and $\Delta_A^\bullet(\eta)=\eta\otimes 1$.
	This concludes the proof since $\chi^\bullet$ is an isomorphism.
\end{proof}

Just like in Section \ref{sec:BCA} we use the isomorphism of (graded) vector spaces $\chi^\bullet\colon\Omega^\bullet(A\otimes_BA)\to\Omega^\bullet(A\otimes H)$ to induce an associative algebra structure on $\Omega^\bullet(A\otimes_BA)$ from the wedge product of $\Omega^\bullet(A\otimes H)$. Namely, we define $\wedge_{\otimes_B}\colon\Omega^\bullet(A\otimes_BA)\otimes\Omega^\bullet(A\otimes_BA)\to\Omega^\bullet(A\otimes_BA)$ via the commutative diagram
\begin{equation}\label{diag:wedgeB}
\begin{tikzcd}
\Omega^\bullet(A\otimes H)\otimes\Omega^\bullet(A\otimes H) \arrow{rr}{\wedge_\otimes}
& & \Omega^\bullet(A\otimes H) \arrow{d}{(\chi^\bullet)^{-1}}\\
\Omega^\bullet(A\otimes_BA)\otimes\Omega^\bullet(A\otimes_BA)
\arrow{u}{\chi^\bullet\otimes\chi^\bullet}
\arrow{rr}{\wedge_{\otimes_B}}
& & \Omega^\bullet(A\otimes_BA)
\end{tikzcd}
\end{equation}
Explicitly,
\begin{equation}\label{wedgeB}
\begin{split}
	(\omega\otimes_{\Omega^\bullet(B)}\omega')\wedge_{\otimes_B}(\eta\otimes_{\Omega^\bullet(B)}\eta')
	&:=(-1)^{|\omega'_{[1]}||\eta||\eta'_{[0]}|}\omega\wedge\omega'_{[0]}\wedge\eta\wedge\eta'_{[0]}\wedge\tau^\bullet(\omega'_{[1]}\wedge\eta'_{[1]})\\
	&=(-1)^{|\omega'_{[1]}||\eta|}\omega\wedge\omega'_{[0]}\wedge\eta\wedge\tau^\bullet(\omega'_{[1]})\wedge\eta'
\end{split}
\end{equation}
for all $\omega,\omega',\eta,\eta'\in\Omega^\bullet(A)$, where we used \eqref{TauBul3} and \eqref{TauBul2} in the second equation. Defining the graded map
\begin{equation}\label{sigmabullet}
\begin{split}
	\sigma^\bullet\colon\Omega^\bullet(A\otimes_BA)&\to\Omega^\bullet(A\otimes_BA)\\
	\omega\otimes_{\Omega^\bullet(B)}\eta&\mapsto\sigma^\bullet(\omega\otimes_{\Omega^\bullet(B)}\eta)
	:=(-1)^{|\omega_{[1]}||\eta|}\omega_{[0]}\wedge\eta\wedge\tau^\bullet(\omega_{[1]})
\end{split}
\end{equation}
we can rewrite the product $\wedge_{\otimes_B}$ as
$$
(\omega\otimes_{\Omega^\bullet(B)}\omega')\wedge_{\otimes_B}(\eta\otimes_{\Omega^\bullet(B)}\eta')=\omega\wedge\sigma^\bullet(\omega'\otimes_{\Omega^\bullet(B)}\eta)\wedge\eta'.
$$
We colloquially refer to \eqref{sigmabullet} as the \textbf{extended \DJ ur\dj evi\'c braiding}.
\begin{proposition}\label{prop:DGAotimesB}
The graded vector space $\Omega^\bullet(A\otimes_BA)=\Omega^\bullet(A)\otimes_{\Omega^\bullet(B)}\Omega^\bullet(A)$ is a DC on $A\otimes_BA$ with wedge product $\wedge_{\otimes_B}$ defined in \eqref{wedgeB} and differential
\begin{equation}\label{d_otimes_B}
	\mathrm{d}_{\otimes_B}(\omega\otimes_{\Omega^\bullet(B)}\eta)
	:=\mathrm{d}\omega\otimes_{\Omega^\bullet(B)}\eta+(-1)^{|\omega|}\omega\otimes_{\Omega^\bullet(B)}\mathrm{d}\eta.
\end{equation}
for all $\omega,\eta\in\Omega^\bullet(A)$.
\end{proposition}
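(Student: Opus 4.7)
The plan is to transport the DGA structure on the tensor product $\Omega^\bullet(A)\otimes\Omega^\bullet(H)$ through the graded vector space isomorphism $\chi^\bullet$ of Theorem \ref{thm:gradedHG}. By the very definition of $\wedge_{\otimes_B}$ via diagram \eqref{diag:wedgeB}, the map $\chi^\bullet$ is already an isomorphism of graded unital algebras onto $(\Omega^\bullet(A)\otimes\Omega^\bullet(H),\wedge_\otimes)$, so associativity and unitality of $\wedge_{\otimes_B}$ are automatic. To upgrade this to a DGA isomorphism it suffices to verify two things: (a) the formula \eqref{d_otimes_B} descends to the balanced tensor product over $\Omega^\bullet(B)$, and (b) $\chi^\bullet$ intertwines $\mathrm{d}_{\otimes_B}$ with the tensor product differential $\mathrm{d}_\otimes$ on $\Omega^\bullet(A)\otimes\Omega^\bullet(H)$. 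Once (a) and (b) are established, the Leibniz rule for $\mathrm{d}_{\otimes_B}$ with respect to $\wedge_{\otimes_B}$ follows by transport from the analogous rule on the tensor product DGA, and $\mathrm{d}_{\otimes_B}^2=0$ is a one line direct calculation from $\mathrm{d}^2=0$ on $\Omega^\bullet(A)$.

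For (a), I would take $b\in\Omega^\bullet(B)$ and expand $\mathrm{d}_{\otimes_B}(\omega\wedge b\otimes_{\Omega^\bullet(B)}\eta)$ and $\mathrm{d}_{\otimes_B}(\omega\otimes_{\Omega^\bullet(B)}b\wedge\eta)$ using the Leibniz rule of $\mathrm{d}$ on $\Omega^\bullet(A)$. Because $\Omega^\bullet(B)\subseteq\Omega^\bullet(A)$ is a \emph{differential} graded subalgebra, both $b$ and $\mathrm{d}b$ may be freely pushed across the balanced tensor product, and the two expansions agree term by term. For (b), using that $\Delta_A^\bullet$ is a morphism of DGAs (this is exactly the completeness of $\Omega^\bullet(A)$), I would compute
\begin{align*}
\mathrm{d}_\otimes(\chi^\bullet(\omega\otimes_{\Omega^\bullet(B)}\eta))
&=\mathrm{d}_\otimes(\omega\wedge\eta_{[0]}\otimes\eta_{[1]})\\
&=\mathrm{d}\omega\wedge\eta_{[0]}\otimes\eta_{[1]}
+(-1)^{|\omega|}\omega\wedge\bigl(\mathrm{d}(\eta_{[0]})\otimes\eta_{[1]}+(-1)^{|\eta_{[0]}|}\eta_{[0]}\otimes\mathrm{d}(\eta_{[1]})\bigr)
\end{align*}
and recognize the parenthesized term as $\mathrm{d}_\otimes(\Delta_A^\bullet(\eta))=\Delta_A^\bullet(\mathrm{d}\eta)=(\mathrm{d}\eta)_{[0]}\otimes(\mathrm{d}\eta)_{[1]}$, so the right hand side equals $\chi^\bullet(\mathrm{d}\omega\otimes_{\Omega^\bullet(B)}\eta+(-1)^{|\omega|}\omega\otimes_{\Omega^\bullet(B)}\mathrm{d}\eta)=\chi^\bullet(\mathrm{d}_{\otimes_B}(\omega\otimes_{\Omega^\bullet(B)}\eta))$.

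It remains to verify the surjectivity condition of Definition \ref{def:DC}. Here I would argue that $\Omega^\bullet(A)\otimes\Omega^\bullet(H)$, being the tensor product calculus of two DCs generated in degree zero, is itself generated in degree zero by $A\otimes H$. Transporting back through $\chi^\bullet$, which is a DGA isomorphism whose restriction to degree zero is precisely the canonical Hopf--Galois bijection $\chi\colon A\otimes_BA\to A\otimes H$, shows that $\Omega^\bullet(A\otimes_BA)$ is generated in degree zero by $A\otimes_BA$, as required.

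The only substantive step is (b), whose whole point is to exploit the completeness assumption; without $\Delta_A^\bullet$ being a DGA morphism one cannot collapse the middle two terms in the Leibniz expansion above. Once this is done, the rest of the argument is purely formal and inherited from the graded Hopf--Galois property established in Theorem \ref{thm:gradedHG}. One could instead try to prove the Leibniz rule for $\wedge_{\otimes_B}$ directly by expanding \eqref{wedgeB} in terms of $\tau^\bullet$ and its properties from Proposition \ref{prop:gradSigma}, but this would require a tedious sign bookkeeping that the transport argument avoids entirely.
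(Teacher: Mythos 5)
Your proof is correct and follows essentially the same route as the paper: the DGA structure on $\Omega^\bullet(A\otimes_BA)$ is defined by pulling back that of $\Omega^\bullet(A\otimes H)$ through the graded isomorphism $\chi^\bullet$, and your key step (b)---verifying $\chi^\bullet\circ\mathrm{d}_{\otimes_B}=\mathrm{d}_\otimes\circ\chi^\bullet$ by collapsing the middle Leibniz terms via the DGA property of $\Delta_A^\bullet$---is exactly the paper's computation. Your two additional explicit checks, namely well-definedness of \eqref{d_otimes_B} on the balanced tensor product (using that $\Omega^\bullet(B)$ is a \emph{differential} graded subalgebra, so $\mathrm{d}b$ passes across $\otimes_{\Omega^\bullet(B)}$) and the transport of the degree-zero generation condition through $\chi^\bullet$, are points the paper leaves implicit, and both are carried out correctly.
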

\begin{proof}
This follows immediately from the construction: the DGA structure of $\Omega^\bullet(A\otimes_BA)$ is obtained by pulling back the DGA structure of $\Omega^\bullet(A\otimes H)$ via the isomorphism of graded vector spaces $\chi^\bullet\colon\Omega^\bullet(A\otimes_BA)\to\Omega^\bullet(A\otimes H)$. We verify that \eqref{d_otimes_B} is in fact the pullback of $\mathrm{d}_\otimes$, i.e. $\mathrm{d}_{\otimes_B}=(\chi^\bullet)^{-1}\circ\mathrm{d}_\otimes\circ\chi^\bullet$ or equivalently $\chi^\bullet\circ\mathrm{d}_{\otimes_B}=\mathrm{d}_\otimes\circ\chi^\bullet$.
For $\omega,\eta\in\Omega^\bullet(A)$ we obtain
\begin{align*}
	\mathrm{d}_\otimes(\chi^\bullet(\omega\otimes_{\Omega^\bullet(B)}\eta))
	=&\mathrm{d}_\otimes(\omega\wedge\eta_{[0]}\otimes\eta_{[1]})\\
	&=\mathrm{d}(\omega\wedge\eta_{[0]})\otimes\eta_{[1]}+(-1)^{|\omega||\eta_{[0]}|}\omega\wedge\eta_{[0]}\otimes\mathrm{d}(\eta_{[1]})\\
	&=\mathrm{d}(\omega)\wedge\eta_{[0]}\otimes\eta_{[1]}
	+(-1)^{|\omega|}\omega\wedge\mathrm{d}(\eta_{[0]})\otimes\eta_{[1]}
	+(-1)^{|\omega||\eta_{[0]}|}\omega\wedge\eta_{[0]}\otimes\mathrm{d}(\eta_{[1]}),
\end{align*}
which coincides with
\begin{align*}
	\chi^\bullet(\mathrm{d}_{\otimes_B}(\omega\otimes_{\Omega^\bullet(B)}\eta))
	&=\chi^\bullet(\mathrm{d}\omega\otimes_{\Omega^\bullet(B)}\eta+(-1)^{|\omega|}\omega\otimes_{\Omega^\bullet(B)}\mathrm{d}\eta)\\
	&=\mathrm{d}(\omega)\wedge\eta_{[0]}\otimes\eta_{[1]}
	+(-1)^{|\omega|}\omega\wedge(\mathrm{d}\eta)_{[0]}\otimes(\mathrm{d}\eta)_{[1]}\\
	&=\mathrm{d}(\omega)\wedge\eta_{[0]}\otimes\eta_{[1]}
	+(-1)^{|\omega|}\omega\wedge\mathrm{d}(\eta_{[0]})\otimes\eta_{[1]}
	+(-1)^{|\omega||\eta_{[0]}|}\omega\wedge\eta_{[0]}\otimes\mathrm{d}(\eta_{[1]}),
\end{align*}
using that $\Delta_A^\bullet$ is a DGA morphism.
\end{proof}

We also prove the graded extension of Proposition \ref{prop:sigma}.
\begin{proposition}[{\cite[Proposition 3.1]{DurGauge}}]
As before, consider a complete calculus $\Omega^\bullet(A)$ with $\Omega^\bullet(H)$ the corresponding calculus on the structure Hopf algebra.
\begin{enumerate}
\item[i.)] The canonical map $\chi^\bullet\colon\Omega^\bullet(A\otimes_BA)\to\Omega^\bullet(A\otimes H)$ and the translation map $\tau\colon\Omega^\bullet(H)\to\Omega^\bullet(A\otimes_BA)$ are DGA morphisms.

\item[ii.)] The map $\sigma^\bullet\colon\Omega^\bullet(A\otimes_BA)\to\Omega^\bullet(A\otimes_BA)$ is an isomorphism in ${}_{\Omega^\bullet(B)}\mathcal{M}_{\Omega^\bullet(B)}$. Its inverse is
\begin{equation}
	(\sigma^\bullet)^{-1}(\omega\otimes_B\eta)
	:=(-1)^{(|\omega|+|\eta_{[0]}|)|\eta_{[1]}|}\tau^\bullet((S^\bullet)^{-1}(\eta_{[1]}))\wedge\omega\wedge\eta_{[0]}.
\end{equation}

\item[iii.)] $\sigma^\bullet$ satisfies the braid equation
\begin{equation}
	(\sigma^\bullet\otimes_{\Omega^\bullet(B)}\mathrm{id})\circ(\mathrm{id}\otimes_{\Omega^\bullet(B)}\sigma^\bullet)\circ(\sigma^\bullet\otimes_{\Omega^\bullet(B)}\mathrm{id})
	=(\mathrm{id}\otimes_{\Omega^\bullet(B)}\sigma^\bullet)\circ(\sigma^\bullet\otimes_{\Omega^\bullet(B)}\mathrm{id})\circ(\mathrm{id}\otimes_{\Omega^\bullet(B)}\sigma^\bullet).
\end{equation}

\item[iv.)] $\Omega^\bullet(A)$ is graded braided-commutative with respect to $\sigma^\bullet$, i.e.
\begin{equation}
	\wedge\circ\sigma^\bullet=\wedge,
\end{equation}
where $\wedge$ denotes the wedge product $\Omega^\bullet(A)\otimes_{\Omega^\bullet(B)}\Omega^\bullet(A)\to\Omega^\bullet(A)$.

\item[v.)] 
\begin{align}
	\sigma^\bullet\circ(\wedge\otimes_{\Omega^\bullet(B)}\mathrm{id})
	&=(\mathrm{id}\otimes_{\Omega^\bullet(B)}\wedge)\circ(\sigma^\bullet\otimes_{\Omega^\bullet(B)}\mathrm{id})\circ(\mathrm{id}\otimes_{\Omega^\bullet(B)}\sigma^\bullet)\label{eq:hex1}\\
	\sigma^\bullet\circ(\mathrm{id}\otimes_{\Omega^\bullet(B)}\wedge)
	&=(\wedge\otimes_{\Omega^\bullet(B)}\mathrm{id})\circ(\mathrm{id}\otimes_{\Omega^\bullet(B)}\sigma^\bullet)\circ(\sigma^\bullet\otimes_{\Omega^\bullet(B)}\mathrm{id})\label{eq:hex2}
\end{align}
hold as equations $\Omega^\bullet(A)\otimes_{\Omega^\bullet(B)}\Omega^\bullet(A)\otimes_{\Omega^\bullet(B)}\Omega^\bullet(A)\to \Omega^\bullet(A)\otimes_{\Omega^\bullet(B)}\Omega^\bullet(A)$.
\end{enumerate}
\end{proposition}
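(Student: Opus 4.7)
The plan is to exploit Proposition \ref{prop:DGAotimesB}, which by construction makes $\chi^\bullet$ an isomorphism of DGAs onto the tensor product DGA $\Omega^\bullet(A)\otimes\Omega^\bullet(H)$. With this observation, part i.) becomes essentially immediate: the DGA morphism property of $\chi^\bullet$ is built into Proposition \ref{prop:DGAotimesB} together with \eqref{wedgeB}, and for $\tau^\bullet$ the factorization $\tau^\bullet = (\chi^\bullet)^{-1}\circ\iota$ provided by Theorem \ref{thm:gradedHG}, where $\iota\colon\Omega^\bullet(H)\to\Omega^\bullet(A)\otimes\Omega^\bullet(H)$, $\theta\mapsto 1\otimes\theta$, expresses $\tau^\bullet$ as a composition of DGA morphisms; $\iota$ is visibly a DGA morphism since $(1\otimes\theta)\wedge_\otimes(1\otimes\xi)=1\otimes\theta\wedge\xi$ and $\mathrm{d}_\otimes(1\otimes\theta)=1\otimes\mathrm{d}\theta$. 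As a bonus this also reproves \eqref{TauBul3} and \eqref{TauBul4} conceptually.

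For part iv.) a direct computation suffices as a warm-up. Expanding \eqref{sigmabullet}, distributing the wedge across the balanced tensor, and applying \eqref{TauBul4} gives
\begin{equation*}
\wedge\circ\sigma^\bullet(\omega\otimes_B\eta) = (-1)^{|\omega_{[1]}||\eta|}\omega_{[0]}\wedge\eta\wedge\omega_{[1]}^{\langle 1\rangle}\wedge\omega_{[1]}^{\langle 2\rangle} = \omega_{[0]}\,\varepsilon^\bullet(\omega_{[1]})\wedge\eta = \omega\wedge\eta,
\end{equation*}
where the sign is immaterial because $\varepsilon^\bullet(\omega_{[1]})=0$ unless $|\omega_{[1]}|=0$, and the final equality uses counitality of $\Delta_A^\bullet$.

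The remaining parts ii.), iii.), v.) are all handled by a single recurring strategy: transport the assertion along $\chi^\bullet$ to $\Omega^\bullet(A)\otimes\Omega^\bullet(H)$ and verify it there using Proposition \ref{prop:gradSigma}. The key one-time computation, obtained by combining part i.) with \eqref{TauBul1}, is
\begin{equation*}
\chi^\bullet\bigl(\sigma^\bullet(\omega\otimes_B\eta)\bigr) = (-1)^{|\omega_{[1]}||\eta|}\,\omega_{[0]}\wedge\eta\otimes\omega_{[1]}.
\end{equation*}
For invertibility ii.), apply $\chi^\bullet$ to the proposed formula for $(\sigma^\bullet)^{-1}$, move the inner coactions across using \eqref{TauBul5}/\eqref{TauBul6}, and finish with the graded antipode axiom $S^\bullet(\theta_{[1]})\wedge\theta_{[2]}=\varepsilon^\bullet(\theta)1$ of Proposition \ref{prop:gradedHopf}; the $\Omega^\bullet(B)$-bilinearity is then the graded centrality \eqref{gradedcentral}. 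For the braid equation iii.) and the hexagons v.), apply $\chi^\bullet$ in the appropriate tensor slot, use that $\Delta_A^\bullet$ is a DGA morphism (the completeness assumption) to propagate it through wedges, and use the multiplicativity \eqref{TauBul3} of $\tau^\bullet$ to collect terms; both sides then reduce to the same expression. The main obstacle throughout is Koszul sign bookkeeping, since Sweedler-like notation such as $\omega_{[0]}\otimes\omega_{[1]}$ lumps contributions of different degrees together and each commutation of $\tau^\bullet(\theta)$ past a homogeneous form introduces a sign; the identities \eqref{TauBul1}--\eqref{TauBul6} are specifically engineered to encode these signs, so the proof becomes a careful but routine assembly, with completeness being crucially invoked in iii.) and v.) to distribute $\Delta_A^\bullet$ over the nested wedge products arising from iterated applications of $\sigma^\bullet$.
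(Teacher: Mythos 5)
Your parts i.) and iv.) are essentially the paper's proof: i.) is immediate from the pullback construction of Proposition \ref{prop:DGAotimesB} (your factorization $\tau^\bullet=(\chi^\bullet)^{-1}\circ\iota$ with $\iota(\theta)=1\otimes\theta$ is a clean way of making the claim for $\tau^\bullet$ explicit), and iv.) is the same computation via \eqref{TauBul4}, counitality and the vanishing of $\varepsilon^\bullet$ in positive degree. For ii.), iii.) and v.) your route genuinely differs from the paper's: there these parts are proved by redoing, with Koszul signs, the element-wise translation-map computations of the ungraded Proposition \ref{prop:sigma}, with the graded identities of Proposition \ref{prop:gradSigma} replacing \eqref{tau6}--\eqref{tau7}; you instead conjugate by the graded Hopf--Galois map and verify the identities in $\Omega^\bullet(A)\otimes\Omega^\bullet(H)$, concluding by injectivity of $\chi^\bullet$. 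This is legitimate, and is in fact the technique the paper itself employs one level down, in the proof of Proposition \ref{prop:gradSigma}; your key formula $\chi^\bullet(\sigma^\bullet(\omega\otimes_{\Omega^\bullet(B)}\eta))=(-1)^{|\omega_{[1]}||\eta|}\,\omega_{[0]}\wedge\eta\otimes\omega_{[1]}$ is correct. What your route buys is that the transported braiding has a simple closed form, so the verifications become coassociativity and antipode manipulations in the graded Hopf algebra $\Omega^\bullet(H)$ rather than translation-map manipulations in $\Omega^\bullet(A\otimes_BA)$. What it costs is that for iii.) and v.) the maps $\sigma^\bullet\otimes_{\Omega^\bullet(B)}\mathrm{id}$ and $\mathrm{id}\otimes_{\Omega^\bullet(B)}\sigma^\bullet$ must be transported through the iterated canonical map $(\chi^\bullet\otimes\mathrm{id})\circ(\mathrm{id}\otimes_{\Omega^\bullet(B)}\chi^\bullet)$ on the triple balanced product, where they do \emph{not} act on a single tensor slot; ``apply $\chi^\bullet$ in the appropriate tensor slot'' hides this computation --- although no more than the paper's own ``one performs the same steps as in Proposition \ref{prop:sigma}''.

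Three smaller points. First, in ii.) you use $(S^\bullet)^{-1}$ without justifying its existence; the paper's proof starts exactly there: $S^\bullet$ is invertible because $S$ is (a standing assumption), with $(S^\bullet)^{-1}$ given by \eqref{gradedS} with $S$ replaced by $S^{-1}$, since $S^\bullet$ is a DGA morphism into the opposite calculus. Second, \eqref{gradedcentral} only yields the \emph{right} $\Omega^\bullet(B)$-linearity of $\sigma^\bullet$; left linearity instead uses $\Delta_A^\bullet(\beta)=\beta\otimes 1$ for $\beta\in\Omega^\bullet(B)$. Third, your ``bonus'' claim that i.) reproves \eqref{TauBul3} is circular as stated: multiplicativity of $\tau^\bullet$ with respect to $\wedge_{\otimes_B}$ only turns into the flipped formula \eqref{TauBul3} after unwinding \eqref{wedgeB}, whose second expression is itself derived in the paper using \eqref{TauBul3}. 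None of these affects the viability of your overall strategy.
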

\begin{proof}
\begin{enumerate}
\item[i.)] This is the case since the DGA structure of $\Omega^\bullet(A\otimes_BA)$ is defined as the pullback via $\chi^\bullet$ according to Proposition \ref{prop:DGAotimesB}.

\item[ii.)] Note that $S^\bullet$ is invertible since $S\colon H\to H$ is invertible by assumption. The inverse $(S^\bullet)^{-1}$ is given by the formula \eqref{gradedS} with $S$ replaced by $S^{-1}$ everywhere. This is the case since $S^\bullet\colon\Omega^\bullet(H)\to\Omega^\bullet(H^\mathrm{op})$ is a DGA morphism. Then the claim follows in complete analogy to the one of Proposition \ref{prop:sigma} ii.), taking into account the correct signs and using Proposition \ref{prop:gradSigma}.

\item[iii.)] One performs the same steps as in Proposition \ref{prop:sigma} iii.), using Proposition \ref{prop:gradSigma}.

\item[iv.)] We have $\wedge(\sigma^\bullet(\omega\otimes_{\Omega^\bullet(B)}\eta))
=(-1)^{|\omega_{[1]}||\eta|}\omega_{[0]}\wedge\eta\wedge(\omega_{[1]})^{\langle 1\rangle}\wedge(\omega_{[1]})^{\langle 2\rangle}
\overset{\eqref{TauBul4}}{=}(-1)^{|\omega_{[1]}||\eta|}\omega_{[0]}\wedge\eta\varepsilon(\omega_{[1]})
=(-1)^{0\cdot|\eta|}\omega\wedge\eta
=\omega\wedge\eta$, since $\varepsilon$ vanishes on degree $>0$ forms.

\item[v.)] One performs the same steps as in Proposition \ref{prop:sigma} v.), using Proposition \ref{prop:gradSigma}.
\end{enumerate}
\end{proof}

\section{Examples of complete calculi}\label{sec:ex}

As explained in \cite{DurII} the motivating example of a complete calculus is the de Rham calculus on a principal bundle in classical differential geometry. The corresponding (extended) \DJ ur\dj evi\'c braiding is the (graded) flip. In this section we provide explicit examples of complete calculi and (extended) \DJ ur\dj evi\'c braidings which go beyond differential geometry. They are original to this article.
Among them are the maximal prolongation of bicovariant calculi in Section \ref{sec:exBC}, the noncommutative $2$-torus with $U(1)$-symmetry in Section \ref{sec:ex2tor}, the quantum Hopf fibration in Section \ref{sec:exHopfFib} and calculi on crossed product algebras in Section \ref{ex:CP}.

\subsection{Bicovariant calculi}\label{sec:exBC}

Let $H$ be a Hopf algebra (with invertible antipode). Consider a bicovariant FODC $\Gamma$ on $H$ with maximal prolongation $\Omega^\bullet(H)$ and $\Lambda^\bullet:={}^{\mathrm{co}H}\Omega^\bullet(H)$. We have proven in Lemma \ref{lem:Hcomplete} that $\Omega^\bullet(H)$ is complete. The corresponding vertical forms $H\otimes\Lambda^\bullet$ are isomorphic to $\Omega^\bullet(H)$ according to Proposition \ref{prop:gradedYD} ii.). The horizontal forms are trivial, since for any $h^0\mathrm{d}h^1\wedge\ldots\wedge\mathrm{d}h^k\in\mathrm{hor}^k$ we have $\Delta^\bullet(h^0\mathrm{d}h^1\wedge\ldots\wedge\mathrm{d}h^k\in\mathrm{hor}^k)\in\Omega^k(H)\otimes H$, which in particular implies $h^0_1h^1_1\ldots h^k_1\otimes h^0_2\mathrm{d}h^1_2\wedge\ldots\wedge\mathrm{d}h^k_2=0$ and the latter gives $h^0\mathrm{d}h^1\wedge\ldots\wedge\mathrm{d}h^k=0$ after applying $\varepsilon\otimes\mathrm{id}$. In particular, the corresponding base forms vanish. The graded Hopf--Galois map
\begin{equation}
	\chi^\bullet\colon\Omega^\bullet(H)\otimes\Omega^\bullet(H)\to\Omega^\bullet(H)\otimes\Omega^\bullet(H),\qquad
	\omega\otimes\eta\mapsto\omega\wedge\eta_{[1]}\otimes\eta_{[2]}
\end{equation}
is invertible with inverse $(\chi^\bullet)^{-1}\colon\Omega^\bullet(H)\otimes\Omega^\bullet(H)\to\Omega^\bullet(H)\otimes\Omega^\bullet(H)$, $\omega\otimes\eta\mapsto\omega\wedge\tau^\bullet(\eta)=\omega\wedge S^\bullet(\eta_{[1]})\otimes \eta_{[2]}$. Furthermore, we write explicitly the \DJ ur\dj evi\'c braiding and its extension
\begin{equation}
\begin{split}
	\sigma\colon H\otimes H\to H\otimes H,\qquad&
	\sigma(h\otimes g)=h_1gS(h_2)\otimes h_3,\\
	\sigma^\bullet\colon\Omega^\bullet(H)\otimes\Omega^\bullet(H)\to\Omega^\bullet(H)\otimes\Omega^\bullet(H),\qquad&
	\sigma^\bullet(\omega\otimes\eta)=(-1)^{(|\omega_{[2]}|+|\omega_{[3]}|)|\eta|}\omega_{[1]}\wedge\eta\wedge S^\bullet(\omega_{[2]})\otimes\omega_{[3]}.
\end{split}
\end{equation}
One notices that $\sigma$ coincides with the left-left Yetter-Drinfel'd braiding (see \cite{SchYD}), where $H$ is endowed with the coproduct and the adjoint left action on itself, while $\sigma^\bullet$ is the corresponding Yetter-Drinfel'd braiding for the graded Hopf algebra $\Omega^\bullet(H)$.

Let us discuss an example of this situation in the following subsection.

\subsubsection*{A complete calculus on $\mathcal{O}(U(1))$}\label{ex:CZgroupalgebra}

Consider the algebra $H:=\mathcal{O}(U(1))$ generated by an invertible element $t$, namely $H=\mathbb{C}[t,t^{-1}]$. Recall that $H$ is a Hopf algebra when equipped with coproduct, counit and antipode given by $\Delta(t^n)=t^n\otimes t^n$, $\varepsilon(t^n)=1$ and $S(t^n)=t^{-n}$, where $n\in\mathbb{Z}$. 
Fix now a non-zero complex number $\mathbb{C}\ni q\neq 0,\pm1$ and define $\Gamma$ as the free left $H$-module generated by one element $\mathrm{d}t$ such that

\begin{equation}
	\mathrm{d}t\, t^n:=q^nt^n\mathrm{d}t
\end{equation}
for all $n\in\mathbb{Z}$. Define $\mathrm{d}\colon H\to\Gamma$  as
\begin{equation}
	\mathrm{d}(f(t)):=\frac{f(qt)-f(t)}{t(q-1)}\mathrm{d}t
\end{equation}
for any rational polynomial $f$ in $t$.
One easily verifies that $(\Gamma,\mathrm{d})$ is a FODC on $H$ (see \cite[Example 1.11]{BegMaj}). It is further bicovariant with $H$-coactions $\Delta_\Gamma\colon\Gamma\to\Gamma\otimes H$ and ${}_\Gamma\Delta\colon\Gamma\to H\otimes\Gamma$ determined by
\begin{equation}
	\mathrm{ver}^{1,0}(t^n\mathrm{d}t):=\Delta_\Gamma(t^n\mathrm{d}t)=t^n\mathrm{d}t\otimes t^{n+1}\qquad\text{and}\qquad
	\mathrm{ver}^{0,1}(t^n\mathrm{d}t):={}_\Gamma\Delta(t^n\mathrm{d}t)=t^{n+1}\otimes t^n\mathrm{d}t,
\end{equation}
respectively. 
Note that the maximal prolongation $\Omega^\bullet(H)$ of $(\Gamma,\mathrm{d})$ is $\Omega^k(H)=\{0\}$ for $k>1$. This is because $\mathrm{d}t\otimes_H\mathrm{d}t$ vanishes on $\Omega^2(H)$, since $\mathrm{d}(t^{-1})+q^{-1}t^{-2}\mathrm{d}t=0$ and 
$$
	\mathrm{d}(1)\otimes_H\mathrm{d}(t^{-1})+q^{-1}\mathrm{d}(t^{-2})\otimes_H\mathrm{d}t
	=-q^{-1}(t^{-1}\mathrm{d}(t)t^{-2}+t^{-2}\mathrm{d}(t)t^{-1})\otimes_H\mathrm{d}t
	=-q^{-2}(1+q^{-1})t^{-3}\mathrm{d}t\otimes_H\mathrm{d}t \,.
$$
As a consequence of the previous computations and Lemma \ref{lem:BicExt} it follows that the calculus $\Omega^\bullet(H)=H\oplus\Omega^1(H)$ on $H=\mathcal{O}(U(1))$ is complete. All higher vertical maps vanish since $\Omega^k(H)=0$ for $k>1$.

We now discuss the braiding for $H=\mathcal{O}(U(1))$. As introduced in Section \ref{sec:BCA}, given a faithfully flat Hopf--Galois extension we consider the map $\sigma\colon H\otimes H\rightarrow H\otimes H$ defined as $\sigma(h\otimes h')=h_{1}h'\tau(h_{2}).$  In the present example the translation map reads $\tau(t^n)=S((t^n)_{1})\otimes (t^n)_{2}=t^{-n}\otimes t^n$, for any $n\in\mathbb{Z}$. Therefore we find that $\sigma$ corresponds to the flip, as expected for a commutative algebra, namely
\begin{equation}
		\sigma(t^{n}\otimes t^{m})=t^{n}t^{m}\tau(t^{n}) =t^{n}t^{m}t^{-n}\otimes t^{n} = t^{m}\otimes t^{n}.
\end{equation}
As discussed before, at the level of differential forms we consider $\sigma^{\bullet}\colon\Omega^{\bullet}(H)\otimes\Omega^{\bullet}(H) \rightarrow \Omega^{\bullet}(H)\otimes\Omega^{\bullet}(H)$, $\sigma^{\bullet}(\omega\otimes\eta)=\omega_{[0]}\wedge \eta \wedge \tau^{\bullet}(\omega_{[1]})$. Explicitly
\begin{align*}
	\sigma^\bullet(t^m\otimes t^n\mathrm{d}t)
	&=q^{-m}t^n\mathrm{d}t\otimes t^m,\\
	\sigma^\bullet(t^n\mathrm{d}t\otimes t^m)
	&=\frac{q^m-1}{q^{n+1}}t^{m-1}\mathrm{d}t\otimes t^{n+1}
	+t^m\otimes t^n\mathrm{d}t,\\
	\sigma^\bullet(t^n\mathrm{d}t\otimes t^m\mathrm{d}t)
	&=-q^{-(n+1)}t^m\mathrm{d}t\otimes t^n\mathrm{d}t.
\end{align*}
Note that, eventhough $H$ is commutative, $\sigma^\bullet$ is \textit{not} symmetric, i.e. it does not square to the identity.
In the classical limit $q\to 1$ we see that $\sigma^\bullet$ becomes the graded flip.

\subsection{The noncommutative algebraic 2-torus}\label{sec:ex2tor}

We now investigate the differential calculus on the QPB given by the noncommutative 2-torus, described in the Example \ref{ex:HG} ii.).
In this case $H=\mathcal{O}(U(1))$ and we consider a bicovariant first order differential calculus $\Omega^{1}(H):=\text{span}_{H}\{\mathrm{d}t\}$ with right $H$-action (generalizing the one in the previous section) defined by
\begin{equation} \label{eq: DC on U(1)}
	\mathrm{d}t\,t = q^{-\alpha} t \mathrm{d} t, \qquad 
	\mathrm{d}t\,t^{-1}
	= q^{\alpha} t^{-1}\mathrm{d}t	\,,
\end{equation} 
with $\alpha\in \mathbb{R}$ and $q\neq 0$ not a root of unity. Note again that $\Omega^2(H)$ is trivial since the previous relation implies that $\mathrm{d}t\otimes_H \mathrm{d}t$ has to vanish on the quotient of the maximal prolongation. For the rest of this subsection we choose $\alpha=0$, which implies that $\Omega^\bullet(H)$ is the classical calculus on $H$. In the subsequent section we choose a different value of $\alpha$. 

On $A=\mathcal{O}_{\theta}(\mathbb{T}^{2})$ we define a DC by $\Omega^{1}(A):=\text{span}_{A}\{\mathrm{d}u,\mathrm{d}v\}$ and $\Omega^{2}(A):=\text{span}_{A}\{\mathrm{d}u\wedge\mathrm{d}v\}$, equipped with the relations
\begin{equation}
		\mathrm{d}u\,u=u\mathrm{d}u, \quad \mathrm{d}v\,v=v\mathrm{d}v, \quad \mathrm{d}u\,v=e^{-i\theta}v\mathrm{d}u,\quad \mathrm{d}v\,u=e^{i\theta}u\mathrm{d}v\,,
\end{equation}
and
\begin{equation}
		\mathrm{d}u\wedge \mathrm{d}v+ e^{-i\theta} \mathrm{d}v\wedge \mathrm{d}u = 0,\qquad \mathrm{d}u\wedge \mathrm{d}u=0,\qquad  \mathrm{d}v \wedge \mathrm{d}v=0.
\end{equation} 
Moreover, $\Omega^k(A)=0$ for $k>2$. It is known that $\Omega^\bullet(A)$ is right $H$-covariant and a QPB à la Brzezi\'nski--Majid, see e.g. \cite{KhaLanVS} and \cite[E5.10]{BegMaj}. Consequently, the right $H$-coaction $\Delta_{A}\colon A\rightarrow A\otimes H$ lifts to  
\begin{equation}
	\Delta_{A}^{1}=\Delta_{\Omega^{1}(A)}+\mathrm{ver}\colon\Omega^{1}(A)\rightarrow(A\otimes \Omega^{1}(H))\oplus (\Omega^{1}(A)\otimes H)\,,	
\end{equation}
and $\Delta_{\Omega^2(A)}\colon\Omega^2(A)\to\Omega^2(A)\otimes H$ is well-defined. On generators we have
\begin{equation}
\begin{split}
	\Delta_{\Omega^{1}(A)}(\mathrm{d}u)=\mathrm{d}u\otimes t,&\qquad
	\Delta_{\Omega^{1}(A)}(\mathrm{d}v)=\mathrm{d}v\otimes t^{-1},\qquad
	\Delta_{\Omega^{2}(A)}(\mathrm{d}u\wedge \mathrm{d}v)=\mathrm{d}u\wedge \mathrm{d}v\otimes 1,\\
	&\mathrm{ver}(\mathrm{d}u)=u\otimes \mathrm{d}t, \qquad \mathrm{ver}(\mathrm{d}v)=v\otimes \mathrm{d}t^{-1}
\end{split}
\end{equation}
and these expressions extend by $A$-linearity.
Moreover, one shows that the left $A$-linear extension of
\begin{equation}\label{ver11}
	\mathrm{ver}^{1,1}(\mathrm{d}u\wedge \mathrm{d}v):= \mathrm{d}(u)v\otimes t\mathrm{d}(t^{-1}) - u \mathrm{d} v\otimes \mathrm{d} (t)t^{-1}
\end{equation}
is well-defined, i.e. consistent with the right $A$-module structure and commutation relation in $\Omega^2(A)$. The explicit computations can be found in \cite{AntonioThesis}. Note that $\mathrm{ver}^{0,2}$ is the zero map, since $\Omega^2(H)=0$. This leads to the extension
\begin{equation}
	\Delta_{A}^{2}=\Delta_{\Omega^2(A)}+\mathrm{ver}^{1,1}\colon\Omega^{2}(A)\rightarrow (\Omega^{2}(A)\otimes H) \oplus (\Omega^{1}(A)\otimes \Omega^{1}(H))
\end{equation}
of $\Delta_A$ to $\Omega^2(A)$. We obtain the following result.
\begin{theorem}\label{thm:torus}
The differential calculus $\Omega^{\bullet}(A)=A\oplus \Omega^{1}(A) \oplus \Omega^{2}(A)$ on the noncommutative algebraic 2-torus is complete. Moreover,
\begin{enumerate}
\item[i.)] $\Omega^\bullet(B)=B\oplus\Omega^1(B)$ is generated by $B:=\mathcal{O}_{\theta}(\mathbb{T}^{2})^{\mathrm{co}\mathcal{O}(U(1))}$, i.e. $\Omega^\bullet(B)\subseteq\Omega^\bullet(A)$ is the pullback DC on $B$.

\item[ii.)] $\mathrm{ver}^\bullet=\mathbb{C}\oplus\Lambda^1$ with $\Lambda^1=\mathrm{span}_\mathbb{C}\{t^{-1}\mathrm{d}t\}$ and $\mathrm{hor}^\bullet=A\Omega^\bullet(B)A$.

\item[iii.)] the map
$\Lambda^1\to\Omega^1(A)$, $t^{-1}\mathrm{d}t\mapsto u^{-1}\mathrm{d}u$
gives a connection $1$-form on $\Omega^1(A)$.
\end{enumerate}
\begin{proof}
Completeness of the calculus follows from the previous computations and it remains to prove $i.)-iii.)$.
\begin{enumerate}
\item[i.)] We show that $\Omega^\bullet(B)\subseteq\Omega^\bullet(A)$ is the pullback calculus. It is straightforward to check that $\Omega^{1}(B) \supseteq B\mathrm{d}B $. The non-trivial statement is then that $\Omega^{1}(B)\subseteq B\mathrm{d}B$. A generic element in $\omega\in \Omega^{1}(B)$ can be written as $\omega= \alpha_{k\ell}u^{k}v^{\ell}\mathrm{d}u  + \beta_{mn}u^{m}v^{n}\mathrm{d}v,$ for $\alpha_{k\ell},\beta_{mn}\in \mathbb{C}$. By $A$-linearity we have that 
\begin{align*}
	\Delta_{A}^\bullet(\alpha_{k\ell}u^{k}v^{\ell}\mathrm{d}u + \beta_{mn}u^{m}v^{n}\mathrm{d}v)
	&=\alpha_{k\ell}\Delta_{A}(u^{k}v^{\ell})\Delta_{A}^{1}(\mathrm{d}u) +  \beta_{mn}\Delta_{A}(u^{m}v^{n})\Delta_{A}^{1}(\mathrm{d}v) \\
	&=\alpha_{k\ell}u^{k}v^{\ell}\mathrm{d}u \otimes t^{k-\ell +1} + \beta_{mn}u^{m}v^{n} \mathrm{d}v\otimes t^{m-n -1} \\ & +\alpha_{k\ell}u^{k}v^{\ell}u \otimes t^{k-\ell}\mathrm{d}t - \beta_{mn}u^{m}v^{n}v \otimes t^{m-n-2}\mathrm{d}t
\end{align*}
equals $(\alpha_{k\ell}u^{k}v^{\ell}\mathrm{d}u  + \beta_{mn}u^{m}v^{n}\mathrm{d}v) \otimes 1$ 
if and only if 
\begin{equation}\label{VerTorus}
	\begin{aligned} \nonumber 
		\alpha_{k\ell}u^{k}v^{\ell}\mathrm{d}u \otimes t^{k-\ell +1} + \beta_{mn}u^{m}v^{n} \mathrm{d}v\otimes t^{m-n -1} & = (\alpha_{k\ell}u^{k}v^{\ell}\mathrm{d}u  + \beta_{mn}u^{m}v^{n}\mathrm{d}v) \otimes 1, \\  \alpha_{k\ell}u^{k}v^{\ell}u \otimes t^{k-\ell}\mathrm{d}t - \beta_{mn}u^{m}v^{n}v \otimes t^{m-n-2}\mathrm{d}t & =0.
	\end{aligned}
\end{equation}  
From the first equation we obtain $k-\ell +1 =0$ (or $\beta_{mn}=0$) and $m-n-1=0$ (or $\alpha_{k\ell}=0$), which leads to \begin{equation}
	\begin{aligned}\nonumber
		\alpha_{k\ell}u^{k}v^{\ell}\mathrm{d}u \otimes t^{k-\ell +1} + \beta_{mn}u^{m}v^{n} \mathrm{d}v\otimes t^{m-n -1} = \alpha_{k,k+1}u^{k}v^{k+1}\mathrm{d}u\otimes 1 + \beta_{n,n+1} u^{n+1}v^{n} \mathrm{d}v \otimes 1 .
	\end{aligned}
\end{equation} 
The second equation reads 
\begin{equation}
	\begin{aligned} \nonumber
		\alpha_{k\ell}u^{k}v^{\ell}u \otimes t^{k-\ell}\mathrm{d}t & - \beta_{mn}u^{m}v^{n}v \otimes t^{m-n-2}\mathrm{d}t  \\ & = \alpha_{k,k+1}u^{k}v^{k+1}u \otimes t^{-1}\mathrm{d}t - \beta_{n+1,n}u^{n+1}v^{n}v\otimes t^{-1}\mathrm{d}t \\ & =(\alpha_{k,k+1}e^{i(k+1)\theta}u^{k+1}v^{k+1}- \beta_{n,n+1}u^{n+1}v^{n+1})\otimes t^{-1}\mathrm{d}t,
	\end{aligned}
\end{equation} 
which is zero if and only if \begin{equation}\nonumber\alpha_{k,k+1}e^{i(k+1)\theta}u^{k+1}v^{k+1}- \beta_{n,n+1}u^{n+1}v^{n+1}=0,\end{equation}  so we must have $n=k$, and accordingly $\alpha_{k,k+1}e^{i(k+1)\theta}=\beta_{k,k+1}$.  Therefore a general element $\omega\in \Omega^{1}(B)$ reads  \begin{equation}\begin{aligned}\nonumber\alpha_{k\ell}u^{k}v^{\ell}\mathrm{d}(u)  + \beta_{mn}u^{m}v^{n}\mathrm{d}(v) & = \alpha_{k,k+1} u^{k}v^{k+1}\mathrm{d}(u) + \beta_{k,k+1}u^{k+1}v^{k}\mathrm{d}(v) \\ & = \alpha_{k,k+1}(u^{k}v^{k}v \mathrm{d}(u)   +e^{i(k+1)\theta} u^{k}u v^{k}\mathrm{d}(v) )\\ & =  \alpha_{k,k+1}u^{k}v^{k}(v\mathrm{d}(u) + e^{i(k+1)\theta}u^{k}v^{k}\mathrm{d}(v) u e^{-i(k+1)\theta}) \\ & =\alpha_{k,k+1} u^{k}v^{k}(v\mathrm{d}(u) + \mathrm{d}(v) u) \\ & = \alpha_{k,k+1} (uv)^{k} \mathrm{d}(uv),\end{aligned}\end{equation} for any $k\in \mathbb{Z}$. Since $B:=A^{coH}=\mathrm{span}_{\mathbb{C}}\{(uv)^{k}| \ k\in \mathbb{Z}\}$ the claim of the theorem follows.  

\item[ii.)] One easily shows that the only left coinvariant forms on $H$ are scalar multiples of $t^{-1}\mathrm{d}t$. For the horizontal forms we know from \cite[E5.10]{BegMaj} that $(\Omega^1(A),\Omega^1(H))$ is a QPB à la Brzezi\'nski--Majid. Thus, by Proposition \ref{prop:compare} $ii.)$ it follows that $\mathrm{hor}^1=A\mathrm{d}(B)A$, which equals $A\Omega^1(B)A$ by $i.)$ of the current proposition. Because of \eqref{ver11} there are no higher order horizontal forms and thus the claim follows.

\item[iii.)] Using \eqref{VerTorus} it follows that the displayed map satisfies the conditions of Definition \ref{def:con}, i.e. that it is a connection $1$-form, see also \cite[E5.10]{BegMaj}.
\end{enumerate}

\end{proof} 
\end{theorem}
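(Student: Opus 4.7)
My plan is to prove the four claims by working very concretely in the generators $u,v$ of $A$ and $t$ of $H$, and then to invoke the abstract machinery (Proposition \ref{prop:compare} and Proposition \ref{prop:1:1connection}) only at the end.

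First I would establish completeness. Since $\Omega^{k}(A)=0$ for $k>2$, only two extensions need to be checked. For degree $1$, the BM vertical map is known to exist for this calculus (cf.\ \cite{KhaLanVS} and \cite[E5.10]{BegMaj}), which by Proposition \ref{prop:compare} $i.)$ gives well-definedness of $\Delta^{1}_{A}$ on the maximal prolongation in degree $1$. Since our $\Omega^\bullet(A)$ is however \emph{not} the maximal prolongation (there are extra relations $\mathrm{d}u\wedge\mathrm{d}u=\mathrm{d}v\wedge\mathrm{d}v=0$ and $\mathrm{d}u\wedge\mathrm{d}v+e^{-i\theta}\mathrm{d}v\wedge\mathrm{d}u=0$), the content here is the degree $2$ check. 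The strategy is to define $\Delta^{2}_{A}$ on generators via \eqref{ver11} together with $\Delta_{\Omega^{2}(A)}(\mathrm{d}u\wedge\mathrm{d}v)=\mathrm{d}u\wedge\mathrm{d}v\otimes 1$, extend left-$A$-linearly, and verify that this is consistent with the right $A$-action and with the wedge relations. Concretely one computes both $\Delta^{1}_{A}(\mathrm{d}u)\wedge_{\otimes}\Delta^{1}_{A}(\mathrm{d}v)$ and $\Delta^{1}_{A}(\mathrm{d}v)\wedge_{\otimes}\Delta^{1}_{A}(\mathrm{d}u)$ in $\Omega^{2}(A\otimes H)$ and checks that their weighted sum with $e^{-i\theta}$ vanishes (and similarly for $\mathrm{d}u\wedge\mathrm{d}u$, $\mathrm{d}v\wedge\mathrm{d}v$). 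This is tedious but mechanical; the main ingredient is that $\mathrm{d}t\wedge\mathrm{d}t=0$ in $\Omega^{\bullet}(H)$ (since $\Omega^{2}(H)=0$ for the chosen $\alpha=0$), so the $A\otimes\Omega^{2}(H)$ component automatically vanishes and one only needs to balance the $\Omega^{1}(A)\otimes\Omega^{1}(H)$ and $\Omega^{2}(A)\otimes H$ components.

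For $i.)$, the inclusion $B\mathrm{d}B\subseteq\Omega^{1}(B)$ is automatic from the Leibniz rule. The reverse inclusion requires writing a generic $\omega\in\Omega^{1}(A)$ as $\omega=\alpha_{k\ell}u^{k}v^{\ell}\mathrm{d}u+\beta_{mn}u^{m}v^{n}\mathrm{d}v$ (with finite sums and scalars $\alpha_{k\ell},\beta_{mn}\in\mathbb{C}$), applying $\Delta^{1}_{A}$, and then imposing $\Delta^{1}_{A}(\omega)=\omega\otimes 1$. The direct sum decomposition $\Omega^{1}(A\otimes H)=(\Omega^{1}(A)\otimes H)\oplus(A\otimes\Omega^{1}(H))$ yields two independent systems: one forcing $\ell=k+1$ and $m=n+1$ (i.e.\ matching $t$-degrees), the other pairing the $\alpha$ and $\beta$ coefficients via $\beta_{k,k+1}=e^{i(k+1)\theta}\alpha_{k,k+1}$. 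Substituting back and using the commutation relation $vu=e^{i\theta}uv$ collects $\omega$ into $\sum_{k}\alpha_{k,k+1}(uv)^{k}\mathrm{d}(uv)$, which manifestly lies in $B\mathrm{d}B$ since $B=\mathrm{span}_{\mathbb{C}}\{(uv)^{k}\}$.

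For $ii.)$ I would compute $\Lambda^{\bullet}={}^{\mathrm{co}H}\Omega^{\bullet}(H)$: since $\Omega^{0}(H)=H$, $\Omega^{1}(H)=\mathrm{span}_{H}\{\mathrm{d}t\}$ with classical bimodule rule ($\alpha=0$), and $\Omega^{\geq 2}(H)=0$, the coinvariants are $\Lambda^{0}=\mathbb{C}$ and $\Lambda^{1}=\mathbb{C}\,t^{-1}\mathrm{d}t$, so $\mathrm{ver}^{\bullet}=\mathbb{C}\oplus A\otimes\mathbb{C}\,t^{-1}\mathrm{d}t$. For $\mathrm{hor}^{\bullet}$, the degree $1$ statement $\mathrm{hor}^{1}=A\mathrm{d}(B)A$ follows directly from Proposition \ref{prop:compare} $ii.)$ combined with the BM-QPB result of \cite{KhaLanVS} and part $i.)$ just proved. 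In degree $2$, I would check via \eqref{ver11} that $\mathrm{d}u\wedge\mathrm{d}v$ has nontrivial $(1,1)$-vertical component, hence $\mathrm{hor}^{2}=0$, which is compatible with $A\Omega^{\bullet}(B)A$ being generated by $(uv)^{k}\mathrm{d}(uv)$ in degree $1$ (so its square in $A$-bimodule fashion gives degree $2$ elements that must be checked to vanish from the relations).

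Finally, for $iii.)$, the map $s(t^{-1}\mathrm{d}t):=u^{-1}\mathrm{d}u$ is right $H$-colinear for the adjoint coaction (trivial here since $H$ is cocommutative and $t^{-1}\mathrm{d}t$ is coinvariant for it), and I would compute $\pi_{v}(u^{-1}\mathrm{d}u)$ via \eqref{piverExpl} to get $u^{-1}u\otimes S(t)t\mathrm{d}t\cdot t^{-1}=1\otimes t^{-1}\mathrm{d}t$, verifying the splitting condition of Definition \ref{def:con}. The main obstacle throughout is the degree $2$ well-definedness check, because the explicit $(1,1)$-vertical map \eqref{ver11} must be shown to respect both the right $A$-action and the anticommutation relation $\mathrm{d}u\wedge\mathrm{d}v=-e^{-i\theta}\mathrm{d}v\wedge\mathrm{d}u$; the remaining parts are essentially linear algebra over $\mathbb{C}[t,t^{-1}]$ and coaction bookkeeping.
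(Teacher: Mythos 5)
Your proposal is correct and follows essentially the same route as the paper's proof: completeness via the known first-order Brzezi\'nski--Majid structure (Proposition \ref{prop:compare} $i.)$) plus a generator-level verification that the $(1,1)$-vertical map \eqref{ver11} respects the right $A$-action and the relations in $\Omega^2(A)$; part $i.)$ via the identical coefficient analysis forcing $\ell=k+1$, $m=n+1$, $n=k$ and $\beta_{k,k+1}=e^{i(k+1)\theta}\alpha_{k,k+1}$; part $ii.)$ via Proposition \ref{prop:compare} $ii.)$ together with the vanishing of higher horizontal forms forced by \eqref{ver11}; and part $iii.)$ by checking the splitting condition of Definition \ref{def:con}. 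Your explicit remark that $\Omega^\bullet(A)$ is \emph{not} the maximal prolongation (so Proposition \ref{prop:comlete} does not apply and the degree-$2$ check is genuinely necessary) is a correct and welcome clarification of a point the paper leaves implicit by deferring the computation to \cite{AntonioThesis}.
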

					
We now discuss the braiding of $A=\mathcal{O}_{\theta}(\mathbb{T}^{2})$ and the differential forms. Since the extension $B\subseteq A$ is cleft, the translation map reads $\tau(h)=j^{-1}(h_{1})\otimes_{B}j(h_{2})$, where $j:H\rightarrow A$ is the cleaving map and $j^{-1}:H\rightarrow A$ the corresponding convolution inverse.  On the generators of $\mathcal{O}(U(1))$ we have 
\begin{equation}
	\tau(t)=u^{-1}\otimes_B u, \quad \tau(t^{-1})=v^{-1}\otimes_B v.
\end{equation}
Therefore $\sigma\colon A\otimes_{B} A\rightarrow A\otimes_{B}A$ reads 
\begin{equation}\label{eq: braiding on the 2-torus, algebra level}
\begin{aligned}
	\sigma(u\otimes_Ba)& =ua\tau(t)=ua(u^{-1}\otimes_B u)=uau^{-1}\otimes_B u, \\ 	\sigma(v\otimes_B a) & = v a \tau(t^{-1}) = v a (v^{-1}\otimes_B v) = vav^{-1}\otimes_B v, 
\end{aligned}
\end{equation}
for any $a\in A$ and the above expressions easily extends to arbitrary elements of $A$. We now focus on the braiding between generators of the algebra and corresponding differential calculi. The equations \eqref{eq: braiding on the 2-torus, algebra level} immediately generalize to 
\begin{equation}\nonumber
\begin{aligned}
		\sigma(u\otimes_{\Omega^{\bullet}(B)} \omega)  = u\omega u^{-1}\otimes_{\Omega^{\bullet}(B)} u,\qquad 	
		\sigma(v\otimes_{\Omega^{\bullet}(B)} \omega) = v\omega v^{-1}\otimes_{\Omega^{\bullet}(B)} v, 
\end{aligned} 
\end{equation} 
for any $\omega\in \Omega^{\leq 2}(A)$.  In particular we find 
\begin{equation}
\begin{aligned}
		\sigma^{\bullet}(u\otimes_{\Omega^{\bullet}(B)}\mathrm{d}u) & = \mathrm{d}u\otimes_{\Omega^{\bullet}(B)} u,\qquad
		\sigma^{\bullet}(u\otimes_{\Omega^{\bullet}(B)}\mathrm{d}v) = e^{-i\theta} \mathrm{d}v\otimes_{\Omega^{\bullet}(B)} u, \\ 
		\sigma^{\bullet}(v\otimes_{\Omega^{\bullet}(B)}\mathrm{d}v) & = \mathrm{d}v\otimes_{\Omega^{\bullet}(B)} v,\qquad  
		\sigma^{\bullet}(v\otimes_{\Omega^{\bullet}(B)}\mathrm{d}u) = e^{i\theta} \mathrm{d} u \otimes_{\Omega^{\bullet}(B)} v,  
\end{aligned}
\end{equation}
and moreover 
\begin{equation}
	\begin{aligned}
		\sigma^{\bullet}(u\otimes_{\Omega^{\bullet}(B)}\mathrm{d}u\wedge\mathrm{d}v) & = u\mathrm{d}u\wedge\mathrm{d}(v)u^{-1}\otimes_{\Omega^{\bullet}(B)}u = e^{-i\theta}(\mathrm{d}u\wedge\mathrm{d}v)\otimes_{\Omega^{\bullet}(B)}u,\\ 
		\sigma^{\bullet}(v\otimes_{\Omega^{\bullet}(B)}\mathrm{d}u\wedge\mathrm{d}v) & = v\mathrm{d}u\wedge\mathrm{d}(v)v^{-1}\otimes_{\Omega^{\bullet}(B)}v = e^{i\theta}(\mathrm{d}u\wedge\mathrm{d}v)\otimes_{\Omega^{\bullet}(B)}v.  
	\end{aligned}
\end{equation}
We now consider elements of the form $\Omega^{1}(A)\otimes_{\Omega^{\bullet}(B)} (A\oplus\Omega^{1}(A)\oplus \Omega^{2}(A))$. The braiding reads 
\begin{equation}
	\begin{aligned}
		\sigma^\bullet(\mathrm{d}u\otimes_{\Omega^\bullet(B)}\omega) & = (-1)^{|\omega||\mathrm{d}u_{[1]}|}(\mathrm{d}u)_{[0]} \wedge \omega \wedge \tau^\bullet(\mathrm{d}u_{[1]}) \\ & = \mathrm{d}u\wedge \omega \wedge \tau(t) + (-1)^{|\omega|}u \wedge \omega \wedge \tau^{1}(\mathrm{d}t) \\ & = (\mathrm{d}u\wedge \omega u^{-1})\otimes_{\Omega^\bullet(B)} u + (-1)^{|\omega|}(u\omega \wedge \mathrm{d}(u^{-1}))\otimes_{\Omega^\bullet(B)} u \\ 
		& \quad + (-1)^{|\omega|}(u \omega u^{-1}) \otimes_{\Omega^\bullet(B)} \mathrm{d}u,
	\end{aligned}
\end{equation}
and similarly
\begin{equation}
	\begin{aligned}
		\sigma^\bullet(\mathrm{d}v\otimes_{\Omega^\bullet(B)} \omega) & = (\mathrm{d}v\wedge \omega v^{-1})\otimes_{\Omega^\bullet(B)} v + (-1)^{|\omega|}(v\omega \wedge \mathrm{d}(v^{-1}))\otimes_{\Omega^\bullet(B)} v\\ 
		& \quad  + (-1)^{|\omega|}(v \omega v^{-1}) \otimes_{\Omega^\bullet(B)} \mathrm{d}v.
\end{aligned}
\end{equation}
Explicitly, for generators we obtain 
\begin{equation*}
\begin{split}
	\sigma^{\bullet}(\mathrm{d}u\otimes_{\Omega^{\bullet}(B)}u)
	&=\mathrm{u}\otimes_{\Omega^{\bullet}(B)}\mathrm{d}u,\\
	\sigma^{\bullet}(\mathrm{d}u\otimes_{\Omega^{\bullet}(B)}v)
	&=e^{-i\theta}v\otimes_{\Omega^{\bullet}(B)}\mathrm{d}u,
\end{split}\qquad
\begin{split}
	\sigma^{\bullet}( \mathrm{d}v\otimes_{\Omega^{\bullet}(B)}v)
	&=v\otimes_{\Omega^{\bullet}(B)}\mathrm{d}v,\\
	\sigma^{\bullet}(\mathrm{d}v\otimes_{\Omega^{\bullet}(B)}u)
	&=e^{i\theta}u\otimes_{\Omega^{\bullet}(B)}\mathrm{d}v,
\end{split}
\end{equation*}
\begin{equation*}
\begin{split}
	\sigma^{\bullet}(\mathrm{d}u\otimes_{\Omega^{\bullet}(B)}\mathrm{d}u)
	&=-\mathrm{d}u\otimes_{\Omega^{\bullet}(B)}  \mathrm{d}u,\\
	\sigma^{\bullet}(\mathrm{d}u\otimes_{\Omega^{\bullet}(B)} \mathrm{d}v)
	&=-e^{-i\theta} \mathrm{d}v \otimes_{\Omega^{\bullet}(B)} \mathrm{d}u,
\end{split}\qquad
\begin{split}
	\sigma^{\bullet}(\mathrm{d}v\otimes_{\Omega^{\bullet}(B)}\mathrm{d}v)
	&=-\mathrm{d}v\otimes_{\Omega^{\bullet}(B)}\mathrm{d}v,\\
	\sigma^{\bullet}(\mathrm{d}v\otimes_{\Omega^{\bullet}(B)}\mathrm{d}u)
	&=-e^{i\theta}\mathrm{d}u\otimes_{\Omega^{\bullet}(B)}\mathrm{d}v
\end{split}
\end{equation*}
and
\begin{equation}\nonumber
	\begin{aligned}
		\sigma^{\bullet}(\mathrm{d}u\otimes_{\Omega^{\bullet}(B)}\mathrm{d}u\wedge\mathrm{d}v) & = u(\mathrm{d}u\wedge\mathrm{d}v)u^{-1}\otimes_{\Omega^{\bullet}(B)}\mathrm{d}u = e^{-i\theta}(\mathrm{d}u\wedge\mathrm{d}v)\otimes_{\Omega^{\bullet}(B)}\mathrm{d}u, \\ 
		\sigma^{\bullet}( \mathrm{d}v\otimes_{\Omega^{\bullet}(B)}\mathrm{d}u\wedge\mathrm{d}v) & = v(\mathrm{d}u\wedge\mathrm{d}v)v^{-1}\otimes_{\Omega^{\bullet}(B)}\mathrm{d}v = e^{i\theta}(\mathrm{d}u\wedge\mathrm{d}v)\otimes_{\Omega^{\bullet}(B)}\mathrm{d}v. 	
	\end{aligned}
\end{equation}
On elements of the form $\Omega^{2}(A)\otimes_{\Omega^{\bullet}(B)}(A\oplus\Omega^{1}(A)\oplus\Omega^{2}(A))$ the braiding reads 
\begin{equation*}
\begin{aligned}
		\sigma^{\bullet}(\mathrm{d}u\wedge\mathrm{d}v\otimes_{\Omega^{\bullet}(B)}\omega) & = \mathrm{d}u\wedge\mathrm{d}v \wedge\omega \otimes_{\Omega^{\bullet}(B)}1 -(-1)^{|\omega|}u\mathrm{d}(v)\wedge \omega\wedge \tau^{\bullet}(\mathrm{d}(t)t^{-1}) \\ 
		& \quad + (-1)^{|\omega|}\mathrm{d}(u)v\wedge\omega\wedge\tau^{\bullet}(t\mathrm{d}t^{-1}) + uv \omega \wedge\tau^{\bullet}(\mathrm{d}t\wedge\mathrm{d}t^{-1}) \\
		& = \mathrm{d}u\wedge\mathrm{d}v\wedge\omega \otimes_{\Omega^{\bullet}(B)}1 -(-1)^{|\omega|}u\mathrm{d}(v)\wedge\omega \wedge \tau^{\bullet}(t^{-1}\mathrm{d}t) - (-1)^{|\omega|}\mathrm{d}(u)v\wedge\omega\wedge\tau^{\ }(t^{-1}\mathrm{d}t) \\ 
		& = \mathrm{d}u\wedge\mathrm{d}v\wedge\omega\otimes_{\Omega^{\bullet}(B)}1 - (-1)^{|\omega|}\mathrm{d}(uv)\wedge\omega \wedge \tau^{\bullet}(t^{-1}\mathrm{d}t) \\ 
		& = \mathrm{d}u\wedge \mathrm{d}v\wedge\omega\otimes_{\Omega^{\bullet}(B)}1  -(-1)^{|\omega|}\mathrm{d}(uv)\wedge\omega\wedge(\mathrm{d}(u^{-1})u\otimes_{\Omega^{\bullet}(B)}1 + 1\otimes_{\Omega^{\bullet}(B)}u^{-1}\mathrm{d}u),   
\end{aligned}
\end{equation*} 
which gives on generators
\begin{equation*}
\begin{split}
	\sigma^{\bullet}(\mathrm{d}u\wedge\mathrm{d}v\otimes_{\Omega^{\bullet}(B)}u)
	=e^{i\theta} u\otimes_{\Omega^{\bullet}(B)}\mathrm{d}u\wedge\mathrm{d}v,\qquad
	\sigma^{\bullet}(\mathrm{d}u\wedge\mathrm{d}v\otimes_{\Omega^{\bullet}(B)}v)
	=e^{-i\theta}v\otimes_{\Omega^{\bullet}(B)}\mathrm{d}u\wedge\mathrm{d}v
\end{split}
\end{equation*}
and
\begin{equation}\nonumber
\begin{aligned}
		\sigma^{\bullet}(\mathrm{d}u\wedge\mathrm{d}v\otimes_{\Omega^{\bullet}(B)}\mathrm{d}u) &
		=e^{i\theta}\mathrm{d}u\otimes_{\Omega^{\bullet}(B)}\mathrm{d}u\wedge\mathrm{d}v, \\ 
		\sigma^{\bullet}(\mathrm{d}u\wedge\mathrm{d}v\otimes_{\Omega^{\bullet}(B)}\mathrm{d}v) & 
		=e^{-i\theta}\mathrm{d}v\otimes_{\Omega^{\bullet}(B)}\mathrm{d}u\wedge\mathrm{d}v.
\end{aligned}
\end{equation}
Note that $\mathrm{d}u\wedge\mathrm{d}v\otimes_{\Omega^{\bullet}(B)}\mathrm{d}u\wedge\mathrm{d}v=\mathrm{d}u\wedge\mathrm{d}v\otimes_{\Omega^{\bullet}(B)}\mathrm{d}(uv)\wedge v^{-1}\mathrm{d}v
=\mathrm{d}u\wedge\mathrm{d}v\wedge\mathrm{d}(uv)\otimes_{\Omega^{\bullet}(B)}v^{-1}\mathrm{d}v
=0$, i.e. the highest degree of $\Omega^\bullet(A\otimes_BA)$ is $3$.
\begin{remark}
According to the calculations above we conclude that $\sigma\colon A\otimes_BA\to A\otimes_BA$ squares to the identity in this example. Furthermore, $\sigma^{\bullet}\colon\Omega^{\bullet}(A)\otimes_{\Omega^{\bullet}(B)}\Omega^{\bullet}(A)\rightarrow\Omega^{\bullet}(A)\otimes_{\Omega^{\bullet}(B)}\Omega^{\bullet}(A)$ squares to the identity on generators and maps generators to generators. Thus, $\sigma^{\bullet}\colon\Omega^{\bullet}(A)\otimes_{\Omega^{\bullet}(B)}\Omega^{\bullet}(A)\rightarrow\Omega^{\bullet}(A)\otimes_{\Omega^{\bullet}(B)}\Omega^{\bullet}(A)$ squares to the identity by  \eqref{eq:hex1} and \eqref{eq:hex2}.
\end{remark}

\subsection{Quantum Hopf fibration and the Podleś sphere}\label{sec:exHopfFib}
					
Let $H=\mathcal{O}(U(1))$ and $A=\mathcal{O}_{q}(\mathrm{SL}_2(\mathbb{C}))$ be as in the Example \ref{ex:HG} iii.). The subalgebra of coinvariant elements $B$ is the Podleś sphere and we already pointed that $B\subseteq A$ is a QPB. We choose $\Omega^\bullet(H)=H\oplus\Omega^1(H)$ with right $H$ action \ref{eq: DC on U(1)} for $\alpha=2$. 
We further indicate by $|\cdot|$ the degree of an element in $A$ that is defined by the $H$-coaction on a general element  $f\in \{\alpha,\beta,\gamma,\delta\},$ by  $\Delta_{A}(f)=f\otimes t^{|f|}$. So $|\alpha|=|\gamma|=1$, whereas $|\beta|=|\delta|=-1$. We define the FODC $\Omega^{1}(A)$ as the free left $A$-module generated by 
\begin{equation}
	e^{+}=q^{-1}\alpha\mathrm{d}\gamma-q^{-2}\gamma\mathrm{d} (\alpha), \quad e^{-}=\delta\mathrm{d} (\beta) - q\beta \mathrm{d} (\delta), \quad e^{0}=\delta\mathrm{d} (\alpha)-q\beta \mathrm{d} (\gamma),
\end{equation} 
with commutation relations 
\begin{equation}
	e^{\pm}f=q^{|f|}fe^{\pm},\quad e^{0}f=q^{2|f|}fe^{0}\,.
\end{equation} 
It is well-known that $\Omega^1(A)$ is right $H$-covariant and that the vertical map is $\mathrm{ver}\colon\Omega^1(A)\to A\otimes\Omega^1(H)$ is well-defined, see e.g. \cite[Example 5.51]{BegMaj}. Thus $\Delta_{A}\colon A\rightarrow A\otimes H$ extends to a well-defined map  
\begin{equation}
\nonumber \Delta_{A}^{1}=\Delta_{\Omega^{1}(A)}+\mathrm{ver} :\Omega^{1}(A)\rightarrow (\Omega^{1}(A)\otimes H)\oplus(A\otimes \Omega^{1}(H)),
\end{equation} 
which reads 
\begin{equation}\label{VerSL}
\begin{split}
	\Delta_{\Omega^{1}(A)}(e^{\pm})&=e^{\pm}\otimes t^{\pm 2},\\
	\mathrm{ver}(e^{0})&=1\otimes t^{-1}\mathrm{d}(t),
\end{split}\qquad
\begin{split}
	\Delta_{\Omega^{1}(A)}(e^{0})&=e^{0}\otimes 1,\\
	\mathrm{ver}(e^{\pm})&=0
\end{split}
\end{equation}
on generators and extends by left $A$-linearity.
The second order differential calculus $(\Omega^{2}(A),\wedge,\mathrm{d})$ on $A$ is the free left $A$-module generated as $\Omega^{2}(A)=\mathrm{span}_{A}\{ e^{\pm}\wedge e^{0},e^{+}\wedge e^{-}\}$ with commutation relations \cite[Example 2.32]{BegMaj}
\begin{equation}
\begin{aligned}\label{ord2}
	e^{+}\wedge e^{-}& =-q^{-2}e^{-}\wedge e^{+}, \quad e^{\pm}\wedge e^{0}=-q^{\mp 4}e^{0}\wedge e^{\pm}, \quad \mathrm{d} (e^{0})=q^{3}e^{+}\wedge e^{-}, \\ &  \mathrm{d} (e^{\pm})=\mp q^{\pm 2}[2]_{q^{-2}}e^{\pm}\wedge e^{0}, \quad e^{\pm}\wedge e^{\pm}=e^{0}\wedge e^{0}=0,
\end{aligned}
\end{equation}  
where $[2]_{q}=(1-q^{2})/(1-q)$. On generators the right $H$-coaction on $\Omega^2(A)$ reads
\begin{equation}
\begin{aligned} 
	\Delta_{\Omega^{2}(A)}(e^{+}\wedge e^{-})& := e^{+}\wedge e^{-}\otimes 1, \quad \Delta_{\Omega^{2}(A)}(e^{+}\wedge e^{0}) := e^{+}\wedge e^{0}\otimes t^{2}, \\ & \Delta_{\Omega^{2}(A)}(e^{-}\wedge e^{0}):= e^{-}\wedge e^{0}\otimes t^{-2}\,.
\end{aligned}
\end{equation} 
We define the vertical map $\mathrm{ver}^{1,1}\colon\Omega^2(A)\to\Omega^1(A)\otimes\Omega^1(H)$ on generators by
\begin{equation}\label{eq:ver11} 
\begin{aligned}
	\mathrm{ver}^{1,1}(e^{+}\wedge e^{-}):=0, \quad \mathrm{ver}^{1,1}(e^{\pm}\wedge e^{0}):= e^{\pm}\otimes t^{\pm 2 -1}\mathrm{d} (t)\,
\end{aligned}
\end{equation}
and extend it by left $A$-linearity, while we are forced to set $\mathrm{ver}^{0,2}$ to zero since $\Omega^2(H)=0$. These maps are well-defined, since they respect the relations \eqref{ord2}, as one easily verifies, leading to
\begin{equation}
	\Delta^2_A=\Delta_{\Omega^2(A)}+\mathrm{ver}^{1,1}\colon\Omega^2(A)\to(\Omega^2(A)\otimes H)\oplus(\Omega^1(A)\oplus\Omega^1(H)).
\end{equation}
The third order differential calculus $\Omega^{3}(A)$ is defined as the free left $A$-module $\Omega^{3}(A)=\mathrm{span}_{A}\{e^{+}\wedge e^{-}\wedge e^{0}\},$ with relations \eqref{ord2} and 
\begin{equation}\label{rel3}
	\mathrm{d} (e^{+}\wedge e^{-})=-q^{-2}\mathrm{d} (e^{-}\wedge e^{+}), \quad \mathrm{d} (e^{\pm}\wedge e^{0}) =-q^{\mp 4}\mathrm{d}(e^{0}\wedge e^{\pm}).
\end{equation} 
The coaction is the $A$-linear extension of $\Delta_{\Omega^{3}(A)}(e^{+}\wedge e^{-}\wedge e^{0})= e^{+}\wedge e^{-}\wedge e^{0}\otimes 1$.
For degree reasons $\mathrm{ver}^{0,3}$ and $\mathrm{ver}^{1,2}$ are trivial, while the left $A$-linear extension of
\begin{equation}\label{ver21}
	\mathrm{ver}^{2,1}(e^{+}\wedge e^{-}\wedge e^{0}) := (e^{+}\wedge e^{-}\wedge e^{0}) \otimes t^{-1}\mathrm{d}(t)\,,
\end{equation}									
induces a well-defined vertical map $\mathrm{ver}^{2,1}\colon\Omega^3(A)\to\Omega^2(A)\otimes\Omega^1(H)$, since relations \eqref{rel3} are respected. Thus, leading to
\begin{equation}
	\Delta^3_A=\Delta_{\Omega^3(A)}+\mathrm{ver}^{2,1}\colon\Omega^3(A)\to(\Omega^3(A)\otimes H)\oplus(\Omega^2(A)\oplus\Omega^1(H)).
\end{equation}
More details can be found in the thesis \cite{AntonioThesis}.					
\begin{theorem} 
The DC											$\Omega^{\bullet}(A):= A \oplus \Omega^{1}(A) \oplus \Omega^{2}(A) \oplus \Omega^{3}(A) $ on $A=\mathcal{O}_{q}(\mathrm{SL}_2(\mathbb{C}))$ is a complete. Moreover,
\begin{enumerate}
\item[i.)] $\Omega^{\bullet}(B)\subseteq\Omega^\bullet(A)$ is the pullback DC generated by $B:=\mathcal{O}_{q}(\mathrm{SL}_2(\mathbb{C}))^{\mathrm{co}\mathcal{O}_{q}(U(1))}$. 

\item[ii.)] $\mathrm{ver}^\bullet=\mathbb{C}\oplus\Lambda^1$, where $\Lambda^1=\mathrm{span}_\mathbb{C}\{t^{-1}\mathrm{d}t\}$ and $\mathrm{hor}^\bullet=A\Omega^\bullet(B)A$.

\item[iii.)] the map $\Lambda^1\to\Omega^1(A)$, $t^{-1}\mathrm{d}t\mapsto e^0$ gives a connection $1$-form on $\Omega^1(A)$.
\end{enumerate} 
\begin{proof}  
Completeness of the calculus follows from the previous computations and constructions. It remains to prove $i.)-iii.)$.
\begin{enumerate}
\item[i.)] We show that the base calculus coincides with the usual pullback calculus. Let us consider a generic element $\omega\in \Omega^{1}(B)$, which we write as
\begin{equation} 
	\omega= ae^{+}+be^{-}+ce^{0}, \quad a,b,c\in A,
\end{equation}
and since by assumption $\Delta_{A}^{1}(\omega)=\omega \otimes 1$ we find 
\begin{equation}
	\begin{aligned} \nonumber
		\Delta_{A}^{1}(ae^{+}+be^{-}+ce^{0}) & = \Delta_{A}(a)\Delta_{A}^1(e^{+}) + \Delta_{A}(b)\Delta_{A}^1(e^{-})+\Delta_{A}(c)\Delta_{A}^1(e^{0}) \\ & = (a\otimes t^{|a|})(e^{+}\otimes t^{2}) + (b\otimes t^{|b|})(e^{-}\otimes t^{-2}) \\ & \quad + (c\otimes t^{|c|})(e^{0}\otimes 1 +1\otimes t^{-1}\mathrm{d} t) \\ & = ae^{+}\otimes t^{|a|}t^{2} + be^{-}\otimes t^{-2}t^{|b|} + ce^{+}\otimes t^{|c|} + c\otimes t^{|c|}t^{-1}\mathrm{d} (t)  \\ & = (a e^{+} + b e^{-} + ce^{0})\otimes 1, 
	\end{aligned}
\end{equation} 
where the last equality holds true if and only if we have one the following combinations 
\begin{equation}
	\begin{aligned}\nonumber
		|a|=-2,|b|=2,c=0,\quad  a=c=0,|b|=2,\quad  b=c=0,|a|=-2, \quad a=b=c=0.														
	\end{aligned}
\end{equation}
Accordingly, every element in $\Omega^{1}(B)$ must be of the form $\omega=a e^{+} + b e^{-}$ with the stated prescriptions. 
We list the possible elements of degree $2,-2$ in terms of generators: 
$$ 
\text{degree 2}: \alpha^{2},\gamma^{2},\alpha\gamma; \quad \text{degree -2}:\delta^{2},\beta^{2},\beta\gamma.
$$  
We now show that every possible element of the form $a e^{+}+b e^{-}$ with $a$ of degree $-2$ and $b$ of degree $2$ gives rise to an element of the form $B\mathrm{d}B$. For elements of the form $be^{-}$ we find 
\begin{equation} \nonumber
	\begin{aligned} 
		\alpha^{2} e^{-} & = \alpha (q^{2}\beta \mathrm{d} (x) - q^{-1} \delta \mathrm{d} (z)) = q^{2}\alpha \beta \mathrm{d} (x) - q^{-2} \alpha\delta \mathrm{d} (z), \\  \gamma^{2}e^{-} & = \gamma (-\delta \mathrm{d} (x) - q \beta \mathrm{d} (z)) = - \gamma\delta \mathrm{d} (x)- q \gamma \beta \mathrm{d} (z),  \\ 
		\alpha\gamma e^{-} & = \alpha(-\delta \mathrm{d} (x)- q \beta \mathrm{d} (z)) = -\alpha \delta \mathrm{d} (x) - q \alpha \beta \mathrm{d} (z).
	\end{aligned}
\end{equation} 
Each of the above expressions defines elements of the form $B\mathrm{d} B$ as we are always pairing degree $1$ and degree $-1$ elements. By the same reasoning we describe elements of the form $ae^{+}$, for which 
\begin{equation}
	\begin{aligned} \nonumber
		\beta^{2}e^{+} & = \beta (q^{-2} \gamma \mathrm{d} (z) - q \alpha \mathrm{d} (x)) = q^{-2}\beta \gamma \mathrm{d} (z)- q \beta \alpha \mathrm{d} (x), \\ 
		\delta^{2}e^{+} & = \delta (\alpha \mathrm{d} (z) + q^{-1}\gamma \mathrm{d} (x)) = \delta \alpha \mathrm{d} (z) + q^{-1}\delta \gamma \mathrm{d} (x), \\
		\delta \beta e^{+} & = \delta (q^{-2}\gamma \mathrm{d} (z) -q\alpha \mathrm{d} (x)) = q^{-2} \delta \gamma \mathrm{d} (z) - q \delta \alpha \mathrm{d} (x). 
	\end{aligned}
\end{equation} 
Moreover, in \cite{BegMaj} (Proposition 2.35, page 113) it is stated that the volume form can be expressed in terms of elements in $B$ and $\Omega^{1}(B)$. Therefore $\Omega^{\bullet}(B)\subseteq\Omega^\bullet(A)$ is a DC.

\item[ii.)] The equality for horizontal $1$-forms follows via the same argumentation used in Theorem \ref{thm:torus} $ii.)$, using the result that $(\Omega^1(A),\Omega^1(H))$ is a QPB à la Brzezi\'nski--Majid according to \cite[Example 5.51]{BegMaj}.
For horizontal $2$-forms the equality holds as well, using \eqref{eq:ver11} and the fact that $e^+\wedge e^-$ is a base form.
There are no horizontal $3$-forms by \eqref{ver21}.

\item[iii.)] This is immediate, using \eqref{VerSL}.
\end{enumerate}
\end{proof}
\end{theorem}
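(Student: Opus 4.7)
The plan is to first establish completeness of $\Omega^\bullet(A)$ by assembling the vertical maps that were already introduced on generators in the preparatory discussion, and then to address parts $i.)$--$iii.)$ using the explicit $\mathbb{Z}$-grading induced by the $\mathcal{O}(U(1))$-coaction. For completeness I would define, in each degree $k$, the candidate $\Delta_A^k=\Delta_{\Omega^k(A)}+\sum_{\ell\geq 1}\mathrm{ver}^{k-\ell,\ell}$ by left $A$-linear extension of the formulas on $e^\pm,e^0$ and their wedge products, and then verify (a) that each $\mathrm{ver}^{k-\ell,\ell}$ respects the commutation relations with $a\in A$ (routine, using $|f|$-weight bookkeeping) and the quadratic/cubic relations \eqref{ord2}, \eqref{rel3} among the $e$'s, and (b) that the assembled $\Delta_A^\bullet$ commutes with $\mathrm{d}$, which by the graded Leibniz rule reduces to the fact that on $A$ we already have $\Delta_A^1\circ\mathrm{d}=\mathrm{d}_\otimes\circ\Delta_A$. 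Multiplicativity is essentially free once well-definedness is checked, since $\Omega^\bullet(A)$ is left $A$-generated by the $e^\pm,e^0$ and these have compatible coactions on both sides.

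For $i.)$ I would expand a general $\omega\in\Omega^1(B)$ as $ae^++be^-+ce^0$ and impose $\Delta_A^1(\omega)=\omega\otimes 1$; using the $t$-weight decomposition of $A$ and \eqref{VerSL} this forces $|a|=-2$, $|b|=2$ and $c=0$, and one then lists the weight-$\pm 2$ monomial basis $\{\alpha^2,\gamma^2,\alpha\gamma\}$ and $\{\delta^2,\beta^2,\beta\delta\}$ and checks element-by-element that each $ae^{+}$ and $be^{-}$ rewrites as $b_1\mathrm{d}b_2$ with $b_i\in B$ (exploiting that $\alpha\beta,\gamma\delta,\beta\gamma\in B$). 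For the degree-$2$ base forms, the kernel of the vertical piece $\mathrm{ver}^{1,1}$ restricted to $H$-coinvariants is $A\cdot(e^+\wedge e^-)$ by \eqref{eq:ver11}, and the relation $\mathrm{d}e^0=q^3 e^+\wedge e^-$ together with $e^0\notin\Omega^1(B)$ but $B\mathrm{d}(B)$-combinations hitting $e^0$ modulo $\Omega^1_\mathrm{hor}$ allow us to realise $e^+\wedge e^-$ as a wedge of two base $1$-forms; for the top degree I would invoke \cite[Proposition 2.35]{BegMaj} to write the volume form in terms of $B$ and $\Omega^1(B)$.

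For $ii.)$ the identification $\Lambda^1=\mathbb{C}\{t^{-1}\mathrm{d}t\}$ is immediate from the commutation rules defining $\Omega^1(H)$; that $\mathrm{hor}^1=A\mathrm{d}(B)A$ follows from Proposition \ref{prop:compare} $ii.)$ applied to the known Brzezi\'nski--Majid QPB structure of $(\Omega^1(A),\Omega^1(H))$ together with $i.)$, while $\mathrm{hor}^2=A(e^+\wedge e^-)A$ and $\mathrm{hor}^3=0$ follow from inspecting \eqref{eq:ver11} and \eqref{ver21} respectively on $A$-linear combinations of generators. For $iii.)$, the map $s(t^{-1}\mathrm{d}t):=e^0$ is $\mathrm{Ad}$-equivariant because $e^0$ is right $H$-coinvariant by \eqref{VerSL} and $t^{-1}\mathrm{d}t$ is $\mathrm{Ad}$-invariant, and the normalization $\pi_v\circ s=1_A\otimes\mathrm{id}_{\Lambda^1}$ reads off directly from $\mathrm{ver}(e^0)=1\otimes t^{-1}\mathrm{d}t$. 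The main obstacle will be the degree-$2$ and degree-$3$ portion of $i.)$: the weight arguments alone only isolate the $H$-coinvariant subspace, and expressing the surviving generators $e^+\wedge e^-$ and $e^+\wedge e^-\wedge e^0$ as wedges of elements from $B$ and $\mathrm{d}B$ requires the nontrivial structural identities $\mathrm{d}e^0=q^3 e^+\wedge e^-$ and the explicit factorisation of the volume form, which one must extract from the Podle\'s sphere calculus rather than from purely representation-theoretic considerations.
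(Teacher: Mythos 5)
Your proposal follows essentially the same route as the paper's proof: completeness via the explicit vertical maps already checked against the relations, part $i.)$ by the weight decomposition forcing $|a|=-2$, $|b|=2$, $c=0$ and an element-by-element rewriting of the weight-$\pm2$ monomials into $B\mathrm{d}B$ (with the volume form handled via \cite[Proposition 2.35]{BegMaj}), part $ii.)$ via the Brzezi\'nski--Majid comparison and inspection of $\mathrm{ver}^{1,1}$, $\mathrm{ver}^{2,1}$, and part $iii.)$ read off from the vertical map on $e^0$. The only discrepancy is in your favour: your list $\{\delta^2,\beta^2,\beta\delta\}$ of weight-$(-2)$ monomials is the correct one, whereas the paper's displayed list contains the weight-$0$ element $\beta\gamma$ as a typo (its subsequent computation uses $\delta\beta$, consistent with your list).
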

We discuss the braiding of algebra elements and differential forms. In \cite[Example 6.26]{BrJaMa} an explicit formula for the strong connection $\ell\colon H\rightarrow A\otimes A$ is provided. By composing with the projection to the balanced tensor product over $B$ we get the translation map $\tau:=\pi_{B}\circ \ell\colon H \rightarrow A\otimes_{B}A$, reading \begin{equation}\label{translation map sphere}
	\begin{aligned}
		\tau(t^{n})& =\sum_{k=0}^{n}\begin{pmatrix}
			n \\ k 
		\end{pmatrix}_{q^{2}}(-1)^{k}q^{k}\beta^{k}\delta^{n-k}\otimes_{B} \alpha^{n-k}\gamma^{k},  \\\tau(t^{-n})& =\sum_{k=0}^{n}
		\begin{pmatrix}
			n \\ k 
		\end{pmatrix}_{q^{2}}
		(-1)^{-k}q^{-k}\alpha^{n-k}\gamma^{k}\otimes_{B} \beta^{k}\delta^{n-k}, 
	\end{aligned} 
\end{equation} 
where 
\begin{equation}
	\begin{pmatrix}
		n \\ k 
	\end{pmatrix}_{q^{2}}=\frac{(q^{2n}-1)(q^{2n-2}-1)\ldots(q^{2k+2}-1)}{(q^{2n-2k}-1)(q^{2n-2k-2}-1)\ldots(q^{2}-1)}
\end{equation}
are the deformed binomial coefficients.
In particular, we have 
\begin{equation}
	\begin{aligned}
		\tau(t)  =  \delta\otimes_{B}\alpha - q\beta\otimes_{B} \gamma, \quad  
		\tau(t^{-1})  = \alpha\otimes_{B}\delta- q^{-1}\gamma\otimes_{B}\beta.
	\end{aligned}
\end{equation}
Therefore we find, for every $h\in \Omega^{\leq 3}(A)=A\oplus\Omega^{1}(A)\oplus\Omega^{2}(A)\oplus\Omega^{3}(A)$, $f\in \{\alpha,\gamma\}$ and $g\in \{\beta,\delta\}$  \begin{equation}
	\begin{aligned}
		\sigma(f\otimes_{B} h)& = fh(\delta\otimes_{B} \alpha-q\beta\otimes_{B}\gamma), \\ \sigma(g\otimes_{B}h)&=gh(\alpha\otimes_{B}\delta- q^{-1}\gamma\otimes_{B}\beta).
	\end{aligned}
\end{equation} 
We provide the explicit expressions of the above for the generators of the algebra and the corresponding differential calculus.
At the algebra level we find that the braiding mimics the commutation relations of the generators, namely 
\begin{equation}
	\begin{aligned}
		&\sigma(\alpha\otimes_{B} \alpha)=\alpha\otimes_{B} \alpha,  &&\sigma( \alpha\otimes_{B} \beta) =q^{-1}\beta\otimes_{B} \alpha, \\
		&\sigma(\alpha\otimes_{B}\gamma)=q^{-1}\gamma\otimes_{B} \alpha, &&\sigma(\alpha\otimes_{B}\delta)=\delta\otimes_{B}\alpha +(q^{-1}-q)\beta\otimes_{B} \gamma,\\
		&\sigma(\beta\otimes_{B}\beta)=\beta\otimes_{B}\beta, 
		&&\sigma(\beta\otimes_{B}\gamma)=\gamma\otimes_{B}\beta, \\ 
		&\sigma(\beta\otimes_{B}\delta)=q^{-1}\delta\otimes_{B}\beta,
		&&\sigma(\gamma\otimes_{B}\gamma)=\gamma\otimes_{B} \gamma, \\
		&\sigma(\gamma\otimes_{B} \delta)=q^{-1}\delta\otimes_{B} \gamma, &&\sigma(\delta\otimes_{B} \delta) =\delta\otimes_{B}\delta. 
	\end{aligned}
\end{equation}
Next we consider the braiding for differential 1-forms and elements of the algebra.
Let $f\in \{\alpha,\beta,\gamma,\delta\}$. For elements of the form $A\otimes_{\Omega^{\bullet}(B)}\Omega^{1}(A)$ we have  
\begin{equation}
	\begin{aligned}
		\sigma^{\bullet}(f\otimes_{\Omega^{\bullet}(B)} e^{0}) & = q^{-2|f|}e^{0}\otimes_{\Omega^{\bullet}(B)} f, \quad \sigma^{\bullet}(f\otimes_{\Omega^{\bullet}(B)} e^{\pm})& = q^{-f}e^{\pm}\otimes_{\Omega^{\bullet}(B)} f.
	\end{aligned}
\end{equation}
On the other hand, for elements the form $\Omega^{1}(A)\otimes_{\Omega^{\bullet}(B)} A$, we have
\begin{equation}
	\begin{aligned}\nonumber
		\sigma^{\bullet}(e^{0}\otimes_{\Omega^{\bullet}(B)} f) & = e^{0}f\otimes_{\Omega^{\bullet}(B)} 1 + f \tau^{\bullet}(t^{-1}\mathrm{d}t) 
		= e^{0}f + f\otimes_{\Omega^{\bullet}(B)} e^{0}-f e^{0}\otimes_{\Omega^{\bullet}(B)} 1 \\ 
		& = (q^{2|f|}-1)fe^{0}\otimes_{\Omega^{\bullet}(B)} 1 +f\otimes_{\Omega^{\bullet}(B)} e^{0},  
	\end{aligned}
\end{equation}
where we used that $\tau^{\bullet}(t^{-1}\mathrm{d}t) = 1\otimes e^{0}-e^{0}\otimes 1$. The other combinations read 
\begin{equation}
	\sigma^{\bullet}(e^{\pm}\otimes_{\Omega^{\bullet}(B)} f) = q^{|f|}f\otimes_{\Omega^{\bullet}(B)} e^{\pm}.
\end{equation} 
For the braiding of differential $1$-forms on both tensor factors, namely elements of the form $\Omega^{1}(A)\otimes_{\Omega^{\bullet}(B)} \Omega^{1}(A)$, we find 
\begin{equation*}
	\begin{aligned}
		&\sigma^{\bullet}(e^{+}\otimes_{\Omega^{\bullet}(B)} e^{+}) =\sigma^{\bullet}(e^{-}\otimes_{\Omega^{\bullet}(B)} e^{-}) =0, \quad &&\sigma^{\bullet}(e^{0}\otimes_{\Omega^{\bullet}(B)} e^{0})=-e^{0}\otimes_{\Omega^{\bullet}(B)} e^{0}, \\ 
		&\sigma^{\bullet}(e^{+}\otimes_{\Omega^{\bullet}(B)} e^{-})  = -q^{-2}e^{-}\otimes_{\Omega^{\bullet}(B)} e^{+}, \quad &&\sigma^{\bullet}(e^{-}\otimes_{\Omega^{\bullet}(B)} e^{+}) = -q^{2}e^{+}\otimes_{\Omega^{\bullet}(B)} e^{-}, \\ 
		&\sigma^{\bullet}(e^{\pm}\otimes_{\Omega^{\bullet}(B)} e^{0})  = -q^{\mp 4}e^{0}\otimes_{\Omega^{\bullet}(B)} e^{\pm}, \quad &&\sigma^{\bullet}(e^{0}\otimes_{\Omega^{\bullet}(B)} e^{\pm}) = -e^{\pm}\otimes_{\Omega^{\bullet}(B)} e^{0}+ (1-q^{\mp 4})e^{0}\wedge e^{\pm}\otimes_{\Omega^{\bullet}(B)} 1  
	\end{aligned}
\end{equation*}
Next, we consider the braiding of differential $2$-forms and differential $1$-forms. For elements of the form $\Omega^{1}(A)\otimes_{\Omega^{\bullet}(B)}\Omega^{2}(A)$ we find 
\begin{equation*}
\begin{split}
	\sigma^{\bullet}(e^{\pm}\otimes_{\Omega^{\bullet}(B)}(e^{\pm}\wedge e^{0}))& =0, \\
	\sigma^{\bullet}(e^{-}\otimes_{\Omega^{\bullet}(B)}(e^{+}\wedge e^{0}))& = q^{2}(e^{+}\wedge e^{0})\otimes_{\Omega^{\bullet}(B)} e^{-},\\
	\sigma^{\bullet}(e^{0}\otimes_{\Omega^{\bullet}(B)} (e^{-}\wedge e^{0}))& =(e^{-}\wedge e^{0})\otimes_{\Omega^{\bullet}(B)} e^{0},
\end{split}\quad
\begin{split}
	\sigma^{\bullet}(e^{+}\otimes_{\Omega^{\bullet}(B)}(e^{-}\wedge e^{0}))& = q^{-2}(e^{-}\wedge e^{0})\otimes_{\Omega^{\bullet}(B)} e^{+},\\
	\sigma^{\bullet}(e^{0}\otimes_{\Omega^{\bullet}(B)} (e^{+}\wedge e^{0}))& =(e^{+}\wedge e^{0})\otimes_{\Omega^{\bullet}(B)} e^{0}, \\
	\sigma^{\bullet}(e^{0}\otimes_{\Omega^{\bullet}(B)} (e^{+}\wedge e^{-})) & = (e^{+}\wedge e^{-})\otimes_{\Omega^{\bullet}(B)} e^{0}
\end{split}
\end{equation*}
and, on the other hand, for elements of the form $\Omega^{2}(A)\otimes_{\Omega^{\bullet}(B)} \Omega^{1}(A)$, we have 
\begin{equation}
	\begin{aligned}
		&\sigma^{\bullet}((e^{+}\wedge e^{-})\otimes_{\Omega^{\bullet}(B)} e^{\pm})=0, \\
		&\sigma^{\bullet}( (e^{+}\wedge e^{-})\otimes_{\Omega^{\bullet}(B)} e^{0})=e^{0}\otimes_{\Omega^{\bullet}(B)} (e^{+}\wedge e^{-}), \\ 
		& \sigma^{\bullet}((e^{+}\wedge e^{0})\otimes_{\Omega^{\bullet}(B)} e^{+}) =0, \\ 
		& \sigma^{\bullet}((e^{+}\wedge e^{0})\otimes_{\Omega^{\bullet}(B)} e^{0}) = q^{-4}(e^{0}\otimes_{\Omega^{\bullet}(B)} (e^{+}\wedge e^{0})),\\
		&\sigma^{\bullet}((e^{+}\wedge e^{0})\otimes_{\Omega^{\bullet}(B)} e^{-}) =q^{-2}e^{-}\otimes_{\Omega^{\bullet}(B)} (e^{+}\wedge e^{0}) - q^{-2}(q^{-4}-1)(e^{-}\wedge e^{0})\otimes_{\Omega^{\bullet}(B)} e^{+}, \\
		&\sigma^{\bullet}((e^{-}\wedge e^{0})\otimes_{\Omega^{\bullet}(B)} e^{-}) =0,\\
		&\sigma^{\bullet}((e^{-}\wedge e^{0})\otimes_{\Omega^{\bullet}(B)} e^{0})= q^{4}e^{0}\otimes_{\Omega^{\bullet}(B)} (e^{-}\wedge e^{0}),\\
		& \sigma^{\bullet}((e^{-}\wedge e^{0})\otimes_{\Omega^{\bullet}(B)} e^{+}) = q^{2}e^{+}\otimes_{\Omega^{\bullet}(B)} (e^{-}\wedge e^{0}) - q^{2}(q^{4}-1)(e^{+}\wedge e^{0})\otimes_{\Omega^{\bullet}(B)} e^{-}.
	\end{aligned}
\end{equation}
In particular, the braiding is symmetric on the level of algebra but \textit{not} symmetric on differential forms.
												
\subsection{Crossed product calculi}\label{ex:CP}

In the following we discuss crossed product algebras as in Lemma \ref{lem:crossedalg}. A differential calculus structure on them was introduced in \cite{SciWeb}, generalizing the construction for smash product algebras as pioneered in \cite{PflSch}. We briefly review this construction and show that crossed product calculi are in particular complete calculi. In this section $\sigma$ will denote the $2$-cocycle of the crossed product algebra and \textit{not} the braiding.

Until the end of this section let us fix a $\sigma$-twisted $H$-module algebra $B$ with measure $\cdot\colon H\otimes B\rightarrow B$ and $2$-cocycle $\sigma\colon H\otimes H \rightarrow B$. We further fix a FODC $\Omega^{1}(B)$ on $B$ and a bicovariant FODC $\Omega^1(H)$ on $H$ with maximal prolongation $\Omega^\bullet(H)$.

\begin{definition}\label{cross product calculus}
We say that $\Omega^{1}(B)$ is a $\sigma$-twisted $H$-module FODC if there is a map $\cdot\colon H\otimes \Omega^{1}(B)\rightarrow\Omega^{1}(B)$ such that 
\begin{equation}
\begin{aligned} 
	h\cdot (b\mathrm{d}_{B}b')& = (h_{1}\cdot b)(h_{2}\cdot \mathrm{d}_{B}b'), \\  \mathrm{d}_{B}(h\cdot b)& =h\cdot \mathrm{d}_{B}b, \\ \mathrm{d}_{B}\circ \sigma & = 0,
\end{aligned}
\end{equation} 
for every $h\in H$ and $b,b'\in B$. 
\end{definition}
Recall from Lemma \ref{lem:crossedalg} that the crossed product algebra $B\#_\sigma H$ is the tensor product vector space endowed with the multiplication $(b\#_\sigma h)(b'\#_\sigma h')
=b(h_1\cdot b')\sigma(h_2\otimes h'_1)\#_\sigma h_3h'_2$. Moreover, $B\#_\sigma H$ is a right $H$-comodule algebra with respect to $\Delta_{B\#_{\sigma}H}:B\#_{\sigma}H\rightarrow B\#_{\sigma}H\otimes H$, $b\otimes h \mapsto b\otimes h_{1}\otimes h_{2}$.
The $1$-forms are modeled on the vector space
\begin{equation}
	\Omega^{1}(B\#_{\sigma}H):=(\Omega^{1}(B)\otimes H)\oplus(B\otimes \Omega^{1}(H))
\end{equation}
with $B\#_{\sigma}H$-module actions defined by
\begin{equation}\label{twistedmodule}
\begin{split}
	(b\#_\sigma h)\cdot(\omega^B\otimes h'+b'\otimes\omega^H)
	&=b(h_1\cdot\omega^B)\sigma(h_2\otimes h'_1)\otimes h_3h'_2
	+b(h_1\cdot b')\sigma(h_2\otimes \omega^H_{-1})\otimes h_3\omega^H_0,\\
	(\omega^B\otimes h+b\otimes\omega^H)\cdot(b'\#_\sigma h')
	&=\omega^B(h_1\cdot b')\sigma(h_2\otimes h'_1)\otimes h_3h'_2
	+b(\omega^H_{-2}\cdot b')\sigma(\omega^H_{-1}\otimes h'_1)\otimes \omega^H_0h'_2
\end{split}
\end{equation}
for all $h,h'\in H$, $b,b'\in B$, $\omega^B\in\Omega^1(B)$ and $\omega^H\in\Omega^1(H)$.
\begin{theorem}[{\cite[Theorem 3.7]{SciWeb}}]\label{th:th 3.7 SciWeb} Given a $\sigma$-twisted $H$-module FODC $\Omega^{1}(B)$ on $B$, we obtain a right $H$-covariant FODC $\Omega^{1}(B\#_{\sigma}H)$ on $B\#_{\sigma}H$ with module actions \eqref{twistedmodule}, differential
\begin{equation}
	\mathrm{d}_{\#_{\sigma}}\colon B\#_{\sigma}H\rightarrow \Omega^{1}(B\#_{\sigma}H),\qquad b\otimes h \mapsto \mathrm{d}_{B}b\otimes h + b\otimes \mathrm{d}_{H}h
\end{equation}
and right $H$-coaction
\begin{equation}
	\Delta_{\Omega^{1}(B\#_{\sigma}H)}\colon\Omega^{1}(B\#_{\sigma}H)\to\Omega^{1}(B\#_{\sigma}H)\otimes H,\qquad
	\omega^B\otimes h+b\otimes\omega^H\mapsto
	\omega^B\otimes\Delta(h)+b\otimes\Delta_{\Omega^1(H)}(\omega^H).
\end{equation}
Moreover, $\Delta_{B\#_\sigma H}\colon B\#_\sigma H\to B\#_\sigma H\otimes H$ is differentiable.
\end{theorem}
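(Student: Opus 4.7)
The plan is to unpack the definition of FODC and verify its constituents one by one, splitting every calculation along the direct sum $\Omega^1(B\#_\sigma H)=(\Omega^1(B)\otimes H)\oplus(B\otimes\Omega^1(H))$ so that each summand can be treated with the appropriate input (the $\sigma$-twisted module axioms on the first, the bicovariance of $\Omega^1(H)$ on the second).

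First I would show that \eqref{twistedmodule} makes $\Omega^1(B\#_\sigma H)$ into a $B\#_\sigma H$-bimodule. Associativity of the left action on $\Omega^1(B)\otimes H$ is essentially the same computation as associativity of the product of $B\#_\sigma H$, because the formula involves only $\sigma$, the measure $\cdot$, and the coproduct of $H$; on $B\otimes\Omega^1(H)$ one uses that $\Omega^1(H)$ is a left $H$-comodule together with the 2-cocycle and $\sigma$-twisted module conditions of Lemma \ref{lem:crossedalg}. The compatibility of left and right actions is analogous. Second, I would check the Leibniz rule. For $x=b\#_\sigma h$ and $y=b'\#_\sigma h'$ one expands $\mathrm{d}_{\#_\sigma}(xy)$ by applying $\mathrm{d}_B\otimes\mathrm{id}+\mathrm{id}\otimes\mathrm{d}_H$ to $b(h_1\cdot b')\sigma(h_2\otimes h'_1)\otimes h_3h'_2$. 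The crucial point is that the terms containing a $\mathrm{d}_B$ hitting $\sigma(h_2\otimes h'_1)$ vanish by the assumption $\mathrm{d}_B\circ\sigma=0$ in Definition \ref{cross product calculus}; the remaining terms reassemble into $\mathrm{d}_{\#_\sigma}(x)\cdot y+x\cdot\mathrm{d}_{\#_\sigma}(y)$ after using the Leibniz rule of $\mathrm{d}_B$, the Leibniz rule of $\mathrm{d}_H$, and the compatibility $\mathrm{d}_B(h\cdot b)=h\cdot\mathrm{d}_B b$.

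Third, for the surjectivity $\Omega^1(B\#_\sigma H)=(B\#_\sigma H)\mathrm{d}_{\#_\sigma}(B\#_\sigma H)$ I note that $\mathrm{d}_{\#_\sigma}(b'\#_\sigma 1)=\mathrm{d}_B(b')\otimes 1$ and $\mathrm{d}_{\#_\sigma}(1\#_\sigma h)=1\otimes\mathrm{d}_H(h)$, so multiplying on the left by $b\#_\sigma 1$ and on the right by $1\#_\sigma h$ generates $b\mathrm{d}_B(b')\otimes h$ and $b\otimes\mathrm{d}_H(h)$, which span the two summands by surjectivity of $(\Omega^1(B),\mathrm{d}_B)$ and $(\Omega^1(H),\mathrm{d}_H)$. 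Fourth, $H$-covariance is a direct verification that $\Delta_{\Omega^1(B\#_\sigma H)}$ is a coaction (using that $\Delta_H$ and $\Delta_{\Omega^1(H)}$ are), a bimodule morphism (which reduces to right $H$-covariance of $\Omega^1(B)$ via the trivial $H$-coaction and to bicovariance of $\Omega^1(H)$ on the other summand), and intertwines $\mathrm{d}_{\#_\sigma}$ with $\mathrm{d}_{B\#_\sigma H\otimes H}$.

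Finally, for differentiability of $\Delta_{B\#_\sigma H}$ I would use the characterization of Lemma \ref{lem:BicExt}-style (the analogue for comodule algebras in Remark after Definition \ref{def:CovCal} $ii.)$): right $H$-covariance of $\Omega^1(B\#_\sigma H)$ obtained in step four is exactly differentiability, with $\Delta^1_{B\#_\sigma H}$ given by $\omega\mapsto\omega_0\otimes\omega_1$ in Sweedler notation for $\Delta_{\Omega^1(B\#_\sigma H)}$. The main obstacle is the bookkeeping in steps one and two: although each individual identity reduces to a cocycle computation, the presence of $\sigma$ inside the module actions means that every associativity check produces terms involving triple comultiplications on $H$ and up to two applications of the 2-cocycle condition, so organizing the calculation cleanly on both summands is where the real work lies.
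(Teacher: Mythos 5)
The paper does not prove this statement itself; it imports it verbatim from \cite[Theorem 3.7]{SciWeb}, so there is no in-paper argument to compare against. Your outline of steps one through four (bimodule structure, Leibniz rule via $\mathrm{d}_B\circ\sigma=0$ and $\mathrm{d}_B(h\cdot b)=h\cdot\mathrm{d}_Bb$, surjectivity, covariance) is the right strategy and the computations you describe do close up; for surjectivity you should be slightly more careful on the $B\otimes\Omega^1(H)$ summand, since $(b\#_\sigma g)\cdot(1\otimes\mathrm{d}_Hh)=b\,\sigma(g_1\otimes h_1)\otimes g_2\mathrm{d}_H(h_2)$ and recovering all of $B\otimes H\mathrm{d}_H H$ requires convolution-inverting $\sigma$, but this is routine.

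The genuine gap is in your final step. You identify ``differentiability of $\Delta_{B\#_\sigma H}$'' with the right $H$-covariance established in step four, taking $\Delta^1$ to be $\Delta_{\Omega^1(B\#_\sigma H)}$, which lands in $\Omega^1(B\#_\sigma H)\otimes H$ only. But the sentence immediately following the theorem in the paper makes explicit that the intended meaning is first order completeness in the sense of Definition \ref{def:FOcomplete}: the coaction must extend to a morphism of FODCi into the \emph{full} tensor product calculus $\Omega^1\bigl((B\#_\sigma H)\otimes H\bigr)=\bigl(\Omega^1(B\#_\sigma H)\otimes H\bigr)\oplus\bigl((B\#_\sigma H)\otimes\Omega^1(H)\bigr)$, i.e.\ it must also carry a well-defined vertical component. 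Right $H$-covariance alone does not give this (compare the Remark after Definition \ref{def:CovCal}, where the target calculus is only $\Gamma\otimes H$, with Definition \ref{def:FOcomplete}). The missing piece is to set, on the second summand,
\begin{equation*}
\Delta^1_{B\#_\sigma H}(b\otimes\omega^H)\;=\;b\otimes\omega^H_0\otimes\omega^H_1\;+\;(b\otimes\omega^H_{-1})\otimes\omega^H_0,
\end{equation*}
using the left $H$-coaction on $\Omega^1(H)$ (here bicovariance of $\Omega^1(H)$ enters again), and then to verify that the resulting map is a $B\#_\sigma H$-bimodule morphism over $\Delta_{B\#_\sigma H}$ and intertwines $\mathrm{d}_{\#_\sigma}$ with $\mathrm{d}_\otimes$; the latter check uses left colinearity of $\mathrm{d}_H$, and the former is a further cocycle computation. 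Since $\Omega^1(B\#_\sigma H)$ is a direct sum rather than a quotient, well-definedness as a linear map is free, so the gap is fillable, but as written your argument proves a strictly weaker statement than the one the paper needs.
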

In the language of this article the latter statement means that $\Omega^{1}(B\#_{\sigma}H)$ is first order complete. To account for an appropriate extension to higher order forms one considers the following definition.												
\begin{definition}
Let $(\Omega^{\bullet}(B),\mathrm{d}_{B})$ be a differential calculus on $B$. We say that it is a $\sigma$-twisted $H$-module differential calculus if there exist linear maps $\cdot\colon H\otimes \Omega^{k}(B)\rightarrow \Omega^{k}(B)$, for all $k\geq 1$, such that
\begin{equation}\label{highertwisted}
\begin{aligned} 
	h\cdot (b^{0}\mathrm{d}_{B}b^{1}\wedge \cdots \wedge \mathrm{d}_{B}b^{k}) & = (h_{1}\cdot b^{0})(h_{2}\cdot \mathrm{d}_{B}b^{1})\wedge\cdots \wedge(h_{k+1}\cdot \mathrm{d}_{B}b^{k}), \\ h \cdot \mathrm{d}_{B}b & = \mathrm{d}_{B}(h\cdot b), \\ \mathrm{d}_{B}\circ \sigma & = 0,
\end{aligned}
\end{equation} 
for all $b,b^{0},\cdots, b^{k}\in B$ and $h\in H$.  
\end{definition}
Clearly, the maps $\cdot\colon H\otimes \Omega^{k}(B)\rightarrow \Omega^{k}(B)$ are uniquely determined by \eqref{highertwisted} if they exist.
\begin{theorem}[{\cite[Theorem 3.16]{SciWeb}}]\label{DC on CPA are complete}
Let $(\Omega^{\bullet}(B),\mathrm{d}_{B})$ be a $\sigma$-twisted $H$-module differential calculus on $B$. Let us define
\begin{equation}
	\Omega^{n}(B\#_{\sigma}H):= \bigoplus_{i=0}^{n}\Omega^{n-i}(B)\otimes \Omega^{i}(H),
\end{equation}
for all $n\geq 0$, and let
\begin{equation}
\begin{aligned}
	(\omega\otimes \eta)\wedge(\omega'\otimes \eta')& := (-1)^{jk}(\omega \wedge(\eta_{-2}\cdot \omega')\sigma(\eta_{-1}\otimes \eta'_{-1}))\otimes (\eta_{0}\wedge \eta'_{0}), \\ \mathrm{d}_{\#_{\sigma}}(\omega\otimes \eta) & := \mathrm{d}_{B}\omega \otimes \eta + (-1)^{i}\omega\otimes \mathrm{d}_{H}\eta,
\end{aligned}
\end{equation} 
for $\omega\in \Omega^{i}(B)$, $\eta \in \Omega^{j}(H)$ and $\omega'\in \Omega^{k}(B)$, $\eta'\in \Omega^{\ell}(H).$ Then $\Omega^{\bullet}(B\#_{\sigma}H)$ is a right $H$-covariant differential calculus on $B\#_{\sigma}H$ with respect to which the right $H$-coaction $\Delta_{B\#_{\sigma}H}$ is differentiable, i.e. $\Omega^{\bullet}(B\#_{\sigma}H)$ is a complete calculus.
\end{theorem}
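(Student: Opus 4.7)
The plan is to extend the first-order construction of Theorem \ref{th:th 3.7 SciWeb} to all degrees by exhibiting $\Omega^{\bullet}(B\#_{\sigma}H)$ as a graded twisted tensor product DGA, and then to verify the completeness condition by using the completeness of $\Omega^{\bullet}(H)$ from Lemma \ref{lem:Hcomplete}. The proof decomposes into four conceptual blocks: (i) DGA structure on $\Omega^{\bullet}(B\#_{\sigma}H)$, (ii) generation in degree zero, (iii) right $H$-covariance, and (iv) extension of $\Delta_{B\#_{\sigma}H}$ to a DGA morphism with target $\Omega^{\bullet}(B\#_{\sigma}H)\otimes\Omega^{\bullet}(H)$.

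First I would verify that $(\Omega^{\bullet}(B\#_{\sigma}H),\wedge,\mathrm{d}_{\#_{\sigma}})$ is a DGA. The fact that $\mathrm{d}_{\#_{\sigma}}^{2}=0$ is immediate from $\mathrm{d}_{B}^{2}=0$, $\mathrm{d}_{H}^{2}=0$, and the sign conventions of the graded tensor product. The graded Leibniz rule reduces to the graded Leibniz rules for $\mathrm{d}_{B}$ and $\mathrm{d}_{H}$, provided one uses that the left $H$-coaction on $\Omega^{\bullet}(H)$ commutes with $\mathrm{d}_{H}$ (bicovariance), that $\cdot\colon H\otimes\Omega^{k}(B)\to\Omega^{k}(B)$ is compatible with $\mathrm{d}_{B}$, and the crucial hypothesis $\mathrm{d}_{B}\circ\sigma=0$ which kills differentials of cocycle terms. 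Associativity of $\wedge$ is the main technical point: it amounts to the graded analogue of Lemma \ref{lem:crossedalg} i.), with the 2-cocycle identity for $\sigma$ and the $\sigma$-twisted module identity for $\cdot$ ensuring that the two bracketings of a triple wedge coincide, after carefully tracking the signs $(-1)^{jk}$ coming from the tensor product grading and the relevant Sweedler-like identities for iterated left coactions of $\eta,\eta',\eta''$.

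Second, generation in degree zero: since $B\#_{\sigma}H$ is generated as an algebra by $B\otimes 1$ and $1\otimes H$, and since $\mathrm{d}_{\#_{\sigma}}(b\otimes 1)=\mathrm{d}_{B}(b)\otimes 1$ and $\mathrm{d}_{\#_{\sigma}}(1\otimes h)=1\otimes\mathrm{d}_{H}(h)$, one produces the generators $\Omega^{k}(B)\otimes 1$ and $1\otimes\Omega^{\ell}(H)$ of every bidegree by iterating $\mathrm{d}_{\#_{\sigma}}$ and multiplying; the general element of $\Omega^{n-i}(B)\otimes\Omega^{i}(H)$ is then obtained from the wedge product formula.

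Third, right $H$-covariance and completeness: I would define
\begin{equation*}
\Delta_{B\#_{\sigma}H}^{\bullet}\colon\Omega^{\bullet}(B\#_{\sigma}H)\to\Omega^{\bullet}(B\#_{\sigma}H)\otimes\Omega^{\bullet}(H),\qquad
\omega\otimes\eta\;\longmapsto\;(\omega\otimes\eta_{[1]})\otimes\eta_{[2]},
\end{equation*}
where $\Delta^{\bullet}(\eta)=\eta_{[1]}\otimes\eta_{[2]}$ is the DGA morphism supplied by completeness of $\Omega^{\bullet}(H)$ (Lemma \ref{lem:Hcomplete}). In degree zero this reduces to $b\otimes h\mapsto(b\otimes h_{1})\otimes h_{2}=\Delta_{B\#_{\sigma}H}(b\otimes h)$. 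Right $H$-covariance of $\Omega^{\bullet}(B\#_{\sigma}H)$ is then obtained by composing with the projection to $\Omega^{\bullet}(B\#_{\sigma}H)\otimes H$. That $\Delta_{B\#_{\sigma}H}^{\bullet}$ is multiplicative follows from multiplicativity of $\Delta^{\bullet}$ combined with $H$-colinearity of $\cdot$ and $H$-colinearity of $\sigma$ (i.e.\ that the measure and cocycle are compatible with the coaction implicit in the construction), and compatibility with $\mathrm{d}_{\#_{\sigma}}$ follows from the fact that both $\mathrm{d}$ and $\Delta^{\bullet}$ commute on $\Omega^{\bullet}(H)$.

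The principal obstacle is the verification of associativity of $\wedge$, since this requires reconciling three simultaneous pieces of data, the grading sign, the iterated left coaction of the $H$-part, and the 2-cocycle braiding, in such a way that the chosen prescription collapses to the classical smash/crossed product multiplication in degree zero. This is essentially where the hypothesis of a $\sigma$-twisted $H$-module \emph{differential} calculus (with action compatible on all $\Omega^{k}(B)$) is fully used, and once this is in place the remaining checks are graded analogues of the first-order arguments already recorded in \cite{SciWeb} and summarized in Theorem \ref{th:th 3.7 SciWeb}.
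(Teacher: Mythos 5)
Your outline is essentially correct, but note that the paper itself contains no proof of this statement: Theorem \ref{DC on CPA are complete} is quoted from \cite[Theorem 3.16]{SciWeb}, and the paper's only original contribution here is the remark immediately following it, namely that the hypothesis in \cite{SciWeb} that $\Delta\colon H\to H\otimes H$ be differentiable is automatic in the present setting, since $\Omega^{\bullet}(H)$ is the maximal prolongation of a bicovariant FODC and hence complete by Lemma \ref{lem:Hcomplete}. Your proposal reconstructs the argument of the cited reference and, importantly, incorporates exactly this point: the coaction $\Delta^{\bullet}_{B\#_{\sigma}H}(\omega\otimes\eta)=(\omega\otimes\eta_{[1]})\otimes\eta_{[2]}$ is built from the DGA morphism $\Delta^{\bullet}$ supplied by Lemma \ref{lem:Hcomplete}, and your check of compatibility with $\mathrm{d}_{\#_{\sigma}}$ is precisely where that completeness enters; likewise the generation argument via $\omega\otimes\eta=(\omega\otimes 1)\wedge(1\otimes\eta)$ and your list of ingredients for the graded Leibniz rule ($\mathrm{d}_{B}\circ\sigma=0$, $h\cdot\mathrm{d}_{B}b=\mathrm{d}_{B}(h\cdot b)$, left colinearity of $\mathrm{d}_{H}$) are the right ones. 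Two caveats. First, for multiplicativity of $\Delta^{\bullet}_{B\#_{\sigma}H}$ the needed input is not ``$H$-colinearity of $\cdot$ and $\sigma$'' --- both maps take values in $\Omega^{\bullet}(B)$ resp.\ $B$, which carry no coaction --- but rather the compatibility $(\eta_{[1]})_{-1}\otimes(\eta_{[1]})_{0}\otimes\eta_{[2]}=\eta_{-1}\otimes(\eta_{0})_{[1]}\otimes(\eta_{0})_{[2]}$ between the left coaction legs feeding the twisted product and the right coaction $\Delta^{\bullet}$; this follows from the graded coassociativity of $\Delta^{\bullet}$ (Proposition \ref{prop:gradedHopf}), since the left coaction is the component of $\Delta^{\bullet}$ whose first leg lies in degree zero. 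Second, the associativity of $\wedge$, which you rightly single out as the main computational burden (cocycle identity, twisted-module identity, Koszul signs), is only announced, not performed; in a complete write-up this is the one step that must be carried out by hand, and it is done in \cite{SciWeb}, not in the present paper.
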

In \cite[Theorem 3.16]{SciWeb} there is the additional assumption that $\Delta\colon H\to H\otimes H$ is differentiable, which is automatic in our setting since $\Omega^1(H)$ is bicovariant and $\Omega^\bullet(H)$ is the maximal prolongation. With this construction we thus obtain a whole class of complete calculi. We show that the corresponding base forms coincide with the forms on $B$.
\begin{proposition} 
Let $\Omega^{\bullet}(B\#_{\sigma}H)$ be the complete crossed product calculus obtained from the previous theorem. The corresponding base forms coincide with the DC $\Omega^\bullet(B)$ on $B$. Furthermore, the vertical forms are given by $\mathrm{ver}^\bullet=B\otimes\Omega^\bullet(H)$, while the horizontal forms are $\mathrm{hor}^\bullet=\Omega^\bullet(B)\otimes H$.
						
\begin{proof}
Clearly all elements of $\Omega^\bullet(B)\cong\Omega^\bullet(B)\otimes\Bbbk 1_H\subseteq\Omega^\bullet(B)\otimes H$ are basic, since they are invariant under $\Delta^\bullet_{B\#_\sigma H}$. We show that all basic forms are of this form.
Let  $\beta\otimes \gamma\in \Omega^{\bullet}(B\#_{\sigma}H)$.
If $\beta\otimes \gamma$ is a base form, i.e. $\beta\otimes \gamma\otimes 1=\Delta_{B\#_{\sigma}H}^{\bullet}(\beta\otimes \gamma)=\beta\otimes \gamma_{[1]}\otimes \gamma_{[2]}$, we can apply $\mathrm{id}\otimes\varepsilon^\bullet\otimes\mathrm{id}$ to the previous equation to obtain $\beta\otimes\varepsilon^\bullet(\gamma)1=\beta\otimes\gamma$. This implies that $\beta\otimes\gamma\in\Omega^\bullet(B)\otimes\Bbbk 1_H\cong\Omega^\bullet(B)$, proving the first claim. For vertical forms we obtain $\mathrm{ver}^\bullet=(B\#_\sigma H)\otimes\Lambda^\bullet\cong B\otimes\Omega^\bullet(H)$ by Proposition \ref{prop:gradedYD} $ii.)$. For the last statement let $\beta\otimes\gamma\in\mathrm{hor}^\bullet$, i.e. $\Delta_{B\#_\sigma H}^\bullet(\beta\otimes\gamma)=\beta\otimes\gamma_{[1]}\otimes\gamma_{[2]}\in\Omega^\bullet(B\#_\sigma H)\otimes H$. Applying $\mathrm{id}\otimes\varepsilon^\bullet\otimes\mathrm{id}$ gives $\beta\otimes\gamma\in\Omega^\bullet(B)\otimes H$, since $\varepsilon^\bullet$ vanishes on forms of degree $>0$. Clearly $\Omega^\bullet(B)\otimes H$ are horizontal, proving the equality.
\end{proof}  
\end{proposition}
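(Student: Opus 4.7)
The plan is to reduce everything to the observation that, under the identification $\Omega^\bullet(B\#_\sigma H) = \Omega^\bullet(B)\otimes\Omega^\bullet(H)$ from Theorem \ref{DC on CPA are complete}, the extended coaction $\Delta^\bullet_{B\#_\sigma H}$ acts trivially on the first tensor factor and coincides with $\Delta^\bullet_H$ on the second. I would first establish this: both sides are morphisms of DGAs $\Omega^\bullet(B\#_\sigma H)\to\Omega^\bullet(B\#_\sigma H)\otimes\Omega^\bullet(H)$, so uniqueness reduces the check to degree zero, where $\Delta_{B\#_\sigma H}(b\otimes h) = (b\otimes h_1)\otimes h_2$ is independent of the cocycle. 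The mild subtlety is the twisted wedge product in $\Omega^\bullet(B\#_\sigma H)$; however the normalization $\sigma(1_H\otimes h) = \varepsilon(h)1_B$ forces $(\omega\otimes 1_H)\wedge(1_B\otimes\eta) = \omega\otimes\eta$, so computing $\Delta^\bullet_{B\#_\sigma H}(\omega\otimes\eta)$ as a product yields the clean formula $(\omega\otimes\eta_{[1]})\otimes\eta_{[2]}$.

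For base forms, a generic candidate $\omega = \sum_i\beta^i\otimes\gamma^i$ with linearly independent $\beta^i\in\Omega^\bullet(B)$ is coinvariant iff $\Delta^\bullet_H(\gamma^i) = \gamma^i\otimes 1_H$ for each $i$. Applying counitality $(\varepsilon^\bullet\otimes\mathrm{id})\circ\Delta^\bullet_H = \mathrm{id}$ together with the vanishing of $\varepsilon^\bullet$ on $\Omega^{>0}(H)$ yields $\gamma^i\in\Bbbk\cdot 1_H$; hence the base forms are $\Omega^\bullet(B)\otimes\Bbbk\cong\Omega^\bullet(B)$.

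For horizontal forms, the same reduction identifies $\omega\in\mathrm{hor}^\bullet$ with the condition $\Delta^\bullet_H(\gamma^i)\in\Omega^\bullet(H)\otimes H$ for each $i$, so the point is the auxiliary claim that $\{\gamma\in\Omega^\bullet(H)\mid\Delta^\bullet_H(\gamma)\in\Omega^\bullet(H)\otimes H\} = H$. If a homogeneous component $\gamma^{(k)}\in\Omega^k(H)$ with $k>0$ occurs, then projecting $\Delta^\bullet_H(\gamma^{(k)})$ onto $\Omega^0(H)\otimes\Omega^k(H) = H\otimes\Omega^k(H)$ yields the left coaction ${}_{\Omega^k(H)}\Delta(\gamma^{(k)})$ (by \eqref{coactions}), which must vanish by horizontality. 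Counitality of the left coaction then forces $\gamma^{(k)} = 0$, so $\mathrm{hor}^\bullet = \Omega^\bullet(B)\otimes H$.

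The vertical forms come for free: by the general formula $\mathrm{ver}^\bullet = A\otimes\Lambda^\bullet$, we have $\mathrm{ver}^\bullet = (B\otimes H)\otimes\Lambda^\bullet$, and the fundamental-theorem isomorphism $H\otimes\Lambda^\bullet\cong\Omega^\bullet(H)$ from Proposition \ref{prop:gradedYD} rearranges this to $B\otimes\Omega^\bullet(H)$. The main obstacle is really the opening step, namely pinning down that $\Delta^\bullet_{B\#_\sigma H}$ factors as $\mathrm{id}\otimes\Delta^\bullet_H$ despite the twisted multiplication; once that is in hand, counitality and the degree-decomposition of $\Delta^\bullet_H$ do all the work.
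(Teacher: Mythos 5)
Your three identification arguments are correct and coincide with the paper's own proof: base forms and horizontal forms are pinned down by counitality of $\Delta^\bullet_H$ together with the vanishing of $\varepsilon^\bullet$ in positive degrees (the paper applies $\mathrm{id}\otimes\varepsilon^\bullet\otimes\mathrm{id}$; your variant for horizontality, projecting onto the $H\otimes\Omega^k(H)$-component and using counitality of the left coaction, is the same mechanism), and vertical forms follow from $\mathrm{ver}^\bullet=A\otimes\Lambda^\bullet$ plus the fundamental-theorem isomorphism of Proposition \ref{prop:gradedYD}, exactly as in the paper.

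Where you diverge is the opening step, and there your argument as written has a gap. The paper does not prove the formula $\Delta^\bullet_{B\#_\sigma H}(\omega\otimes\eta)=(\omega\otimes\eta_{[1]})\otimes\eta_{[2]}$; it takes it as part of the content of Theorem \ref{DC on CPA are complete}, i.e.\ of \cite{SciWeb}, where the extension is constructed with precisely this formula. You instead derive it by uniqueness of DGA extensions, which requires knowing that the candidate map is itself a morphism of DGAs for the \emph{twisted} wedge product --- and your only multiplicative check is the identity $(\omega\otimes 1_H)\wedge(1_B\otimes\eta)=\omega\otimes\eta$. That identity is not enough: the factorization $\omega\otimes\eta=(\omega\otimes 1_H)\wedge(1_B\otimes\eta)$ only reduces the problem to computing $\Delta^\bullet_{B\#_\sigma H}(1_B\otimes\eta)$, and $1_B\otimes\Omega^\bullet(H)$ is \emph{not} multiplicatively generated by $1_B\otimes H$ in the naive way, because the cocycle genuinely enters products there: for instance
\begin{equation*}
(1_B\otimes h)\wedge\mathrm{d}_{\#_\sigma}(1_B\otimes g)
=(1_B\otimes h)\wedge(1_B\otimes\mathrm{d}_Hg)
=\sigma(h_1\otimes g_1)\otimes h_2\mathrm{d}_Hg_2
\neq 1_B\otimes h\,\mathrm{d}_Hg
\end{equation*}
in general. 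So "computing $\Delta^\bullet_{B\#_\sigma H}$ as a product" does not yet yield the clean formula on elements $1_B\otimes h\mathrm{d}_Hg$, let alone in higher degree. The gap is fixable: either simply cite the explicit form of the extension provided by Theorem \ref{DC on CPA are complete} (which is what the paper implicitly does), or use convolution invertibility of $\sigma$ to write
\begin{equation*}
1_B\otimes h\mathrm{d}_Hg
=\big(\sigma^{-1}(h_1\otimes g_1)\otimes 1_H\big)\wedge(1_B\otimes h_2)\wedge\mathrm{d}_{\#_\sigma}(1_B\otimes g_2),
\end{equation*}
to which multiplicativity of $\Delta^\bullet_{B\#_\sigma H}$ applies factor by factor; the cocycle and its inverse then cancel and one obtains $(1_B\otimes(h\mathrm{d}g)_{[1]})\otimes(h\mathrm{d}g)_{[2]}$, and an induction along these lines settles all degrees. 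With that step repaired (or replaced by the citation), your proof is complete and matches the paper's.
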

The crossed product calculus also admits a strong connection, as spelled out in \cite[Theorem 3.19]{SciWeb}.

Using that the cleaving map $j\colon H\to B\#_\sigma H$ and its inverse $j^{-1}\colon H\to B\#_\sigma H$ read $j(h)=1\otimes h$ and $j^{-1}(h)=\sigma^{-1}(S(h_2)\otimes h_3)\otimes S(h_1)$ we can spell out the translation map
\begin{equation}
\begin{split}
	\tau\colon H&\to (B\#_\sigma H)\otimes_B(B\#_\sigma H)\\
	\tau(h)&
	=j^{-1}(h_1)\otimes_Bj(h_2)
	=(\sigma^{-1}(S(h_2)\otimes h_3)\otimes S(h_1))\otimes_B(1\otimes h_4).
\end{split}
\end{equation}
Consequently, the \DJ ur\dj evi\'c braiding $\sigma^D\colon(B\#_\sigma H)\otimes_B(B\#_\sigma H)\to(B\#_\sigma H)\otimes_B(B\#_\sigma H)$ (here written with an index $D$ to distinguish it from the $2$-cocycle $\sigma\colon H\otimes H\to B$) reads
\begin{equation}
\begin{split}
	\sigma^D((b&\otimes h)\otimes_B(b'\otimes h'))\\
	&=(b(h_1\cdot b')\sigma(h_2\otimes h'_1)(h_3h'_2\cdot\sigma^{-1}(S(h_8)\otimes h_9))\sigma(h_4h'_3\otimes S(h_7))\otimes h_5h'_4S(h_6))\otimes_B(1\otimes h_{10})
\end{split}
\end{equation}
which simplifies to 
$\sigma^D((b\otimes h)\otimes_B(b'\otimes h'))
=(b(h_1\cdot b')\otimes h_2h'S(h_3))\otimes_B(1\otimes h_4)$
in case of the smashed product $B\# H$, i.e. for trivial $\sigma$.
Similarly one works out the expressions of the extended \DJ ur\dj evi\'c braiding for $\Omega^\bullet(B\#_\sigma H)$.

\appendix

\section{The extension of the translation map}\label{app:taubullet}

This appendix is devoted to the proof of Proposition \ref{lem:taubullet}.

Let us first show that the "centraliy property" $b\tau(h)=\tau(h)b$ for all  $b\in B$ and $h\in H$, see \eqref{tau7}, extends to base forms of arbitrary degree.
\begin{lemma}
	The restriction and corestriction
	\begin{equation}\label{ChiBulRes}
		\chi^\bullet|_{A\Omega^\bullet(B)\otimes_{\Omega^\bullet(B)}\Omega^\bullet(B)A}\colon A\Omega^\bullet(B)\otimes_{\Omega^\bullet(B)}\Omega^\bullet(B)A\to A\Omega^\bullet(B)A\otimes H
	\end{equation}
	is an injection. In particular,
	\begin{equation}
		\omega\tau(h)=\tau(h)\omega
	\end{equation}
	for all $\omega\in\Omega^\bullet(B)$ and $h\in H$.
\end{lemma}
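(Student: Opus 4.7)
The plan is to address the two parts of the lemma separately, deducing the centrality relation from the injectivity of the restricted map.

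First, I would establish the injectivity. The starting observation is the identification $A\Omega^\bullet(B)\otimes_{\Omega^\bullet(B)}\Omega^\bullet(B)A = A\otimes_{\Omega^\bullet(B)}\Omega^\bullet(B)A$, obtained by pushing the $\Omega^\bullet(B)$-component of $a\beta$ across the balanced tensor product. Under this identification, a typical element takes the form $a\otimes_B \gamma a'$ with $\gamma \in \Omega^\bullet(B)$, and the coinvariance $\Delta_A^\bullet(\gamma) = \gamma\otimes 1$ immediately yields $\chi^\bullet(a\otimes_B \gamma a') = a\gamma a'_0\otimes a'_1$, confirming both well-definedness and that the image lies in $A\Omega^\bullet(B)A \otimes H$. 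For injectivity I would follow the standard Hopf--Galois trick: apply $\mathrm{id}_{\Omega^\bullet(A)}\otimes\tau$ to a vanishing image $\sum_i a_i\gamma_i a'_{i,0}\otimes a'_{i,1} = 0$, landing in $\Omega^\bullet(A)\otimes A\otimes_B A$, then contract the middle $A$-factor with the first via the wedge product. Equation \eqref{tau5} collapses the result to $\sum_i a_i\gamma_i\otimes_B a'_i$, which equals $\sum_i a_i\otimes_B \gamma_i a'_i$ after pushing $\gamma_i$ back across the tensor product.

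Second, for the centrality claim I would compute $\chi^\bullet$ on both sides and invoke the injection just established. Using that $\Delta_A^\bullet$ is a DGA morphism, the coinvariance $\Delta_A^\bullet(\omega) = \omega\otimes 1$, and equation \eqref{tau6}, a short calculation gives
$$\chi^\bullet(\omega\tau(h)) = \omega h^{\langle 1\rangle}(h^{\langle 2\rangle})_0\otimes (h^{\langle 2\rangle})_1 = \omega\otimes h,$$
and symmetrically
$$\chi^\bullet(\tau(h)\omega) = h^{\langle 1\rangle}(h^{\langle 2\rangle})_0\omega\otimes(h^{\langle 2\rangle})_1 = \omega\otimes h.$$
Interpreting both $\omega\tau(h) = \omega\cdot\tau(h)$ and $\tau(h)\omega = \tau(h)\cdot\omega$ as elements of the $\Omega^\bullet(B)$-sub-bimodule of $\Omega^\bullet(A)\otimes_{\Omega^\bullet(B)}\Omega^\bullet(A)$ generated by $A\otimes_B A$, which is the natural realisation of the subspace appearing in the lemma, the injectivity then forces $\omega\tau(h) = \tau(h)\omega$.

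The main technical point requiring care is precisely how to view these two elements as inhabitants of $A\Omega^\bullet(B)\otimes_{\Omega^\bullet(B)}\Omega^\bullet(B)A$: the first tensor factor $\omega h^{\langle 1\rangle}$ of $\omega\tau(h)$ is naturally of the form $\Omega^\bullet(B)\cdot A$ rather than $A\cdot\Omega^\bullet(B)$, so the subspace must be read as the $\Omega^\bullet(B)$-sub-bimodule generated by $A\otimes_B A$ rather than as the literal span of pure tensors $(a\beta)\otimes_B(\beta' a')$. I expect this interpretation matter to be the main point to fix before the rest of the argument goes through essentially by direct manipulation of the Hopf--Galois identities \eqref{tau5}--\eqref{tau6} and the tensor product relations over $\Omega^\bullet(B)$.
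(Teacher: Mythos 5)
Your injectivity argument is correct and is essentially the paper's own: both construct the left inverse $f(z\otimes h):=z\tau(h)$ of $\chi^\bullet$ and collapse it with \eqref{tau5}, and your identification $A\Omega^\bullet(B)\otimes_{\Omega^\bullet(B)}\Omega^\bullet(B)A=A\otimes_{\Omega^\bullet(B)}\Omega^\bullet(B)A$ together with $\chi^\bullet(a\otimes_{\Omega^\bullet(B)}\gamma a')=a\gamma a'_0\otimes a'_1$ matches the paper's verification. The centrality computation (both $\chi^\bullet(\omega\tau(h))$ and $\chi^\bullet(\tau(h)\omega)$ equal $\omega\otimes h$ by coinvariance of $\omega$ and \eqref{tau6}) is also the paper's.

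The genuine gap lies in your proposed repair of the domain issue, which you correctly flag as the delicate point. Reading the domain as the $\Omega^\bullet(B)$-sub-bimodule generated by $A\otimes_BA$ does make $\omega\tau(h)$ and $\tau(h)\omega$ members of it, but your injectivity proof does not cover that sub-bimodule, and it cannot be extended to it by the same method. The left-inverse computation hinges on the base form sitting to the \emph{left} of the $A$-factor in the second leg: coinvariance and \eqref{tau5} give $f(\chi^\bullet(x\otimes_{\Omega^\bullet(B)}\gamma a'))=x\gamma a'_0\tau(a'_1)=x\otimes_{\Omega^\bullet(B)}\gamma a'$, and this works for an \emph{arbitrary} first leg $x\in\Omega^\bullet(A)$, so injectivity in fact holds on the image of $\Omega^\bullet(A)\otimes_{\Omega^\bullet(B)}\Omega^\bullet(B)A$ and the element $\omega\tau(h)=\omega h^{\langle 1\rangle}\otimes_{\Omega^\bullet(B)}h^{\langle 2\rangle}$ is covered. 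But on an element $x\otimes_{\Omega^\bullet(B)}a'\gamma$ of your enlarged domain one gets $f(\chi^\bullet(x\otimes_{\Omega^\bullet(B)}a'\gamma))=xa'_0\gamma\tau(a'_1)$, and collapsing this back to $x\otimes_{\Omega^\bullet(B)}a'\gamma$ requires commuting $\gamma$ past $\tau(a'_1)$ --- which is exactly the centrality statement under proof, so the argument becomes circular precisely there. This matters because $\tau(h)\omega=h^{\langle 1\rangle}\otimes_{\Omega^\bullet(B)}h^{\langle 2\rangle}\omega$ is of this problematic shape. To be fair, the paper's own write-up also invokes injectivity on these two elements without verifying that they lie in the restricted domain, so you have identified a real soft spot; but your fix does not close it --- it trades an unverified membership claim for an injectivity statement on a larger space that the left-inverse method cannot deliver, and as it stands the identity $\omega\tau(h)=\tau(h)\omega$ remains unproven in your proposal.
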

\begin{proof}
	Clearly $M:=\chi^\bullet\bigg(A\Omega^\bullet(B)\otimes_{\Omega^\bullet(B)}\Omega^\bullet(B)A\bigg)\subseteq A\Omega^\bullet(B)A\otimes H$. We show that
	$$
	f\colon M\subseteq A\Omega^\bullet(B)A\otimes H\to A\Omega^\bullet(B)A\otimes_{\Omega^\bullet(B)}\Omega^\bullet(B)A,\qquad
	a\omega a'\otimes h\mapsto a\omega a'h^{\langle 1\rangle}\otimes_{\Omega^\bullet(B)}h^{\langle 2\rangle}
	$$
	is a left inverse of \eqref{ChiBulRes}. Let $a,a'\in A$ and $\omega,\omega'\in\Omega^\bullet(B)$. Then
	\begin{align*}
		f(\chi^\bullet(a\omega\otimes_{\Omega^\bullet(B)}\omega'a'))
		&=f(a\omega\wedge\omega'a'_0\otimes a'_1)
		=a\omega\wedge\omega'a'_0(a'_1)^{\langle 1\rangle}\otimes_{\Omega^\bullet(B)}(a'_1)^{\langle 2\rangle}
		\overset{\eqref{tau5}}{=}a\omega\wedge\omega'\otimes_{\Omega^\bullet(B)}a'\\
		&=a\omega\otimes_{\Omega^\bullet(B)}\omega'a'
	\end{align*}
	follows. In particular, the codomain of $f$ is $A\Omega^\bullet(B)\otimes_{\Omega^\bullet(B)}\Omega^\bullet(B)A$. The existence of a left inverse is equivalent to injectivity of \eqref{ChiBulRes}.
	
	Now let $\omega\in\Omega^\bullet(B)$ and $h\in H$. Then
	$\chi^\bullet(\omega\tau(h))=\omega h^{\langle 1\rangle}(h^{\langle 2\rangle})_0\otimes_B(h^{\langle 2\rangle})_1\overset{\eqref{tau6}}{=}\omega\otimes_Bh$ coincides with $\chi^\bullet(\tau(h)\omega)=h^{\langle 1\rangle}(h^{\langle 2\rangle})_0\omega\otimes_B(h^{\langle 2\rangle})_1\overset{\eqref{tau6}}{=}\omega\otimes_Bh$, which implies $\omega\tau(h)=\tau(h)\omega$ by the injectivity of \eqref{ChiBulRes}.
\end{proof}

Next, we give an extension of the translation map $\tau\colon H\to A\otimes_BA$ to $1$-forms. Note that we do not have to assume $\Omega^1_\mathrm{hor}=\mathrm{hor}^1$.

\begin{lemma}\label{lem:tau1}
	The map
	\begin{equation}
		\begin{split}
			\tau^1\colon\Omega^1(H)&\to\Omega^\bullet(A)\otimes_{\Omega^\bullet(B)}\Omega^\bullet(A)\\
			h\mathrm{d}g&\mapsto\mathrm{d}(g^{\langle 1\rangle})h^{\langle 1\rangle}\otimes_{\Omega^\bullet(B)}h^{\langle 2\rangle}g^{\langle 2\rangle}
			+g^{\langle 1\rangle}h^{\langle 1\rangle}\otimes_{\Omega^\bullet(B)}h^{\langle 2\rangle}\mathrm{d}(g^{\langle 2\rangle})
		\end{split}
	\end{equation}
	is well-defined.
\end{lemma}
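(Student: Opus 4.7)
The plan is to observe that the formula for $\tau^1$ arises as a naive Leibniz lift of $\mathrm{d}$ applied to the translation map, combined with $\tau(h)$. Setting $D(g) := \mathrm{d}(g^{\langle 1\rangle}) \otimes_{\Omega^\bullet(B)} g^{\langle 2\rangle} + g^{\langle 1\rangle} \otimes_{\Omega^\bullet(B)} \mathrm{d}(g^{\langle 2\rangle})$, the formula reads $\tau^1(h\,\mathrm{d}g) = D(g) \cdot \tau(h)$, where the right multiplication by $\tau(h)$ inserts $h^{\langle 1\rangle}$ to the right of the first tensor leg and $h^{\langle 2\rangle}$ to the left of the second. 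First I would verify that $D$ is well-defined as a map $H \to \Omega^\bullet(A) \otimes_{\Omega^\bullet(B)} \Omega^\bullet(A)$, using that $\mathrm{d}(B) \subseteq \Omega^1(B)$ (since $\Omega^\bullet(B) \subseteq \Omega^\bullet(A)$ is a DG subalgebra); and that the right multiplication by $\tau(h)$ respects the $\Omega^\bullet(B)$-balanced tensor product thanks to the centrality property \eqref{tau7}.

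Next, I would verify that the bilinear map $\phi\colon H \otimes H \to \Omega^\bullet(A) \otimes_{\Omega^\bullet(B)} \Omega^\bullet(A)$ defined by $\phi(h,g) := D(g) \cdot \tau(h)$ descends along $H \otimes H \twoheadrightarrow \Omega^1(H)$, $h \otimes g \mapsto h\,\mathrm{d}g$. By Theorem \ref{ThmWor}, the kernel of this surjection is spanned by $\{h \otimes 1 : h \in H\}$ together with the bicovariance relations coming from the right $H$-ideal $I \subseteq H^+$. The first class is immediate: since $\tau$ is an algebra morphism (Proposition \ref{prop:sigma}), $\tau(1) = 1 \otimes_B 1$, so $D(1) = 0$ and hence $\phi(h,1) = 0$.

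The main obstacle is handling the bicovariance relations. The cleanest route is to prove the identity $\chi^1 \circ \tau^1 = \iota$, where $\chi^\bullet$ is as in \eqref{eq:chibullet} and $\iota(h\,\mathrm{d}g) := 1 \otimes h\,\mathrm{d}g$ lands in $A \otimes \Omega^1(H)$; because the target already has the bicovariance relations imposed on the $\Omega^1(H)$ factor, this identity automatically forces $\tau^1$ to respect them. Expanding $\chi^1$ on each summand of the formula and applying the Leibniz rule for $\mathrm{d}_\otimes$ together with first-order completeness of $\Omega^\bullet(A)$ and the translation-map identities \eqref{tau3} and \eqref{tau6}, all terms collapse except $1 \otimes h\,\mathrm{d}g$. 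This bookkeeping is the routine-but-tedious crux of the argument, exhibiting a cancellation pattern analogous to the one in the proof of Lemma \ref{lem:Hcomplete}.
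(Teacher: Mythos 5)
Your first two steps are essentially sound: the Leibniz-type map $D$ does descend to the balanced tensor product, the elements $h\otimes 1$ are disposed of by $\tau(1)=1\otimes_B1$, and your description of the kernel of $H\otimes H\twoheadrightarrow\Omega^1(H)$ (spanned by $h\otimes 1$ together with, via Theorem \ref{ThmWor}, the elements $hS(g_1)\otimes g_2$ with $g\in I$) is correct. The gap is in your third step. From the identity $\chi^1\circ\tau^1=\iota$ you cannot conclude that the defining formula kills the kernel: the codomain $\Omega^\bullet(A)\otimes_{\Omega^\bullet(B)}\Omega^\bullet(A)$ is itself a quotient, and ``$\chi^\bullet$ of the element vanishes'' only yields ``the element vanishes'' if $\chi^\bullet$ is injective on the relevant degree-one subspace. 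That injectivity is precisely (part of) the graded Hopf--Galois Theorem \ref{thm:gradedHG}, whose proof in the paper constructs the inverse $(\chi^\bullet)^{-1}(\omega\otimes\theta)=\omega\wedge\tau^\bullet(\theta)$ out of the map $\tau^\bullet$ of Proposition \ref{lem:taubullet} --- that is, out of the very lemma you are proving. As stated, your argument is therefore circular. Nor does the degree-zero bijectivity of $\chi$ rescue it: in degree one the balanced tensor product over $\Omega^\bullet(B)$ imposes genuinely new relations (those coming from $\Omega^1(B)$), and no flatness of $\Omega^\bullet(A)$ over the graded algebra $\Omega^\bullet(B)$ is available to control them.

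This is exactly why the paper verifies well-definedness by direct computation: it shows $\tau^1(hS(g_1)\mathrm{d}(g_2))=0$ for $g\in I$ using the translation-map identities \eqref{tau1}--\eqref{tau5}, and the decisive step is that the element $\mathrm{d}((g_2)^{\langle 1\rangle})S(g_1)^{\langle 1\rangle}\otimes_B S(g_1)^{\langle 2\rangle}(g_2)^{\langle 2\rangle}$ has its first tensor leg in $\Omega^1(B)$ --- it is right $H$-coinvariant and horizontal (Proposition \ref{prop:baseintersection}), horizontality resting on $\mathrm{Ad}(I)\subseteq I\otimes H$, $S(I)\subseteq I$ and $I=\ker\varpi$ --- so that it can be slid across $\otimes_{\Omega^\bullet(B)}$, producing the cancellation. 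This is where the bicovariance of the calculus on $H$ genuinely enters, and it is the step your reduction was designed to bypass; some such hands-on argument is unavoidable. A smaller imprecision: when you change representatives of $\tau(g)$ inside your insertion formula, the Leibniz rule creates terms in which $\mathrm{d}(b)\in\Omega^1(B)$ ($b\in B$) sits against $\tau(h)$, so you need the degree-one centrality $\omega\,\tau(h)=\tau(h)\,\omega$ for $\omega\in\Omega^1(B)$; this is the preliminary lemma of Appendix \ref{app:taubullet}, proved via injectivity of the restricted map \eqref{ChiBulRes}, and it does not follow from \eqref{tau7} alone.
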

\begin{proof}
	Using the fundamental theorem of Hopf modules $\Omega^1(H)\cong H\otimes\Lambda^1\cong H\otimes H^+/I$ we show that if $0=h\varpi(g)=hS(g_1)\mathrm{d}(g_2)$ with $h\otimes g\in H\otimes I$, then $\tau^1(hS(g_1)\mathrm{d}(g_2))=0$. In fact,
	\allowdisplaybreaks
	\begin{align*}
		\tau^1(hS(g_1)\mathrm{d}(g_2))
		&=\mathrm{d}((g_2)^{\langle 1\rangle})(hS(g_1))^{\langle 1\rangle}\otimes_{\Omega^\bullet(B)}(hS(g_1))^{\langle 2\rangle}(g_2)^{\langle 2\rangle}
		+(g_2)^{\langle 1\rangle}(hS(g_1))^{\langle 1\rangle}\otimes_{\Omega^\bullet(B)}(hS(g_1))^{\langle 2\rangle}\mathrm{d}((g_2)^{\langle 2\rangle})\\
		&\overset{\eqref{tau2}}{=}\mathrm{d}((g_2)^{\langle 1\rangle})S(g_1)^{\langle 1\rangle}h^{\langle 1\rangle}\otimes_{\Omega^\bullet(B)}h^{\langle 2\rangle}S(g_1)^{\langle 2\rangle}(g_2)^{\langle 2\rangle}\\
		&\qquad+(g_2)^{\langle 1\rangle}S(g_1)^{\langle 1\rangle}h^{\langle 1\rangle}\otimes_{\Omega^\bullet(B)}h^{\langle 2\rangle}S(g_1)^{\langle 2\rangle}\mathrm{d}((g_2)^{\langle 2\rangle})\\
		&\overset{\eqref{tau4}}{=}\mathrm{d}((g_2)^{\langle 1\rangle})S(g_1)^{\langle 1\rangle}h^{\langle 1\rangle}\otimes_{\Omega^\bullet(B)}h^{\langle 2\rangle}S(g_1)^{\langle 2\rangle}(g_2)^{\langle 2\rangle}\\
		&\qquad+(g^{\langle 1\rangle})_0((g^{\langle 1\rangle})_1)^{\langle 1\rangle}h^{\langle 1\rangle}\otimes_{\Omega^\bullet(B)}h^{\langle 2\rangle}((g^{\langle 1\rangle})_1)^{\langle 2\rangle}\mathrm{d}(g^{\langle 2\rangle})\\
		&\overset{\eqref{tau5}}{=}\mathrm{d}((g_2)^{\langle 1\rangle})S(g_1)^{\langle 1\rangle}h^{\langle 1\rangle}\otimes_{\Omega^\bullet(B)}h^{\langle 2\rangle}S(g_1)^{\langle 2\rangle}(g_2)^{\langle 2\rangle}
		+h^{\langle 1\rangle}\otimes_{\Omega^\bullet(B)}h^{\langle 2\rangle}g^{\langle 1\rangle}\mathrm{d}(g^{\langle 2\rangle})\\
		&\overset{(*)}{=}h^{\langle 1\rangle}\otimes_{\Omega^\bullet(B)}h^{\langle 2\rangle}\mathrm{d}((g_2)^{\langle 1\rangle})S(g_1)^{\langle 1\rangle}S(g_1)^{\langle 2\rangle}(g_2)^{\langle 2\rangle}
		+h^{\langle 1\rangle}\otimes_{\Omega^\bullet(B)}h^{\langle 2\rangle}g^{\langle 1\rangle}\mathrm{d}(g^{\langle 2\rangle})\\
		&\overset{\eqref{tau1}}{=}h^{\langle 1\rangle}\otimes_{\Omega^\bullet(B)}h^{\langle 2\rangle}\mathrm{d}(g^{\langle 1\rangle})g^{\langle 2\rangle}
		+h^{\langle 1\rangle}\otimes_{\Omega^\bullet(B)}h^{\langle 2\rangle}g^{\langle 1\rangle}\mathrm{d}(g^{\langle 2\rangle})\\
		&=h^{\langle 1\rangle}\otimes_{\Omega^\bullet(B)}h^{\langle 2\rangle}\mathrm{d}(g^{\langle 1\rangle}g^{\langle 2\rangle})\\
		&\overset{\eqref{tau1}}{=}0,
	\end{align*}
	where in $(*)$ we applied
	$$
	\mathrm{d}((g_2)^{\langle 1\rangle})S(g_1)^{\langle 1\rangle}\otimes_BS(g_1)^{\langle 2\rangle}(g_2)^{\langle 2\rangle}\in\Omega^1(B)\otimes_BA.
	$$
	The first tensor factor above is a basic form, since it is right $H$-coinvariant and horizontal (see Proposition \ref{prop:baseintersection}). This is the case, since
	\begin{align*}
		\mathrm{d}(((g_2)^{\langle 1\rangle})_0)&(S(g_1)^{\langle 1\rangle})_0\otimes_BS(g_1)^{\langle 2\rangle}(g_2)^{\langle 2\rangle}\otimes ((g_2)^{\langle 1\rangle})_1(S(g_1)^{\langle 1\rangle})_1\\
		&\overset{\eqref{tau4}}{=}\mathrm{d}((g_3)^{\langle 1\rangle})(S(g_1)^{\langle 1\rangle})_0\otimes_BS(g_1)^{\langle 2\rangle}(g_3)^{\langle 2\rangle}\otimes S(g_2)(S(g_1)^{\langle 1\rangle})_1\\
		&\overset{\eqref{tau4}}{=}\mathrm{d}((g_3)^{\langle 1\rangle})(S(g_1)_2)^{\langle 1\rangle}\otimes_B(S(g_1)_2)^{\langle 2\rangle}(g_3)^{\langle 2\rangle}\otimes S(g_2)S(S(g_1)_1)\\
		&=\mathrm{d}((g_4)^{\langle 1\rangle})S(g_1)^{\langle 1\rangle}\otimes_BS(g_1)^{\langle 2\rangle}(g_4)^{\langle 2\rangle}\otimes S(g_3)S(S(g_2))\\
		&=\mathrm{d}((g_2)^{\langle 1\rangle})(g_1)^{\langle 1\rangle}\otimes_BS(g_1)^{\langle 2\rangle}(g_2)^{\langle 2\rangle}\otimes 1
	\end{align*}
	and
	\begin{align*}
		\pi_v\bigg(\mathrm{d}((g_2)^{\langle 1\rangle})&S(g_1)^{\langle 1\rangle}\bigg)\otimes_BS(g_1)^{\langle 2\rangle}(g_2)^{\langle 2\rangle}\\
		&=((g_2)^{\langle 1\rangle})_0(S(g_1)^{\langle 1\rangle})_0
		\otimes\mathrm{d}(((g_2)^{\langle 1\rangle})_1)(S(g_1)^{\langle 1\rangle})_1
		\otimes_BS(g_1)^{\langle 2\rangle}(g_2)^{\langle 2\rangle}\\
		&\overset{\eqref{tau4}}{=}(g_3)^{\langle 1\rangle}S(g_1)^{\langle 1\rangle}
		\otimes\mathrm{d}(S(g_2)_1)S(S(g_2)_2)
		\otimes_BS(g_1)^{\langle 2\rangle}(g_3)^{\langle 2\rangle}\\
		&=-(g_3)^{\langle 1\rangle}S(g_1)^{\langle 1\rangle}
		\otimes\underbrace{S(g_2)_1\mathrm{d}(S(S(g_2)_2))}_{=\varpi(\pi_\varepsilon(S(g_2)))=0}
		\otimes_BS(g_1)^{\langle 2\rangle}(g_3)^{\langle 2\rangle}\\
		&=0,
	\end{align*}
	using \eqref{tau4} twice, as in the computation before.
	The term above vanishes, since $\mathrm{Ad}(I)\subseteq I\otimes H$, $S(I)\subseteq I$ and $I=\ker\varpi$.
\end{proof}

\begin{proof}[Proof of Proposition \ref{lem:taubullet}:]
	Recall from Proposition \ref{prop:MaxGammaH}, as the maximal prolongation of a bicovariant FODC, $\Omega^\bullet(H)$ is isomorphic to $H\otimes\Lambda^\bullet$ and $\Lambda^\bullet$ is freely generated by $\Lambda^1$, modulo the relation
	\begin{equation}\label{maxrelation}
		\wedge\circ(\varpi\circ\pi_\varepsilon\otimes\varpi\circ\pi_\varepsilon)(\Delta(I))=0.
	\end{equation}
	Thus, we can determine $\tau^2$ on elements $h\varpi(g)\varpi(k)\in\Omega^2(H)$ with $h\in H$, $g,k\in H^+$.
	Applying $\tau^2$ to $h\varpi(g)\varpi(k)=hS(g_1)\mathrm{d}(g_2)\wedge S(k_1)\mathrm{d}(k_2)=-hS(g_1)\mathrm{d}(g_2S(k_1))\wedge\mathrm{d}(k_2)$ we obtain the expression 
	\begin{align*}
		\tau^2(-hS(g_1)\mathrm{d}(g_2S(k_1))\wedge\mathrm{d}(k_2))
		&=\mathrm{d}((k_2)^{\langle 1\rangle})\wedge\mathrm{d}((g_2S(k_1))^{\langle 1\rangle})(hS(g_1))^{\langle 1\rangle}\otimes_{\Omega^\bullet(B)}(hS(g_1))^{\langle 2\rangle}(g_2S(k_1))^{\langle 2\rangle}(k_2)^{\langle 2\rangle}\\
		&\quad+\mathrm{d}((k_2)^{\langle 1\rangle})(g_2S(k_1))^{\langle 1\rangle}(hS(g_1))^{\langle 1\rangle}\otimes_{\Omega^\bullet(B)}(hS(g_1))^{\langle 2\rangle}\mathrm{d}((g_2S(k_1))^{\langle 2\rangle})(k_2)^{\langle 2\rangle}\\
		&\quad-(k_2)^{\langle 1\rangle}\mathrm{d}((g_2S(k_1))^{\langle 1\rangle})(hS(g_1))^{\langle 1\rangle}\otimes_{\Omega^\bullet(B)}(hS(g_1))^{\langle 2\rangle}(g_2S(k_1))^{\langle 2\rangle}\mathrm{d}((k_2)^{\langle 2\rangle})\\
		&\quad-(k_2)^{\langle 1\rangle}(g_2S(k_1))^{\langle 1\rangle}(hS(g_1))^{\langle 1\rangle}\otimes_{\Omega^\bullet(B)}(hS(g_1))^{\langle 2\rangle}\mathrm{d}((g_2S(k_1))^{\langle 2\rangle})\wedge\mathrm{d}((k_2)^{\langle 2\rangle})
	\end{align*}
	and using the methods in the proof of Lemma \ref{lem:tau1} it is easy to show that the above vanishes if $g\in I$ or $k\in I$. It remains to show the vanishing under relation \eqref{maxrelation}.
	
	Let $h\in I\subseteq H^+$ and consider the result of $\tau^2$ applied to 
	$$
	0
	=\varpi(\pi_\varepsilon(h_1))\wedge\varpi(\pi_\varepsilon(h_2))
	=S(h_1)\mathrm{d}(h_2)\wedge S(h_3)\mathrm{d}(h_4)
	=-\mathrm{d}(S(h_1))\wedge\mathrm{d}(h_2)
	$$
	as in definition \eqref{tau2form}. We show that $\tau^2(\mathrm{d}(S(h_1))\wedge\mathrm{d}(h_2))$ vanishes, which then implies that \eqref{tau2form} is well-defined. In fact,
	\allowdisplaybreaks
	\begin{align*}
		\tau^2(\mathrm{d}(S(h_1))\wedge\mathrm{d}(h_2))
		&=-\mathrm{d}((h_2)^{\langle 1\rangle})\wedge\mathrm{d}(S(h_1)^{\langle 1\rangle})\otimes_{\Omega^\bullet(B)}S(h_1)^{\langle 2\rangle}(h_2)^{\langle 2\rangle}\\
		&\quad-\mathrm{d}((h_2)^{\langle 1\rangle})S(h_1)^{\langle 1\rangle}\otimes_{\Omega^\bullet(B)}\mathrm{d}(S(h_1)^{\langle 2\rangle})(h_2)^{\langle 2\rangle}\\
		&\quad+(h_2)^{\langle 1\rangle}\mathrm{d}(S(h_1)^{\langle 1\rangle})\otimes_{\Omega^\bullet(B)}S(h_1)^{\langle 2\rangle}\mathrm{d}((h_2)^{\langle 2\rangle})\\
		&\quad+(h_2)^{\langle 1\rangle}S(h_1)^{\langle 1\rangle}\otimes_{\Omega^\bullet(B)}\mathrm{d}(S(h_1)^{\langle 2\rangle})\wedge\mathrm{d}((h_2)^{\langle 2\rangle})\\
		&\overset{(*)}{=}-1\otimes_{\Omega^\bullet(B)}\mathrm{d}((h_2)^{\langle 1\rangle})\wedge\mathrm{d}(S(h_1)^{\langle 1\rangle})S(h_1)^{\langle 2\rangle}(h_2)^{\langle 2\rangle}\\
		&\quad-1\otimes_{\Omega^\bullet(B)}\mathrm{d}((h_2)^{\langle 1\rangle})S(h_1)^{\langle 1\rangle}\mathrm{d}(S(h_1)^{\langle 2\rangle})(h_2)^{\langle 2\rangle}\\
		&\quad+1\otimes_{\Omega^\bullet(B)}(h_2)^{\langle 1\rangle}\mathrm{d}(S(h_1)^{\langle 1\rangle})S(h_1)^{\langle 2\rangle}\mathrm{d}((h_2)^{\langle 2\rangle})\\
		&\quad+1\otimes_{\Omega^\bullet(B)}\mathrm{d}(h^{\langle 1\rangle})\wedge\mathrm{d}(h^{\langle 2\rangle})\\
		&=-1\otimes_{\Omega^\bullet(B)}\mathrm{d}((h_2)^{\langle 1\rangle})\wedge\mathrm{d}(S(h_1)^{\langle 1\rangle}S(h_1)^{\langle 2\rangle}(h_2)^{\langle 2\rangle})\\
		&\quad+1\otimes_{\Omega^\bullet(B)}\mathrm{d}((h_2)^{\langle 1\rangle})\wedge S(h_1)^{\langle 1\rangle}\mathrm{d}(S(h_1)^{\langle 2\rangle}(h_2)^{\langle 2\rangle})\\
		&\quad-1\otimes_{\Omega^\bullet(B)}\mathrm{d}((h_2)^{\langle 1\rangle})S(h_1)^{\langle 1\rangle}\mathrm{d}(S(h_1)^{\langle 2\rangle})(h_2)^{\langle 2\rangle}\\
		&\quad-1\otimes_{\Omega^\bullet(B)}\mathrm{d}((h_2)^{\langle 1\rangle})S(h_1)^{\langle 1\rangle}S(h_1)^{\langle 2\rangle}\mathrm{d}((h_2)^{\langle 2\rangle})\\
		&\quad+1\otimes_{\Omega^\bullet(B)}\mathrm{d}(h^{\langle 1\rangle})\wedge\mathrm{d}(h^{\langle 2\rangle})\\
		&=0,
	\end{align*}
	where in $(*)$ we used that
	\begin{align*}
		\omega:=(h_2)^{\langle 1\rangle}\mathrm{d}(S(h_1)^{\langle 1\rangle})\otimes_BS(h_1)^{\langle 2\rangle}\otimes (h_2)^{\langle 2\rangle}\in\Omega^1(B)\otimes_BA\otimes A
	\end{align*}
	as in the proof of Lemma \ref{lem:tau1},
	which implies that the differential of the first tensor product of the former
	$$
	\mathrm{d}((h_2)^{\langle 1\rangle})\wedge\mathrm{d}(S(h_1)^{\langle 1\rangle})\otimes_BS(h_1)^{\langle 2\rangle}\otimes (h_2)^{\langle 2\rangle}\in\Omega^2(B)\otimes_BA\otimes A,
	$$
	is an element of $\Omega^2(B)\otimes_BA\otimes A$, since $\Omega^\bullet(B)$ is a DGA.
	
	The same argumentation can be applied to higher-order forms $h\varpi(g)\ldots\varpi(\pi_\varepsilon(k_1))\wedge\varpi(\pi_\varepsilon(k_1))\ldots$ with $k\in I$.
	Since \eqref{maxrelation} is the only relation imposed on the maximal prolongation, it follows that \eqref{taubullet} is well-defined in all degrees.
\end{proof}

\subsection*{Acknowledgements}

We thank Paolo Aschieri, Branimir Ćaćić, Alessandro Carotenuto, Rita Fioresi and Chiara Pagani for helpful discussions and comments.
This research was supported by Gnsaga-Indam, by
COST Action CaLISTA CA21109, HORIZON-MSCA-2022-SE-01-01 CaLIGOLA, MSCA-DN CaLiForNIA - 101119552 and  INFN Sezione Bologna. This research was partially supported by the University of Warsaw Thematic Research Programme "Quantum Symmetries". The third author would like to thank Piotr M. Hajac for pointing out the work of \DJ ur\dj evi\'c which kick-started this project.

\end{document}